\title{On logarithmic nonabelian Hodge theory of higher level in characteristic $p$}
\author{Sachio Ohkawa}
\date{}
\begin{document}

\maketitle
\begin{abstract}
Given a natural number $m$ and a log smooth integral morphism $X\to S$ of fine log schemes of characteristic $p>0$ with a lifting of its Frobenius pull-back $X'\to S$ modulo $p^{2}$, we use indexed algebras ${\cal A}_{X}^{gp}$, ${\cal B}_{X/S}^{(m+1)}$ of Lorenzon-Montagnon and the sheaf ${\cal D}_{X/S}^{(m)}$ of log differential operators of level $m$ of Berthelot-Montagnon to construct an equivalence between the category of certain indexed ${\cal A}^{gp}_{X}$-modules with ${\cal D}_{X/S}^{(m)}$-action and the category of certain indexed ${\cal B}_{X/S}^{(m+1)}$-modules with Higgs field.
Our result is regarded as a level $m$ version of some results of Ogus-Vologodsky and Schepler.
\end{abstract}
\theoremstyle{plain}
\newtheorem{theo}{Theorem}[section]
\newtheorem{defi}[theo]{Definition}
\newtheorem{lemm}[theo]{Lemma}
\newtheorem{pro}[theo]{Proposition}
\newtheorem{cor}[theo]{Corollary}
\newtheorem{Prod}[theo]{Proposition-Definition}

\theoremstyle{definition}
\newtheorem{rem}[theo]{Remark}
\newtheorem{ex}[theo]{Example}

\renewcommand*\proofname{\upshape{\bfseries{Proof}}}

\section{Introduction}
For a projective smooth complex algebraic variety, Simpson \cite{Si} established a correspondence, which is called the Simpson correspondence nowadays, between local systems and Higgs bundles.
In \cite{OV}, Ogus and Vologodsky studied an analogue of the Simpson correspondence for certain integrable connections or equivalently certain $\cal D$-modules in positive characteristic.
As a natural generalization of their theory,
Schepler \cite{S} studied its log version and Gros, Le Stum and Quir\'os \cite{GLQ} studied its higher level version.
The aim of this article is to establish the log and higher level version of the theory of Ogus-Vologodsky.

Let us recall the Ogus-Vologodsky's analogue of the Simpson correspondence in positive characteristic, which is called the global Cartier transform (see Theorem 2.8 of \cite{OV}).
Let $X\to S$ be a smooth morphism of schemes of characteristic $p>0$.
Let us denote by $X'$ the pull-back of $X\to S$ via the absolute Frobenius endomorphism $F_{S}$ of $S$.
Denote the relative Frobenius morphism $X\to X'$ by $F_{X/S}$.
Let ${\cal T}_{X'/S}$ be the tangent bundle of $X'$ over $S$,
$S^{\cdot}{\cal T}_{X'/S}$ the symmetric algebra of ${\cal T}_{X'/S}$ and
$\cal G$ the nilpotent divided power envelope of the zero section of the cotangent bundle of $X'/S$, so that ${\cal O_{G}}=\hat{\Gamma}.{\cal T}_{X'/S}$.
Assume that we are given a lifting of $X'\to S$ modulo $p^{2}$.
Then there exists an equivalence between the category of ${\cal O}_{X}$-modules $E$ with integrable connection $\nabla$ equipped with a horizontal ${\cal O}_{X}$-linear $\cal G$-Higgs field $\theta:{\cal O}_{\cal G}\to F_{X/S*}{\cal E}nd_{{\cal O}_{X}}(E, \nabla)$ extending the horizontal map $\psi: S^{\cdot}{\cal T}_{X'/S}\to F_{X/S*}{\cal E}nd_{{\cal O}_{X}}(E, \nabla)$ given by the $p$-curvature
and the category of ${\cal O}_{X'}$-modules $E'$ equipped with an ${\cal O}_{X'}$-linear $\cal G$-Higgs field $\theta: {\cal O}_{\cal G}\to {\cal E}nd_{{\cal O}_{X'}}(E')$.
There are two key technical results for the proof of the global Cartier transform.
One is the fact that the sheaf ${\cal D}_{X/S}^{(0)}$ of differential operators of level 0 on $X$ is an Azumaya algebra over its center, which is isomorphic to $S^{\cdot}{\cal T}_{X'/S}$ via the $p$-curvature map.
The other is a construction of the splitting module $\check{\cal K}_{\cal X/S}$ of this Azumaya algebra over the scalar extension  ${\cal O}_{\cal G}$ of $S^{\cdot}{\cal T}_{X'/S}$.
This means an isomorphism of ${\cal O}_{X}$-algebras 
${\cal D}_{X/S}^{(0)}\otimes_{S^{\cdot}{\cal T}_{X'/S}}{\cal O}_{\cal G}\xrightarrow{\cong}{\cal E}nd_{{\cal O}_{\cal G}}\left( \check{\cal K}_{\cal X/S}\right)$.
Then the global Cartier transform can be obtained by the Morita equivalence.
Ogus-Vologodsky also constructed the splitting module over the completion $\hat{S^{\cdot}}{\cal T}_{X'/S}$ of $S^{.}{\cal T}_{X'/S}$ under the assumption of an existence of a mod $p^{2}$ lifting of $F_{X/S}$ and got an analogous equivalence called the local Cartier transform.

As is mentioned in the first paragraph, the theory of Ogus-Vologodsky has been generalized in (at least) two directions.
First, Schepler \cite{S} extended their theory to the case of log schemes.
The difficulty for this generalization is that the Azumaya nature of the sheaf ${\cal D}_{X/S}^{(0)}$ of the log differential operators of level $0$ is no longer true in general.
Schepler overcame this difficulty by using Lorenzon's theory of indexed modules and indexed algebras ${\cal A}_{X}^{gp}$ and ${\cal B}_{X/S}$ associated to a log scheme $X$ and its Frobenius pullback $X'\to S$.
Roughly speaking, ${\cal A}_{X}^{gp}$ and ${\cal B}_{X/S}$ are the suitable scalar extensions of the structure sheaf ${\cal O}_{X}$ and ${\cal O}_{X'}$ respectively in log case.
He used the sheaf $\tilde{{\cal D}}_{X/S}^{(0)}:={\cal A}^{gp}_{X}\otimes_{{\cal O}_{X}}{\cal D}_{X/S}^{(0)}$ in place of ${\cal D}_{X/S}^{(0)}$
and proved the Azumaya nature of $\tilde{{\cal D}}_{X/S}^{(0)}$ over its center.
Schepler also generalized the splitting module $\check{\cal K}_{\cal X/S}$ of Ogus-Vologodsky and got the log global Cartier transform. 
Second, in \cite{GLQ}, Gros, Le Stum and Quir\'os extended some results in \cite{OV} to the case of Berthelot's ring of differential operators of higher level \cite{B2}.
They proved the Azumaya nature of the sheaf ${\cal D}_{X/S}^{(m)}$ of differential operators of level $m$,
constructed a splitting module of ${\cal D}_{X/S}^{(m)}$ over $\hat{S^{\cdot}}{\cal T}_{X'/S}$ (here $X'$ denotes the pull-back of $X\to S$ by the $(m+1)$-st iterate of the absolute Frobenius $F_{S}:S\to S$) under the assumption of an existence of a good lifting of the ($m+1$)-st relative Frobenius morphism $F_{X/S}$ mod $p^{2}$, which they call a strong lifting, and proved the local Cartier transform of higher level.
They also constructed (but informally) a global splitting module by a gluing argument.
But their construction is different from that of Ogus-Vologodsky.
It should be remarked here that the sheaf $\tilde{{\cal D}}_{X/S}^{(0)}$ used by Schepler (or more generally the sheaf  $\tilde{{\cal D}}_{X/S}^{(m)}$ of log differential operators of higher level) is introduced by Montagnon \cite{M}. 
She established there the foundations of log differential operators of higher level and  
especially obtained the log version of  Berthelot's Frobenius descent by using the indexed algebras ${\cal A}^{gp}_{X}$ and ${\cal B}^{(m)}_{X/S}$,
where the latter denotes the higher level version of Lorsntzon's ${\cal B}_{X/S}$.

The purpose of this paper is to generalize Schepler's log global Cartier transform to the case of higher level by using the indexed algebras ${\cal A}_{X}^{gp}$ and ${\cal B}_{X/S}^{(m+1)}$ of Lorenzon and Montagnon.
Our construction is a natural generalization of Ogus-Vologodsky and Schepler, but we also need some log differential calculus of higher level which is based on Montagnon's result. 
We also prove the compatibility of the log global Cartier transform with Montagnon's log Frobenius descent.

Let us describe the content of each section.
We work with a log smooth morphism $X\to S$ of fine log schemes in positive characteristic.
Let $F_{X/S}$ denote the ($m+1$)-st relative Frobenius $X\to X'$.
In the second section, we review the theory of indexed modules.
In the third section, we construct the log version of the higher curvature map $\beta :{\cal T}_{X'/S}\to F_{X/S*}{\cal D}_{X/S}^{(m)}$, which we call the $p^{m+1}$-curvature map, in Definition \ref{defi1}
after reviewing the theory of log differential operators of level $m$.
In the fourth section, after reviewing the construction and some basic results of indexed algebras associated to the log structure, we study the Azumaya nature of $\tilde{{\cal D}}_{X/S}^{(m)}$.
We prove that ${\cal B}^{(m+1)}_{X/S}\otimes_{{\cal O}_{X'}}S^{.}{\cal T}_{X'/S}$ is identified with the center of $\tilde{{\cal D}}_{X/S}^{(m)}$ via the $p^{m+1}$-curvature map (see Theorem \ref{theorem13})
and $\tilde{{\cal D}}_{X/S}^{(m)}$ is an Azumaya algebra over ${\cal B}^{(m+1)}_{X/S}\otimes_{{\cal O}_{X'}}S^{.}{\cal T}_{X'/S}$ (see Corollary \ref{cor1}).
We also prove the log Cartier descent theorem of higher level as an application (see Theorem \ref{theorem40}).
In the fifth section, we construct the splitting module $\check{{\cal K}}_{\cal X/S}^{(m), {\cal A}}$ of $\tilde{{\cal D}}_{X/S}^{(m)}$ over ${\cal B}^{(m+1)}_{X/S}\otimes_{{\cal O}_{X'}}\hat{\Gamma}_{.}{\cal T}_{X'/S}$ 
under the assumption of an existence of a mod $p^{2}$ lifting of $X'\to S$ (see (\ref{AA}))
and get the log global Cartier transform of higher level by using the indexed variant of Morita equivalence (see Theorem \ref{Theorem21}).
In the final section, we consider the compatibility of the log global Cartier transform with  Montagnon's log Frobenius descent. 
Our new ingredient is to prove the behavior of the splitting module $\check{{\cal K}}_{\cal X/S}^{(m), {\cal A}}$ with respect to the Frobenius descent functor of Montagnon (see Theorem \ref{theorem11}).
As a consequence of Theorem \ref{theorem11}, we obtain the expected compatibility (see Theorem \ref{theorem10}).

\subsection{Acknowledgments} 
This paper is a revised version of the author's master thesis in the University of Tokyo.
The author expresses his hearty thanks to Professor Atsushi Shiho for suggesting the topics, helpful discussions, carefully reading the draft of this paper, pointing out a lot of mistakes on it and valuable advice. Without his advice, this paper cannot be realized.
This paper owes its existence to the work of Lorenzon \cite{L}, Montagnon \cite{M}, Ogus-Vologodsky \cite{GLQ}  and Schepler \cite{S}.
He thanks them heartily. 
This work was supported by the Program for Leading Graduate 
Schools, MEXT, Japan.
 
\subsection{Conventions}
Throughout this article, we fix a prime number $p$ and a natural number $m$.
For a natural number $k$, $q_{k}$ denotes the unique natural number which satisfy $k=q_{k}p^{m}+r$ for some $0\leq r<p^{m}$. For natural numbers $k, k'$ and $k''$ such that $k=k'+k''$, we put
\begin{equation*}
\binom{k}{k'}:=\frac{k!}{k'!k''!},\,\, \left\{
\begin{array}{c}
k\\
k'
\end{array}
\right\}:=\frac{q_{k}!}{q_{k'}!q_{k''}!} \text{\,\, and } \left\langle
\begin{array}{c}
k\\
k'
\end{array}
\right\rangle:=\binom{k}{k'}{\left\{
\begin{array}{c}
k\\
k'
\end{array}
\right\}}^{-1}.
\end{equation*}
Next we introduce some notations on multi-indices.
The element $(0, . . . , 1, . . . , 0)\in {\mathbb N}^{r}$, where 1 sits in the $i$-th entry, is denoted by $\underline{\varepsilon}_{i}$. When $\underline{k}$ is an element of ${\mathbb N}^{r}$, we denote its $i$-th entry by $k_{i}$, $k_{1}+\cdots +k_{r}$ by $|\underline{k}|$ and $\prod_{i=1}^{r}q_{k_{i}}$ by $\underline{q}_{\underline{k}}$. If $\underline{j}\leq \underline{i}$, that is, if $j_{k}\leq i_{k}$ for all $1\leq k \leq r$, we put
\begin{equation*}
\binom{\underline{i}}{\underline{j}}:=\prod_{k=1}^{r}\binom{i_{k}}{j_{k}},\,\, \left\{
\begin{array}{c}
\underline{i}\\
\underline{j}
\end{array}
\right\}:=\prod_{k=1}^{r}\left\{
\begin{array}{c}
i_{k}\\
j_{k}
\end{array}
\right\} \text{\,\, and } \left\langle
\begin{array}{c}
\underline{i}\\
\underline{j}
\end{array}
\right\rangle:=\prod_{k=1}^{r} \left\langle
\begin{array}{c}
i_{k}\\
j_{k}
\end{array}
\right\rangle.
\end{equation*}
\section{Indexed Azumaya Algebra}

In this section, we give a review of the theory of indexed modules and indexed Azumaya algebras developed by  (\cite{L}, see also \cite{S}) which we will use to construct the log global Cartier transform of higher level. The general theory of indexed modules can be developed on a ringed topos but, for simplicity, we only consider the case of the ringed topos associated to the \'etale site of a scheme and its structure sheaf. Also, we try to describe several notions more concretely than those given in \cite{L} and \cite{S}.
We fix throughout this section a scheme $X$ and an \'etale sheaf of abelian groups ${\cal I}$.
\subsection{Indexed module}
Let us recall some notions on indexed modules.  
\begin{defi}
{\rm (1)} An $\cal I$-indexed sheaf on $X$ is a sheaf of sets over $\cal I$, namely, a map of sheaves $\cal F \to \cal I$. We denote the map $\cal F \to \cal I$ by $p_{\cal F}$. An $\cal I$-indexed sheaf of abelian groups on $X$ is an $\cal I$-indexed sheaf on $\cal F\to I$ on $X$ equipped with an addition map $\cal F \times_{I}F\to F$ over $\cal I$, a unit map $\cal I\to F$ over $\cal I$ and an inverse map $\cal F\to F$ over $\cal I$ satisfying the usual axioms of abelian groups.

{\rm (2)} An $\cal I$-indexed ${\cal O}_{X}$-module is an $\cal I$-indexed sheaf of abelian groups equipped with  a scalar multiplication map ${\cal O}_{X}\times \cal F\to F$ over $\cal I$ satisfying the usual associativity, distributivity and unitarity conditions, where ${\cal O}_{X}\times \cal F$ is regarded as a sheaf over $\cal I$ via the composite ${\cal O}_{X}\times \cal F\to F\to I$.

{\rm (3)} An $\cal I$-indexed ${\cal O}_{X}$-algebra is an $\cal I$-indexed ${\cal O}_{X}$-module $\cal A$ equipped with an ${\cal O}_{X}$-bilinear multiplication map $\pi : {\cal A \times A\to A} $ over the addition map ${\cal I \times I\to I}$ and a global section $1_{\cal A}$ of $\cal A$ over the zero section $0 : e\to \cal I$ satisfying the usual associativity and unitarity conditions. We say an $\cal I$-indexed ${\cal O}_{X}$-algebra $\cal A$ is commutative if the multiplication map $\pi$ satisfies $\pi \circ \sigma = \pi$ where $\sigma$ is the isomorphism ${\cal A \times A\to A\times A} $ defined by $(a, b)\mapsto (b, a)$.

{\rm (4)} For an $\cal I$-indexed ${\cal O}_X$-algebra $\cal A$, 
an $\cal I$-indexed ${\cal A}$-algebra is an 
$\cal I$-indexed ${\cal O}_X$-algebra $\cal B$ equipped with a morphism 
${\cal A} \rightarrow {\cal B}$ of $\cal I$-indexed ${\cal O}_X$-algebras. 
\end{defi}
\begin{rem}
Let $\cal A$ be an $\cal I$-indexed sheaf on $X$. For an \'etale open $U$ of $X$ and a section $i \in {\cal I}(U)$, we denote by ${\cal A}_{i}$ the pullback $h_{U}\times_{\cal I}{\cal A}$ where $h_{U}$ is an \'etale sheaf on $X$ represented by $U$ and $h_{U} \to \cal I$ is the section $i$.
We call ${\cal A}_{i}$ the fiber of $\cal A\to I$ at $i\in {\cal I}(U)$.
Note that ${\cal A}_{i}$ is naturally considered as an \'etale sheaf on $U$, and moreover, if $\cal A$ is an $\cal I$-indexed ${\cal O}_{X}$-module, then  ${\cal A}_{i}$ has an ${\cal O}_{U}$-module structure naturally induced by the $\cal I$-indexed ${\cal O}_{X}$-module structure on $\cal A$.
If $\cal A$ is an $\cal I$-indexed ${\cal O}_{X}$-algebra, the multiplication map $\pi$ of $\cal A$ is equivalent to the following data: for each \'etale open $U$ of $X$ and sections $i, j\in {\cal I}(U)$, a morphism of  ${\cal O}_{U}$-modules $\pi_{ij}: {\cal A}_{i}\otimes_{{\cal O}_{U}}{\cal A}_{j} \to {\cal A}_{i+j}$ functorial with respect to $i, j$ satisfying the obvious conditions of associativity and unitarity.
\end{rem}
Now we recall the definition of $\cal J$-indexed $\cal A$-modules.
\begin{defi}
Let $\cal A$ be an $\cal I$-indexed ${\cal O}_{X}$-algebra. Let $\cal J$ be an \'etale sheaf of $\cal I$-sets, that is, an \'etale sheaf of sets on $X$ equipped with an $\cal I$-action map ${\cal I\times J\to J}; (i, j)\mapsto i+j$. A  $\cal J$-indexed left $\cal A$-module is a $\cal J$-indexed ${\cal O}_{X}$-module $\cal E$ equipped with an ${\cal O}_{X}$-bilinear map $\rho: {\cal A\times E\to E}$ over the $\cal I$-action map $\cal I\times J\to J$ satisfying the usual associativity and unitarity conditions. We can similarly define the notion of  $\cal J$-indexed right $\cal A$-module.
\end{defi}
\begin{rem}\label{Remark2}
Let $\cal A$ be an $\cal I$-indexed ${\cal O}_{X}$-algebra and  $\cal J$ be an \'etale sheaf of $\cal I$-sets on $X$. Let $\cal E$ be a $\cal J$-indexed left $\cal A$-module.
Then the structure morphism $\rho: {\cal A\times E\to E}$ over $\cal I\times J\to J$ is equivalent to the following data:
for each \'etale open $U$ of $X$, and each section $(i, j)\in {\cal I\times J}$, a morphism of ${\cal O}_{U}$-modules $\rho_{ij}: {\cal A}_{i}\otimes_{{\cal O}_{U}} {\cal E}_{j}\to {\cal E}_{i+j};$ $a\otimes e\mapsto ae$ functorial with respect to $i, j$ satisfying the obvious conditions of associativity and unitarity.
\end{rem}
Next we recall the definition of tensor products and internal hom objects as an indexed module.
\begin{defi}\label{defi3}
Let $\cal A$ be an $\cal I$-indexed ${\cal O}_{X}$-algebra and 
$\cal J$, $\cal K$ be \'etale sheaves of $\cal I$-sets on $X$.

{\rm (1)}  Let $\cal E$ be a $\cal J$-indexed right $\cal A$-module and $\cal F$ a $\cal K$-indexed left $\cal A$-module. Let ${\cal J} \otimes_{\cal I} 
{\cal K}$ be the $\cal I$-set
${\cal J} \times {\cal K}/\!\sim$, where $\sim$ is the equivalence relation 
generated by the relation 
$(i+j,k) \!\sim\! (j,i+k)$ for $i \in {\cal I}, j \in {\cal J}, 
k \in {\cal K}$. Then 
we define a ${\cal J} \otimes_{\cal I} {\cal K}$-indexed sheaf 
of abelian groups $\cal E\otimes_{A} F$ (the tensor product of $\cal E$ and $\cal F$) as the object representing the functor which sends ${\cal J} \otimes_{\cal I}{\cal K}$-indexed sheaf of abelian groups $\cal M$ to the set of biadditive
$\cal A$-balanced morphisms $\cal E\times F\to M$ over the natural projection $\cal J\times K\to J\otimes_{I}K$.
Concretely this is the \'etale sheaf on $X$ associated to the presheaf 
	\begin{equation*}
	U\longmapsto \bigsqcup_{l \in {\cal J} \otimes_{\cal I} {\cal K}(U)}\left(\bigoplus_{(j,k)=l} {\cal E}_{j}(U)\otimes_{{\cal O}_{X}(U)}{\cal F}_{k}(U)\right)\slash R
	\end{equation*}
endowed with natural projection to ${\cal J} \otimes_{\cal I} {\cal K}$,
where $R$ is the ${\cal O}_{X}(U)$-submodule generated by\\
$\left\{ xa\otimes y-x\otimes ay \biggl|
	\begin{array}{c}
	x\in {\cal E}(U),\,\, y\in {\cal F}(U) \,\,\text{and} \,\,a\in {\cal A}(U) \text{ satisfying }\\
	(p_{\cal E}(x) + p_{\cal A}(a), p_{\cal F}(y)) = l
	\end{array}
\right\}$.
When $\cal A$ is commutative, then $\cal E\otimes_{A} F$ naturally forms a 
${\cal J} \otimes_{\cal I} {\cal K}$-indexed $\cal A$-module.

{\rm (2)} For a ${\cal K}$-indexed left $\cal A$-module $\cal F$ and 
$\varphi \in Hom_{\cal I}(\cal J, \cal K)$, 
we define the ${\cal J}$-indexed $\cal A$-module 
${\cal F}(\varphi)$ by the \'etale sheaf 
${\cal F}\times_{{\cal K}, \varphi} {\cal J}$ with the second projection 
and $\cal A$-action via the action on $\cal F$. 

{\rm (3)} Let $\cal E$ be a $\cal J$-indexed left $\cal A$-module and $\cal F$ a $\cal K$-indexed left $\cal A$-module.
We define the internal hom object of $\cal E$ and $\cal F$, which we denote by ${\cal H}om_{\cal A}({\cal E}, {\cal F})$, 
as the \'etale sheaf on $X$
\begin{equation*}
U\longmapsto \bigsqcup_{\varphi\in {\cal H}om_{\cal I}
\left({\cal J}, {\cal K}\right)(U)} 
{\rm Hom}_{\cal A}\left({\cal E}|_{U}, {\cal F}|_{U}(\varphi)\right)
\end{equation*}
endowed with natural projection to ${\cal H}om_{\cal I}(\cal J, \cal K)$,
where $\rm Hom_{\cal A}$ denotes the set of homomorphism of 
$\cal J$-indexed $\cal A$-modules. When $\cal A$ is commutative, then ${\cal H}om_{\cal A}({\cal E}, {\cal F})$ naturally forms a ${\cal H}om_{\cal I}(\cal J, \cal K)$-indexed $\cal A$-module. Also, we denote ${\cal H}om_{\cal A}(\cal E, E)$ simply by ${\cal E}nd_{\cal A}(\cal E)$.
\end{defi}


Finally we recall the local freeness and faithful flatness as an  $\cal I$-indexed $\cal A$-module.

\begin{defi}
Let $\cal A$ be an $\cal I$-indexed ${\cal O}_{X}$-algebra and let $\cal B$ be a $\cal J$-indexed $\cal A$-algebra.

{\rm (1)} We say that an $\cal I$-indexed $\cal A$-module $\cal E$ is locally free of rank $k$ if \'etale localy on $X$ there exist sections $n_{1}, \ldots , n_{k}$ of ${\cal I} = {\cal H}om_{\cal I}({\cal I}, {\cal I})$ such that $\cal E$ is isomorphic to $\bigoplus_{i=1}^{k} {\cal A}(n_{i})$, where ${\cal A}(n_{i})$ are as in Definition \ref{defi3} $(2)$.

{\rm (2)} We say that $\cal B$ is faithfully flat over $\cal A$ if the functor ${\cal E}\mapsto \cal E\otimes_{\cal A}\cal B$ is exact and faithful.
\end{defi}

\subsection{Indexed Azumaya algebra}

The following proposition due to Schepler (see \cite{S}) is an index version of the Morita equivalence.

\begin{pro}\label{Proposition5}
Let $\cal A$ be a commutative $\cal I$-indexed ${\cal O}_{X}$-algebra. Let $\cal J$ be an \'etale sheaf of $\cal I$-sets on $X$ and $M$ be a locally free $\cal I$-indexed $\cal A$-module of finite rank. We denote ${\cal E}nd_{\cal A}(M)$ by $\cal E$ which is an $\cal I$-indexed ${\cal O}_{X}$-algebra in natural way. Then the functor $E \longmapsto M\otimes_{{\cal A}} E$ is an equivalence of categories between the category of  $\cal J$-indexed $\cal A$-modules and the category of  $\cal J$-indexed left $\cal E$-modules.
\end{pro}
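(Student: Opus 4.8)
The plan is to exhibit an explicit quasi-inverse to $F\colon E\mapsto M\otimes_{\cal A}E$ and to show that the unit and counit of the resulting adjunction are isomorphisms, a check that I will reduce to the case in which $M$ is free. First observe that $M$ is naturally an $\cal I$-indexed left $\cal E$-module via the evaluation action $\phi\cdot m=\phi(m)$, and that the structure map ${\cal A}\to Z({\cal E})$ sending $a$ to multiplication by $a$ realizes every left $\cal E$-module, and hence every internal hom out of one, as an $\cal A$-module. Thus for a $\cal J$-indexed left $\cal E$-module $N$ the object ${\cal H}om_{\cal E}(M,N)$ of Definition \ref{defi3}$(3)$ is defined. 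Since $\cal I$ is a sheaf of abelian groups acting on $\cal J$, every $\cal I$-equivariant map ${\cal I}\to{\cal J}$ has the form $i\mapsto i+j$ for a unique section $j$ of $\cal J$, so that ${\cal H}om_{\cal I}({\cal I},{\cal J})\cong{\cal J}$ as $\cal I$-sets; consequently ${\cal H}om_{\cal E}(M,N)$ is a $\cal J$-indexed $\cal A$-module. I therefore set $G(N)={\cal H}om_{\cal E}(M,N)$, with $F(E)=M\otimes_{\cal A}E$ carrying its left $\cal E$-action through the $M$-factor.

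Next I would construct the unit and counit. The counit $\epsilon_{N}\colon F(G(N))=M\otimes_{\cal A}{\cal H}om_{\cal E}(M,N)\to N$ is the evaluation morphism $m\otimes\phi\mapsto\phi(m)$, and the unit $\eta_{E}\colon E\to G(F(E))={\cal H}om_{\cal E}(M,M\otimes_{\cal A}E)$ sends a local section $e$ to the $\cal E$-linear map $m\mapsto m\otimes e$. Working on fibers as in Remark \ref{Remark2}, one checks that these are well defined morphisms of $\cal J$-indexed $\cal E$- (respectively $\cal A$-) modules, functorial in $N$ (respectively $E$), and that the index shifts match: if $e$ lies in the fiber over $j$, then $m\mapsto m\otimes e$ has degree $j$, so $\eta_{E}$ preserves the $\cal J$-grading, and similarly for $\epsilon_{N}$. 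Granting the triangle identities, $F$ is left adjoint to $G$, and it remains only to prove that $\eta_{E}$ and $\epsilon_{N}$ are isomorphisms.

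Being an isomorphism of \'etale sheaves is an \'etale-local property, and the formation of $F$, $G$, $\eta$ and $\epsilon$ commutes with restriction to \'etale opens of $X$, the internal hom being by construction the sheaf whose sections over $U$ are the ${\cal E}|_{U}$-linear homomorphisms. Hence I may argue \'etale-locally and assume, by local freeness, that $M\cong\bigoplus_{i=1}^{k}{\cal A}(n_{i})$ for sections $n_{1},\dots,n_{k}$ of $\cal I$ (the case $k=0$ being trivial, as both categories then reduce to the zero module). In this situation $\cal E={\cal E}nd_{\cal A}(M)$ is the indexed analogue of the matrix algebra $M_{k}({\cal A})$, with entries the twisted internal homs ${\cal H}om_{\cal A}({\cal A}(n_{i}),{\cal A}(n_{j}))$, and $F$, $G$ become the classical Morita functors attached to the progenerator $\bigoplus_{i}{\cal A}(n_{i})$. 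Evaluating $\eta_{E}$ and $\epsilon_{N}$ over each index then reduces the required bijectivity to the standard identities ${\cal A}(n)\otimes_{\cal A}E\cong E(n)$ together with the Morita equivalence of $\cal A$ and $M_{k}({\cal A})$, which is settled by a direct computation with the matrix units.

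The genuinely algebraic content is thus classical; the main obstacle is the bookkeeping of the indexing. The delicate points to get right are that $G(N)$ carries the correct $\cal J$-indexed $\cal A$-module structure through the identification ${\cal H}om_{\cal I}({\cal I},{\cal J})\cong{\cal J}$; that the evaluation and coevaluation maps respect the $\cal I$-set and $\cal J$-set gradings, so that fiberwise the index shifts agree on both sides of $\eta$ and $\epsilon$; and that the reduction to the free case is legitimate, which rests on $M$ being locally free of finite rank so that the internal hom is compatible with \'etale localization. Once these compatibilities are verified on fibers via Remark \ref{Remark2}, the argument proceeds exactly as in the classical Morita theory.
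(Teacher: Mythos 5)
Your proposal is correct and follows essentially the same route as the paper, which simply records the Morita quasi-inverse and defers all details to Theorem 2.2 of Schepler's thesis; your fleshed-out version (explicit unit and counit, \'etale-local reduction to $M\cong\bigoplus_{i}{\cal A}(n_{i})$, classical matrix-algebra computation) is exactly the argument that reference carries out. One point in your favour: your quasi-inverse ${\cal H}om_{\cal E}(M,-)$ is the correct one, whereas the paper's proof writes ${\cal H}om_{\cal A}(M,F)$, which appears to be a slip (for $M={\cal A}^{k}$ it would return $F^{k}$ rather than the descended module), and indeed the paper's own later applications, e.g.\ Theorems \ref{theorem40} and \ref{Theorem21}, take the hom over the endomorphism algebra exactly as you do.
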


\begin{proof}
The quasi-inverse of $E\longmapsto M\otimes_{{\cal A}} E$ is given by $F \longmapsto {\cal H}om_{{\cal A}}(M, F)$. For more details, see Theorem 2.2 of \cite{S}.
\end{proof}

Now let us recall the notion on indexed Azumaya algebra.

\begin{defi}
Let $\cal A$ be a commutative $\cal I$-indexed ${\cal O}_{X}$-algebra and $\cal E$ an $\cal I$-indexed $\cal A$-algebra. Then, for a commutative $\cal I$-indexed $\cal A$-algebra $\cal B$, $\cal E$ splits over $\cal B$ with splitting module $M$ if there exists an $\cal I$-indexed locally free $\cal B$-module $M$ of finite rank such that ${\cal E\otimes_{A}B\simeq E}nd_{\cal B}(M)$.
$\cal E$ is an Azumaya algebra over $\cal A$ of rank $r^{2}$ if there exists some commutative faithfully flat  $\cal I$-indexed $\cal A$-algebra $\cal B$ such that $\cal E$ splits over $\cal B$ with splitting module $M$ of rank $r$.
\end{defi}
If we know that $\cal E$ is an Azumaya algebra over $\cal A$, then we can find the splitting module of $\cal E$ over $\cal A$ in certain case by the following proposition.
\begin{pro}\label{Proposition7}
Let $\cal A$ be a commutative $\cal I$-indexed ${\cal O}_{X}$-algebra and $\cal E$ an Azumaya algebra over $\cal A$ of rank $r^{2}$. If there exists a locally free $\cal I$-indexed $\cal A$-module $M$ of rank $r$ with a structure of $\cal I$-indexed left $\cal E$-module compatible with the given $\cal I$-indexed $\cal A$-module structure, then $\cal E$ splits over $\cal A$ with splitting module $M$. 
\end{pro}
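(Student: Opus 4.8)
**The plan is to reduce the statement to the classical Azumaya-splitting criterion by reinterpreting everything through the fiber description and the indexed Morita equivalence (Proposition \ref{Proposition5}).**

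The strategy I would follow is to show directly that the natural $\cal A$-algebra map $\cal E \to {\cal E}nd_{\cal A}(M)$ — obtained from the given left $\cal E$-module structure on $M$ together with the compatibility with the $\cal A$-module structure — is an isomorphism. Once this is established, taking $\cal B = \cal A$ in the definition of a splitting exhibits $\cal E$ as split over $\cal A$ itself with splitting module $M$, which is exactly the conclusion. So the whole problem becomes: verify that this comparison morphism is an isomorphism, and the hypotheses (that $M$ is locally free of rank $r$ as an $\cal A$-module and that $\cal E$ has rank $r^{2}$ over $\cal A$) are precisely what one needs for a rank count.

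First I would fix the left $\cal E$-module structure on $M$, which by associativity and the compatibility hypothesis factors the $\cal A$-action through $\cal E$; this yields a morphism of $\cal I$-indexed $\cal A$-algebras $\Phi : \cal E \to {\cal E}nd_{\cal A}(M)$. Next, because being an isomorphism of indexed $\cal A$-modules is an \'etale-local and faithfully-flat-local condition, I would pass to a faithfully flat commutative $\cal I$-indexed $\cal A$-algebra $\cal B$ over which $\cal E$ splits: by the definition of Azumaya algebra there is such a $\cal B$ and an isomorphism ${\cal E}\otimes_{\cal A}{\cal B}\simeq {\cal E}nd_{\cal B}(N)$ for a locally free $\cal B$-module $N$ of rank $r$. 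After base change to $\cal B$, the module $M\otimes_{\cal A}\cal B$ becomes a locally free rank-$r$ module over $\cal B$ carrying a left ${\cal E}nd_{\cal B}(N)$-action compatible with its $\cal B$-structure. Here I would invoke Proposition \ref{Proposition5}: the Morita equivalence forces any such module to be of the form $N\otimes_{\cal B}P$ for a $\cal B$-module $P$, and matching ranks gives that $P$ is locally free of rank one, so $M\otimes_{\cal A}\cal B\simeq N(n)$ for some section $n$ of $\cal I$ \'etale-locally. Under this identification the base-changed map $\Phi\otimes\mathrm{id}_{\cal B}$ becomes the canonical isomorphism ${\cal E}nd_{\cal B}(N)\to {\cal E}nd_{\cal B}(N\otimes_{\cal B}P)$, hence an isomorphism.

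The main obstacle I anticipate is the last identification step: controlling the rank-one twist $P$ and checking that, after base change, $\Phi$ really is identified with the tautological endomorphism-algebra isomorphism rather than merely being an abstract map between objects of the same rank. In the indexed setting the rank-one twists by sections $n$ of $\cal I$ must be tracked carefully, and one has to use that an $\cal A$-linear map between locally free indexed $\cal A$-modules which becomes an isomorphism after faithfully flat base change is itself an isomorphism — this is where the faithful flatness of $\cal B$ over $\cal A$ is essential. Apart from that, everything reduces to the standard Azumaya formalism transported verbatim into the $\cal I$-indexed category, so the remaining verifications (associativity, unitarity, functoriality in the fiber indices) are routine bookkeeping that I would not spell out in full.
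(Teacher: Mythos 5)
The paper does not actually prove this proposition: its ``proof'' is the single line ``See Corollary 2.5 of \cite{S}'', so there is no in-paper argument to compare against. Your proposal reconstructs what is essentially the standard (and, as far as one can tell, Schepler's) argument: form the comparison morphism $\Phi:{\cal E}\to {\cal E}nd_{\cal A}(M)$, observe that being an isomorphism may be tested after the faithfully flat base change ${\cal A}\to{\cal B}$ that splits $\cal E$ (using ${\cal E}nd_{\cal A}(M)\otimes_{\cal A}{\cal B}\cong{\cal E}nd_{\cal B}(M\otimes_{\cal A}{\cal B})$ for $M$ locally free of finite rank), and then identify the base-changed map via the indexed Morita equivalence of Proposition \ref{Proposition5}. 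The overall architecture is correct and is the right level of generality for the indexed category.

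The one step that is asserted rather than proved --- and which you yourself flag --- is the claim that ``matching ranks gives that $P$ is locally free of rank one, so $M\otimes_{\cal A}{\cal B}\simeq N(n)$ \'etale-locally.'' In the indexed setting this is genuinely more delicate than in the classical case: the rank of an indexed module is defined by the existence of a local isomorphism with $\bigoplus_i{\cal A}(n_i)$, the fibers of $\cal B$ over distinct indices need not be isomorphic ${\cal O}_X$-modules, and from a local isomorphism $\bigoplus_{i=1}^{r}P(n_i)\cong\bigoplus_{j=1}^{r}{\cal B}(m_j)$ one cannot immediately read off $P\cong{\cal B}(m)$; one must produce an argument (e.g.\ exhibiting $P$ as the image of an explicit idempotent in ${\cal E}nd_{\cal B}(N)$ acting on $M\otimes_{\cal A}{\cal B}$ and then localizing further). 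Note also that for the conclusion you only need $P$ to be invertible with ${\cal E}nd_{\cal B}(P)\cong{\cal B}$, since then ${\cal E}nd_{\cal B}(N)\to{\cal E}nd_{\cal B}(N\otimes_{\cal B}P)$, $f\mapsto f\otimes\mathrm{id}_P$, is automatically an isomorphism and naturality of the Morita equivalence identifies it with $\Phi\otimes\mathrm{id}_{\cal B}$; insisting on the stronger statement $P\cong{\cal B}(n)$ is an unnecessary detour. Until the invertibility of $P$ (or some substitute, such as the double-commutant argument showing that any ${\cal A}$-algebra homomorphism between indexed Azumaya algebras of the same rank is an isomorphism) is supplied, the proof has a genuine gap at its decisive step.
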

\begin{proof}
See Corollary 2.5 of \cite{S}.
\end{proof}

\section{The $p^{m+1}$-curvature map}
From this section, we are mainly concerned with log schemes. We denote a log scheme by a single letter such as $X$ and the log structure of $X$ by ${\cal M}_{X}$.
Our aim of this section is to construct the $p^{m+1}$-curvature map for a log smooth morphism $X\to S$ of fine log schemes defined over a field of positive characteristic.
We also give the local description of this map.

Let us first briefly recall the log version of Berthelot's theory of differential operators of higher level which is studied by Montagnon.
For more details, see \cite{B2} and \cite{M}.
Throughout this section, we denote by $p$ a fixed prime number and all log schemes are assumed to be defined over $\mathbb{Z}_{(p)}$.
	\subsection{Logarithmic differential operators of higher level} 
	We begin with the definition of the $m$-PD structure.
		\begin{defi}
		Let $m$ be a positive integer. Let $X$ be a log scheme and $I$ a quasi-coherent ideal of ${\cal O}_{X}$. A divided power structure of level $m$ $(m$-PD structure$)$ on $I$ is a divided power ideal 
		$(J, \gamma)$ of ${\cal O}_{X}$ such that 
		\begin{equation*}
I^{(p^{m})}+pI\subset J\subset I
\end{equation*}
and a divided power structure $\gamma$ on $J$ compatible with the unique one on $p{\mathbb Z}_{(p)}$. 
Here $I^{(p^{m})}$ denotes the ideal of ${\cal O}_{X}$ generated by $p^{m}$-th powers of all sections of $I$.
If $(J, \gamma)$ is an $m$-PD structure on $I$, we call $(I, J, \gamma)$ an  $m$-PD ideal of ${\cal O}_{X}$ and call $(X, I, J, \gamma)$ an $m$-PD log scheme. 
\end{defi}
Let $(X, I, J, \gamma)$ be an $m$-PD log scheme. For each natural number $k$, we define the map $I\to {\cal O}_{X}$; $f\mapsto f^{\{k\}(m)}$ by $f^{\{k\}(m)}:=f^{r}\gamma_{q}(f^{p^{m}})$ where $k=p^{m}q+r$ and $0\leq r< p^{m}$. These maps satisfy the following formulas (see p.13 of \cite{M}).
\begin{pro}\label{Proposition9}
Let $(X, I, J, \gamma)$ be an $m$-PD log scheme and $k, l$ be positive integers.

{\rm(1)} For any  $x\in I, x^{\{0\}(m)}=1, x^{\{1\}(m)}=x$,  and  $x^{\{k\}(m)}\in I.$  Moreover if $k\geq p^{m}$, then  $x^{\{k\}(m)}\in J.$ 

{\rm(2)} For any  $x\in I, a\in {\cal O}_{X}$,  $(ax)^{\{k\}(m)}=a^{k}x^{\{k\}(m)}$. 

{\rm(3)} For any $x, y\in I$, $(x+y)^{\{k\}(m)}$=$\sum_{k'+k''=k} \left\langle
	\begin{array}{c}
	k\\
	k'
	\end{array}
\right\rangle x^{\{k'\}(m)} y^{\{k''\}(m)}.$ 

{\rm(4)} For any $x\in I$, $q_{k}!x^{\{k\}(m)}=x^{k}$. 

{\rm (5)} For any $x\in I$, $(x^{\{k\}(m)})^{\{l\}(m)}=\frac{q_{kl}!}{(q_{k}!)^{l}q_{l}!}x^{\{kl\}(m)}$.
\end{pro}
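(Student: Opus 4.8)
The plan is to split the five formulas into two groups. Formulas (1), (2) and (4) are direct consequences of the defining expression $f^{\{k\}(m)}=f^{r}\gamma_{q_{k}}(f^{p^{m}})$ together with the standard axioms of the divided power structure $\gamma$ on $J$. First I would note that $f^{p^{m}}\in I^{(p^{m})}\subset J$, so that $\gamma_{q_{k}}(f^{p^{m}})$ is defined for every $f\in I$. For (1) the cases $k=0,1$ follow from $\gamma_{0}=1$ and $\gamma_{1}=\mathrm{id}$; the membership $x^{\{k\}(m)}\in I$ for $k\geq 1$ splits into the subcase $r\geq 1$ (where $x^{r}\in I$) and the subcase $r=0,\ q_{k}\geq 1$ (where $\gamma_{q_{k}}(x^{p^{m}})\in J\subset I$), and the refinement $x^{\{k\}(m)}\in J$ for $k\geq p^{m}$ holds because then $q_{k}\geq 1$ and $J$ is an ideal. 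For (2) I would use $\gamma_{n}(\lambda z)=\lambda^{n}\gamma_{n}(z)$, and for (4) the identity $n!\,\gamma_{n}(z)=z^{n}$; both are one-line computations after substituting the definition.

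For the two remaining formulas (3) and (5) the efficient route is a reduction to a \emph{universal torsion-free} situation. Let me introduce the free $m$-PD algebra over $\mathbb{Z}_{(p)}$ on one, respectively two, generators; this algebra is torsion-free because it embeds into its rationalization, in which every element $w$ of the $m$-PD ideal satisfies $w^{\{k\}(m)}=w^{k}/q_{k}!$. The $m$-PD structure is functorial, so for any $x$ (and $y$) in $I$ there is a unique $m$-PD morphism from this universal algebra to ${\cal O}_{X}$ carrying the generators to $x$ (and $y$); since such a morphism commutes with products and with $\gamma$, hence with all the operations $f\mapsto f^{\{k\}(m)}$, it suffices to verify (3) and (5) in the universal algebra, where torsion-freeness lets me check each identity after multiplying by a suitable factorial.

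Concretely, in the rational model (4) reads $w^{\{k\}(m)}=w^{k}/q_{k}!$. For (5) I would simply substitute, computing
\begin{equation*}
\bigl(x^{\{k\}(m)}\bigr)^{\{l\}(m)}=\frac{(x^{\{k\}(m)})^{l}}{q_{l}!}=\frac{1}{q_{l}!}\Bigl(\frac{x^{k}}{q_{k}!}\Bigr)^{l}=\frac{x^{kl}}{(q_{k}!)^{l}q_{l}!}=\frac{q_{kl}!}{(q_{k}!)^{l}q_{l}!}\,x^{\{kl\}(m)},
\end{equation*}
which is the asserted formula. For (3), multiplying the claimed identity by $q_{k}!$ and invoking the definition $\left\langle\begin{smallmatrix}k\\k'\end{smallmatrix}\right\rangle=\binom{k}{k'}\left\{\begin{smallmatrix}k\\k'\end{smallmatrix}\right\}^{-1}$ with $\left\{\begin{smallmatrix}k\\k'\end{smallmatrix}\right\}=q_{k}!/(q_{k'}!q_{k''}!)$ together with (4), each summand on the right becomes $\binom{k}{k'}x^{k'}y^{k''}$, so the right-hand side collapses to $(x+y)^{k}=q_{k}!\,(x+y)^{\{k\}(m)}$ by the ordinary binomial theorem; cancelling the non-zero-divisor $q_{k}!$ then yields (3). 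Since the $x^{\{k'\}(m)}y^{\{k''\}(m)}$ are part of a $\mathbb{Z}_{(p)}$-basis of the universal algebra, this computation also shows \emph{en passant} that the coefficients $\left\langle\begin{smallmatrix}k\\k'\end{smallmatrix}\right\rangle$ lie in $\mathbb{Z}_{(p)}$, so the formula is integrally meaningful.

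The main obstacle I anticipate is setting up the universal input cleanly: one must know that the free $m$-PD algebra exists, that it is torsion-free with the $t^{\{k\}(m)}$ forming a basis, and that $f\mapsto f^{\{k\}(m)}$ is genuinely natural for $m$-PD morphisms, so that the specialization to ${\cal O}_{X}$ is legitimate. These facts belong to the Berthelot--Montagnon foundations and I would cite them rather than reprove them. If instead one insists on a direct proof of (3) inside ${\cal O}_{X}$, the crux becomes expanding $\gamma_{q_{k}}\bigl((x+y)^{p^{m}}\bigr)$ and regrouping: the delicate point is the \emph{carrying} between the two parts of each index, i.e.\ distinguishing whether $r'+r''<p^{m}$ or $r'+r''\geq p^{m}$ in a decomposition $k'+k''=k$, which is exactly what the correction factors $\left\{\begin{smallmatrix}k\\k'\end{smallmatrix}\right\}$ and $\left\langle\begin{smallmatrix}k\\k'\end{smallmatrix}\right\rangle$ encode. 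The torsion-free reduction is attractive precisely because it bypasses this bookkeeping.
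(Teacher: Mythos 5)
Your argument is correct. The paper itself gives no proof of this proposition --- it simply points to p.~13 of \cite{M} --- and the proof given there (following Berthelot \cite{B2}, \S 1) is essentially the reduction you describe: (1), (2), (4) read off directly from the definition $f^{\{k\}(m)}=f^{r}\gamma_{q_{k}}(f^{p^{m}})$ and the PD axioms, while (3) and (5) are checked in the free $m$-PD algebra on one or two indeterminates, which is $\mathbb{Z}_{(p)}$-free on the elements $\underline{t}^{\{\underline{k}\}}$ and can therefore be verified after inverting the relevant factorials; this also yields the $p$-integrality of $\left\langle\begin{smallmatrix}k\\k'\end{smallmatrix}\right\rangle$ and of $q_{kl}!/\bigl((q_{k}!)^{l}q_{l}!\bigr)$ exactly as you observe. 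One cosmetic point: ``torsion-free because it embeds into its rationalization'' is circular as phrased --- the torsion-freeness comes from the explicit $\mathbb{Z}_{(p)}$-basis of the free $m$-PD algebra, which you do invoke a sentence later, so nothing of substance is missing.
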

In the following, we sometimes denote an element $f^{\{k\}(m)}$ simply by $f^{\{k\}}$, if there will be no confusions.
For a while, we fix  an $m$-PD fine log scheme $(S, \mathfrak a, \mathfrak b, \gamma)$ on which $p$ is locally nilpotent and a morphism $X\to S$ of fine log schemes.
We assume that $\gamma$ extends to $X$ (for definition, see \cite{B2} D\'efinition 1.3.2(1)).
Note that $\gamma$ always extends to $X$ in the case ${\mathfrak b}=(p)$ (see \cite{B2} D\'efinition 1.3.2(1)), which is the case of our interest.

To recall the sheaf of log differential operators of higher level, we need the log $m$-PD envelope of higher level.
The construction of the log $m$-PD envelope is the same as the classical case of level $0$, which we explain now:
Let $i: X\hookrightarrow Y$ be an immersion of fine log schemes over $S$.
\'Etale locally on $X$, we have a factorization $i=g\circ i'$ with an exact closed immersion $i': X\hookrightarrow Z$ and a log \'etale morphism $g: Z\to Y$.
Let $i'': X\hookrightarrow D$ be the usual $m$-PD envelope of $i'$ (for definition, see \cite{B2}), and endow $D$ with the inverse image log structure of $Z$.
Then, since $i''$ satisfies the obvious universal property, it descents to the exact closed immersion $X\hookrightarrow P_{X, (m)}(Y)$ with the $m$-PD structure globally on $X$.
$P_{X, (m)}(Y)$ is called the log $m$-PD envelope of $i:X\hookrightarrow Y$.

Let us consider the diagonal immersion $X\to X\times_{S}X$. We simply denote its log $m$-PD envelope by $P_{X/S, (m)}$ and the defining ideal of $X\hookrightarrow P_{X/S, (m)}$ by $\bar{I}$.
Then there exists the $m$-PD-adic filtration $\left\{\bar{I}^{\{n\}}\right\}_{n\in {\mathbb N}}$ associated to $\bar{I}$  (for definition, see \cite{B3} D\'efinition A.3) which satisfies the following property.
\begin{equation}
\text{If $x$ is a local section of $\bar{I}^{\{n\}}$, then $x^{\{k\}}$ is in $\bar{I}^{\{nk\}}$}.
\end{equation}
Let ${\cal P}_{X/S, (m)}$ denote the structure sheaf of $P_{X/S, (m)}$. 
For each natural number $n$, we denote ${\cal P}_{X/S, (m)}^{n}$ the quotient sheaf ${\cal P}_{X/S, (m)}\slash \overline{I}^{\{n+1\}}$ and ${P}_{X/S, (m)}^{n}$ the closed subscheme of $P_{X/S, (m)}$ defined by $\overline{I}^{\{n+1\}}$.
We have a sequence of surjective homomorphisms of sheaves

\begin{equation*}
\cdots \to {\cal P}_{X/S, (m)}^{n}\to {\cal P}_{X/S, (m)}^{n-1}\to \cdots \to {\cal P}_{X/S, (m)}^{1}\to {\cal P}_{X/S, (m)}^{0}.
\end{equation*}

Let $p_{0}$ and $p_{1}$ (resp. $p_{0}^{n}$ and $p_{1}^{n}$) denote the first and second projection $P_{X/S, (m)}\to X$ (resp. $P_{X/S, (m)}^{n}\to X$) respectively.
We regard ${\cal P}_{X/S, (m)}$ and ${\cal P}_{X/S, (m)}^{n}$ as an ${\cal O}_{X}$-algebra via the first projection.
	\begin{defi}
	Let $n, m$ be natural numbers.
	The sheaf of differential operators of level $m$ of order $\leq n$ is defined by 
		\begin{equation*}
		{\cal D}_{X/S, n}^{(m)}:={\cal H}om_{{\cal O}_{X}}({{\cal P}}_{X/S, (m)}^{n}, {\cal O}_{X}).
		\end{equation*}
	The sheaf of differential operators of level $m$ is defined by
		\begin{equation*}
		{\cal D}_{X/S}^{(m)}:=\bigcup_{n\in \mathbb{N}} {\cal D}_{X/S, n}^{(m)}.
		\end{equation*}
	\end{defi}
\begin{rem}
Since, for any $m'\geq m$, an $m$-PD ideal can be considered as an $m'$-PD ideal, $\left\{{\cal D}_{X/S}^{(m)}\right\}_{m\geq 0}$ naturally forms an inductive system.
\end{rem}
${\cal D}_{X/S}^{(m)}$ has the (non commutative) ring structure as follows.
By using the universality of $m$-PD envelope, we obtain the canonical homomorphism of ${\cal O}_{X}$-algebras
\begin{equation*}
{\delta_{m}^{n,n'}}:{\cal P}_{X/S, (m)}^{n+n'}\rightarrow {\cal P}_{X/S, (m)}^{n}\otimes_{{\cal O}_{X}}{\cal P}_{X/S, (m)}^{n'}
\end{equation*} 
for each natural number $n, n'$, which is induced by the projection $X\times_{S}X\times_{S}X\to X\times_{S}X$ to the first and the third factors (for precise definition of ${\delta_{m}^{n,n'}}$, see Subsection 2.3.2 of \cite{M}).
For each $\Phi \in{\cal D}_{X/S, n}^{(m)}$ and $\Psi\in {\cal D}_{X/S, n'}^{(m)}$, we define the product $\Phi\cdot \Psi$ in ${\cal D}_{X/S, n+n'}^{(m)}$ by
\begin{equation*}
{\cal P}_{X/S, (m)}^{n+n'}\xrightarrow {\delta_{m}^{n,n'}}{\cal P}_{X/S, (m)}^{n}\otimes_{{\cal O}_{X}}{\cal P}_{X/S, (m)}^{n'}\xrightarrow {{\rm id}\otimes \Psi} {\cal P}_{X/S, (m)}^{n}\xrightarrow {\Phi} {\cal O}_{X}.
\end{equation*}
This is well-defined and ${\cal D}_{X/S}^{(m)}$ forms a sheaf of non commutative  ${\cal O}_{X}$-algebras on $X$.

\begin{rem}\label{Remark8}
Let ${\cal E}$ be an ${\cal O}_{X}$-module.
Then a log $m$-PD stratification on $\cal E$ is a family of ${\cal P}_{X/S, (m)}^{n}$-linear isomorphisms $\varepsilon_{n}: p_{0}^{n*}{\cal E}\xrightarrow{\cong }p_{1}^{n*}{\cal E}$ satisfying the usual cocycle conditions.
As is the same with the classical case, giving a ${\cal D}_{X/S}^{(m)}$-action on ${\cal E}$ extending its ${\cal O}_{X}$-module structure is equivalent to the data of a log $m$-PD stratification on ${\cal E}$. 
\end{rem}

Finally we recall the local description of ${\cal D}_{X/S}^{(m)}$ when $X\to S$ is a log smooth morphism of fine log schemes.
Let $j$ denote the log $m$-PD envelope $X\hookrightarrow P_{X/S,(m)}$ of the diagonal $X\to X\times_{S}X$.
We have an exact sequence 
\begin{equation}\label{exact1}
0\to j^{-1}(1+\bar{I})\xrightarrow{\lambda}j^{-1}{\cal M}_{P_{X/S, (m)}}\xrightarrow{j^{*}}{\cal M}_{X}\to0,
\end{equation}
where $\lambda$ is the restriction of log structure $j^{-1}(\alpha_{P_{X/S, (m)}}^{-1}): j^{-1}({{\cal P}_{X/S, (m)}}^{*})\to j^{-1}({\cal M}_{P_{X/S, (m)}}^{*})$.
For any section $a\in {\cal M}_{X}$, $p_{0}^{*}(a)$ and $p_{1}^{*}(a)$ have the same image in ${\cal M}_{X}$.
Thus, from the exact sequence (\ref{exact1}), there exists the unique section $\mu_{(m)}(a)\in j^{-1}(1+\bar{I})$ such that $p_{1}^{*}(a)=p_{0}^{*}(a)\cdot \lambda\bigl(\mu_{(m)}(a)\bigr)$. Log smoothness of $X\to S$ implies that, \'etale locally on $X$, there is a logarithmic system of coordinates $m_{1}, \ldots, m_{r}\in {\cal M}_{X}^{gp}$, that is, a system of sections such that the set$\left\{d\log m_{1},\ldots, d\log m_{r}\right\}$ forms a basis of the log differential module $\Omega_{X/S}^{1}$ of $X$ over $S$. 
We define the section $\eta_{i}^{(m)}$ by $\eta_{i}^{(m)}:=\mu_{(m)}(m_{i})-1$ and $\underline{\eta}^{\{\underline{k}\}(m)}$ by $\underline{\eta}^{\{\underline{k}\}(m)}=\prod_{i:=1}^{r}\eta_{i}^{\{k_{i}\}(m)}$ for each multi-index $\underline{k}\in {\mathbb N}^{r}$.
\begin{Prod}\label{Prod1}
The set $\left\{ \underline{\eta}^{\{\underline{k}\}(m)} \bigl| |\underline{k}|\leq n\right\}$ forms a local basis of ${\cal P}_{X/S, (m)}^{n}$ over ${\cal O}_{X}$.
We denote the dual basis of $\bigl\{ \underline{\eta}^{\{\underline{k}\}(m)} \bigl| |\underline{k}|\leq n\bigr\}$ by $\bigl\{ \underline{\partial}_{<\underline{k}>(m)} \bigl| |\underline{k}|\leq n\bigr\}$.
We also denote $\underline{\eta}^{\{\underline{k}\}(m)}$ $($resp. $\underline{\partial}_{<\underline{k}>(m)})$ by $\underline{\eta}^{\{\underline{k}\}}$ $($resp. $\underline{\partial}_{<\underline{k}>})$ simply,
if there will be no confusions.
\end{Prod}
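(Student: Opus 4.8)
The plan is to identify ${\cal P}_{X/S, (m)}$, as a filtered ${\cal O}_{X}$-algebra equipped with its $m$-PD structure, with the free $m$-PD polynomial algebra on $r$ variables, via the assignment sending the $i$-th variable to $\eta_{i}^{(m)}$; the asserted basis of ${\cal P}_{X/S, (m)}^{n}$ then drops out from the standard monomial basis of the latter together with the identification ${\cal P}_{X/S, (m)}^{n} = {\cal P}_{X/S, (m)}/\overline{I}^{\{n+1\}}$. This is the logarithmic, higher-level analogue of Berthelot's local structure theorem for $m$-PD envelopes and of the level-$0$ logarithmic computation, and I would follow Montagnon's treatment closely.

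First I would record the elementary membership facts. Since $\mu_{(m)}(m_{i})\in j^{-1}(1+\overline{I})$ we have $\eta_{i}^{(m)}\in\overline{I}$, and from $\eta_{i}\in\overline{I}=\overline{I}^{\{1\}}$ together with the compatibility of the $m$-PD-adic filtration with the operations $x\mapsto x^{\{k\}}$ (the displayed property preceding the definition of ${\cal D}_{X/S,n}^{(m)}$) one gets $\eta_{i}^{\{k_{i}\}}\in\overline{I}^{\{k_{i}\}}$, hence $\underline{\eta}^{\{\underline{k}\}}\in\overline{I}^{\{|\underline{k}|\}}$. In particular $\underline{\eta}^{\{\underline{k}\}}$ maps to $0$ in ${\cal P}_{X/S, (m)}^{n}$ whenever $|\underline{k}|>n$, so only the finitely many monomials with $|\underline{k}|\leq n$ can contribute, and it remains to see that these form an ${\cal O}_{X}$-basis.

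The heart of the argument is the structural isomorphism, which I would obtain \'etale-locally by comparison with a model. By log smoothness of $X\to S$, the chosen $m_{1},\dots,m_{r}$ with $\{d\log m_{i}\}$ a basis of $\Omega_{X/S}^{1}$ give, \'etale locally, a chart exhibiting $X\to S$ through a morphism of monoid algebras, reducing the computation to the standard case in which the log $m$-PD envelope of the diagonal is the $m$-PD polynomial algebra and the $\eta_{i}$ are its canonical generators. Concretely I would proceed in two steps. For generation, I would show that $\overline{I}$ is generated as an $m$-PD ideal by $\eta_{1},\dots,\eta_{r}$: the exact sequence (\ref{exact1}) identifies $\overline{I}/\overline{I}^{\{2\}}$ with $\Omega_{X/S}^{1}$, carrying $\eta_{i}$ to $d\log m_{i}$, and since the latter form a basis, a graded argument on the associated graded of the $m$-PD filtration upgrades this to generation of the whole $m$-PD ideal, and thus to surjectivity of the map from the $m$-PD polynomial algebra. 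For freeness, I would exhibit the inverse map on the model and transfer it back, or equivalently verify directly that the $\underline{\eta}^{\{\underline{k}\}}$ are ${\cal O}_{X}$-linearly independent in each ${\cal P}_{X/S, (m)}^{n}$; here the universal property of the $m$-PD envelope, applied to the standard $m$-PD polynomial algebra, produces the required splitting.

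I expect the main obstacle to be this second step in the logarithmic setting, namely verifying that there are no relations among the $\underline{\eta}^{\{\underline{k}\}}$ beyond those forced by the $m$-PD structure. Because $\mu_{(m)}$ is only multiplicative on ${\cal M}_{X}^{gp}$ while the divided-power bookkeeping at level $m$ involves the coefficients $\langle\,\cdot\,\rangle$ and $\{\,\cdot\,\}$ of Proposition \ref{Proposition9}, one must check that passing from the additive classes $d\log m_{i}$ to the multiplicative sections $\eta_{i}=\mu_{(m)}(m_{i})-1$ preserves freeness. This is exactly where log smoothness, through the local chart and the resulting comparison with Berthelot's model, is indispensable, and where I would concentrate the care; the generation step, by contrast, is a fairly formal consequence of the cotangent exact sequence.
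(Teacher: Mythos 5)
Your proposal is correct in outline and matches the route the paper takes: the paper offers no proof of its own but simply cites Montagnon's Proposition 2.2.1, whose argument is exactly the one you sketch (\'etale-local reduction via a chart to the $m$-PD polynomial model on the $\eta_{i}$, with generation read off from $\bar{I}/\bar{I}^{\{2\}}\cong\Omega^{1}_{X/S}$ and freeness from the universal property of the $m$-PD envelope). Nothing further is needed.
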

\begin{proof}
See Proposition 2.2.1 of \cite{M}.
\end{proof}

\begin{pro}\label{proposition4}
Let $X\to S$ be a log smooth morphism of fine log schemes. Assume that we are given a logarithmic system of coordinates $m_{1}, \ldots, m_{r}\in {\cal M}_{X}^{gp}$.\\
{\rm(1)} ${\cal D}_{X/S}^{(m)}$ is locally generated by $\bigl\{ \underline{\partial}_{<\underline{\varepsilon}_{i}>}, \underline{\partial}_{<p\underline{\varepsilon}_{i}>}, \ldots, \underline{\partial}_{<p^{m}\underline{\varepsilon}_{i}>}\bigl| 1\leq i \leq r\bigr\}$ as an ${\cal O}_{X}$-algebra.\\
{\rm(2)} We have
	\begin{equation*}
	\underline{\partial}_{<\underline{k}'>}\cdot \underline{\partial}_{<\underline{k}''>}=\sum_{\underline{k}=\sup\{\underline{k}', \underline{k}''\}}^{\underline{k}'+\underline{k}''}\frac{\underline{k}!}{(\underline{k}'+\underline{k}''-\underline{k})!(\underline{k}-\underline{k}')!(\underline{k}-\underline{k}'')!}\frac{\underline{q}_{\underline{k}'}!\underline{q}_{\underline{k}''}!}{\underline{q}_{\underline{k}}!}\underline{\partial}_{<\underline{k}>}.
	\end{equation*}
	In particular, $\underline{\partial}_{<\underline{k}>}\cdot \underline{\partial}_{<\underline{k}'>}=\underline{\partial}_{<\underline{k}'>}\cdot \underline{\partial}_{<\underline{k}>}$ holds.\\
{\rm(3)} For any $x\in {\cal O}_{X}$, we have
	\begin{equation*}
	\underline{\partial}_{<\underline{k}>}.x=\sum_{\underline{i}\leq \underline{k}} \left\{
		\begin{array}{c}
		\underline{k}\\
		\underline{i}
		\end{array}
	\right\} \underline{\partial}_{<\underline{k}-\underline{i}>}(x)\underline{\partial}_{<\underline{i}>} \,\,\,\,\text{ in } {\cal D}_{X/S}^{(m)}.
	\end{equation*}
{\rm (4)} The natural map ${\cal D}_{X/S}^{(m)}\to {\cal D}_{X/S}^{(m')}$ sends $\underline{\partial}_{<\underline{k}>(m)}$ to $\frac{\underline{q}!}{{\underline{q'}!}}\underline{\partial}_{<\underline{k}>(m')}$, where $\underline{k}=p^{m}\underline{q}+\underline{r}$, $\underline{k'}=p^{m'}\underline{q'}+\underline{r'}$ with $0\leq \underline{r}<p^{m}$ and $0\leq \underline{r'}<p^{m'}$ .
\end{pro}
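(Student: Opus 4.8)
The plan is to reduce all four statements to explicit local computations in the basis $\{\underline{\eta}^{\{\underline{k}\}}\}$ of ${\cal P}^{n}_{X/S,(m)}$ and its dual basis $\{\underline{\partial}_{<\underline{k}>}\}$ from Proposition-Definition \ref{Prod1}. Two inputs drive everything. First, by the definition of the ring structure, the product on ${\cal D}^{(m)}_{X/S}$ is dual to the comultiplication $\delta_{m}^{n,n'}$: if $\delta_{m}(\xi)=\sum_{\alpha}u_{\alpha}\otimes v_{\alpha}$, then $(\Phi\cdot\Psi)(\xi)=\sum_{\alpha}\Phi(u_{\alpha})\Psi(v_{\alpha})$, so the coefficient of $\underline{\partial}_{<\underline{k}>}$ in $\underline{\partial}_{<\underline{k}'>}\cdot\underline{\partial}_{<\underline{k}''>}$ equals the coefficient of $\underline{\eta}^{\{\underline{k}'\}}\otimes\underline{\eta}^{\{\underline{k}''\}}$ in $\delta_{m}(\underline{\eta}^{\{\underline{k}\}})$. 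Second, since $\mu_{(m)}$ is multiplicative and $\delta_{m}$ is induced by the projection of the triple self-product onto the outer factors, $\mu_{(m)}(m_{i})=1+\eta_{i}$ is group-like, whence
\[
\delta_{m}(\eta_{i})=\eta_{i}\otimes 1+1\otimes\eta_{i}+\eta_{i}\otimes\eta_{i}.
\]
As $\delta_{m}$ is a morphism of $m$-PD algebras it commutes with the operations $(-)^{\{k\}}$, and as it is multiplicative the whole computation factors over the coordinates; so it suffices to argue in one variable.

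For (2) I would put $u=\eta\otimes 1$ and $v=1\otimes\eta$, so that in one variable $\delta_{m}(\eta^{\{k\}})=(u+v+uv)^{\{k\}}$. Using $q_{k}!\,(-)^{\{k\}}=(-)^{k}$ (Proposition \ref{Proposition9}(4)), I expand the ordinary power $(u+v+uv)^{k}$ by the multinomial theorem: the monomial $u^{a}v^{b}$ receives a single contribution, from the exponent $l=a+b-k$, with coefficient $k!/((a+b-k)!\,(k-a)!\,(k-b)!)$, admissible exactly when $\sup\{a,b\}\le k\le a+b$. Rewriting $u^{a}v^{b}=q_{a}!\,q_{b}!\,(\eta^{\{a\}}\otimes\eta^{\{b\}})$, again by Proposition \ref{Proposition9}(4), and dividing by $q_{k}!$, the coefficient of $\eta^{\{k'\}}\otimes\eta^{\{k''\}}$ in $\delta_{m}(\eta^{\{k\}})$ comes out as
\[
\frac{k!}{(k'+k''-k)!\,(k-k')!\,(k-k'')!}\cdot\frac{q_{k'}!\,q_{k''}!}{q_{k}!}.
\]
Taking the product over the coordinates yields the displayed formula, and the commutativity assertion is the symmetry of this coefficient in $\underline{k}',\underline{k}''$. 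The division by $q_{k}!$ is justified by first running the computation over the torsion-free ring $\mathbb{Z}_{(p)}$ and then base-changing.

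For (3) I would use the universal Taylor expansion attached to the $m$-PD stratification of Remark \ref{Remark8}, namely $p_{1}^{*}(x)=\sum_{\underline{k}}\underline{\partial}_{<\underline{k}>}(x)\,\underline{\eta}^{\{\underline{k}\}}$, together with $p_{1}^{*}(xy)=p_{1}^{*}(x)p_{1}^{*}(y)$ and the multiplication table $\eta_{i}^{\{a\}}\eta_{i}^{\{b\}}=\bigl\{\begin{smallmatrix}a+b\\ a\end{smallmatrix}\bigr\}\eta_{i}^{\{a+b\}}$ (a direct consequence of Proposition \ref{Proposition9}). Comparing the coefficient of $\underline{\eta}^{\{\underline{k}\}}$ on both sides yields the higher-level Leibniz rule $\underline{\partial}_{<\underline{k}>}(xy)=\sum_{\underline{i}\le\underline{k}}\bigl\{\begin{smallmatrix}\underline{k}\\ \underline{i}\end{smallmatrix}\bigr\}\underline{\partial}_{<\underline{k}-\underline{i}>}(x)\,\underline{\partial}_{<\underline{i}>}(y)$; since the order-zero operator $x$ satisfies $(\underline{\partial}_{<\underline{k}>}\cdot x)(y)=\underline{\partial}_{<\underline{k}>}(xy)$, this is exactly the asserted commutation relation. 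For (4) I would identify the transition map with the dual of the canonical $m$-PD morphism $\phi\colon{\cal P}_{X/S,(m')}\to{\cal P}_{X/S,(m)}$ furnished by the universal property of the $m'$-PD envelope applied to ${\cal P}_{X/S,(m)}$ (an $m'$-PD algebra because $m'\ge m$). From $q_{k}^{(m')}!\,\eta^{\{k\}(m')}=\eta^{k}=q_{k}^{(m)}!\,\eta^{\{k\}(m)}$ one reads $\phi(\underline{\eta}^{\{\underline{k}\}(m')})=\tfrac{\underline{q}!}{\underline{q}'!}\underline{\eta}^{\{\underline{k}\}(m)}$, and dualizing gives $\underline{\partial}_{<\underline{k}>(m)}\mapsto\tfrac{\underline{q}!}{\underline{q}'!}\underline{\partial}_{<\underline{k}>(m')}$ (the ratio being an integer since $\underline{q}'\le\underline{q}$).

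Finally, (1) follows from (2) by strong induction on $|\underline{k}|$: the formula in (2) shows $\underline{\partial}_{<\underline{k}'>}\cdot\underline{\partial}_{<\underline{k}''>}=\bigl\langle\begin{smallmatrix}\underline{k}'+\underline{k}''\\ \underline{k}'\end{smallmatrix}\bigr\rangle\,\underline{\partial}_{<\underline{k}'+\underline{k}''>}$ modulo operators of strictly lower order, so whenever $\underline{k}$ splits as $\underline{k}'+\underline{k}''$ with both parts already available and $\bigl\langle\begin{smallmatrix}\underline{k}\\ \underline{k}'\end{smallmatrix}\bigr\rangle$ a $p$-adic unit, one solves for $\underline{\partial}_{<\underline{k}>}$. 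I expect the genuine obstacle to be the number-theoretic bookkeeping that makes this induction close, i.e. exhibiting for every $\underline{k}$ a chain of such unit-coefficient products starting from the generators $\underline{\partial}_{<p^{j}\underline{\varepsilon}_{i}>}$, $0\le j\le m$. This rests on the identity $v_{p}\bigl\langle\begin{smallmatrix}\underline{k}\\ \underline{k}'\end{smallmatrix}\bigr\rangle=v_{p}\binom{\underline{k}}{\underline{k}'}-v_{p}\bigl\{\begin{smallmatrix}\underline{k}\\ \underline{k}'\end{smallmatrix}\bigr\}$ together with Kummer's theorem: the carries produced in the lowest $m$ base-$p$ digits must cancel against $\bigl\{\begin{smallmatrix}\underline{k}\\ \underline{k}'\end{smallmatrix}\bigr\}$. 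The decisive case is the passage from $\underline{\partial}_{<p^{m}\underline{\varepsilon}_{i}>}$ to $\underline{\partial}_{<p^{m+1}\underline{\varepsilon}_{i}>}$ via $(\underline{\partial}_{<p^{m}\underline{\varepsilon}_{i}>})^{p}$, which works precisely because the single carry in the multinomial coefficient $\binom{p^{m+1}}{p^{m},\dots,p^{m}}$ is cancelled by the $p$-adic valuation of the symbol $q_{p^{m+1}}!/(q_{p^{m}}!)^{p}=p!$ sitting in the denominator; disjointly supported factors combine with coefficient $1$, which reduces the multi-index induction to the one-variable case.
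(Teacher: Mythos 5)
The paper supplies no argument of its own here --- its ``proof'' consists of four citations to Montagnon's thesis (Proposition 2.3.1, Lemme 2.3.4, (2.5) and (2.6) of \cite{M}) --- and your reconstruction is precisely the standard Berthelot--Montagnon argument those references contain: duality of the product with the comultiplication $\delta_m$, the group-like identity $\delta_m(\eta_i)=\eta_i\otimes1+1\otimes\eta_i+\eta_i\otimes\eta_i$, expansion of $(u+v+uv)^{k}$ carried out in the torsion-free universal case before reducing, the Taylor/Leibniz expansion for (3), and the transition morphism of $m$-PD envelopes for (4), with all coefficients coming out correctly. The one place you stop short is (1), which you leave as a sketch, but the mechanism you identify is the right one and does close: the leading coefficient of $(\underline{\partial}_{<p^{m}\underline{\varepsilon}_{i}>})^{q}$ is $\frac{(qp^{m})!}{(p^{m}!)^{q}\,q!}$, a $p$-adic unit, and for $k=qp^{m}+r$ with $r<p^{m}$ one has $\bigl\langle\begin{smallmatrix}k\\ r\end{smallmatrix}\bigr\rangle=\binom{k}{r}$, again a unit since the base-$p$ addition of $r$ and $qp^{m}$ has no carries, so strong induction on the order produces every $\underline{\partial}_{<\underline{k}>}$ from the stated generators exactly as you predict.
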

\begin{proof}
(1) See Proposition 2.3.1 of \cite{M}.
(2) See Lemme 2.3.4 of \cite{M}.
(3) See (2.5) of \cite{M}.
(4) See (2.6) of \cite{M}.
\end{proof}
We prove the following lemma needed later.
\begin{lemm}\label{lem1}
Let $X\to S$ be a log smooth morphism of fine log schemes defined over ${\mathbb Z}/p{\mathbb Z}$. For any $\underline{k}\in {\mathbb N}^{r}$, $l\in {\mathbb N}$ and $1\leq i\leq r$, we have
\begin{equation*}
\underline{\partial}_{<p^{m+1}\underline{k}>}\cdot \underline{\partial}_{<l\underline{\varepsilon}_{i}>}=\underline{\partial}_{<p^{m+1}\underline{k}+l\underline{\varepsilon}_{i}>}.
\end{equation*}
\end{lemm}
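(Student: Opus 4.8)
The plan is to deduce the identity from the explicit multiplication formula of Proposition \ref{proposition4}(2). Both the coefficient $\frac{\underline{k}!}{(\underline{k}'+\underline{k}''-\underline{k})!(\underline{k}-\underline{k}')!(\underline{k}-\underline{k}'')!}\frac{\underline{q}_{\underline{k}'}!\underline{q}_{\underline{k}''}!}{\underline{q}_{\underline{k}}!}$ and the summation range $\sup\{\underline{k}',\underline{k}''\}\le\underline{k}\le\underline{k}'+\underline{k}''$ are products over the $r$ coordinates, so the product $\underline{\partial}_{<p^{m+1}\underline{k}>}\cdot\underline{\partial}_{<l\underline{\varepsilon}_i>}$ factors coordinatewise. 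For a coordinate $j\ne i$ the second index is $0$, so the range degenerates to the single value $p^{m+1}k_j$ with coefficient $1$; thus only the $i$-th coordinate is at stake. Writing $\partial_{<\cdot>}$ for the one-variable factor in the $i$-th slot and putting $a:=p^{m+1}k_i$, $b:=l$, I am reduced to the one-variable assertion $\partial_{<a>}\cdot\partial_{<b>}=\partial_{<a+b>}$, that is: in the one-variable formula the coefficient of $\partial_{<a+b>}$ is $1$ and all the others vanish in characteristic $p$.

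Indexing the one-variable sum by $n=a+b-s$ with $0\le s\le\min\{a,b\}$, the coefficient $c_s$ of $\partial_{<a+b-s>}$ is
\[
c_s=\frac{(a+b-s)!}{s!\,(a-s)!\,(b-s)!}\cdot\frac{q_a!\,q_b!}{q_{a+b-s}!}.
\]
The decisive elementary input is that $p^{m+1}\mid a$: hence $a$ is an exact multiple of $p^{m}$, so $q_a=a/p^{m}=p\,k_i$, and for every $j\ge0$ one has $q_{a+j}=q_a+q_j$. Taking $j=b-s$ gives $q_{a+b-s}=q_a+q_{b-s}$, which collapses the factorial ratio; for $s=0$ it gives $q_{a+b}=q_a+q_b$, so that $\left\{\begin{array}{c}a+b\\a\end{array}\right\}=\binom{q_a+q_b}{q_a}$ and, by the definition of $\langle\cdot\rangle$, the top coefficient is $c_0=\binom{a+b}{a}\left\{\begin{array}{c}a+b\\a\end{array}\right\}^{-1}=\left\langle\begin{array}{c}a+b\\a\end{array}\right\rangle$. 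It remains to prove $c_0\equiv1$ and $c_s\equiv0$ in $\mathbb{F}_p$ for $0<s\le\min\{a,b\}$.

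Both are $p$-adic statements about the constants $c_s\in\mathbb{Z}_{(p)}$, which I would settle by base-$p$ digit bookkeeping. Writing $S_p$ for the base-$p$ digit sum and using Legendre's formula $v_p(N!)=(N-S_p(N))/(p-1)$ together with $q_{a+b-s}=q_a+q_{b-s}$, the linear terms cancel and $v_p(c_s)$ becomes $\tfrac{1}{p-1}$ times an alternating sum of the digit sums of $s,\,a-s,\,b-s,\,a+b-s,\,q_a,\,q_b,\,q_{b-s},\,q_a+q_{b-s}$. For $s>0$ the hypothesis $p\mid q_a$ (equivalently, the lowest $m+1$ base-$p$ digits of $a$ vanish) is exactly what forces this quantity to be $\ge1$; this stronger divisibility is genuinely necessary, since already for $a=b=p^{m}$ with $m\ge1$ the term $s=a=b$ survives and the analogous identity fails. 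For the top coefficient I would argue by Kummer's theorem: since the lowest $m+1$ base-$p$ digits of $a$ are zero, the column additions computing $a+b$ in positions $\ge m+1$ coincide with those computing $q_a+q_b$ in positions $\ge1$ --- same addends, same carries, by induction starting from the carry-free low columns --- while the low columns contribute only trivial binomial factors. Hence $\binom{a+b}{a}$ and $\binom{q_a+q_b}{q_a}$ have equal $p$-adic valuation and congruent prime-to-$p$ parts, giving $\left\langle\begin{array}{c}a+b\\a\end{array}\right\rangle\equiv1\pmod p$.

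I expect the main obstacle to be precisely this last comparison of prime-to-$p$ parts when $q_a+q_b$ has carries: there both $\binom{a+b}{a}$ and $\binom{q_a+q_b}{q_a}$ are divisible by $p$, so Lucas' theorem alone does not suffice and one must control their unit parts. I would handle this through the column bijection above combined with Granville's refinement of Lucas' congruence for the unit part of a binomial coefficient; in the carry-free case it is cheaper to read the residue off from $(1+t)^{a+b}\equiv(1+t^{p^{m+1}})^{k_i}(1+t)^{b}\pmod p$ by extracting the coefficient of $t^{a}$ and comparing with the same computation for $\binom{q_a+q_b}{q_a}$. Once this carry analysis is in place, the digit-sum inequality $v_p(c_s)\ge1$ controlling the intermediate terms is a routine verification.
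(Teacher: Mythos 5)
Your setup coincides with the paper's: both proofs expand $\underline{\partial}_{<p^{m+1}\underline{k}>}\cdot\underline{\partial}_{<l\underline{\varepsilon}_i>}$ via Proposition \ref{proposition4}(2), reduce to the $i$-th coordinate, and use $q_{a+j}=q_a+q_j$ (valid since $p^m\mid a$) to simplify the factorial ratio; your $c_0$ is the paper's $s=l$ term and your identification $c_0=\left\langle\begin{array}{c}a+b\\ a\end{array}\right\rangle$ is correct. The divergence — and the gap — is in how the two decisive congruences are established. The paper first reduces to $1\le l\le p^{m+1}$ (justifiable by a short induction: once the lemma holds for $l\le p^{m+1}$ one gets $\underline{\partial}_{<p^{m+1}\underline{k}>}\underline{\partial}_{<p^{m+1}\underline{\varepsilon}_i>}=\underline{\partial}_{<p^{m+1}(\underline{k}+\underline{\varepsilon}_i)>}$ and then the general $l$ by writing $l=p^{m+1}q+r$ and using associativity). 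After that reduction no carries can occur, the top coefficient telescopes as $\prod_{j=1}^{l}(1+p^{m+1}k_i/j)\cdot\prod_{j=1}^{q_l}(1+pk_i/j)^{-1}\in 1+p\mathbb{Z}_{(p)}$, and the remaining terms die by an elementary valuation estimate. By attacking general $l$ directly you must prove that $\left\langle\begin{array}{c}a+b\\ a\end{array}\right\rangle\equiv 1\pmod p$ whenever $p^{m+1}\mid a$, including the comparison of the prime-to-$p$ parts of $\binom{a+b}{a}$ and $\binom{q_a+q_b}{q_a}$ when $q_a+q_b$ has carries. You correctly flag this as the main obstacle, but you do not carry it out: your Kummer-type column bijection only gives equality of valuations, and the appeal to Granville's congruence is a pointer to a method, not an argument. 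The statement is true, but as written this step — the heart of the lemma — is missing.

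Two further points. First, the assertion that ``the linear terms cancel'' in the Legendre computation of $v_p(c_s)$ is not correct: after substituting $q_{a+b-s}=q_a+q_{b-s}$ a residual $q_b-q_{b-s}\ge 0$ survives. It is harmless, but it indicates the bookkeeping was not actually performed, and the subsequent digit-sum inequality for $0<s\le\min\{a,b\}$, declared ``routine,'' is likewise never verified. It does hold, and if you insist on general $l$ there is a clean route: cancelling $\binom{a+b-s}{a}$ against $\binom{q_a+q_{b-s}}{q_a}$ (equal valuations by your carry bijection applied to $a$ and $b-s$) yields $v_p(c_s)=v_p\binom{a}{s}+v_p\bigl(q_b!/q_{b-s}!\bigr)$; then either $p^{m+1}\nmid s$, which forces a carry in $s+(a-s)=a$ and hence $p\mid\binom{a}{s}$, or $p^{m+1}\mid s$, in which case $q_b!/q_{b-s}!$ is a product of $q_s\ge p$ consecutive integers and is divisible by $p$. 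But some such argument must actually appear. The most economical repair, however, is to adopt the paper's preliminary reduction to $l\le p^{m+1}$, which dissolves both unproven congruences at once.
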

\begin{proof}
We may assume $1\leq l\leq p^{m+1}$.
When $k_{i}=0$, the assertion follows easily from Proposition \ref{proposition4} (2).
Thus we may also assume $k_{i}\geq 1$.
By Proposition \ref{proposition4} (2), we have
	\begin{equation*}
	(\clubsuit)\,\,\,\,\underline{\partial}_{<p^{m+1}\underline{k}>}\cdot \underline{\partial}_{<l\underline{\varepsilon}_{i}>}=\sum_{s=0}^{l}\frac{(p^{m+1}k_{i}+s)!}{(l-s)!s!(p^{m+1}k_{i}-l+s)!}\frac{(pk_{i})!q_{l}!}{(pk_{i}+q_{s})!}\underline{\partial}_{<p^{m
	+1}\underline{k}+s\underline{\varepsilon}_{i}>}.
	\end{equation*}
We put
	\begin{equation*}
	A:=\frac{(p^{m+1}k_{i}+s)!}{(l-s)!s!(p^{m+1}k_{i}-l+s)!}\in {\mathbb Z}\,\,\text{ and }B:=\frac{(pk_{i})!q_{l}!}{(pk_{i}+q_{s})!}\in {\mathbb Q}.
	\end{equation*}
First, we consider the case $s=l$.
Then, we have
	\begin{eqnarray*}
	A\cdot B&=&\frac{(p^{m+1}k_{i}+l)!}{l!(p^{m+1}k_{i})!}\frac{(pk_{i})!q_{l}!}{(pk_{i}+q_{l})!}\\
	&=&\prod_{j=1}^{l}(1+\frac{p^{m+1}k_{i}}{j})\cdot \prod_{j=1}^{q_{l}}(1+\frac{pk_{i}}{j})^{-1}.
	\end{eqnarray*}
Since $1+\frac{p^{m+1}k_{i}}{j}\in 1+p{\mathbb Z}_{(p)}$ if $1\leq j\leq p^{m+1}-1$,$1+\frac{p^{m+1}k_{i}}{p^{m+1}}=1+k_{i}$ if $j=p^{m+1}$, $(1+\frac{pk_{i}}{j})^{-1}\in 1+p{\mathbb Z}_{(p)}$ if $1\leq j \leq p-1$ and $(1+\frac{pk_{i}}{p})^{-1}=(1+k_{i})^{-1}$ if $j=p$, we have $A\cdot B\in 1+p{\mathbb Z}_{(p)}$ and thus $A\cdot B\equiv1$ mod $p$.
Next, we consider the case $0\leq s\leq l-1, l=p^{m+1}$.
Then, we have
	\begin{equation*}
	B=\frac{(pk_{i})!q_{l}!}{(pk_{i}+q_{s})!}=\prod_{j=1}^{q_{s}}\frac{1}{(pk_{i}+j)}\cdot p!\in p{\mathbb Z}_{(p)}.
	\end{equation*}
Hence $A\cdot B\in p{\mathbb Z}_{(p)}$. We thus have $A\cdot B\equiv 0$ mod $p$.
Finally, we consider the case $0\leq s\leq l-1, 0\leq l \leq p^{m+1}-1$. Then, we have 
	\begin{equation*}
	B=\prod_{j=1}^{q_{s}}\frac{1}{(pk_{i}+j)}\cdot q_{l}!\in {\mathbb Z}_{(p)}.
	\end{equation*}
Let $v:{\mathbb Q}^{*}\to {\mathbb Z}$ denote the normalized $p$-adic valuation. 
For any $n\in {\mathbb N}$, it is known that $(p-1)v(n!)=n-\sigma(n)$, where $\sigma(n):=\sum_{j}a_{j}$ if $n=\sum_{j}a_{j}p^{j}$.
Thus, we have
	\begin{eqnarray*}
	(p-1)v(A)&=&\sigma(l-s)+\sigma(s)+\sigma(p^{m+1}k_{i}-l+s)-\sigma(p^{m+1}k_{i}-s)\\
	&=&\sigma(l-s)+\sigma(p^{m+1}k_{i}-l+s)-\sigma(p^{m+1}k_{i})\\
	&>&0.
	\end{eqnarray*}
Hence $A\cdot B\in p{\mathbb Z}_{(p)}$. We thus have $A\cdot B\equiv 0$ mod $p$.
The assertion follows from these calculations and $(\clubsuit)$.
\end{proof}

\subsection{The $p^{m+1}$-curvature map}
Throughout this subsection, all the log schemes are assumed to be defined over $\mathbb{Z}\slash p\mathbb{Z}$. 
Let us introduce some notations. For a log scheme $X$, $F_{X}$ denotes the $(m+1)$-st iterate of its absolute Frobenius endomorphism. 
For a morphism $X\to S$ of fine log schemes, we consider the following commutative diagram:
\[\xymatrix{
X \ar@/_/[dddrr] \ar@{>}[dr]  \ar@/^/[ddrrr]^{F_{X}} \\
& X' \ar[dr] \\
& & {X''} \ar[d] \ar[r] & X \ar[d] \\
& & S \ar[r]^{F_{S}} & S, }
\] 
where $X''$ is the fiber product in the category of fine log schemes, and the morphism $X\to X'$ (denoted by $F_{X/S}$ and called the ($(m+1)$-st) relative Frobenius morphism) is uniquely determined by the requirement that the morphism $F_{X\slash S}$ is purely inseparable and $X'\to X''$ is log \'etale (see Proposition 4.10 of \cite{K}).
We denote the composition $X'\to X''\to X$ by $\pi_{X\slash S}$ or simply by $\pi$.
We also denote $X'$ by $X^{(m+1)}$, if there is a risk of confusion. \\

First we prove the log level $m$ version of Mochizuki's theorem which is used to construct the $p^{m+1}$-curvature map (see also Proposition 3.2 of \cite{GLQ} and Proposition A.7 of \cite{S}).

\begin{theo}\label{Theorem4}
Let $X\to S$ be a log smooth morphism of fine log schemes. Let $P_{X/S, (m)}$ (resp. $Y$) be the log $m$-PD envelope (resp. the log formal neighborhood) of the diagonal immersion $X\to X\times_{S} X$ and $\bar{I}$ (resp. $I$) its defining ideal. 
Let ${\cal P}_{X/S, (m)}$ denote the structure sheaf of $P_{X/S, (m)}$.
Then there is an isomorphism of ${\cal O}_{X}$-modules
\begin{equation*}
\alpha: F_{X\slash S}^{*}\Omega_{X'\slash S}^{1}\to \bar{I}/ \left(\bar{I}^{\{p^{m+1}+1\}}+I{\cal P}_{X/S, (m)}\right) 
\end{equation*}
such that, for any $\xi \in I$ with image $\omega \in I/ I^{2}\cong \Omega_{X\slash S}^{1}$,
\begin{equation}
\alpha(1\otimes \pi^{*}\omega)=\xi^{\{p^{m+1}\}} \label{mochizuki}.
\end{equation}
\end{theo}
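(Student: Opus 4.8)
The plan is to build $\alpha$ from the level of ideals and to verify its defining properties by reducing to an explicit local computation. Since source and target are coherent and the formula (\ref{mochizuki}) is coordinate-free, I may argue \'etale locally and fix a logarithmic system of coordinates $m_{1},\dots,m_{r}$; by Proposition-Definition \ref{Prod1} the monomials $\underline{\eta}^{\{\underline{k}\}}$ then form an ${\cal O}_{X}$-basis of ${\cal P}_{X/S,(m)}$, the step $\bar{I}^{\{n\}}$ is spanned by those with $|\underline{k}|\geq n$, and $I{\cal P}_{X/S,(m)}$ is the ideal generated by $\eta_{1},\dots,\eta_{r}$. Writing $Q:=\bar{I}/(\bar{I}^{\{p^{m+1}+1\}}+I{\cal P}_{X/S,(m)})$, I would first define $\phi\colon I\to Q$, $\xi\mapsto\xi^{\{p^{m+1}\}}$, and show that it is additive, kills $I^{2}$, and is $F_{X}^{*}$-semilinear. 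Since an additive $F_{X}^{*}$-semilinear map $\Omega_{X/S}^{1}=I/I^{2}\to Q$ is the same as an ${\cal O}_{X}$-linear map $F_{X}^{*}\Omega_{X/S}^{1}=F_{X/S}^{*}\Omega_{X'/S}^{1}\to Q$ (using $F_{X}=\pi\circ F_{X/S}$ and the canonical isomorphism $\pi^{*}\Omega_{X/S}^{1}\cong\Omega_{X'/S}^{1}$), this produces $\alpha$ together with (\ref{mochizuki}).

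For additivity I would expand $(\xi+\xi')^{\{p^{m+1}\}}$ by Proposition \ref{Proposition9}(3); each cross term is a multiple of some $\xi^{\{a\}}$ with $1\leq a\leq p^{m+1}-1$. For such $a$ one has $q_{a}=\lfloor a/p^{m}\rfloor\leq p-1$, so $q_{a}!$ is prime to $p$, and Proposition \ref{Proposition9}(4) gives $\xi^{\{a\}}=(q_{a}!)^{-1}\xi^{a}\in I{\cal P}_{X/S,(m)}$; hence every cross term dies in $Q$ and $\phi$ is additive. That $\phi$ kills $I^{2}$ is immediate from the compatibility of the $m$-PD-adic filtration with the operations $x\mapsto x^{\{k\}}$ recalled at the beginning of this section: if $\xi\in I^{2}$ then $\xi\in\bar{I}^{\{2\}}$, so $\xi^{\{p^{m+1}\}}\in\bar{I}^{\{2p^{m+1}\}}\subset\bar{I}^{\{p^{m+1}+1\}}$. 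Finally, the ${\cal O}_{X}$-structure on $Q$ is the one through the first projection $p_{0}$; a lift of $g\,\omega$ is $p_{0}^{*}(g)\,\xi$, and Proposition \ref{Proposition9}(2) gives $\phi(p_{0}^{*}(g)\xi)=g^{p^{m+1}}\phi(\xi)=F_{X}^{*}(g)\phi(\xi)$, which is exactly the twist required for $\alpha$ to be ${\cal O}_{X}$-linear.

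To see that $\alpha$ is an isomorphism I would compute $Q$ in the chosen coordinates. As $\{1\otimes\pi^{*}d\log m_{i}\}_{1\leq i\leq r}$ is an ${\cal O}_{X}$-basis of $F_{X/S}^{*}\Omega_{X'/S}^{1}$ and $d\log m_{i}$ lifts to $\eta_{i}$, we have $\alpha(1\otimes\pi^{*}d\log m_{i})=\eta_{i}^{\{p^{m+1}\}}$, so it suffices to prove that these $r$ classes form an ${\cal O}_{X}$-basis of $Q$. Modulo $\bar{I}^{\{p^{m+1}+1\}}$ the module $\bar{I}$ is spanned by the $\underline{\eta}^{\{\underline{k}\}}$ with $1\leq|\underline{k}|\leq p^{m+1}$, so the point is to decide which of these lie in $I{\cal P}_{X/S,(m)}$. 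Using $\eta_{j}\cdot\underline{\eta}^{\{\underline{l}\}}=(q_{l_{j}+1}!/q_{l_{j}}!)\,\underline{\eta}^{\{\underline{l}+\underline{\varepsilon}_{j}\}}$, which follows from Proposition \ref{Proposition9}(4), one checks that this coefficient is a $p$-adic unit precisely when $p^{m+1}\nmid(l_{j}+1)$; hence $\underline{\eta}^{\{\underline{k}\}}\in I{\cal P}_{X/S,(m)}$ if and only if $k_{j}\geq 1$ and $p^{m+1}\nmid k_{j}$ for some $j$. Together with $|\underline{k}|\leq p^{m+1}$ this leaves exactly $\eta_{1}^{\{p^{m+1}\}},\dots,\eta_{r}^{\{p^{m+1}\}}$, so $\alpha$ carries a basis to a basis and is an isomorphism.

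The main obstacle is this last local identification of $Q$: everything hinges on pinning down, through the level-$m$ coefficients $q_{k}!/q_{k'}!$ and their $p$-divisibility, exactly which divided-power monomials are absorbed into $I{\cal P}_{X/S,(m)}$, the delicate phenomenon being that in characteristic $p$ an honest generator $\eta_{j}$ annihilates $\underline{\eta}^{\{\underline{l}\}}$ unless $p^{m+1}\nmid(l_{j}+1)$. Once this is in hand the remaining verifications are formal, and independence of $\alpha$ from the chosen coordinates is guaranteed a posteriori by the intrinsic formula (\ref{mochizuki}).
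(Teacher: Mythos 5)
Your proposal is correct and follows essentially the same route as the paper: define $\xi\mapsto\xi^{\{p^{m+1}\}}$ on $I$, kill the cross terms in Proposition \ref{Proposition9}(3) using that $q_{a}!$ is a unit for $0<a<p^{m+1}$, obtain $F_{X}^{*}$-semilinearity from Proposition \ref{Proposition9}(2), and then check in a logarithmic system of coordinates that the classes $\eta_{i}^{\{p^{m+1}\}}$ form a basis of the target. The only difference is that you supply the explicit computation (via the coefficient $q_{l_{j}+1}!/q_{l_{j}}!$) of exactly which monomials $\underline{\eta}^{\{\underline{k}\}}$ are absorbed into $I{\cal P}_{X/S,(m)}$, a point the paper asserts without detail.
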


\begin{proof}
First we show that the map $\alpha': I\to \bar{I}/ I{\cal P}_{X/S, (m)}$ defined by
\begin{equation*}
\begin{array}{ccc}
 I                     & \longrightarrow &  \bar{I}\slash I{\cal P}_{X/S, (m)}                     \\[-4pt]
 \\[-4pt]
              \xi                     & \longmapsto     & \xi^{\{p^{m+1}\}}
\end{array}
\end{equation*}
is $F_{X}^{*}$-linear and zero on $I^{2}$. If $\xi$ and $\tau$ are local sections of $I$,  by Proposition \ref{Proposition9} (3) we have
\begin{equation*}
(\xi+\tau)^{\{p^{m+1}\}}=\xi^{\{p^{m+1}\}}+\tau^{\{p^{m+1}\}}+\sum_{\begin{subarray}{c}
i+j=p^{m+1}\\
i, j>0
\end{subarray}}\left\langle
\begin{array}{c}
p^{m+1}\\
i
\end{array}
\right\rangle \xi^{\{i\}} \tau^{\{j\}}.
\end{equation*}
Since $0<i, j<p^{m+1}$, we have $q_{i}, q_{j}<p$, where $q_{i}, q_{j}$ are as in Subsection 1.3.
Therefore $q_{i}!$ and $q_{j}!$ are invertible. From Proposition \ref{Proposition9} (4) we have $\xi^{\{i\}} \tau^{\{j\}}=(q_{i}!q_{j}!)^{-1}\xi^{i}\tau^{j}\in I{\cal P}_{X/S, (m)}$.
It follows that the last term in the sum is in $I{\cal P}_{X/S, (m)} $ and we see the additivity of $\alpha'$.
Similarly, the fact that $\alpha'$ is $F_{X}^{*}$-linear and zero on $I^{2}$ follows from Proposition \ref{Proposition9} (2), (5). We thus obtain the ${\cal O}_{X}$-linear map 
\begin{equation*}
\alpha: F_{X\slash S}^{*}\Omega_{X'\slash S}^{1}\cong F_{X}^{*}\Omega_{X\slash S}^{1}\to \bar{I}\slash \left(\bar{I}^{\{p^{m+1}+1\}}+I{\cal P}_{X/S, (m)}\right)
\end{equation*}
which satisfies (\ref{mochizuki}).
Let us show that $\alpha$ is an isomorphism.
Since the assertion is \'etale local on $X$, we may assume that we have a logarithmic system of coordinates $m_{1}, \ldots , m_{r}\in {\cal M}_{X}^{gp}$.
Then the left hand side is isomorphic to $\bigoplus^{r}_{i=1}{\cal O}_{X}\left(1\otimes \pi^{*}d\log m_{i}\right)$.
On the other hand, by Proposition-Definition \ref{Prod1}, $\bar{I}\slash \bar{I}^{\{p^{m+1}+1\}}$ is freely generated by $\left\{\underline{\eta}^{\{\underline{i}\}}| 1\leq |\underline{i}|\leq p^{m+1} \right\}$ as an ${\cal O}_{X}$-module and 
the image of $I{\cal P}_{X/S, (m)}$ under the map $I{\cal P}_{X/S, (m)}\to \bar{I}\slash \bar{I}^{\{p^{m+1}+1\}}$ is generated by $\left\{\eta_{j}\underline{\eta}^{\{\underline{i}\}}| 0\leq |\underline{i}|\leq p^{m+1}-1, 1\leq j\leq r \right\}$ as an ${\cal O}_{X}$-module.
Actually $\bar{I}\slash (\bar{I}^{\{p^{m+1}+1\}}+I{\cal P}_{X/S, (m)})$ is freely generated by $\left\{{\eta}_{i}^{\{p^{m+1}\}}\bigl| 1\leq i\leq r\right\}$ as an ${\cal O}_{X}$-module.
So the right hand side is isomorphic to $\bigoplus^{r}_{i=1}{\cal O}_{X}\eta^{\{p^{m+1}\}}_{i}$ and,  by construction, 
$\alpha$ sends $1\otimes \pi^{*}d\log m_{i}$ to $\eta^{\{p^{m+1}\}}_{i}$.
This completes the proof.
\end{proof}
Let $a$ denote the map defined by the composition of maps
\begin{equation*}
{\cal P}_{X/S, (m)}^{n}\to {\cal O}_{X}\to {\cal P}_{X/S, (m)}^{n},
\end{equation*}
where the first map is the natural projection and the second one is the structural morphism $p^{n*}_{0}$.
Now, we are ready to define the $p^{m+1}$-curvature map.
\begin{defi}\label{defi1}
Let $X\to S$ be a log smooth morphism of fine log schemes. Let ${\cal T}_{X'\slash S}:={\cal H}om_{{\cal O}_{X'}}(\Omega^{1}_{X'/S}, {\cal O}_{X'})$ denote the log tangent bundle on $X'$.
We define the map $\beta :{\cal T}_{X'/S}\to F_{X/S*}{\cal D}_{X/S}^{(m)}$ by sending $D\in {\cal T}_{X'\slash S}$ to the composition of maps
	\begin{equation*}
	{\cal P}_{X/S, (m)}^{p^{m+1}}\xrightarrow{y\mapsto y-a(y)} \bar{I}/\bar{I}^{\{p^{m+1}\}}\to \bar{I}/ \left(\bar{I}^{\{p^{m+1}+1\}}+I{\cal P}_{X, m}\right)\xleftarrow{\cong} F^{*}_{X/S}\Omega^{1}_{X'/S}\xrightarrow{F^{*}_{X/S}D} {\cal O}_{X},
	\end{equation*}
where the second map is the natural projection and the third one is the isomorphism in Theorem \rm{\ref{Theorem4}}. We call it the $p^{m+1}$-curvature map.
\end{defi}
The local description of the $p^{m+1}$-curvature map is the following.
\begin{pro}\label{theorem1}
Let $X\to S$ be a log smooth morphism of fine log schemes. Assume that we are given a logarithmic system of coordinates $m_{1},\ldots ,m_{r}\in {\cal M}^{gp}_{X}$ . Let $\left\{\xi_{i}^{'}\bigl| 1\leq i\leq r\right\}$ denote the dual basis of  $\left\{\pi^{*}d\log m_{i}\bigl| 1\leq i\leq r\right\}$.
Then $\beta$ sends $\xi^{'}_{i}$ to $\underline{\partial}_{<p^{m+1}\underline{\varepsilon}_{i}>}$.
\end{pro}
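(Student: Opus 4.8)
The plan is to trace the definition of $\beta$ through each map in Definition \ref{defi1} explicitly, applied to the dual basis vector $\xi_i'$, and verify that the resulting composite $\mathcal{P}_{X/S,(m)}^{p^{m+1}} \to \mathcal{O}_X$ is precisely the functional $\underline{\partial}_{<p^{m+1}\underline{\varepsilon}_i>}$, i.e.\ the dual basis element picking out the coefficient of $\underline{\eta}^{\{p^{m+1}\underline{\varepsilon}_i\}}$. The computation is essentially a bookkeeping exercise once the isomorphism $\alpha$ of Theorem \ref{Theorem4} is made explicit on the chosen coordinates.

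**First I would** record what each arrow does to a basis element $\underline{\eta}^{\{\underline{k}\}}$ with $|\underline{k}|\leq p^{m+1}$. The map $y \mapsto y - a(y)$ kills the constant term, so it sends $\underline{\eta}^{\{\underline{0}\}}=1$ to $0$ and fixes all $\underline{\eta}^{\{\underline{k}\}}$ with $|\underline{k}|\geq 1$ modulo $\bar I^{\{p^{m+1}\}}$. Passing to the further quotient $\bar I/(\bar I^{\{p^{m+1}+1\}} + I\mathcal{P}_{X/S,(m)})$, the proof of Theorem \ref{Theorem4} shows this quotient is freely generated by $\{\eta_j^{\{p^{m+1}\}} \mid 1\leq j\leq r\}$, and that every $\underline{\eta}^{\{\underline{k}\}}$ with $1 \leq |\underline{k}|$ but $\underline{k} \neq p^{m+1}\underline{\varepsilon}_j$ for any $j$ maps to zero there (the monomials with a repeated or mixed index land in $I\mathcal{P}_{X/S,(m)}$, and those of degree $< p^{m+1}$ as well). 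Hence in the quotient only the $r$ classes $\eta_j^{\{p^{m+1}\}}$ survive. Under the inverse of $\alpha$, by the normalization \eqref{mochizuki} the class $\eta_j^{\{p^{m+1}\}}$ corresponds to $1\otimes \pi^*d\log m_j$. Finally $F^*_{X/S}\xi_i'$ evaluates $1\otimes\pi^*d\log m_j$ to $\delta_{ij}$, since $\{\xi_i'\}$ is dual to $\{\pi^*d\log m_i\}$.

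**Combining these,** the composite $\beta(\xi_i')$ sends $\underline{\eta}^{\{p^{m+1}\underline{\varepsilon}_j\}}$ to $\delta_{ij}$ and every other basis element $\underline{\eta}^{\{\underline{k}\}}$ (including $1$) to $0$. This is exactly the defining property of the dual basis element $\underline{\partial}_{<p^{m+1}\underline{\varepsilon}_i>}$ from Proposition-Definition \ref{Prod1}, so $\beta(\xi_i') = \underline{\partial}_{<p^{m+1}\underline{\varepsilon}_i>}$ as desired.

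**The main obstacle** I anticipate is verifying cleanly that all the "unwanted" basis monomials $\underline{\eta}^{\{\underline{k}\}}$ with $|\underline{k}| = p^{m+1}$ but $\underline{k}$ not of the pure form $p^{m+1}\underline{\varepsilon}_j$ actually die in the quotient $\bar I/(\bar I^{\{p^{m+1}+1\}} + I\mathcal{P}_{X/S,(m)})$. One must check that any such mixed monomial lies in $I\mathcal{P}_{X/S,(m)}$; this follows because such a $\underline{\eta}^{\{\underline{k}\}}$ factors as $\eta_j \cdot \underline{\eta}^{\{\underline{k}-\underline{\varepsilon}_j\}}$ up to a unit coefficient (using Proposition \ref{Proposition9}(4) to split off one factor $\eta_j$, whose attached binomial/$q$-factorial coefficient is a $p$-adic unit precisely when the remaining degree is $<p^{m+1}$), matching the description of the generators of $I\mathcal{P}_{X/S,(m)}$ given in the proof of Theorem \ref{Theorem4}. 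Making this divisibility precise for all multi-indices is the only genuinely delicate point; the rest is formal tracing of dual bases.
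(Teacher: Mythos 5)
Your proposal is correct and follows essentially the same route as the paper: the paper's proof simply asserts that, by construction, $\beta(\xi_i')$ sends $\underline{\eta}^{\{\underline{k}\}}$ to $1$ if $\underline{k}=p^{m+1}\underline{\varepsilon}_i$ and to $0$ otherwise, which is exactly the computation you carry out in detail (including the key point, already established in the proof of Theorem \ref{Theorem4}, that $\bar I/(\bar I^{\{p^{m+1}+1\}}+I{\cal P}_{X/S,(m)})$ is freely generated by the classes $\eta_j^{\{p^{m+1}\}}$, with all other $\underline{\eta}^{\{\underline{k}\}}$ dying because their $q$-factorial coefficients are units). Your expanded justification of why the mixed monomials land in $I{\cal P}_{X/S,(m)}$ is sound and merely makes explicit what the paper leaves implicit.
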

\begin{proof}
We calculate that $\beta(\xi'_{i})$ sends $\underline{\eta}^{\{\underline{k}\}}$ to $1$ if $\underline{k}=p^{m+1}\underline{\varepsilon}_{i}$ and $0$ otherwise by construction of $\beta$, thereby completing the proof.
\end{proof}
\begin{rem}
When $m$ is equal to $0$, our $p^{m+1}$-curvature map is the usual $p$-curvature map (\cite{OV} Proposition 1.7).
If the log structure of $X$ is trivial, then our $p^{m+1}$-curvature map coincides with the $p^{m}$-curvature map studied in \cite{GLQ} section 3. 
\end{rem}

\section{Azumaya algebra property}
The goal of this section is the Azumaya algebra property of the indexed version of the sheaf of log differential operators $\tilde{{\cal D}}_{X/S}^{(m)}={\cal A}_{X}^{gp}\otimes_{{\cal O}_{X}}{\cal D}_{X/S}^{(m)}$ defined by Montagnon.
We also study the Azumaya nature of $\tilde{{\cal D}}_{X^{(m)}/S}^{(0)}={\cal B}_{X/S}^{(m)}\otimes_{{\cal O}_{X^{(m)}}}{\cal D}_{X^{(m)}/S}^{(0)}$, which is also introduced by Montagnon.
At first we give a review of the canonical indexed algebra ${\cal A}_{X}^{gp}$ associated to the log structure of $X$ and Montagnon's $\tilde{{\cal D}}_{X/S}^{(m)}$ and $\tilde{{\cal D}}_{X^{(m)}/S}^{(0)}$.

\subsection{Indexed algebra associated to a log structure}
\subsubsection{The ${\cal I}_{X}^{gp}$-indexed algebra $\tilde{{\cal D}}_{X/S}^{(m)}$}
First we recall the definition of ${\cal A}_{X}^{gp}$.
Let $X$ be a fine log scheme. We consider the extension of sheaves of abelian groups
\begin{equation*}
0\longrightarrow {\cal O}_{X}^{*}\longrightarrow {\cal M}_{X}^{gp}\xrightarrow{\delta} {\cal I}_{X}^{gp}\longrightarrow 0.
\end{equation*}
Here $ {\cal I}_{X}^{gp}$ is the quotient sheaf ${\cal M}_{X}^{gp}/{\cal O}_{X}^{*}$.
We define ${\cal A}_{X}^{gp}$ as the contracted product ${\cal M}_{X}^{gp}\wedge_{{\cal O}^{*}_{X}} {\cal O}_{X}$ which is the quotient of  ${\cal M}_{X}^{gp}\times{\cal O}_{X}$ by the equivalence relation $(ax, y)\sim (x, ay)$ where $a, x, y$ are local sections of ${\cal O}_{X}^{*}, {\cal M}_{X}^{gp}$ and ${\cal O}_{X}$ respectively.
The projection ${\cal M}_{X}^{gp}\times {\cal O}_{X}\to {\cal M}_{X}^{gp}\to {\cal I}_{X}^{gp}$ induces a map ${\cal A}_{X}^{gp}\to {\cal I}_{X}^{gp}$ which makes ${\cal A}_{X}^{gp}$ an ${\cal I}_{X}^{gp}$-indexed ${\cal O}_{X}$-module.
For a local section $i$ of ${\cal I}_{X}^{gp}$, the fiber ${\cal M}_{X, i}^{gp}$ of ${\cal M}_{X}^{gp}\to {\cal I}_{X}^{gp}$ at $i$ is an ${\cal O}^{*}_{X}$-torsor.
This implies that the fiber ${\cal A}_{X, i}^{gp}$ of ${\cal A}_{X}^{gp}\to {\cal I}_{X}^{gp}$ at $i$ is an invertible ${\cal O}_{X}$-module.
${\cal A}_{X}^{gp}$ has a multiplication map induced by the addition map ${\cal M}_{X}^{gp}\times {\cal M}_{X}^{gp}\to {\cal M}_{X}^{gp}$ over ${\cal I}^{gp}_{X}\times{\cal I}_{X}^{gp}\to {\cal I}^{gp}_{X}$. Hence ${\cal A}_{X}^{gp}$ forms an ${\cal I}_{X}^{gp}$-indexed ${\cal O}_{X}$-algebra. ${\cal A}_{X}^{gp}$ is called the ${\cal I}_{X}^{gp}$-indexed algebra associated to the log structure.\\

Next let us recall the definition of the section $e_{s}$ of ${\cal A}_{X}^{gp}$ associated to a section $s$ of ${\cal M}_{X}^{gp}$. For each \'etale open $U$ of $X$ and a section $s\in {\cal M}_{X}^{gp}(U)$, $s$ trivializes the ${\cal O}_{X}^{*}$-torsor ${\cal M}_{X, \delta(s)}^{gp}$. Thus it gives a basis $e_{s}:=\overline{(s, 1)}$ of the invertible ${\cal O}_{U}$-module 
${\cal A}_{X, \delta(s)}^{gp}$. Then
\begin{equation*}
e_{0}=1,\,\, e_{s}e_{t}=e_{s+t},\,\, ae_{s}=e_{as}
\end{equation*}
for $s, t\in {\cal M}_{X}^{gp}(U)$ and $a\in {\cal O}_{X}^{*}(U)$.\\

The construction of  ${\cal A}_{X}^{gp}$ is functorial in the following sense. For a morphism $f: X\to Y$ of fine log schemes, we have a commutative diagram

\begin{equation*}
\begin{CD}
0 @>>> {\cal O}_{X}^{*} @>>> (f^{*}{\cal M}_{Y})^{gp} @>>> f^{-1}{\cal I}_{Y}^{gp} @>>> 0 \\
@. @VVV @VVV @VVV \\
0 @>>> {\cal O}_{X}^{*} @>>> {\cal M}_{X}^{gp} @>>> {\cal I}_{X}^{gp} @>>> 0,\\
\end{CD}
\end{equation*}
where vertical arrows are isomorphisms if $f$ is strict.
This induces a commutative diagram

\begin{equation*}
\begin{CD}
f^{*}{\cal A}_{Y}^{gp} @> \text{${\cal A}_{f}^{gp}$} >>{\cal A}_{X}^{gp}\\
@VVV @VVV\\
f^{-1}{\cal I}_{Y}^{gp} @>>> {\cal I}_{X}^{gp},\\
\end{CD}
\end{equation*}
where horizontal arrows are isomorphisms if $f$ is strict.
So we get the following proposition.
\begin{pro}
If $f:X\to Y$ is a strict morphism of fine log schemes, then ${\cal A}_{f}^{gp}: f^{*}{\cal A}_{Y}^{gp}\to {\cal A}_{X}^{gp}$ is an isomorphism of ${\cal I}_{X}^{gp}$-indexed algebras.
\end{pro}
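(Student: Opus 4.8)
The plan is to reduce the assertion to the functoriality of the contracted-product construction, checked fiber by fiber, using the fact that strictness forces the middle vertical arrow of the first commutative diagram of extensions to be an isomorphism.

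First I would unwind what strictness gives us. Recall that $f$ strict means precisely that the canonical map $f^{*}{\cal M}_{Y}\to {\cal M}_{X}$ is an isomorphism, hence so is the induced map $(f^{*}{\cal M}_{Y})^{gp}\to {\cal M}_{X}^{gp}$ on associated groups; this is the middle vertical arrow of the first displayed diagram. Since the left vertical arrow is the identity of ${\cal O}_{X}^{*}$ and both rows are short exact, the five lemma shows that the right vertical arrow $f^{-1}{\cal I}_{Y}^{gp}\to {\cal I}_{X}^{gp}$, which is the induced map on cokernels, is an isomorphism as well. This is what lets us view ${\cal A}_{f}^{gp}$ as a morphism of ${\cal I}_{X}^{gp}$-indexed objects after identifying $f^{-1}{\cal I}_{Y}^{gp}$ with ${\cal I}_{X}^{gp}$.

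Next I would verify that the formation of ${\cal A}^{gp}$ commutes with pullback and then run a fiberwise check. For an index $i$, the fiber ${\cal A}_{Y,i}^{gp}$ is the invertible ${\cal O}_{Y}$-module obtained by pushing the ${\cal O}_{Y}^{*}$-torsor ${\cal M}_{Y,i}^{gp}$ out along ${\cal O}_{Y}^{*}\hookrightarrow {\cal O}_{Y}$, and pulling back an associated invertible module agrees with the invertible module associated to the pulled-back torsor. This identifies the fiber of $f^{*}{\cal A}_{Y}^{gp}$ at $i$ with the invertible ${\cal O}_{X}$-module attached to the ${\cal O}_{X}^{*}$-torsor $(f^{*}{\cal M}_{Y})_{i}^{gp}$, i.e. with the fiber of $(f^{*}{\cal M}_{Y})^{gp}\wedge_{{\cal O}_{X}^{*}}{\cal O}_{X}$; under this identification ${\cal A}_{f}^{gp}$ is exactly the map of contracted products induced by the middle vertical arrow $(f^{*}{\cal M}_{Y})^{gp}\to {\cal M}_{X}^{gp}$. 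Since that arrow is an isomorphism of extensions of ${\cal O}_{X}^{*}$-torsors, the associated map on each fiber is an isomorphism of invertible ${\cal O}_{X}$-modules; a morphism of ${\cal I}_{X}^{gp}$-indexed ${\cal O}_{X}$-modules which is an isomorphism over the identity of the index sheaf and an isomorphism on every fiber is itself an isomorphism. Compatibility with multiplication is then automatic, since both multiplications are induced from the addition map of ${\cal M}^{gp}$ and the middle vertical arrow is a homomorphism of sheaves of abelian groups, so ${\cal A}_{f}^{gp}$ respects the algebra structure and is an isomorphism of ${\cal I}_{X}^{gp}$-indexed ${\cal O}_{X}$-algebras.

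The main obstacle is the base-change step rather than the five lemma: one must check carefully that pullback commutes with the torsor-pushforward defining the contracted product, and that the index-sheaf identification genuinely matches $f^{*}{\cal A}_{Y}^{gp}$ with $(f^{*}{\cal M}_{Y})^{gp}\wedge_{{\cal O}_{X}^{*}}{\cal O}_{X}$. Once this identification is in place, everything reduces to the fiberwise isomorphism check combined with the exactness already recorded in the construction.
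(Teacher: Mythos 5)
Your proposal is correct and follows essentially the same route as the paper: the paper simply records that strictness makes the vertical arrows of the diagram of extensions isomorphisms and that the induced map on contracted products is therefore an isomorphism, which is exactly the argument you give. You merely fill in the details the paper leaves implicit (the five lemma on the index sheaves, the compatibility of pullback with the contracted-product construction, and the fiberwise check), so there is no substantive difference.
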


Next we recall the definition of ${\cal I}_{X}^{gp}$-indexed left ${\cal D}_{X/S}^{(m)}$-module structure on ${\cal A}_{X}^{gp}$.  Let $\ast$ be a trivial sheaf of abelian groups on $X$. We naturally regard ${\cal D}_{X/S}^{(m)}$ as $\ast$-indexed ${\cal O}_{X}$-algebra and ${\cal I}_{X}^{gp}$ as sheaf of $\ast$-sets.
Since the natural projections $p^{n}_{i}: P^{n}_{X, (m)}\to X$ with $i=0, 1$ are strict by construction, we have the isomorphism of ${\cal I}_{X}^{gp}$-indexed ${\cal O}_{X}$-algebras 
\begin{equation*}
\varepsilon_{n}:p_{1}^{n*}{\cal A}_{X}^{gp}\xrightarrow{\cong} {\cal A}_{P_{X/S, (m)}^{n}}\xleftarrow{\cong} p_{0}^{n*}{\cal A}_{X}^{gp}.
\end{equation*}
These isomorphisms are compatible with respect to $n$ by construction and satisfy the cocycle condition. 
Hence we can define an ${\cal I}_{X}^{gp}$-indexed left ${\cal D}_{X/S}^{(m)}$-module structure on ${\cal A}_{X}^{gp}$ by
\begin{eqnarray*}
{\cal D}_{X/S}^{(m)}\times{\cal A}_{X}^{gp}\hookrightarrow {\cal D}_{X/S}^{(m)}\times p_{1}^{*}{\cal A}_{X}^{gp}\xrightarrow{\rm{id}\times\varepsilon} {\cal D}_{X/S}^{(m)}\times p_{0}^{*}{\cal A}_{X}^{gp}\,\,\,\,\,\,\,\,\,\,\,\,\,\, \\
 \hookrightarrow {\cal H}om_{{\cal O}_{X}}\left({\cal P}_{X/S, (m)}, {\cal O}_{X}\right)\times {\cal A}_{X}^{gp}\otimes_{{\cal O}_{X}} {\cal P}_{X/S, (m)}\to {\cal A}_{X}^{gp} .
\end{eqnarray*}

By calculation with the section $e_{s}$ $\left(s\in {\cal I}^{gp}_{X}\right)$, one can see that the action of ${\cal D}_{X/S}^{(m)}$ on ${\cal A}_{X}^{gp}$ satisfies the Leibnitz type formula (for more details, see Subsection 4.1 of \cite{M}).
Therefore we have the following nontrivial ring structure on $\tilde {{\cal D}}_{X/S}^{(m)}$ which is a central object in this article.
\begin{Prod}\label{proposition10}
Let $X\to S$ be a log smooth morphism of fine log schemes.
Let $\tilde {{\cal D}}_{X/S}^{(m)}$ denote the ${\cal I}_{X}^{gp}$-indexed ${\cal O}_{X}$-module ${\cal A}_{X}^{gp}\otimes_{{\cal O}_{X}}{\cal D}_{X/S}^{(m)}$.
Then there exists a unique ${\cal I}_{X}^{gp}$-indexed ${\cal O}_{X}$-algebra structure on $\tilde {{\cal D}}_{X/S}^{(m)}$ such that the maps ${\cal A}_{X}^{gp}\to \tilde{{\cal D}}_{X/S}^{(m)};\,\,a\mapsto a\otimes1$ and ${\cal D}_{X/S}^{(m)}\to \tilde{{\cal D}}_{X/S}^{(m)};\,\,P\mapsto 1\otimes P$ are homomorphisms and that for any $a\in {\cal A}_{X}^{gp}$, $P\in {\cal D}_{X/S}^{(m)}$ and $\underbar{k}\in {\mathbb{N}}^{r}$, we have the relations $a\otimes P=(a\otimes 1)(1\otimes P)$ and 
\begin{equation*}
(1\otimes \underline{\partial}_{<\underline{k}>})(a\otimes 1)=\sum_{\underline{i}\leq \underline{k}}\left\{
\begin{array}{c}
\underline{k}\\
\underline{i}
\end{array}
\right\}
(\underline{\partial}_{<\underline{k}-\underline{i}>}.a)\otimes \underline{\partial}_{<\underline{i}>}.
\end{equation*}
Here \{$\underline{\partial}_{<\underline{k}>}$\} is as in Proposition-Definition \ref{Prod1}.
\end{Prod}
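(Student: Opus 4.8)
The plan is to establish uniqueness first and then to construct the multiplication intrinsically. \emph{Uniqueness.} Suppose such a structure exists. Since the assertion is étale local, I fix a logarithmic system of coordinates $m_{1},\dots,m_{r}$ and use Proposition-Definition \ref{Prod1}, by which every local section of $\tilde{\cal D}_{X/S}^{(m)}$ is a finite sum $\sum_{\underline{k}} a_{\underline{k}}\otimes\underline{\partial}_{<\underline{k}>}$ with $a_{\underline{k}}$ local sections of ${\cal A}_{X}^{gp}$. Because $a\mapsto a\otimes1$ and $P\mapsto1\otimes P$ are required to be homomorphisms and $a\otimes P=(a\otimes1)(1\otimes P)$, any product factors as $(a\otimes\underline{\partial}_{<\underline{k}>})(b\otimes\underline{\partial}_{<\underline{l}>})=(a\otimes1)(1\otimes\underline{\partial}_{<\underline{k}>})(b\otimes1)(1\otimes\underline{\partial}_{<\underline{l}>})$. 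Moving the factor $1\otimes\underline{\partial}_{<\underline{k}>}$ past $b\otimes1$ by the commutation relation of the statement, then multiplying out using $e_{s}e_{t}=e_{s+t}$ in ${\cal A}_{X}^{gp}$ and Proposition \ref{proposition4}(2) in ${\cal D}_{X/S}^{(m)}$, returns the product to the normal form above. Hence the multiplication is completely determined, so at most one such structure exists locally; by this local uniqueness the structures defined over different coordinate patches agree on overlaps and glue, giving global uniqueness.

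\emph{Existence.} For the construction I avoid coordinates. Since each ${\cal P}_{X/S,(m)}^{n}$ is locally free of finite rank over ${\cal O}_{X}$ by Proposition-Definition \ref{Prod1}, the canonical map is an isomorphism of ${\cal I}_{X}^{gp}$-indexed ${\cal O}_{X}$-modules ${\cal A}_{X}^{gp}\otimes_{{\cal O}_{X}}{\cal D}_{X/S,n}^{(m)}\cong{\cal H}om_{{\cal O}_{X}}({\cal P}_{X/S,(m)}^{n},{\cal A}_{X}^{gp})$, where the internal ${\cal H}om$ is formed for the first-projection structure and is ${\cal I}_{X}^{gp}$-indexed in the sense of Definition \ref{defi3}(3) because its target is. Taking the union over $n$ identifies $\tilde{\cal D}_{X/S}^{(m)}$ with $\bigcup_{n}{\cal H}om_{{\cal O}_{X}}({\cal P}_{X/S,(m)}^{n},{\cal A}_{X}^{gp})$. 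On this object I define a product by mimicking that of ${\cal D}_{X/S}^{(m)}$: for $P\in{\cal H}om({\cal P}^{n},{\cal A}_{X}^{gp})$ and $Q\in{\cal H}om({\cal P}^{n'},{\cal A}_{X}^{gp})$ I form ${\cal P}^{n+n'}\xrightarrow{\delta_{m}^{n,n'}}{\cal P}^{n}\otimes_{{\cal O}_{X}}{\cal P}^{n'}$, apply $Q$ to the second factor, use the stratification $\varepsilon_{n}$ of ${\cal A}_{X}^{gp}$ (the isomorphism $p_{1}^{n*}{\cal A}_{X}^{gp}\xrightarrow{\cong}p_{0}^{n*}{\cal A}_{X}^{gp}$ defining the action) to transport the resulting ${\cal A}_{X}^{gp}$-coefficient across ${\cal P}^{n}$, apply $P$, and finally multiply in ${\cal A}_{X}^{gp}$. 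Because the multiplication of ${\cal A}_{X}^{gp}$ lies over ${\cal I}_{X}^{gp}\times{\cal I}_{X}^{gp}\to{\cal I}_{X}^{gp}$, this product respects the indexing and is ${\cal O}_{X}$-bilinear, and the unit is $1\otimes1$, i.e. the composite ${\cal P}^{n}\to{\cal P}^{0}={\cal O}_{X}\hookrightarrow{\cal A}_{X}^{gp}$.

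It then remains to verify the two homomorphism properties, the relation $a\otimes P=(a\otimes1)(1\otimes P)$, the commutation relation, and associativity. The first three are immediate from the construction: the restriction to $1\otimes{\cal D}_{X/S}^{(m)}$ reproduces the product of ${\cal D}_{X/S}^{(m)}$ and the restriction to ${\cal A}_{X}^{gp}\otimes1$ reproduces the multiplication of ${\cal A}_{X}^{gp}$, while the commutation relation is exactly the Leibniz-type formula for the action of ${\cal D}_{X/S}^{(m)}$ on ${\cal A}_{X}^{gp}$ recalled before the statement (Subsection 4.1 of \cite{M}), read off through $\varepsilon_{n}$ and Proposition \ref{proposition4}(3).

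The main work is associativity, and I expect it to be the only real obstacle. In the intrinsic description it follows from the coassociativity of the maps $\delta_{m}^{n,n'}$ — the same coherence making ${\cal D}_{X/S}^{(m)}$ associative — together with the cocycle condition on the stratification $\varepsilon_{n}$ (see Remark \ref{Remark8}), which ensures that transporting coefficients across ${\cal P}^{n}$ and ${\cal P}^{n'}$ in two stages agrees with transporting across ${\cal P}^{n+n'}$ in one. Equivalently, in coordinates associativity reduces to comparing $(1\otimes\underline{\partial}_{<\underline{k}>})\bigl((a\otimes1)(b\otimes1)\bigr)$ with $\bigl((1\otimes\underline{\partial}_{<\underline{k}>})(a\otimes1)\bigr)(b\otimes1)$, which after applying the commutation relation twice becomes precisely the assertion that $a\mapsto\underline{\partial}_{<\underline{k}>}.a$ satisfies the higher Leibniz rule and that ${\cal A}_{X}^{gp}$ is a ${\cal D}_{X/S}^{(m)}$-module algebra; this is Montagnon's computation. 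I anticipate that the genuine difficulty lies not in this conceptual point but in the bookkeeping forced by the ${\cal I}_{X}^{gp}$-indexing, namely keeping careful track of fibers and of the two ${\cal O}_{X}$-structures on ${\cal P}^{n}$ arising from $p_{0}$ and $p_{1}$. Finally, local uniqueness from the first part shows the intrinsic product is independent of coordinates, completing existence and uniqueness.
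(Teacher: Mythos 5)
Your proposal is correct in substance, but it is worth noting that the paper itself gives no argument here: its ``proof'' is a bare citation to Subsection 4.1 of Montagnon's thesis, so you are supplying a reconstruction rather than paralleling the text. Your reconstruction is the right one and is essentially what Montagnon does. The uniqueness argument via normal forms $\sum_{\underline{k}}a_{\underline{k}}\otimes\underline{\partial}_{<\underline{k}>}$ is sound (you should perhaps say explicitly that products $(1\otimes\underline{\partial}_{<\underline{k}>})(1\otimes\underline{\partial}_{<\underline{l}>})$ are forced because $P\mapsto 1\otimes P$ is a homomorphism, so Proposition \ref{proposition4}(2) pins them down), and the intrinsic existence construction --- identifying $\tilde{\cal D}_{X/S}^{(m)}$ with $\bigcup_{n}{\cal H}om_{{\cal O}_{X}}({\cal P}_{X/S,(m)}^{n},{\cal A}_{X}^{gp})$ and defining a smash-product-style multiplication via $\delta_{m}^{n,n'}$ and the stratification $\varepsilon_{n}$ of ${\cal A}_{X}^{gp}$, with associativity coming from coassociativity of $\delta$ plus the cocycle condition --- is the standard and correct mechanism. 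The one place where your ``immediate'' is doing real work is the commutation relation: reading it off through $\varepsilon_{n}$ requires the explicit $m$-PD multiplication of the basis elements $\underline{\eta}^{\{\underline{i}\}}$ in ${\cal P}_{X/S,(m)}^{n}$, which is where the level-$m$ coefficients of the statement (rather than ordinary binomial coefficients) arise; this is exactly the computation behind Proposition \ref{proposition4}(3), so the dependence is legitimate but should be flagged rather than absorbed into ``read off through $\varepsilon_{n}$.'' With that caveat, the argument closes the gap the paper leaves to the reference.
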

\begin{proof}
See Subsection 4.1 of \cite{M}.
\end{proof}
\begin{rem}\label{remark6}
In \cite{M}, Montagnon defines $\tilde {{\cal D}}_{X/S}^{(m)}$ by ${\cal A}_{X}^{gp}\otimes_{{\cal O}_{X, {\cal I}}}{\cal D}_{X/S, {\cal I}}^{(m)}$,
where ${\cal O}_{X, {\cal I}}$ $\left(\text{resp. } {\cal D}_{X/S, {\cal I}}^{(m)}\right)$ denotes an ${\cal I}$-indexed ${\cal O}_{X}$-module ${\cal O}_{X}\times {\cal I}$ $\left(\text{resp. } {\cal D}_{X/S}^{(m)}\times {\cal I}\right)$ with the second projection.
This definition is a mistake.
Actually ${\cal D}_{X/S}^{(m)}$ must be trivially indexed so that the ${\cal D}_{X/S}^{(m)}$-action on ${\cal A}^{gp}_{X}$ satisfies the Leibnitz type formula.
\end{rem}
Finally we recall the following formula needed later.
Let $m_{1}, \ldots , m_{r}\in {\cal M}_{X}^{gp}$ be a logarithmic system of coordinates. For $i\in \{1, 2, \ldots, r\}$ and a multi-index $\underline{i}$, we put $\theta_{i}:=e_{m_{i}}\text{ and }\underline{\theta}^{\underline{i}}:=\prod_{j}\theta_{j}^{i_{j}}$.

\begin{pro}\label{proposition1}
For $\underbar{k}\in {\mathbb{N}}^{r}$ and $1\leq i\leq r$,
\begin{equation*}
\underline{\partial}_{<\underline{k}>}.\underline{\theta}^{\underline{j}}=\underline{q}_{\underline{k}}! \binom{\underline{j}}{\underline{k}}\underline{\theta}^{\underline{j}}.
\end{equation*}
\end{pro}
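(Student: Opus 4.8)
The plan is to compute both sides of the claimed formula
$\underline{\partial}_{<\underline{k}>}.\underline{\theta}^{\underline{j}}=\underline{q}_{\underline{k}}!\binom{\underline{j}}{\underline{k}}\underline{\theta}^{\underline{j}}$
by reducing to the one-variable case and then invoking the explicit action of $\underline{\partial}_{<\underline{k}>}$ on the sections $e_{s}$ coming from the log $m$-PD stratification on ${\cal A}_{X}^{gp}$. Since $\underline{\partial}_{<\underline{k}>}=\prod_{i}\partial_{<k_{i}\underline{\varepsilon}_{i}>}$ commutes as operators (Proposition \ref{proposition4} (2)) and $\underline{\theta}^{\underline{j}}=\prod_{j}\theta_{j}^{i_{j}}$ factors as a product over distinct coordinates, and since both the binomial coefficient $\binom{\underline{j}}{\underline{k}}$ and the factor $\underline{q}_{\underline{k}}!$ are multiplicative over the index $i$, I would first establish the single-variable statement
\begin{equation*}
\partial_{<k\underline{\varepsilon}_{i}>}.\theta_{i}^{\,j}=q_{k}!\binom{j}{k}\theta_{i}^{\,j}
\end{equation*}
and then assemble the multi-index version by multiplicativity. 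The commutativity of the $\underline{\partial}_{<\underline{k}>}$ and the fact that $\partial_{<k\underline{\varepsilon}_{i}>}$ acts as a scalar on $\theta_{i}^{\,j}$ make this reduction clean.

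For the single-variable computation, I would unwind the definition of the ${\cal D}_{X/S}^{(m)}$-action on ${\cal A}_{X}^{gp}$ through the stratification isomorphism $\varepsilon$. The key input is the explicit expansion of $p_{1}^{*}(e_{s})$ in terms of $p_{0}^{*}(e_{s})$ and the generators $\eta_{i}^{\{k\}}$: from $p_{1}^{*}(m_{i})=p_{0}^{*}(m_{i})\cdot\lambda(\mu_{(m)}(m_{i}))$ with $\mu_{(m)}(m_{i})=1+\eta_{i}^{(m)}$, passing to the associated sections of ${\cal A}^{gp}$ one gets an expansion of the form $\varepsilon(e_{\theta_{i}})=\sum_{k}c_{k}\,e_{\theta_{i}}\otimes\eta_{i}^{\{k\}}$ for suitable coefficients $c_{k}$, and raising to the $j$-th power (using the binomial-type formula in Proposition \ref{Proposition9} (3) for the $\{\cdot\}$-powers, together with the multiplicativity $e_{s}e_{t}=e_{s+t}$) yields the coefficient of $\eta_{i}^{\{k\}}$. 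Pairing against the dual basis element $\underline{\partial}_{<k\underline{\varepsilon}_{i}>}$ then extracts exactly that coefficient, and the identity $q_{k}!\,x^{\{k\}}=x^{k}$ from Proposition \ref{Proposition9} (4) converts the $m$-PD bookkeeping into the ordinary binomial coefficient $\binom{j}{k}$ scaled by $q_{k}!$.

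I expect the main obstacle to be the careful combinatorics of the coefficient $c_{k}$ in the expansion of $\varepsilon(e_{\theta_{i}})$ and its interaction with the $\{\cdot\}$-power formula when taking the $j$-th power. The subtlety is that $\eta_{i}^{\{k\}}$ is an $m$-PD power, not an ordinary power, so one must track the factors $\left\langle\begin{array}{c}k\\k'\end{array}\right\rangle$ and $q_{k}!$ correctly rather than naively applying the classical binomial theorem; Proposition \ref{Proposition9} (3), (4), (5) are precisely the tools that control this. Once the coefficient of $\eta_{i}^{\{k\}}$ in $e_{\theta_{i}}^{\,j}$ is shown to equal $q_{k}!\binom{j}{k}$ times the leading term $e_{\theta_{i}}^{\,j}=\theta_{i}^{\,j}$, the proposition follows by applying $\underline{\partial}_{<\underline{k}>}$ coordinatewise and multiplying. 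Throughout I may assume the local basis structure of Proposition-Definition \ref{Prod1} and the Leibnitz-type compatibility recorded in Proposition-Definition \ref{proposition10}.
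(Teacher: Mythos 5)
The paper offers no argument of its own here --- the proof is just a citation to Lemme 4.2.3 of Montagnon's thesis --- so there is nothing to compare line by line; your sketch is the standard direct computation and it does go through. One clarification, though: the step you flag as the main obstacle is not actually an obstacle. The stratification sends $1\otimes e_{m_i}$ to $\mu_{(m)}(m_i)\,(e_{m_i}\otimes 1)=(1+\eta_i)(e_{m_i}\otimes 1)$, and since $\varepsilon$ is a homomorphism of indexed algebras and $e_{m_i}^{j}=e_{j m_i}$, the $j$-th power you need is the \emph{ordinary} power $(1+\eta_i)^{j}$, not an $m$-PD power. Hence the classical binomial theorem applies verbatim, and the only $m$-PD bookkeeping is the single conversion $\eta_i^{k}=q_{k}!\,\eta_i^{\{k\}}$ from Proposition \ref{Proposition9} (4); the addition and composition formulas (3) and (5) of Proposition \ref{Proposition9} are not needed at all. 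Pairing with the dual basis of Proposition-Definition \ref{Prod1} then extracts the coefficient $q_{k}!\binom{j}{k}$ directly, and the multi-index statement follows either by your factorization $\underline{\partial}_{<\underline{k}>}=\prod_i\underline{\partial}_{<k_i\underline{\varepsilon}_i>}$ (legitimate by Proposition \ref{proposition4} (2): for disjointly supported indices the product formula collapses to the single term $\underline{k}=\underline{k}'+\underline{k}''$ with coefficient $1$) or, more economically, by expanding $\mu_{(m)}\bigl(\textstyle\sum_l j_l m_l\bigr)=\prod_l(1+\eta_l)^{j_l}$ in one stroke, which avoids any appeal to a Leibniz rule.
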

\begin{proof}
See Lemme 4.2.3 of \cite{M}.
\end{proof}

\subsubsection{The ${\cal I}_{X}^{gp}$-indexed algebra $\tilde{{\cal D}}_{X^{(m)}/S}^{(0)}$}
We start with a general theory of log ${\cal D}$-modules of higher level.
For details in more general settings, see Chapitre 3 of \cite{M}.
Let $X\to S$ be a log smooth morphism of fine log schemes defined over $\mathbb{Z}\slash p\mathbb{Z}$.
We consider the following commutative diagram:

\[\xymatrix{
X \ar[d] \ar[r] & P_{X/S, (0)} \ar@<1ex>[r]^{q_{0}}\ar[r]_{q_{1}} & X\ar[d]\\
X^{(m)} \ar[r] & P_{X^{(m)}/S, (0)} \ar@<1ex>[r]^{p_{0}} \ar[r]_{p_{1}} & X^{(m)}. 
}
\]
Here the vertical two arrows are the $m$-th relative Frobenius defined in Subsection 3.2 and 
$q_{0}$ and $q_{1}$ (resp. $p_{0}$ and $p_{1}$) are the first and second projections.
Then, by universal property of log $0$-PD envelopes,
there exists an unique morphism of $0$-PD fine log schemes $F_{\triangle}: P_{X/S,(0)}\to P_{X^{(m)}/S,(0)}$ which fits into the above diagram.

	\begin{pro}\label{proposition11}
	Let $X\to S$ be a log smooth morphism of fine log schemes. 	
	
	{\rm (1)} There exists $\Phi: P_{X/S,(m)}\to P_{X^{(m)}/S,(0)}$ such that $F_{\triangle}$ uniquely factors as $P_{X/S, (0)}\to P_{X/S, (m)}\xrightarrow{\Phi}P_{X^{(m)}/S}$,
	where the first map is the natural homomorphism.
	
	{\rm (2)} Assume that we are given a logarithmic system of coordinates $\{m_{i}\}$ of $X\to S$.
	Let $\left\{\underline{\eta}^{\{\underline{k}\}(m)}\right\}$ (resp. $\left\{\underline{\eta'}^{\{\underline{k}\}(0)}\right\}$) denote the basis of ${\cal P}_{X/S, (m)}$ (resp. ${\cal P}_{X^{(m)}/S, (0)}$) associated to the basis $\{d\log m_{i}\}$ (resp. $\{d\log		\pi^{*}m_{i}\}$),
	where $\pi$ denotes the natural projection $X^{(m)}\to X$ explained in the beginning of Subsection 3.2.
	Then $\Phi^{*}: {\cal P}_{X^{(m)}/S, (0)} \to {\cal P}_{X/S, (m)}$ sends $\underline{\eta'}^{\{\underline{k}\}(0)}$ to $\underline{\eta}^{\{p^{m}\underline{k}\}(m)}$.
	\end{pro}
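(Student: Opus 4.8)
The plan is to build $\Phi$ directly from the universal property of the $0$-PD envelope $P_{X^{(m)}/S,(0)}$, exploiting the fact that the $m$-PD structure on $P_{X/S,(m)}$ already contains an honest divided power ideal. Recall that the $m$-PD structure on $\bar{I}$ is given by a genuine PD-ideal $(J,\gamma)$ with $\bar{I}^{(p^{m})}+p\bar{I}\subseteq J\subseteq \bar{I}$, where $\gamma$ is a \emph{level $0$} divided power structure compatible with the one on $p\mathbb{Z}_{(p)}$. I would first form the composite $h\colon P_{X/S,(m)}\to X\times_{S}X\xrightarrow{F\times F}X^{(m)}\times_{S}X^{(m)}$, where $F\colon X\to X^{(m)}$ is the $m$-th relative Frobenius and the first arrow is the structural morphism of the envelope. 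Since $F$ raises log coordinates to their $p^{m}$-th powers ($F^{*}\pi^{*}m_{i}=m_{i}^{p^{m}}$), the pullback along $h$ of a generator of the diagonal ideal of $X^{(m)}\times_{S}X^{(m)}$ is $(1+\eta_{i})^{p^{m}}-1=\sum_{j=1}^{p^{m}}\binom{p^{m}}{j}\eta_{i}^{j}$, whose top term $\eta_{i}^{p^{m}}$ lies in $\bar{I}^{(p^{m})}$ and whose lower terms lie in $p\bar{I}$ (as $\binom{p^{m}}{j}\equiv 0$ for $0<j<p^{m}$), so it lies in $\bar{I}^{(p^{m})}+p\bar{I}\subseteq J$. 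Thus $(P_{X/S,(m)},J,\gamma)$ together with $h$ is a $0$-PD log scheme over $X^{(m)}\times_{S}X^{(m)}$ whose structural map sends the diagonal ideal into the PD-ideal $J$, and the universal property of the $0$-PD envelope yields a unique $0$-PD morphism $\Phi\colon P_{X/S,(m)}\to P_{X^{(m)}/S,(0)}$ over $X^{(m)}\times_{S}X^{(m)}$.

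For the factorization, both $\Phi\circ\nu$ and $F_{\triangle}$ (where $\nu\colon P_{X/S,(0)}\to P_{X/S,(m)}$ is the natural homomorphism) are $0$-PD morphisms $P_{X/S,(0)}\to P_{X^{(m)}/S,(0)}$ lying over the same map $F\times F$ to $X^{(m)}\times_{S}X^{(m)}$; by the uniqueness in the universal property of $P_{X^{(m)}/S,(0)}$ they coincide, which also gives the asserted uniqueness of $\Phi$. I expect this recognition of $(J,\gamma)$ as the level-$0$ PD ideal through which $F\times F$ factors to be the main obstacle: it is precisely what allows a level-$m$ envelope to map to a level-$0$ envelope, and once it is in place everything else is bookkeeping with universal properties.

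For part (2) I would compute $\Phi^{*}$ on generators and then propagate through divided powers. Using that $\mu_{(m)}$, characterized as in (\ref{exact1}) by $p_{1}^{*}(a)=p_{0}^{*}(a)\cdot\lambda(\mu_{(m)}(a))$, is multiplicative and functorial, and that $\Phi$ lies over $F\times F$ with $F^{*}\pi^{*}m_{i}=m_{i}^{p^{m}}$, one gets $\Phi^{*}\mu_{(0)}(\pi^{*}m_{i})=\mu_{(m)}(m_{i})^{p^{m}}=(1+\eta_{i})^{p^{m}}$, hence $\Phi^{*}(\eta_{i}')=(1+\eta_{i})^{p^{m}}-1$, which equals $\eta_{i}^{p^{m}}$ since we work over $\mathbb{Z}/p\mathbb{Z}$, and lies in $J$. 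Now $(\eta_{i}')^{\{k\}(0)}=\gamma_{k}(\eta_{i}')$ is a level-$0$ divided power, and since $\Phi$ is a $0$-PD morphism, $\Phi^{*}$ commutes with $\gamma$, giving $\Phi^{*}\bigl((\eta_{i}')^{\{k\}(0)}\bigr)=\gamma_{k}(\eta_{i}^{p^{m}})$. Writing $p^{m}k=p^{m}\cdot k+0$ in the definition of the operations $f^{\{\,\cdot\,\}(m)}$ shows $\eta_{i}^{\{p^{m}k\}(m)}=\gamma_{k}(\eta_{i}^{p^{m}})$, so $\Phi^{*}\bigl((\eta_{i}')^{\{k\}(0)}\bigr)=\eta_{i}^{\{p^{m}k\}(m)}$. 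Multiplying over $i$ and using that $\Phi^{*}$ is a ring homomorphism yields $\Phi^{*}(\underline{\eta'}^{\{\underline{k}\}(0)})=\underline{\eta}^{\{p^{m}\underline{k}\}(m)}$, as claimed. The only delicate point here is the compatibility of $\Phi^{*}$ with divided powers, which is exactly the content of $\Phi$ being a $0$-PD morphism secured in part (1).
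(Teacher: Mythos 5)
Your proof is correct. The paper itself gives no argument for this proposition --- it simply cites Propositions 3.3.1 and 3.4.1(i) of \cite{M} --- but the cited proofs run exactly along your lines: one observes that the genuine PD-part $J$ of the $m$-PD ideal $(\bar I,J,\gamma)$ of $P_{X/S,(m)}$ contains the pullback under $F\times F$ of the (exactified) diagonal ideal of $X^{(m)}\times_{S}X^{(m)}$, because $(1+\eta_{i})^{p^{m}}-1\in \bar I^{(p^{m})}+p\bar I\subseteq J$, invokes the universal property of the log $0$-PD envelope to get $\Phi$ and the factorization of $F_{\triangle}$, and then computes $\Phi^{*}$ on divided powers via $f^{\{p^{m}k\}(m)}=\gamma_{k}(f^{p^{m}})$. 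The only points worth making explicit are that the identity $\Phi^{*}(\eta_{i}')=\eta_{i}^{p^{m}}$ in part (2) uses that Subsection 4.1.2 works over ${\mathbb Z}/p{\mathbb Z}$, and that verifying the ideal containment on the local generators $\eta_{i}'$ suffices since containment of ideals is an \'etale-local condition.
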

	\begin{proof}
	(1) see Proposition 3.3.1 of \cite{M}. (2) See (i) of Proposition 3.4.1 of \cite{M}.
	\end{proof}
Let $\cal E$ be a left ${\cal D}_{X^{(m)}/S}^{(0)}$-module and $\{{\varepsilon}_{n}\}$ the log $0$-stratification on $\cal E$ via the equivalence in Remark \ref{Remark8}.
By endowing $F^{*}{\cal E}$ with a left ${\cal D}_{X/S}^{(m)}$-module structure by pulling back $\{{\varepsilon}_{n}\}$ via $\Phi$, we have a functor
\begin{equation}
F^{*}:
	\left(
		\begin{array}{c}
		\text{The category of} \\
		\text{left ${\cal D}_{X^{(m)}/S}^{(0)}$-modules on $X^{(m)}$}
		\end{array}
	\right)
	\to 
	\left(
		\begin{array}{c}
		\text{The category of} \\
		\text{left ${\cal D}_{X/S}^{(m)}$-modules on $X$}
		\end{array}
	\right).
\end{equation}
\begin{rem}\label{Remark20}
Let $\cal E$ be a left ${\cal D}_{X^{(m)}/S}^{(0)}$-module.
Let $\left\{\underline{\eta}^{\{\underline{k}\}(m)}\right\}$ and $\left\{\underline{\eta'}^{\{\underline{k}\}(0)}\right\}$ be as in Proposition \ref{proposition11} (2).
Let $\left\{\underline{\partial}_{<\underline{k}>}\right\}$ (resp. $\left\{\underline{\partial'}_{<\underline{k}>}\right\}$) denote the dual of $\left\{\underline{\eta}^{\{\underline{k}\}(0)}\right\}$ (resp. $\left\{\underline{\eta}'^{\{\underline{k}\}(0)}\right\}$).
Then the ${\cal D}_{X/S}^{(m)}$-action on $F^{*}{\cal E}$ is characterized by the following formula:
\begin{equation}\label{formula3}
\underline{\partial}_{<\underline{k}>(m)}.(1\otimes f)=\biggl\{
	\begin{array}{c}
	1\otimes \underline{\partial}'_{<p^{-m}\underline{k}>(0)}.f \text{\,\,\,\,\,\,\,\,\,\,\,\, if $p^{m}$ devides $\underline{k}$}\\
	0 \text{ \,\,\,\,\,\,\,\,\,\,\,\,\,\,\,\,\,\,\,\,\,\,\,\,\,\,\,\,\,\,\,\,\,\,\,\,\, otherwise}.	
	\end{array}
\end{equation}
For the proof of this formula, see Proposition 3.4.1 of \cite{M}.
\end{rem}
In particular, we can consider $F^{*}{\cal D}_{X^{(m)}/S}^{(0)}$ as a left ${\cal D}^{(m)}_{X/S}$-module.
Now, we define the subalgebra ${\cal B}_{X/S}^{(m)}$ of ${\cal A}_{X}^{gp}$.
\begin{Prod}
Let ${\cal B}_{X/S}^{(m)}$ be the ${\cal I}_{X}^{gp}\cong {\cal H}om_{*}\left(*, {\cal I}_{X}^{gp}\right)$-indexed sheaf of abelian groups ${\cal B}_{X/S}^{(m)}= {\cal H}om_{{\cal D}^{(m)}_{X/S}}\left(F^{*}{\cal D}_{X^{(m)}/S}^{(0)}, {\cal A}_{X}^{gp}\right)$.
We give ${\cal O}_{X^{(m)}}$-action on ${\cal B}_{X/S}^{(m)}$ by right multiplication of ${\cal O}_{X^{(m)}}$ on $F^{*}{\cal D}_{X^{(m)}/S}^{(0)}$ and define the morphism $\psi: {\cal B}_{X}^{(m)}\to {\cal A}_{X}^{gp}$ by $g\mapsto g(1\otimes 1)$. Then $\psi$ is injective and ${\cal B}_{X/S}^{(m)}$ forms an ${\cal I}_{X}^{gp}$-indexed sub ${\cal O}_{X^{(m)}}$-algebra of ${\cal A}_{X}^{gp}$ induced from the multiplication on ${\cal A}_{X}^{gp}$.
\end{Prod}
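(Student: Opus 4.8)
The plan is to verify the three asserted claims in turn: that $\psi$ is injective, that $\mathcal{B}_{X/S}^{(m)}$ is a sub-$\mathcal{O}_{X^{(m)}}$-algebra of $\mathcal{A}_X^{gp}$, and that the algebra structure it receives is the one induced from the multiplication on $\mathcal{A}_X^{gp}$. Throughout I would work \'etale-locally, so that a logarithmic system of coordinates $m_1,\dots,m_r$ is available, and use the explicit description of the $\mathcal{D}_{X/S}^{(m)}$-action on $F^*\mathcal{D}_{X^{(m)}/S}^{(0)}$ provided by formula $(\ref{formula3})$ in Remark \ref{Remark20}, together with the Leibnitz-type formula of Proposition-Definition \ref{proposition10} and the eigenvalue computation $\underline{\partial}_{<\underline{k}>}.\underline{\theta}^{\underline{j}} = \underline{q}_{\underline{k}}!\binom{\underline{j}}{\underline{k}}\underline{\theta}^{\underline{j}}$ of Proposition \ref{proposition1}.

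First I would analyze what it means for an $\mathcal{O}_X$-linear map $g\colon F^*\mathcal{D}_{X^{(m)}/S}^{(0)}\to \mathcal{A}_X^{gp}$ to be $\mathcal{D}_{X/S}^{(m)}$-linear. Since $F^*\mathcal{D}_{X^{(m)}/S}^{(0)}$ is generated as a $\mathcal{D}_{X/S}^{(m)}$-module by the element $1\otimes 1$ (the operators $\underline{\partial}'_{<\underline{k}>(0)}$ pull back, via formula $(\ref{formula3})$, to the $\underline{\partial}_{<p^m\underline{k}>(m)}$ acting on $1\otimes 1$), such a $g$ is determined by the single value $g(1\otimes 1)=\psi(g)$. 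This immediately gives injectivity of $\psi$: if $\psi(g)=0$ then $g$ kills a generator, hence $g=0$. The content is therefore to characterize the image of $\psi$, i.e.\ to determine precisely which sections $b=\psi(g)\in\mathcal{A}_X^{gp}$ arise. The compatibility constraint is that for every $\underline{k}$, the value $g(\underline{\partial}_{<p^m\underline{k}>}.(1\otimes 1)) = \underline{\partial}_{<p^m\underline{k}>}.b$ must be consistent with the relations among the generators in $F^*\mathcal{D}_{X^{(m)}/S}^{(0)}$; concretely, $b$ must lie in the joint kernel of the operators $\underline{\partial}_{<\underline{k}>(m)}$ for all $\underline{k}$ not divisible by $p^m$, because those operators annihilate $1\otimes 1$ by $(\ref{formula3})$.

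The main computation, and the step I expect to be the crux, is to translate this kernel condition into an explicit description of $\mathcal{B}_{X/S}^{(m)}$ inside $\mathcal{A}_X^{gp}$ and to check that the resulting subsheaf is closed under multiplication and stable under $\mathcal{O}_{X^{(m)}}$. Expanding a local section of $\mathcal{A}_X^{gp}$ in the basis $\underline{\theta}^{\underline{j}}$ and applying Proposition \ref{proposition1}, the condition that $\underline{\partial}_{<\underline{k}>}$ annihilates a monomial $\underline{\theta}^{\underline{j}}$ whenever $p^m\nmid\underline{k}$ forces, through the binomial coefficients $\binom{\underline{j}}{\underline{k}}$ and the factor $\underline{q}_{\underline{k}}!$, a congruence restriction on the exponents $\underline{j}$ modulo $p^m$; this is exactly the mechanism by which $\mathcal{B}_{X/S}^{(m)}$ becomes the "$p^m$-th power" indexed subalgebra and acquires its $\mathcal{O}_{X^{(m)}}$-structure via $F$. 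Once this local description is in hand, closure under multiplication follows from $e_s e_t = e_{s+t}$ and additivity of the exponent condition, while compatibility with the $\mathcal{O}_{X^{(m)}}$-action is a direct comparison between right multiplication on $F^*\mathcal{D}_{X^{(m)}/S}^{(0)}$ and scalar multiplication in $\mathcal{A}_X^{gp}$. Finally, since all structures are defined \'etale-locally and the identifications are compatible with the transition isomorphisms $\varepsilon_n$ used to define the $\mathcal{D}_{X/S}^{(m)}$-action, the local constructions glue, and for the full statement I would cite the corresponding result in Montagnon \cite{M}.
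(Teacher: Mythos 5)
The paper does not actually prove this statement: its ``proof'' is the citation to Lemme 4.2.1 of \cite{M}, so there is no in-paper argument to compare yours against step by step. Your overall plan is the right one, and your injectivity argument is complete and correct: formula (\ref{formula3}) gives $\underline{\partial}_{<p^{m}\underline{j}>(m)}.(1\otimes 1)=1\otimes\underline{\partial}'_{<\underline{j}>(0)}$, so $1\otimes 1$ generates $F^{*}{\cal D}^{(0)}_{X^{(m)}/S}$ as a ${\cal D}^{(m)}_{X/S}$-module and any ${\cal D}^{(m)}_{X/S}$-linear $g$ is determined by $\psi(g)$.

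Two points in the second half need attention. First, you correctly identify that the kernel condition ($\underline{\partial}_{<\underline{k}>}.b=0$ whenever $p^{m}\nmid\underline{k}$) is necessary, but the sufficiency --- that such a $b$ genuinely extends to a ${\cal D}^{(m)}_{X/S}$-linear map via $c\otimes\underline{\partial}'_{<\underline{j}>}\mapsto c\,\underline{\partial}_{<p^{m}\underline{j}>}.b$ --- is the real content of Montagnon's lemma and is only asserted in your sketch; it requires checking that the level-$0$ composition law for the $\underline{\partial}'_{<\underline{j}>}$ matches the level-$m$ one for the $\underline{\partial}_{<p^{m}\underline{j}>}$ (Proposition \ref{proposition4}(2)) and is compatible with the Leibnitz rule. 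Second, and more substantively, your proposed ``main computation'' --- expanding a section of ${\cal A}_{X}^{gp}$ in the monomials $\underline{\theta}^{\underline{j}}$ and reading off a congruence on the exponents --- is circular as stated: by Proposition \ref{proposition2} the $\underline{\theta}^{\underline{j}}$ form a basis of ${\cal A}_{X}^{gp}$ only over ${\cal B}^{(m+1)}_{X/S}$, i.e.\ over the very kind of object being constructed; they are not an ${\cal O}_{X}$-basis. The circularity is avoidable, because for the claims actually made in the Proposition-Definition you do not need the monomial description at all: closure of the kernel $\{\,b:\underline{\partial}_{<p^{s}\underline{\varepsilon}_{i}>}.b=0,\ 0\le s\le m-1\,\}$ under multiplication follows directly from the Leibnitz formula of Proposition-Definition \ref{proposition10}, once one knows (from the proof of Proposition 2.3.1 of \cite{M}, as invoked in the paper's proof of Proposition \ref{proposition14}) that every $\underline{\partial}_{<j\underline{\varepsilon}_{i}>}$ with $0<j<p^{m}$ lies in the subalgebra generated by those operators. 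The ${\cal O}_{X^{(m)}}$-compatibility is, as you say, immediate from $\psi(g\cdot c)=F^{\sharp}(c)\,\psi(g)$.
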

\begin{proof}
See Lemme 4.2.1 of \cite{M}.
\end{proof}
\begin{rem}
In \cite{M}, Montagnon defines ${\cal B}_{X/S}^{(m)}$ by ${\cal B}_{X/S}^{(m)}= {\cal H}om_{{\cal D}^{(m)}_{X/S, {\cal I}}}\left(F^{*}{\cal D}_{X^{(m)}/S, {\cal I}}^{(0)}, {\cal A}_{X}^{gp}\right)$ but this is a mistake.
See also Remark \ref{remark6}.
\end{rem}
\begin{Prod}
Let $X\to S$ be a log smooth morphism of fine log schemes.
Let $\tilde {{\cal D}}_{X^{(m)}/S}^{(0)}$ denote the ${\cal I}_{X}^{gp}$-indexed ${\cal O}_{X^{(m)}}$-module ${\cal B}_{X/S}^{(m)}\otimes_{{\cal O}_{X^{(m)}}}{\cal D}_{X^{(m)}/S}^{(0)}$.
Then there exists a unique ${\cal I}_{X}^{gp}$-indexed ${\cal O}_{X^{(m)}}$-algebra structure on $\tilde {{\cal D}}_{X^{(m)}/S}^{(0)}$ such that the maps ${\cal B}_{X/S}^{(m)}
\to \tilde{{\cal D}}_{X^{(m)}/S}^{(0)};\,\,a\mapsto a\otimes1$ and ${\cal D}_{X^{m}/S}^{(0)}\to \tilde{{\cal D}}_{X^{(m)}/S}^{(0)};\,\,P\mapsto 1\otimes P$ are homomorphisms and that for any $a\in {\cal B}_{X/S}^{(m)}$, $P\in {\cal D}_{X^{(m)}/S}^{(0)}$ and $\underbar{k}\in {\mathbb{N}}^{r}$, we have the relations $a\otimes P=(a\otimes 1)(1\otimes P)$ and 
\begin{equation*}
(1\otimes \underline{\partial'}_{<\underline{k}>})(a\otimes 1)=\sum_{\underline{i}\leq \underline{k}}
\binom{\underline{k}}{\underline{i}}
(\underline{\partial'}_{<\underline{k}-\underline{i}>}.a)\otimes \underline{\partial'}_{<\underline{i}>}.
\end{equation*}
Here the notation $\left\{\underline{\partial}'_{<\underline{k}>}\right\}$ is as in Remark \ref{Remark20}.
\end{Prod}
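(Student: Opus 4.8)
The plan is to treat this statement as the level-$0$ incarnation, for the morphism $X^{(m)}\to S$, of Proposition-Definition \ref{proposition10}. Concretely, once ${\cal B}_{X/S}^{(m)}$ is equipped with a left ${\cal D}_{X^{(m)}/S}^{(0)}$-module structure extending its ${\cal O}_{X^{(m)}}$-module structure and compatible with its multiplication through the level-$0$ Leibniz formula, the ${\cal I}_{X}^{gp}$-indexed ${\cal O}_{X^{(m)}}$-algebra structure on $\tilde{\cal D}_{X^{(m)}/S}^{(0)}={\cal B}_{X/S}^{(m)}\otimes_{{\cal O}_{X^{(m)}}}{\cal D}_{X^{(m)}/S}^{(0)}$ is produced by the very same crossed-product recipe used there, and its uniqueness, well-definedness, unitality and associativity are verified in the same way. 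As in Remark \ref{remark6}, I regard ${\cal D}_{X^{(m)}/S}^{(0)}$ as a trivially ($*$-)indexed algebra acting on the ${\cal I}_{X}^{gp}$-indexed object ${\cal B}_{X/S}^{(m)}$; this is what makes the Leibniz formula below meaningful.

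The first step is to exhibit the ${\cal D}_{X^{(m)}/S}^{(0)}$-action on ${\cal B}_{X/S}^{(m)}$. Besides its left ${\cal D}_{X/S}^{(m)}$-module structure coming from the stratification pulled back along the map $\Phi$ of Proposition \ref{proposition11}, the sheaf $F^{*}{\cal D}_{X^{(m)}/S}^{(0)}$ carries a commuting right ${\cal D}_{X^{(m)}/S}^{(0)}$-action: for $Q\in{\cal D}_{X^{(m)}/S}^{(0)}$ the right multiplication $r_{Q}$ is a morphism of left ${\cal D}_{X^{(m)}/S}^{(0)}$-modules, so $F^{*}(r_{Q})$ is a morphism of left ${\cal D}_{X/S}^{(m)}$-modules, and $Q\mapsto F^{*}(r_{Q})$ is the sought action. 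Hence $F^{*}{\cal D}_{X^{(m)}/S}^{(0)}$ is a $({\cal D}_{X/S}^{(m)},{\cal D}_{X^{(m)}/S}^{(0)})$-bimodule, and ${\cal B}_{X/S}^{(m)}={\cal H}om_{{\cal D}_{X/S}^{(m)}}(F^{*}{\cal D}_{X^{(m)}/S}^{(0)},{\cal A}_{X}^{gp})$ inherits a left ${\cal D}_{X^{(m)}/S}^{(0)}$-module structure by $(\underline{\partial'}_{<\underline{k}>}.g)(\xi):=g(\xi\,\underline{\partial'}_{<\underline{k}>})$. Restricting to the degree-$0$ part recovers exactly the ${\cal O}_{X^{(m)}}$-action defined by right multiplication on $F^{*}{\cal D}_{X^{(m)}/S}^{(0)}$, so this action extends the ${\cal O}_{X^{(m)}}$-structure.

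The crux, which is where I expect the main difficulty to lie, is the level-$0$ Leibniz formula
\begin{equation*}
\underline{\partial'}_{<\underline{k}>}.(ab)=\sum_{\underline{i}\leq\underline{k}}\binom{\underline{k}}{\underline{i}}(\underline{\partial'}_{<\underline{i}>}.a)(\underline{\partial'}_{<\underline{k}-\underline{i}>}.b),
\end{equation*}
in which ordinary binomials appear because at level $0$ the structure constants of Subsection 1.3 reduce to binomials ($q_{k}=k$). The key tool is the injective algebra map $\psi:{\cal B}_{X/S}^{(m)}\hookrightarrow{\cal A}_{X}^{gp}$, $g\mapsto g(1\otimes 1)$, from the construction of ${\cal B}_{X/S}^{(m)}$. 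For $g\in{\cal B}_{X/S}^{(m)}$ and $\underline{j}\in{\mathbb N}^{r}$, formula (\ref{formula3}) of Remark \ref{Remark20} gives $1\otimes\underline{\partial'}_{<\underline{j}>}=\underline{\partial}_{<p^{m}\underline{j}>(m)}.(1\otimes 1)$, whence ${\cal D}_{X/S}^{(m)}$-linearity of $g$ yields
\begin{equation*}
\psi(\underline{\partial'}_{<\underline{j}>}.g)=\underline{\partial}_{<p^{m}\underline{j}>(m)}.\psi(g),\qquad \underline{\partial}_{<\underline{i}>(m)}.\psi(g)=0\ \ (p^{m}\nmid\underline{i}),
\end{equation*}
the second identity again by (\ref{formula3}). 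Applying $\psi$ to the asserted formula and expanding $\underline{\partial}_{<p^{m}\underline{k}>(m)}.(\psi(a)\psi(b))$ by the level-$m$ Leibniz formula for the ${\cal D}_{X/S}^{(m)}$-action on ${\cal A}_{X}^{gp}$ (recalled before Proposition-Definition \ref{proposition10}), the vanishing above kills every term whose index is not divisible by $p^{m}$, while the surviving level-$m$ structure constants reduce to $\binom{\underline{k}}{\underline{i}}$ since $q_{p^{m}j}=j$; as $\psi$ is an injective homomorphism, the formula follows.

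Granting that ${\cal B}_{X/S}^{(m)}$ is a ${\cal D}_{X^{(m)}/S}^{(0)}$-algebra in this sense, I would define the multiplication on $\tilde{\cal D}_{X^{(m)}/S}^{(0)}$ on generators by
\begin{equation*}
(a\otimes\underline{\partial'}_{<\underline{k}>})(b\otimes Q):=\sum_{\underline{i}\leq\underline{k}}\binom{\underline{k}}{\underline{i}}\,a(\underline{\partial'}_{<\underline{k}-\underline{i}>}.b)\otimes\underline{\partial'}_{<\underline{i}>}Q,
\end{equation*}
which is exactly the formula forced by the two homomorphism requirements together with the stated commutation relation, so uniqueness is immediate. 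It then remains to check that this is ${\cal O}_{X^{(m)}}$-balanced, respects the ${\cal I}_{X}^{gp}$-grading, is unital, and is associative. Given the Leibniz formula, associativity reduces to the module-algebra coherence between iterated application of that formula and the level-$0$ composition law $\underline{\partial'}_{<\underline{i}>}\cdot\underline{\partial'}_{<\underline{l}>}$ of Proposition \ref{proposition4}(2); this is precisely the computation carried out for $\tilde{\cal D}_{X/S}^{(m)}$ in Subsection 4.1 of \cite{M}, which applies verbatim once the level-$m$ data on $X$ there is replaced by the level-$0$ data on $X^{(m)}$ produced above, so I would invoke it rather than repeat it.
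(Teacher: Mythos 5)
Your proposal is correct, and it actually does more than the paper, whose entire proof of this statement is the citation ``See Subsection 4.2.1 of \cite{M}.'' Your self-contained derivation is sound at each step: the left ${\cal D}_{X^{(m)}/S}^{(0)}$-action on ${\cal B}_{X/S}^{(m)}$ via right multiplication on the bimodule $F^{*}{\cal D}_{X^{(m)}/S}^{(0)}$ is exactly the structure the paper itself uses later for the Frobenius descent functor ${\mathbb F}_{\cal J}$; the identity $\psi(\underline{\partial}'_{<\underline{j}>}.g)=\underline{\partial}_{<p^{m}\underline{j}>(m)}.\psi(g)$ together with the vanishing $\underline{\partial}_{<\underline{i}>(m)}.\psi(g)=0$ for $p^{m}\nmid\underline{i}$ follows correctly from (\ref{formula3}) and the ${\cal D}_{X/S}^{(m)}$-linearity of $g$ (this is the content of Proposition \ref{proposition3}); and the reduction of the level-$m$ structure constants $\bigl\{\begin{smallmatrix}p^{m}\underline{k}\\ p^{m}\underline{i}\end{smallmatrix}\bigr\}$ to $\binom{\underline{k}}{\underline{i}}$ via $q_{p^{m}j}=j$ is right. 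The only point to flag is that the multiplicativity and injectivity of $\psi$, which your Leibniz argument leans on, are not proved by you but imported from the preceding Proposition-Definition (itself cited from Lemme 4.2.1 of \cite{M}); likewise the associativity of the crossed product is deferred to Montagnon's computation. Both deferrals are legitimate given that the paper defers the entire statement, so the argument stands as a faithful reconstruction of the intended proof rather than a divergent one.
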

\begin{proof}
See Subsection 4.2.1 of \cite{M}.
\end{proof}
\begin{rem}
In general, ${\cal B}_{X/S}^{(m)}$ does not coincide with ${\cal A}^{gp}_{X^{(m)}}$.
For counter-example, see 1.8 of \cite{L}.
\end{rem}\label{Remark01}
Finally, we give the local description of ${\cal B}^{(m)}_{X/S}$.
Let $\left\{ \underline{\partial}_{<\underline{k}>(m)}\right\}$ be as in Remark \ref{Remark20}.
	\begin{pro}\label{proposition3}
	Locally, ${\cal B}_{X/S}^{(m)}$ can be written as follows:
		\begin{equation*}
		\left\{\,a\in {\cal A}_{X}^{gp} \,\bigl|\, \underline{\partial}_{<p^{s}\underline{\varepsilon}_{i}>(m)}a=0 \text{ for all }0\leq s\leq m-1, 1\leq i\leq r\,\right\}.
		\end{equation*}
	\end{pro}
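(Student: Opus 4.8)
The plan is to make the injection $\psi\colon {\cal B}^{(m)}_{X/S}\to {\cal A}^{gp}_{X}$, $g\mapsto g(1\otimes 1)$, explicit by computing its image. The first step is to observe that, locally, $F^{*}{\cal D}^{(0)}_{X^{(m)}/S}$ is generated as a left ${\cal D}^{(m)}_{X/S}$-module by $1\otimes 1$, and to identify the annihilator of this generator. Indeed, as an ${\cal O}_{X}$-module $F^{*}{\cal D}^{(0)}_{X^{(m)}/S}$ is free on $\{1\otimes \underline{\partial}'_{<\underline{j}>(0)}\}_{\underline{j}}$, while ${\cal D}^{(m)}_{X/S}$ is free on $\{\underline{\partial}_{<\underline{k}>(m)}\}_{\underline{k}}$ by Proposition-Definition \ref{Prod1}. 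By the formula (\ref{formula3}) of Remark \ref{Remark20} (applied to $f=1$), the structural map $\rho\colon P\mapsto P(1\otimes 1)$ sends $\underline{\partial}_{<\underline{k}>(m)}$ to $1\otimes \underline{\partial}'_{<p^{-m}\underline{k}>(0)}$ when $p^{m}$ divides $\underline{k}$ and to $0$ otherwise. Thus $\rho$ carries the basis $\{\underline{\partial}_{<\underline{k}>(m)}\mid p^{m}\mid\underline{k}\}$ bijectively onto the target basis and kills every $\underline{\partial}_{<\underline{k}>(m)}$ with $p^{m}\nmid\underline{k}$; it is therefore surjective, with kernel the left ideal ${\cal L}=\bigoplus_{p^{m}\nmid\underline{k}}{\cal O}_{X}\,\underline{\partial}_{<\underline{k}>(m)}$.

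Consequently $F^{*}{\cal D}^{(0)}_{X^{(m)}/S}\cong {\cal D}^{(m)}_{X/S}/{\cal L}$, and since $\psi$ is injective it identifies ${\cal B}^{(m)}_{X/S}$ with $\{a\in {\cal A}^{gp}_{X}\mid {\cal L}a=0\}$. Because the ${\cal D}^{(m)}_{X/S}$-action on ${\cal A}^{gp}_{X}$ is ${\cal O}_{X}$-linear, the condition ${\cal L}a=0$ is equivalent to $\underline{\partial}_{<\underline{k}>(m)}a=0$ for every $\underline{k}$ with $p^{m}\nmid\underline{k}$. It then remains to prove that this is equivalent to the a priori weaker condition in the statement, namely that $\underline{\partial}_{<p^{s}\underline{\varepsilon}_{i}>(m)}a=0$ for all $0\leq s\leq m-1$ and $1\leq i\leq r$ already forces $\underline{\partial}_{<\underline{k}>(m)}a=0$ whenever $p^{m}\nmid\underline{k}$ (the reverse implication being trivial, as $p^{m}\nmid p^{s}\underline{\varepsilon}_{i}$ for $s<m$).

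I would establish this by induction on $|\underline{k}|$. Given $\underline{k}$ with $p^{m}\nmid\underline{k}$, choose $i$ with $p^{m}\nmid k_{i}$ and set $s_{0}=v(k_{i}\bmod p^{m})$, where $v$ is the $p$-adic valuation; then $0\leq s_{0}\leq m-1$ and $p^{s_{0}}\leq k_{i}$. Since $\underline{\partial}_{<p^{s_{0}}\underline{\varepsilon}_{i}>(m)}$ is supported in the $i$-th coordinate, the multiplication formula Proposition \ref{proposition4} (2) reduces to the one-variable expansion in that slot, giving
\[
\underline{\partial}_{<\underline{k}-p^{s_{0}}\underline{\varepsilon}_{i}>(m)}\cdot \underline{\partial}_{<p^{s_{0}}\underline{\varepsilon}_{i}>(m)}=\tbinom{k_{i}}{p^{s_{0}}}\,\underline{\partial}_{<\underline{k}>(m)}+\sum_{|\underline{k}'|<|\underline{k}|}c_{\underline{k}'}\,\underline{\partial}_{<\underline{k}'>(m)}.
\]
Applying this operator to $a$ annihilates the left-hand side, because $\underline{\partial}_{<p^{s_{0}}\underline{\varepsilon}_{i}>(m)}a=0$; moreover the leading coefficient $\binom{k_{i}}{p^{s_{0}}}$ is, by Lucas' theorem, congruent to the $s_{0}$-th $p$-adic digit of $k_{i}$, which is nonzero by the choice of $s_{0}$, hence a unit. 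Thus $\underline{\partial}_{<\underline{k}>(m)}a$ is expressed as an ${\cal O}_{X}$-combination of the lower terms $\underline{\partial}_{<\underline{k}'>(m)}a$.

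The \emph{main obstacle} is to control these lower terms: each surviving $\underline{k}'$ must again satisfy $p^{m}\nmid\underline{k}'$ in order to invoke the inductive hypothesis. The terms $\underline{k}'$ differ from $\underline{k}$ only in the $i$-th coordinate, which ranges over an interval of length $p^{s_{0}}<p^{m}$ ending at $k_{i}$; hence this interval contains at most one multiple of $p^{m}$, namely $q_{k_{i}}p^{m}$, and it occurs only as the bottom index $k_{i}-p^{s_{0}}$ (precisely when $k_{i}\bmod p^{m}=p^{s_{0}}$). The coefficient of that single dangerous term is $\binom{q_{k_{i}}p^{m}}{p^{s_{0}}}$, which vanishes mod $p$ by Lucas' theorem since $q_{k_{i}}p^{m}$ has $s_{0}$-th $p$-adic digit equal to $0$. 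Therefore every lower term with nonzero coefficient satisfies $p^{m}\nmid\underline{k}'$ and is killed by induction, while the base case $|\underline{k}|=1$ is immediate because $\underline{\partial}_{<\underline{\varepsilon}_{i}>(m)}=\underline{\partial}_{<p^{0}\underline{\varepsilon}_{i}>(m)}$ is among the hypotheses. This closes the induction and shows the two descriptions of ${\cal B}^{(m)}_{X/S}$ coincide.
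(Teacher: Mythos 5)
Your argument is correct, and it is worth noting that the paper itself offers no proof of this proposition beyond the citation ``See 4.2.1 of \cite{M}'', so yours is a genuine reconstruction rather than a parallel of anything printed here. The two halves of your proof both check out. For the first half: by the formula (\ref{formula3}) of Remark \ref{Remark20} applied to $f=1$, the ${\cal O}_{X}$-linear map $P\mapsto P.(1\otimes 1)$ does send the basis $\{\underline{\partial}_{<p^{m}\underline{j}>(m)}\}$ onto $\{1\otimes\underline{\partial}'_{<\underline{j}>(0)}\}$ and kills the remaining basis vectors, so $F^{*}{\cal D}^{(0)}_{X^{(m)}/S}\cong{\cal D}^{(m)}_{X/S}/{\cal L}$ and $\psi$ identifies ${\cal B}^{(m)}_{X/S}$ (fiberwise over ${\cal I}_{X}^{gp}$) with the annihilator of ${\cal L}$. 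For the second half, I verified the two coefficient computations on which the induction hinges: in the expansion of $\underline{\partial}_{<\underline{k}-p^{s_{0}}\underline{\varepsilon}_{i}>}\cdot\underline{\partial}_{<p^{s_{0}}\underline{\varepsilon}_{i}>}$ the factor $\underline{q}_{\underline{k}'}!\underline{q}_{\underline{k}''}!/\underline{q}_{\underline{l}}!$ equals $1$ for every term in the range (because $p^{s_{0}}\leq r_{i}$ forces $q_{k_{i}-t}=q_{k_{i}}$ for $0\leq t\leq p^{s_{0}}$ and $q_{p^{s_{0}}}=0$), so the top coefficient is exactly $\binom{k_{i}}{p^{s_{0}}}$, a unit by your choice of $s_{0}$, and the unique possibly dangerous lower term carries the coefficient $\binom{q_{k_{i}}p^{m}}{p^{s_{0}}}\equiv 0$. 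This inductive generation argument is essentially the mechanism the paper elsewhere attributes to Proposition 2.3.1 of \cite{M} (cf.\ the proof of Proposition \ref{proposition14}, which quotes the fact that $\underline{\partial}_{<p^{s}\underline{\varepsilon}_{i}-\underline{k}>}$ lies in the subalgebra generated by the $\underline{\partial}_{<p^{s}\underline{\varepsilon}_{i}>}$), so your route is consistent with the intended one while being fully self-contained.
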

\begin{proof}
See 4.2.1 of \cite{M}.
\end{proof}
	\begin{rem}
	By virtue of Proposition \ref{proposition3} combined with Proposition \ref{proposition4} (4), we can locally write
		\begin{equation*}
		{\cal B}_{X/S}^{(m+1)}=\left\{\,a\in {\cal A}_{X}^{gp} \,\bigl|\, \underline{\partial}_{<p^{s}\underline{\varepsilon}_{i}>(m)}a=0 \text{ for all }0\leq s\leq m, 1\leq i\leq r\,\right\}.
		\end{equation*}	
	\end{rem}
	\begin{pro}\label{proposition2}
	${\cal A}_{X}^{gp}$ is locally free as an ${\cal I}_{X}^{gp}$-indexed ${\cal B}_{X/S}^{(m+1)}$-module with local basis $\{\,\underline{\theta}^{\underline{j}}\,|\,\underline{j}\in [0, p^{m+1}-1]^{r}\,\}$.
	\end{pro}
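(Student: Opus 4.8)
The plan is to work \'etale locally with a logarithmic system of coordinates $m_{1},\dots,m_{r}$ and to exhibit an explicit isomorphism realizing the claimed basis. Since each $\theta_{i}=e_{m_{i}}$ is a unit of ${\cal A}_{X}^{gp}$ (with inverse $e_{-m_{i}}$), the monomials $\underline{\theta}^{\underline{j}}$ are defined and invertible for all $\underline{j}\in{\mathbb Z}^{r}$; for $\underline{j}\in[0,p^{m+1}-1]^{r}$ write $\delta_{\underline{j}}:=\sum_{i}j_{i}\,\delta(m_{i})\in{\cal I}_{X}^{gp}$ for the index of $\underline{\theta}^{\underline{j}}$. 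I would consider the morphism of ${\cal I}_{X}^{gp}$-indexed ${\cal B}_{X/S}^{(m+1)}$-modules
\[
\Theta:\ \bigoplus_{\underline{j}\in[0,p^{m+1}-1]^{r}}{\cal B}_{X/S}^{(m+1)}(\delta_{\underline{j}})\longrightarrow {\cal A}_{X}^{gp},\qquad b\longmapsto b\,\underline{\theta}^{\underline{j}}\ \text{ on the $\underline{j}$-th summand},
\]
where ${\cal B}_{X/S}^{(m+1)}(\delta_{\underline{j}})$ is the twist of Definition \ref{defi3} $(2)$, and show $\Theta$ is an isomorphism. As both sides are ${\cal I}_{X}^{gp}$-indexed ${\cal O}_{X}$-modules whose fibres ${\cal A}_{X,n}^{gp}$ are invertible, it suffices to check that $\Theta$ induces an isomorphism on the fibre over each index $n\in{\cal I}_{X}^{gp}$.

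The key computation is the action of the operators cutting out ${\cal B}_{X/S}^{(m+1)}$ on monomials. By Proposition \ref{proposition1},
\[
\underline{\partial}_{<p^{s}\underline{\varepsilon}_{i}>}.\underline{\theta}^{\underline{j}}=\underline{q}_{p^{s}\underline{\varepsilon}_{i}}!\binom{\underline{j}}{p^{s}\underline{\varepsilon}_{i}}\underline{\theta}^{\underline{j}}=\binom{j_{i}}{p^{s}}\underline{\theta}^{\underline{j}},
\]
since $\underline{q}_{p^{s}\underline{\varepsilon}_{i}}\in\{0,1\}$ for every $0\le s\le m$ (indeed $q_{p^{s}}=0$ for $s<m$ and $q_{p^{m}}=1$), so that $\underline{q}_{p^{s}\underline{\varepsilon}_{i}}!=1$. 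Over ${\mathbb Z}/p{\mathbb Z}$, Lucas' congruence gives $\binom{j_{i}}{p^{s}}\equiv(\text{the $s$-th base-$p$ digit of }j_{i})$, so $\underline{\theta}^{\underline{j}}$ is annihilated by all $\underline{\partial}_{<p^{s}\underline{\varepsilon}_{i}>}$ with $0\le s\le m$ exactly when every $j_{i}$ is divisible by $p^{m+1}$; in particular $\underline{\theta}^{p^{m+1}\underline{l}}\in{\cal B}_{X/S}^{(m+1)}$ for all $\underline{l}$, while among the chosen monomials only $\underline{\theta}^{\underline{0}}=1$ lies in ${\cal B}_{X/S}^{(m+1)}$. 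Combining this with the local description of Proposition \ref{proposition3} (and the Leibniz formula of Proposition-Definition \ref{proposition10}, which shows the $\underline{\partial}_{<p^{s}\underline{\varepsilon}_{i}>}$ act only through the fibre directions $\delta(m_{i})$ and kill the part pulled back from $S$), I would identify the index-support ${\cal J}:=\{c\in{\cal I}_{X}^{gp}\mid {\cal B}_{X/S,c}^{(m+1)}\ne 0\}$ as the subgroup of indices whose $\delta(m_{i})$-components are divisible by $p^{m+1}$, and show that on each such $c$ the fibre ${\cal B}_{X/S,c}^{(m+1)}$ equals the invertible module ${\cal A}_{X,c}^{gp}$.

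Granting this description of ${\cal J}$, the classes $\{\delta_{\underline{j}}\mid \underline{j}\in[0,p^{m+1}-1]^{r}\}$ form a complete set of representatives for ${\cal I}_{X}^{gp}/{\cal J}$, of cardinality $p^{(m+1)r}$. Hence for each index $n$ exactly one $\underline{j}$ satisfies $n-\delta_{\underline{j}}\in{\cal J}$; for that $\underline{j}$ the summand ${\cal B}_{X/S,n-\delta_{\underline{j}}}^{(m+1)}={\cal A}_{X,n-\delta_{\underline{j}}}^{gp}$ maps isomorphically onto ${\cal A}_{X,n}^{gp}$ via the unit $\underline{\theta}^{\underline{j}}$, while all other summands vanish, so $\Theta$ is a fibrewise, hence global, isomorphism exhibiting $\{\underline{\theta}^{\underline{j}}\mid\underline{j}\in[0,p^{m+1}-1]^{r}\}$ as a local basis. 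The main obstacle is the middle step: because the $\underline{\partial}_{<p^{s}\underline{\varepsilon}_{i}>}$ are genuine (non ${\cal O}_{X}$-linear) differential operators, the membership condition in ${\cal B}_{X/S}^{(m+1)}$ must be analyzed on arbitrary fibres and not merely on the monomials $\underline{\theta}^{\underline{j}}$; controlling the Leibniz corrections so as to pin down ${\cal J}$ and to verify that ${\cal B}_{X/S,c}^{(m+1)}$ is invertible for every $c\in{\cal J}$ is where the real work lies, and where the decomposition of an index into its $S$-part and its $\delta(m_{i})$-part must be used carefully.
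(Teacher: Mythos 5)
The paper does not actually prove this statement; it cites Corollaire 4.2.1 of Montagnon's thesis, so there is no in-paper argument to match yours against. Your setup (the map $\Theta$, the reduction to fibres, and the eigenvalue computation $\underline{\partial}_{<p^{s}\underline{\varepsilon}_{i}>}.\underline{\theta}^{\underline{j}}=\binom{j_{i}}{p^{s}}\underline{\theta}^{\underline{j}}$ via Proposition \ref{proposition1} and Lucas) is sound, and that computation is exactly the one the paper itself uses in the proof of the companion statement, Proposition \ref{proposition14}. The problem is the middle step you flag as "the real work": the structural claims you make there are not merely unproved, they are false, and the counting argument built on them cannot be repaired. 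Concretely, the fibre ${\cal B}_{X/S,c}^{(m+1)}$ is a nonzero invertible ${\cal O}_{X^{(m+1)}}$-module for \emph{every} index $c$, and it is a proper ${\cal O}_{X^{(m+1)}}$-submodule of the invertible ${\cal O}_{X}$-module ${\cal A}_{X,c}^{gp}$ (already for $c=0$ one has ${\cal B}_{X/S,0}^{(m+1)}\subsetneq{\cal O}_{X}={\cal A}_{X,0}^{gp}$, since not every function is killed by all $\underline{\partial}_{<p^{s}\underline{\varepsilon}_{i}>}$). So your index-support ${\cal J}$ is all of ${\cal I}_{X}^{gp}$, not the subgroup of indices divisible by $p^{m+1}$: the computation only shows that the particular trivializing section $\underline{\theta}^{\underline{j}}$ is not horizontal, not that the fibre ${\cal A}_{X,\delta_{\underline{j}}}^{gp}$ contains no horizontal sections (for $X={\mathbb A}^{1}$ with log structure at the origin, $m=0$, the section $t^{p-1}\theta_{1}$ is horizontal of index $\delta(t)$).

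Consequently the final step --- "exactly one summand survives and maps isomorphically onto ${\cal A}_{X,n}^{gp}$" --- is wrong, and a rank count over ${\cal O}_{X^{(m+1)}}$ shows it cannot be right: ${\cal A}_{X,n}^{gp}$ has rank $p^{(m+1)r}$ over ${\cal O}_{X^{(m+1)}}$ while a single fibre of ${\cal B}_{X/S}^{(m+1)}$ has rank $1$. In the correct decomposition all $p^{(m+1)r}$ summands ${\cal B}_{X/S,n-\delta_{\underline{j}}}^{(m+1)}\underline{\theta}^{\underline{j}}$ contribute a rank-one piece. The degenerate case of trivial log structure makes this vivid: there every $\delta_{\underline{j}}=0$, so your coset-representative count collapses entirely, yet the proposition still holds --- it reduces to the freeness of ${\cal O}_{X}$ over ${\cal O}_{X^{(m+1)}}$ on the monomials in the coordinates. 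What a correct proof needs, and what your outline is missing, is the descent-type statement that each fibre ${\cal A}_{X,c}^{gp}$ is recovered from its horizontal part, i.e.\ that the natural map ${\cal O}_{X}\otimes_{{\cal O}_{X^{(m+1)}}}{\cal B}_{X/S,c}^{(m+1)}\to{\cal A}_{X,c}^{gp}$ is an isomorphism (this is the content of Lorenzon's and Montagnon's theorems, proved by a level-$m$ Cartier/Frobenius descent or by explicitly producing a horizontal generator in each fibre); the eigenvalue computation then separates the $\underline{\theta}^{\underline{j}}$-components and yields the direct sum.
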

\begin{proof}
See Corollaire 4.2.1 of \cite{M}.
\end{proof}
	\begin{pro}\label{proposition14}
	${\cal B}_{X/S}^{(m)}$ is locally free as an ${\cal I}_{X}^{gp}$-indexed ${\cal B}_{X/S}^{(m+1)}$-module with local basis $\left\{\,\underline{\theta}^{p^{m}\underline{j}}\,|\,\underline{j}\in \left[0, p-1\right]^{r}\right\}$.	

	\end{pro}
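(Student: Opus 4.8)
The statement is étale-local, so I would assume given a logarithmic system of coordinates $m_{1},\ldots,m_{r}$ and work with the elements $\theta_{i}=e_{m_{i}}$. The plan is to deduce the assertion by comparing two freeness results for ${\cal A}_{X}^{gp}$: on one hand, Proposition \ref{proposition2} gives that ${\cal A}_{X}^{gp}$ is free over ${\cal B}_{X/S}^{(m+1)}$ with basis $\{\underline{\theta}^{\underline{l}}\mid \underline{l}\in[0,p^{m+1}-1]^{r}\}$; on the other hand, applying the same result with $m$ replaced by $m-1$ (the case $m=0$ being trivial, since then ${\cal B}_{X/S}^{(m)}={\cal A}_{X}^{gp}$) gives that ${\cal A}_{X}^{gp}$ is free over ${\cal B}_{X/S}^{(m)}$ with basis $\{\underline{\theta}^{\underline{c}}\mid \underline{c}\in[0,p^{m}-1]^{r}\}$. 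The combinatorial input linking them is the base-$p$ bijection $[0,p^{m+1}-1]^{r}\cong[0,p-1]^{r}\times[0,p^{m}-1]^{r}$ sending $\underline{l}$ to $(\underline{a},\underline{c})$ with $\underline{l}=p^{m}\underline{a}+\underline{c}$, under which $\underline{\theta}^{\underline{l}}=\underline{\theta}^{p^{m}\underline{a}}\,\underline{\theta}^{\underline{c}}$ because the $\theta_{i}$ commute.

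First I would check that each proposed basis element lies in ${\cal B}_{X/S}^{(m)}$. For $\underline{a}\in[0,p-1]^{r}$ and $0\le s\le m-1$, Proposition \ref{proposition1} gives $\underline{\partial}_{<p^{s}\underline{\varepsilon}_{i}>}.\underline{\theta}^{p^{m}\underline{a}}=\underline{q}_{p^{s}\underline{\varepsilon}_{i}}!\binom{p^{m}\underline{a}}{p^{s}\underline{\varepsilon}_{i}}\underline{\theta}^{p^{m}\underline{a}}$, where $\underline{q}_{p^{s}\underline{\varepsilon}_{i}}!=1$ since $q_{p^{s}}=0$ for $s<m$, while $\binom{p^{m}\underline{a}}{p^{s}\underline{\varepsilon}_{i}}=\binom{p^{m}a_{i}}{p^{s}}$. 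The latter vanishes modulo $p$: in base $p$ the integer $p^{m}a_{i}$ has zero digit in every position $0,\ldots,m-1$, whereas $p^{s}$ has digit $1$ in position $s\le m-1$, so the factor at position $s$ in Lucas' formula is $\binom{0}{1}=0$. As we work in characteristic $p$, this shows $\underline{\partial}_{<p^{s}\underline{\varepsilon}_{i}>}.\underline{\theta}^{p^{m}\underline{a}}=0$, hence $\underline{\theta}^{p^{m}\underline{a}}\in{\cal B}_{X/S}^{(m)}$ by the local description of Proposition \ref{proposition3}. Since moreover ${\cal B}_{X/S}^{(m+1)}\subseteq{\cal B}_{X/S}^{(m)}$, the ${\cal B}_{X/S}^{(m+1)}$-submodule $N:=\bigoplus_{\underline{a}\in[0,p-1]^{r}}{\cal B}_{X/S}^{(m+1)}\underline{\theta}^{p^{m}\underline{a}}$ of ${\cal A}_{X}^{gp}$ — a direct sum, being spanned by part of the basis above — is contained in ${\cal B}_{X/S}^{(m)}$, and it is free over ${\cal B}_{X/S}^{(m+1)}$ on the claimed basis.

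It then remains to prove $N={\cal B}_{X/S}^{(m)}$. Regrouping the ${\cal B}_{X/S}^{(m+1)}$-basis of ${\cal A}_{X}^{gp}$ along the bijection above yields the direct sum decomposition ${\cal A}_{X}^{gp}=\bigoplus_{\underline{c}\in[0,p^{m}-1]^{r}}N\underline{\theta}^{\underline{c}}$, where $N\underline{\theta}^{\underline{c}}=\bigoplus_{\underline{a}}{\cal B}_{X/S}^{(m+1)}\underline{\theta}^{p^{m}\underline{a}+\underline{c}}$. I would compare this with the level-$m$ decomposition ${\cal A}_{X}^{gp}=\bigoplus_{\underline{c}\in[0,p^{m}-1]^{r}}{\cal B}_{X/S}^{(m)}\underline{\theta}^{\underline{c}}$, noting the inclusions $N\underline{\theta}^{\underline{c}}\subseteq{\cal B}_{X/S}^{(m)}\underline{\theta}^{\underline{c}}$ coming from $N\subseteq{\cal B}_{X/S}^{(m)}$. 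Given $x\in{\cal B}_{X/S}^{(m)}\underline{\theta}^{\underline{c}_{0}}$, its decomposition through the first direct sum writes it as a sum of terms lying in $N\underline{\theta}^{\underline{c}}\subseteq{\cal B}_{X/S}^{(m)}\underline{\theta}^{\underline{c}}$; by uniqueness of decomposition in the second (level-$m$) direct sum, all components with $\underline{c}\neq\underline{c}_{0}$ vanish, so $x\in N\underline{\theta}^{\underline{c}_{0}}$. Hence $N\underline{\theta}^{\underline{c}}={\cal B}_{X/S}^{(m)}\underline{\theta}^{\underline{c}}$ for every $\underline{c}$, and taking $\underline{c}=0$ gives ${\cal B}_{X/S}^{(m)}=N=\bigoplus_{\underline{a}\in[0,p-1]^{r}}{\cal B}_{X/S}^{(m+1)}\underline{\theta}^{p^{m}\underline{a}}$, as desired.

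The computational core is the vanishing in the second step, but the genuinely delicate point — and where I would be most careful — is the matching of the two direct sum decompositions. This maneuver is precisely what lets one bypass proving directly that the operators $\underline{\partial}_{<p^{s}\underline{\varepsilon}_{i}>}$ with $s\le m-1$ act ${\cal B}_{X/S}^{(m+1)}$-linearly on ${\cal A}_{X}^{gp}$, which would otherwise force one to control the Leibniz cross-terms $\underline{\partial}_{<n\underline{\varepsilon}_{i}>}.b$ for $b\in{\cal B}_{X/S}^{(m+1)}$ and would be considerably more laborious. One should also confirm that the regrouping and the uniqueness argument are valid as statements about ${\cal I}_{X}^{gp}$-indexed modules; since everything takes place étale-locally, where the chosen coordinates trivialise the indexing, these reduce to honest direct sum decompositions of sheaves.
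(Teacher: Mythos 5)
Your proof is correct, but it takes a genuinely different route from the paper's. The paper argues by direct computation: it expands an arbitrary local section $f=\sum_{\underline{j}}a_{\underline{j}}\underline{\theta}^{\underline{j}}$ of ${\cal B}_{X/S}^{(m)}$ in the ${\cal B}_{X/S}^{(m+1)}$-basis of ${\cal A}_{X}^{gp}$, applies $\underline{\partial}_{<p^{s}\underline{\varepsilon}_{i}>}$ for $s\leq m-1$, and uses the Leibnitz-type formula of Proposition-Definition \ref{proposition10} together with the fact that the cross-term operators $\underline{\partial}_{<p^{s}\underline{\varepsilon}_{i}-\underline{k}>}$ annihilate the coefficients $a_{\underline{j}}\in{\cal B}_{X/S}^{(m+1)}$ (a point settled by citing the proof of Proposition 2.3.1 of \cite{M}); this reduces the condition $\underline{\partial}_{<p^{s}\underline{\varepsilon}_{i}>}.f=0$ to $\sum_{\underline{j}}a_{\underline{j}}\binom{j_{i}}{p^{s}}\underline{\theta}^{\underline{j}}=0$, and a $p$-adic valuation analysis of $\binom{j_{i}}{p^{s}}$ then forces $a_{\underline{j}}=0$ unless $p^{m}$ divides $\underline{j}$. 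You bypass the Leibnitz analysis entirely by importing an input the paper does not use, namely the level-$(m-1)$ instance of Proposition \ref{proposition2} (freeness of ${\cal A}_{X}^{gp}$ over ${\cal B}_{X/S}^{(m)}$ on $\{\,\underline{\theta}^{\underline{c}}\,|\,\underline{c}\in[0,p^{m}-1]^{r}\,\}$ -- legitimate, since Corollaire 4.2.1 of \cite{M} is uniform in the level, and you handle $m=0$ separately), and then comparing the two resulting direct sum decompositions via the base-$p$ splitting $\underline{l}=p^{m}\underline{a}+\underline{c}$ of exponents together with uniqueness of components. The only congruence you need is the Lucas-type vanishing $\binom{p^{m}a_{i}}{p^{s}}\equiv 0$ for $s<m$, which the paper also needs (tersely) to justify $\underline{\theta}^{p^{m}\underline{j}}\in{\cal B}_{X/S}^{(m)}$. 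Your version is less computational and avoids the cross-term bookkeeping, at the cost of invoking the cited freeness result at one more level; the paper's version stays entirely at level $m$ and yields the explicit diagonal action of $\underline{\partial}_{<p^{s}\underline{\varepsilon}_{i}>}$ on the basis as a byproduct. Your closing caveat about the indexed setting is resolved exactly as you suggest: all sums are taken within a single fiber over ${\cal I}_{X}^{gp}$, where each decomposition is an honest direct sum of ${\cal O}_{X}$-modules, which is also how the paper's own manipulation of the expansion of $f$ must be read.
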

	\begin{proof}
	In this proof we put $M:=\left[0, p^{m+1}-1\right]^{r}$.
	Note that $\underline{\theta}^{p^{m}\underline{j}}\in {\cal B}_{X/S}^{(m)}$ by Proposition \ref{proposition1}.
	Since the assertion is \'etale local on $X$, we may work in a local situation.
	Let $f$ be a local section of ${\cal B}_{X/S}^{(m)}$.
	By Proposition \ref{proposition2}, we can wright $f=\sum_{\underline{j}\in M}a_{\underline{j}}\underline{\theta}^{\underline{j}}$ with $a_{\underline{j}}\in {\cal B}_{X/S}^{(m+1)}$.
	Then we calculate
	\begin{eqnarray*}
	\underline{\partial}_{<p^{s}\underline{\varepsilon}_{i}>}. f=\sum_{\underline{j}\in M}\left(\sum_{\underline{k}\leq {p^{s}\underline{\varepsilon}_{i}}}	\left\{
		\begin{array}{c}
		{p^{s}\underline{\varepsilon}_{i}}\\
		\underline{k}
		\end{array}
		\right\}
		\underline{\partial}_{<{p^{s}\underline{\varepsilon}_{i}}-\underline{k}>}a_{\underline{j}}\underline{\partial}_{<\underline{k}>}\underline{\theta}^{\underline{j}} \right).
	\end{eqnarray*}
	From the proof of Proposition 2.3.1 in \cite{M}, $\underline{\partial}_{<{p^{s}\underline{\varepsilon}_{i}}-\underline{k}>}$ is belongs to the ${\cal O}_{X}$-subalgebra of ${\cal D}^{(m)}_{X/S}$ generated by \{$\underline{\partial}_{<{p^{s}\underline{\varepsilon}_{i}}>}| 0\leq i\leq m-1$\} .
	So $\underline{\partial}_{<{p^{s}\underline{\varepsilon}_{i}}-\underline{k}>}a_{\underline{j}}=0$ if $\underline{k}\neq p^{s}\underline{\varepsilon}_{i}$.
	Hence, by Proposition \ref{proposition1}, we have 
	\begin{eqnarray*}
	\underline{\partial}_{<p^{s}\underline{\varepsilon}_{i}>}. f=\sum_{\underline{j}\in M}a_{\underline{j}}\binom{j_{i}}{p^{s}}	
	\underline{\theta}^{\underline{j}}=0 \text{ \,\,\,\,\,for any $1\leq s\leq m-1$, and $1\leq i\leq r$.}
	\end{eqnarray*}	
	Let us calculate the $p$-adic valuation of $\binom{j_{i}}{p^{s}}$.
	Let $v$ be the normalized $p$-adic valuation.
	In the case that $p^{m}$ divides $j_{i}$, we easily calculate $v(\binom{j_{i}}{p^{s}})>0$ for any $1\leq s\leq m-1$.
	In the case that $p^{m}$ does not divide $j_{i}$, if we write $j_{i}=\sum b_{l}p^{l}$, then there exists $1\leq s\leq m-1$ such that $b_{s}>0$.
	We obtain $v(\binom{j_{i}}{p^{s}})=0$ for this $s$.
	Since $\{\,\underline{\theta}^{\underline{j}}\,|\,\underline{j}\in M\,\}$ is linearly independent, we conclude that $a_{\underline{j}}\neq 0$ if and only if $p^{m}$ divides $\underline{j}$.
	We finish the proof.
	\end{proof}

\subsection{The Azumaya algebra property}
We first prove the Azumaya algebra property of $\tilde {{\cal D}}_{X/S}^{(m)}$ over its center. To do this, first we calculate the center of $\tilde {{\cal D}}_{X/S}^{(m)}$. Throughout this subsection we assume that all log schemes are defined over ${\mathbb{Z}}/p\mathbb{Z}$.\\

Let $X\to S$ be a log smooth morphism of fine log schemes and $F_{X/S}:X\to X'$ the relative Frobenius morphism of $X$ defined in Subsection 3.2.
We have constructed the $p^{m+1}$-curvature map $\beta : {\cal T}_{X'\slash S}\to F_{X\slash S*}{\cal D}^{(m)}_{X}$ (Definition \ref{defi1}). 
\begin{lemm}\label{Lem4}
The image of ${\cal T}_{X'\slash S}$ under the $p^{m+1}$-curvature map $\beta$ is contained in the center of ${\cal D}_{X/S}^{(m)}$.
\end{lemm}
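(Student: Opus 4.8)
The plan is to reduce the statement to an explicit local computation with the basis operators $\underline{\partial}_{<\underline{k}>}$. Since being central is an \'etale-local condition, I would first pass to a logarithmic system of coordinates $m_{1},\ldots ,m_{r}$. By Proposition \ref{theorem1} the image of $\beta$ is then the $\mathcal{O}_{X'}$-span of the operators $\underline{\partial}_{<p^{m+1}\underline{\varepsilon}_{i}>}$, $1\leq i\leq r$, where the scalars are pulled back from $X'$. Because the center of $\mathcal{D}_{X/S}^{(m)}$ is a subalgebra and $[z,-]$ is a derivation, it suffices to show that each generator $\underline{\partial}_{<p^{m+1}\underline{\varepsilon}_{i}>}$ commutes with a set of $\mathcal{O}_{X}$-algebra generators of $\mathcal{D}_{X/S}^{(m)}$; the scalar functions coming from $X'$ are themselves central, since by Proposition \ref{proposition3} a section pulled back from $\mathcal{O}_{X'}$ is annihilated by all $\underline{\partial}_{<p^{s}\underline{\varepsilon}_{j}>}$ with $0\leq s\leq m$. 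By Proposition \ref{proposition4}(1) such algebra generators are provided by $\mathcal{O}_{X}$ together with the operators $\underline{\partial}_{<p^{s}\underline{\varepsilon}_{j}>}$ for $0\leq s\leq m$, $1\leq j\leq r$.

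Commutation with the differential part is immediate: Proposition \ref{proposition4}(2) asserts that any two operators $\underline{\partial}_{<\underline{k}>}$ commute, so $\underline{\partial}_{<p^{m+1}\underline{\varepsilon}_{i}>}$ commutes with every $\underline{\partial}_{<p^{s}\underline{\varepsilon}_{j}>}$. The real content is therefore commutation with $\mathcal{O}_{X}$. For $x\in\mathcal{O}_{X}$, Proposition \ref{proposition4}(3) gives, after cancelling the top term $x\cdot\underline{\partial}_{<p^{m+1}\underline{\varepsilon}_{i}>}$,
\begin{equation*}
[\underline{\partial}_{<p^{m+1}\underline{\varepsilon}_{i}>}, x]
=\underline{\partial}_{<p^{m+1}\underline{\varepsilon}_{i}>}(x)
+\sum_{j=1}^{p^{m+1}-1}\left\{\begin{array}{c}p^{m+1}\\ j\end{array}\right\}
\underline{\partial}_{<(p^{m+1}-j)\underline{\varepsilon}_{i}>}(x)\,\underline{\partial}_{<j\underline{\varepsilon}_{i}>},
\end{equation*}
and I must show every summand on the right vanishes in characteristic $p$.

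The hard part is purely the arithmetic of these coefficients, and it splits into two claims. For $1\leq j\leq p^{m+1}-1$ I would verify $\left\{\begin{smallmatrix}p^{m+1}\\ j\end{smallmatrix}\right\}=\frac{q_{p^{m+1}}!}{q_{j}!\,q_{p^{m+1}-j}!}\equiv 0\pmod p$: here $q_{p^{m+1}}=p$, and writing $j=q_{j}p^{m}+r_{j}$ with $0\leq r_{j}<p^{m}$ one finds $q_{p^{m+1}-j}=p-q_{j}$ when $r_{j}=0$ and $q_{p^{m+1}-j}=p-q_{j}-1$ when $r_{j}>0$, so the coefficient equals $\binom{p}{q_{j}}$ respectively $\binom{p}{q_{j}}(p-q_{j})$, each divisible by $p$ in the stated range, exactly as in the valuation computation of Lemma \ref{lem1}. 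For the remaining $j=0$ term I would invoke Proposition \ref{proposition4}(4): the natural map $\mathcal{D}_{X/S}^{(m)}\to\mathcal{D}_{X/S}^{(m+1)}$ sends $\underline{\partial}_{<p^{m+1}\underline{\varepsilon}_{i}>(m)}$ to $p!\,\underline{\partial}_{<p^{m+1}\underline{\varepsilon}_{i}>(m+1)}$, which is $0$ since $p!=0$; as this map is compatible with the actions on $\mathcal{O}_{X}$, we get $\underline{\partial}_{<p^{m+1}\underline{\varepsilon}_{i}>}(x)=0$ for all $x$ (equivalently, the action on a monomial produces the factor $q_{p^{m+1}}!=p!$). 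Combining the two claims annihilates the whole commutator, so each $\underline{\partial}_{<p^{m+1}\underline{\varepsilon}_{i}>}$ is central and the proof is complete. The principal obstacle is thus the combinatorial mod-$p$ vanishing of the bracket coefficients rather than anything conceptual.
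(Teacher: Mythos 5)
Your proof is correct and follows essentially the same route as the paper's: reduce to the local description of $\beta$, check commutation with the $\mathcal{O}_{X}$-algebra generators of $\mathcal{D}_{X/S}^{(m)}$ using Proposition \ref{proposition4}(2) for the operators and Proposition \ref{proposition4}(3)--(4) for functions. You merely make explicit the mod-$p$ vanishing of the coefficients $\left\{\begin{smallmatrix}p^{m+1}\\ j\end{smallmatrix}\right\}$ for $0<j<p^{m+1}$, which the paper leaves implicit.
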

\begin{proof}
Since the assertion is \'etale local on $X$, we may work with the local description of $\beta $ (Proposition \ref{theorem1}).
Since ${\cal D}_{X/S}^{(m)}$ is generated by $\bigl\{ \underline{\partial}_{<p^{s}\underline{\varepsilon}_{i}>}\bigl| 1\leq i\leq r, 1\leq s\leq m \bigr\}$ as ${\cal O}_{X}$-algebra, we need to calculate $\underline{\partial}_{<p^{m+1}\underline{\varepsilon}_{j}>}\cdot a$ and $\underline{\partial}_{<p^{m+1}\underline{\varepsilon}_{j}>}\cdot \underline{\partial}_{<p^{s}\underline{\varepsilon}_{i}>}$ for any $a\in {\cal O}_{X}$, $1\leq i, j\leq r$ and $1\leq s\leq m$.
By Proposition \ref{proposition4} (2), we obtain $\underline{\partial}_{<p^{m+1}\underline{\varepsilon}_{j}>}\cdot \underline{\partial}_{<p^{s}\underline{\varepsilon}_{i}>}=\underline{\partial}_{<p^{s}\underline{\varepsilon}_{i}>}\cdot\underline{\partial}_{<p^{m+1}\underline{\varepsilon}_{j}>}$.
Note that, by Proposition \ref{proposition4} (4), we have $\underline{\partial}_{<p^{m+1}\underline{\varepsilon}_{j}>(m)}(a)=p!\underline{\partial}_{<p^{m+1}\underline{\varepsilon}_{j}>(m+1)}(a)=0$.
Therefore, by Proposition \ref{proposition4} (3), 
we also obtain 
	\begin{eqnarray*}
	\underline{\partial}_{<p^{m+1}\underline{\varepsilon}_{j}>}\cdot a&=&\sum_{i\leq p^{m+1}}\left\{
		\begin{array}{c}
		p^{m+1}\\
		i
		\end{array}
	\right\}
	\underline{\partial}_{<(p^{m+1}-i)\underline{\varepsilon}_{j}>}(a) \underline{\partial}_{<i\underline{\varepsilon}_{j}>}\\
	&=&a\cdot \underline{\partial}_{<p^{m+1}\underline{\varepsilon}_{j}>}.
	\end{eqnarray*} 
We finish the proof.
\end{proof}
Hence we obtain the morphism of rings
\begin{equation*}
 S^{\cdot}{\cal T}_{X'\slash S}\to F_{X\slash S*}{\cal D}^{(m)}_{X/S}.
\end{equation*}
We also denote this map by $\beta$.
Let us put $\mathfrak Z:=S^{\cdot}{\cal T}_{X'\slash S}$ and regard $\mathfrak Z$ as subring of ${\cal D}^{(m)}_{X/S}$ via the map $\beta$.
The following theorem is due to Schepler when $m$ is equal to zero (see Theorem 2.14 of \cite{S}).

\begin{theo}\label{theorem13}
Let $X\to S$ be a log smooth morphism of fine log schemes. Let $\tilde{\mathfrak Z}$ denote the center of $\tilde {{\cal D}}_{X/S}^{(m)}$. Then $\tilde{\mathfrak Z}$ is isomorphic to ${\cal B}_{X/S}^{(m+1)}\otimes_{{\cal O}_{X'}}\mathfrak Z$ as indexed subalgebra of $\tilde {{\cal D}}_{X/S}^{(m)}$.
\end{theo}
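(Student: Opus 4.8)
The plan is to prove the two inclusions $\mathcal B^{(m+1)}_{X/S}\otimes_{\mathcal O_{X'}}\mathfrak Z\subseteq\tilde{\mathfrak Z}$ and $\tilde{\mathfrak Z}\subseteq\mathcal B^{(m+1)}_{X/S}\otimes_{\mathcal O_{X'}}\mathfrak Z$ separately. Since the center is characterized by a universal property, its formation commutes with \'etale localization, and the asserted identification is manifestly compatible with restriction; so it suffices to argue \'etale locally. I would fix a logarithmic system of coordinates $m_{1},\dots,m_{r}$, giving the basis $\{\underline{\partial}_{<\underline{k}>}\}$ of $\mathcal D^{(m)}_{X/S}$ (Proposition-Definition \ref{Prod1}), the units $\theta_{l}=e_{m_{l}}$, the local description of $\mathcal B^{(m+1)}_{X/S}$ (the remark following Proposition \ref{proposition3}), and the local description $\beta(\xi'_{i})=\underline{\partial}_{<p^{m+1}\underline{\varepsilon}_{i}>}$ of $\mathfrak Z$ (Proposition \ref{theorem1}).

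For the inclusion $\supseteq$ I would check that both generating families are central. An element $b\otimes 1$ with $b\in\mathcal B^{(m+1)}_{X/S}$ commutes with $\mathcal A^{gp}_{X}\otimes 1$ by commutativity of $\mathcal A^{gp}_{X}$, and with each generator $1\otimes\underline{\partial}_{<p^{s}\underline{\varepsilon}_{l}>}$ ($0\le s\le m$) because, in the Leibniz expansion of Proposition-Definition \ref{proposition10}, every lower term carries a factor $\underline{\partial}_{<t\underline{\varepsilon}_{l}>}.b$ with $1\le t\le p^{m}$; these vanish since $b$ is annihilated by every $\underline{\partial}_{<p^{s'}\underline{\varepsilon}_{l}>}$ ($0\le s'\le m$) and hence, via Proposition \ref{proposition4} (1),(2), by every $\underline{\partial}_{<t\underline{\varepsilon}_{l}>}$ with $1\le t\le p^{m}$. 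An element $1\otimes\underline{\partial}_{<p^{m+1}\underline{\varepsilon}_{i}>}$ commutes with $1\otimes\mathcal D^{(m)}_{X/S}$ by Lemma \ref{Lem4}, and with $\mathcal A^{gp}_{X}\otimes 1$ because in its Leibniz expansion the coefficients $\left\{\begin{array}{c}p^{m+1}\underline{\varepsilon}_{i}\\ \underline{i}\end{array}\right\}$ vanish modulo $p$ for $0<\underline{i}<p^{m+1}\underline{\varepsilon}_{i}$ (a surviving factor $p!$ in the numerator), while the remaining term $\underline{\partial}_{<p^{m+1}\underline{\varepsilon}_{i}>}.a$ also vanishes: by Proposition \ref{proposition4} (4) this operator equals $p!\,\underline{\partial}_{<p^{m+1}\underline{\varepsilon}_{i}>(m+1)}=0$ on $\mathcal A^{gp}_{X}$, exactly as in Lemma \ref{Lem4}. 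Hence the $\mathcal O_{X'}$-subalgebra generated by $\mathcal B^{(m+1)}_{X/S}$ and $\mathfrak Z$ is central.

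For the converse I would write a central local section over a fixed index as $z=\sum_{\underline{k}}a_{\underline{k}}\otimes\underline{\partial}_{<\underline{k}>}$ with $a_{\underline{k}}\in\mathcal A^{gp}_{X}$, and analyze it in two steps. Imposing $[z,\,c\otimes 1]=0$ for all $c\in\mathcal A^{gp}_{X}$ and comparing coefficients of $\underline{\partial}_{<\underline{i}>}$ gives, after cancelling the diagonal term,
\[
\sum_{\underline{k}>\underline{i}}\left\{\begin{array}{c}\underline{k}\\ \underline{i}\end{array}\right\}(\underline{\partial}_{<\underline{k}-\underline{i}>}.c)\,a_{\underline{k}}=0 .
\]
Testing against $c=\underline{\theta}^{\underline{j}}$ and using Proposition \ref{proposition1}, i.e. $\underline{\partial}_{<\underline{k}-\underline{i}>}.\underline{\theta}^{\underline{j}}=\underline{q}_{\underline{k}-\underline{i}}!\binom{\underline{j}}{\underline{k}-\underline{i}}\underline{\theta}^{\underline{j}}$, converts these into linear relations among the $a_{\underline{k}}$ with eigenvalue coefficients $\underline{q}_{\underline{k}-\underline{i}}!\binom{\underline{j}}{\underline{k}-\underline{i}}$; varying $\underline{j}$ and peeling off terms forces $a_{\underline{k}}=0$ unless every component of $\underline{k}$ is divisible by $p^{m+1}$, and by Lemma \ref{lem1} the surviving $\underline{\partial}_{<p^{m+1}\underline{k}'>}$ are products of the $\underline{\partial}_{<p^{m+1}\underline{\varepsilon}_{i}>}$, so the differential part of $z$ lies in $\mathfrak Z$. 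Then, imposing $[z,\,1\otimes\underline{\partial}_{<p^{s}\underline{\varepsilon}_{l}>}]=0$ for $0\le s\le m$, and using that each $\underline{\partial}_{<p^{m+1}\underline{k}'>}$ is central in $\mathcal D^{(m)}_{X/S}$ (Lemma \ref{Lem4}) and multiplies with $\underline{\partial}_{<\underline{i}>}$ by Lemma \ref{lem1}, the commutator collapses to the requirement $\underline{\partial}_{<t\underline{\varepsilon}_{l}>}.a_{p^{m+1}\underline{k}'}=0$ for $1\le t\le p^{m}$, i.e. $a_{p^{m+1}\underline{k}'}\in\mathcal B^{(m+1)}_{X/S}$ by Proposition \ref{proposition3} and its following remark. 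Thus $z\in\mathcal B^{(m+1)}_{X/S}\otimes_{\mathcal O_{X'}}\mathfrak Z$.

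I expect the main obstacle to be the first step of the reverse inclusion: disentangling the commutation relations to conclude that only indices with $p^{m+1}\mid\underline{k}$ survive. The delicate point is that over $\mathbb Z/p\mathbb Z$ the eigenvalue functions $\underline{j}\mapsto\binom{\underline{j}}{\underline{k}-\underline{i}}$ are not all linearly independent, so the peeling must be organized by the $p$-adic valuations of $\underline{q}_{\underline{k}-\underline{i}}!$ and of the relevant binomial and multinomial coefficients --- the same valuation bookkeeping already carried out in Lemma \ref{lem1} and in the proof of Proposition \ref{proposition14}. A secondary point to verify is that the natural map $\mathcal B^{(m+1)}_{X/S}\otimes_{\mathcal O_{X'}}\mathfrak Z\to\tilde{\mathcal D}^{(m)}_{X/S}$ is injective with image an indexed subalgebra; this follows from the freeness of $\mathcal A^{gp}_{X}$ over $\mathcal B^{(m+1)}_{X/S}$ (Proposition \ref{proposition2}), the fact that $\{\underline{\partial}_{<p^{m+1}\underline{k}'>}\}$ is part of the $\mathcal O_{X}$-basis of $\mathcal D^{(m)}_{X/S}$, and the compatibility of the two $\mathcal O_{X'}$-structures through $\beta$ and through $\mathcal O_{X'}\hookrightarrow\mathcal B^{(m+1)}_{X/S}$.
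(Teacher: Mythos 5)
Your proposal is correct and follows essentially the same route as the paper: both inclusions are verified \'etale-locally in a logarithmic system of coordinates by computing the commutators with the $\theta_{i}$ and with the generators $\underline{\partial}_{<p^{s}\underline{\varepsilon}_{i}>}$, using Proposition \ref{proposition1}, Lemma \ref{lem1}, Proposition \ref{theorem1} and the local description of ${\cal B}_{X/S}^{(m+1)}$. The one ``delicate point'' you flag --- disentangling the relations obtained by testing against all monomials $\underline{\theta}^{\underline{j}}$ --- evaporates in the paper's version, which tests only against $c=\theta_{i}$: there $\binom{\underline{\varepsilon}_{i}}{\underline{k}-\underline{i}}$ kills every term of the Leibniz expansion except $\underline{i}=\underline{k}-\underline{\varepsilon}_{i}$, whose coefficient $\bigl\{\begin{smallmatrix}k_{i}\\k_{i}-1\end{smallmatrix}\bigr\}$ vanishes mod $p$ exactly when $p^{m+1}\mid k_{i}$, so no linear-independence or valuation bookkeeping is needed.
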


\begin{proof}
We may consider \'etale locally on $X$. Let $m_{1}, \ldots , m_{r}\in {\cal M}_{X}$ be a logarithmic system of coordinates. If $f\in {\cal B}_{X/S}^{(m+1)}$, then, since $\underline{\partial}_{<p^{s}\underline{\varepsilon}_{i}>}$.$f=0$ for any $0\leq s\leq m$, $1\leq i\leq r$ (Remark \ref{proposition3}), we have
\begin{eqnarray*}
(1\otimes \underline{\partial}_{<p^{s}\underline{\varepsilon}_{i}>})(f\otimes 1)&=&\sum_{j\leq p^{s}}\left\{
\begin{array}{c}
p^{s}\\
j
\end{array}
\right\}
(\underline{\partial}_{<(p^{s}-j)\underline{\varepsilon}_{i}>}.f)\otimes \underline{\partial}_{<j\underline{\varepsilon}_{i}>}\\
&=&f\otimes \underline{\partial}_{<p^{s}\underline{\varepsilon}_{i}>}\\
&=&(f\otimes 1)(1\otimes \underline{\partial}_{<p^{s}\underline{\varepsilon}_{i}>})
\end{eqnarray*}
for any $0\leq s\leq m$, $1\leq i\leq r$. Since $\{\underline{\partial}_{<p^{s}\underline{\varepsilon}_{i}>}| 0\leq s\leq m, 1\leq i\leq r\}$ generate ${\cal D}_{X/S}^{(m)}$ (Proposition \ref{proposition4}),
we see that $f$ commutes with ${\cal D}_{X/S}^{(m)}$ and it obviously commutes with ${\cal A}_{X}^{gp}$. Hence we have $f\in \tilde{\mathfrak Z}$ and see ${\cal B}_{X/S}^{(m+1)}\otimes_{{\cal O}_{X'}}\mathfrak Z \subset \tilde{\mathfrak Z}$. Conversely, since ${\cal A}_{X}^{gp}$ is locally free as an ${\cal I}_{X}^{gp}$-indexed ${\cal B}_{X/S}^{(m+1)}$-module with local basis $\{\underline{\theta}^{\underline{i}} | \underline{i}\in [0, p^{m+1}-1]^{r}\}$(see Proposition \ref{proposition2}), $\tilde {{\cal D}}_{X/S}^{(m)}$ is generated as a ${\cal B}_{X/S}^{(m+1)}$-algebra by $\{\theta_{i}, \underline{\partial}_{<p^{s}\underline{\varepsilon}_{i}>} |0\leq s\leq m, 1\leq i\leq r\}$.
First, for a local section $\varphi=\sum_{\underline{k}}f_{\underline{k}}\otimes \underline{\partial}_{<\underline{k}>}$ of $\tilde {{\cal D}}_{X/S}^{(m)}$, we now calculate  commutators $[\varphi, \theta_{i}]$ and $[\varphi, \underline{\partial}_{<p^{s}\underline{\varepsilon}_{i}>}]$.
\begin{eqnarray*}
[\varphi, \theta_{i}]&=&\sum_{\underline{k}}(f_{\underline{k}} \otimes \underline{\partial}_{<\underline{k}>})\cdot \theta_{i} - \sum_{\underline{k}}\theta_{i}f_{\underline{k}}\otimes \underline{\partial}_{<\underline{k}>}\\
&=&\sum_{\underline{k}}f_{\underline{k}}\left(\sum_{\underline{j}<{\underline{k}}}\left\{
\begin{array}{c}
\underline{k}\\
\underline{j}
\end{array}
\right\}
(\underline{\partial}_{<\underline{k}-\underline{j}>}.\theta_{i})\otimes \underline{\partial}_{<\underline{j}>}\right).
\end{eqnarray*}
By Proposition \ref{proposition1}, we have
\begin{equation}
[\varphi, \theta_{i}]=\sum_{\underline{k}}\left\{
\begin{array}{c}
k_{i}\\
k_{i}-1
\end{array}
\right\}
f_{\underline{k}}\theta_{i}\otimes \underline{\partial}_{<\underline{k}>}.\label{equation1}
\end{equation}
Similarly, 
\begin{eqnarray}
[\varphi, \underline{\partial}_{<p^{s}\underline{\varepsilon}_{i}>}]&=&\sum_{\underline{k}}f_{\underline{k}} \otimes \underline{\partial}_{<\underline{k}>} \underline{\partial}_{<p^{s}\underline{\varepsilon}_{i}>}- \sum_{\underline{k}}\sum_{0\leq l\leq p^{s}}\left\{
\begin{array}{c}
p^{s}\underline{\varepsilon}_{i}\nonumber \\
l\underline{\varepsilon}_{i}
\end{array}
\right\}
\underline{\partial}_{<(p^{s}-l)\underline{\varepsilon}_{i}>}.f_{\underline{k}}\otimes \underline{\partial}_{<l\underline{\varepsilon}_{i}>} \underline{\partial}_{<\underline{k}>}\\
&=&-\sum_{\underline{k}}\sum_{0\leq l<p^{s}}\left\{
\begin{array}{c}
p^{s}\underline{\varepsilon}_{i}\\
l\underline{\varepsilon}_{i}
\end{array}
\right\}
\underline{\partial}_{<(p^{s}-l)\underline{\varepsilon}_{i}>}.f_{\underline{k}}\otimes \underline{\partial}_{<l\underline{\varepsilon}_{i}>} \underline{\partial}_{<\underline{k}>}\nonumber \\
&=&-\sum_{\underline{k}}\sum_{0\leq l<p^{s}}
\underline{\partial}_{<(p^{s}-l)\underline{\varepsilon}_{i}>}.f_{\underline{k}}\otimes \underline{\partial}_{<l\underline{\varepsilon}_{i}>}\underline{\partial}_{<\underline{k}>}.\label{equation2}
\end{eqnarray}
Note that $\left\{
\begin{array}{c}
k_{i}\\
k_{i}-1
\end{array}
\right\}=0$ in characteristic $p$ if and only if $p^{m+1}| k_{i}$.
By (\ref{equation1}), if $\varphi \in \tilde{\mathfrak Z}$ then $f_{\underline{k}}\neq 0$ only for $\underline{k}$ with $p^{m+1}| \underline{k}$.
Hence, by (\ref{equation2}) and Lemma \ref{lem1}, we have
$\underline{\partial}_{<p^{s}\underline{\varepsilon}_{i}>}.f_{\underline{k}}=0$ for any $0<s \leq m, 1\leq i\leq r$, that is, $f_{\underline{k}}\in {\cal B}_{X/S}^{(m+1)}$.
Since the $p^{m+1}$-curvture map sends $\xi_{i}'$ to $\underline{\partial}_{<p^{m+1}\underline{\varepsilon}_{i}>}$, its image is generated as an ${\cal O}_{X'}$-algebra by $\{\underline{\partial}_{<p^{m+1}\underline{k}>}|\underline{k}\in {\mathbb N}^{r}\}$ by Lemma \ref{lem1}.
Therefore $\varphi \in \tilde{\mathfrak Z}$ implies $\varphi \in {\cal B}_{X/S}^{(m+1)}\otimes_{{\cal O}_{X'}}\mathfrak Z$. This completes the proof.
\end{proof}
We also calculate the centralizer of ${\cal A}_{X}^{gp}$ in $\tilde {{\cal D}}_{X/S}^{(m)}$ in a similar manner. By the calculation (\ref{equation1}), we also see the following.
\begin{pro}
Let ${\cal C}_{X}$ denotes the centralizer of ${\cal A}_{X}^{gp}$ in $\tilde {{\cal D}}_{X/S}^{(m)}$. Then ${\cal C}_{X}={\cal A}_{X}^{gp}\otimes_{{\cal O}_{X}}{\mathfrak Z}$ as an indexed subalgebra of $\tilde {{\cal D}}_{X/S}^{(m)}$.
\end{pro}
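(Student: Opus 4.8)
The plan is to reuse, almost verbatim, the local commutator computation from the proof of Theorem \ref{theorem13}, stopping one step earlier. There one requires a section $\varphi$ to commute both with the units $\theta_{i}$ and with the operators $\underline{\partial}_{<p^{s}\underline{\varepsilon}_{i}>}$; for the centralizer ${\cal C}_{X}$ of ${\cal A}_{X}^{gp}$ one imposes only commutation with ${\cal A}_{X}^{gp}$ itself. Since the statement is \'etale local, I would fix a logarithmic system of coordinates $m_{1},\ldots,m_{r}$ and set $\theta_{i}=e_{m_{i}}$. Because ${\cal A}_{X}^{gp}$ is, locally, built from ${\cal O}_{X}$ and the invertible sections $\theta_{i}^{\pm 1}$ (this is the local-freeness of Proposition \ref{proposition2}), centralizing ${\cal A}_{X}^{gp}$ amounts to commuting with ${\cal O}_{X}$ and with each $\theta_{i}$.

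For the inclusion ${\cal C}_{X}\subseteq{\cal A}_{X}^{gp}\otimes_{{\cal O}_{X}}{\mathfrak Z}$, I would write a local section as $\varphi=\sum_{\underline{k}}f_{\underline{k}}\otimes\underline{\partial}_{<\underline{k}>}$, the expression being unique because the $\underline{\partial}_{<\underline{k}>}$ form an ${\cal O}_{X}$-basis of ${\cal D}_{X/S}^{(m)}$ (Proposition-Definition \ref{Prod1}), and feed it into the commutator formula (\ref{equation1}), namely $[\varphi,\theta_{i}]=\sum_{\underline{k}}\left\{\begin{array}{c}k_{i}\\ k_{i}-1\end{array}\right\}f_{\underline{k}}\theta_{i}\otimes\underline{\partial}_{<\underline{k}>}$. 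As $\theta_{i}$ is a unit and the $\underline{\partial}_{<\underline{k}>}$ are ${\cal O}_{X}$-linearly independent, the vanishing of this commutator for every $i$ forces $f_{\underline{k}}=0$ unless $p^{m+1}\mid k_{i}$ for all $i$, using the criterion that $\left\{\begin{array}{c}k_{i}\\ k_{i}-1\end{array}\right\}=0$ in characteristic $p$ exactly when $p^{m+1}\mid k_{i}$. Writing the surviving multi-indices as $\underline{k}=p^{m+1}\underline{k}'$ and invoking Lemma \ref{lem1} together with Proposition \ref{theorem1} to recognize $\underline{\partial}_{<p^{m+1}\underline{k}'>}$ as a product of $p^{m+1}$-curvatures, hence a section of ${\mathfrak Z}$, yields $\varphi\in{\cal A}_{X}^{gp}\otimes_{{\cal O}_{X}}{\mathfrak Z}$.

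The reverse inclusion is essentially formal: ${\cal A}_{X}^{gp}$ is commutative, so its sections commute with the ${\cal A}_{X}^{gp}$-factor; and ${\mathfrak Z}$ centralizes ${\cal A}_{X}^{gp}$ because its generators $\underline{\partial}_{<p^{m+1}\underline{k}>}$ commute with each $\theta_{i}$ (again the coefficient in (\ref{equation1}) vanishes since $p^{m+1}\mid\underline{k}$) and commute with ${\cal O}_{X}$ by Lemma \ref{Lem4}. There is no real computational obstacle beyond (\ref{equation1}); the only point deserving care is the reduction of ``centralizes ${\cal A}_{X}^{gp}$'' to ``commutes with ${\cal O}_{X}$ and the finitely many $\theta_{i}$'', i.e. that these generate ${\cal A}_{X}^{gp}$ locally. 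The conceptual content, and the reason the answer is ${\cal A}_{X}^{gp}\otimes_{{\cal O}_{X}}{\mathfrak Z}$ rather than the center ${\cal B}_{X/S}^{(m+1)}\otimes_{{\cal O}_{X'}}{\mathfrak Z}$ of Theorem \ref{theorem13}, is that dropping the requirement to commute with the $\underline{\partial}_{<p^{s}\underline{\varepsilon}_{i}>}$ (formula (\ref{equation2})) removes the constraint $f_{\underline{k}}\in{\cal B}_{X/S}^{(m+1)}$, leaving the coefficients free in ${\cal A}_{X}^{gp}$.
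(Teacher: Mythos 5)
Your argument is correct and is essentially the paper's own: the paper deduces this proposition in one line from the commutator computation (\ref{equation1}) in the proof of Theorem \ref{theorem13}, which is precisely the step you isolate and reuse, dropping the second constraint coming from (\ref{equation2}). The one cosmetic slip is that locally ${\cal A}_{X}^{gp}$ is generated by the $\underline{\theta}^{\underline{j}}$ over ${\cal B}_{X/S}^{(m+1)}$ (Proposition \ref{proposition2}), not over ${\cal O}_{X}$; this does not affect the forward inclusion, and for the reverse inclusion the reduction to commutation with the $\theta_{i}$ still goes through because ${\cal B}_{X/S}^{(m+1)}\otimes_{{\cal O}_{X'}}{\mathfrak Z}$ is already known to be central by Theorem \ref{theorem13}.
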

Let us consider $\tilde {{\cal D}}_{X/S}^{(m)}$ as a right ${\cal C}_{X}$-module by multiplication on the right.
\begin{theo}\label{Theorem5}
Let $X\to S$ be a log smooth morphism of fine log schemes. Then there is an isomorphism of ${\cal I}_{X}^{gp}$-indexed ${\cal C}_{X}$-algebras
	\begin{equation}
		\begin{array}{ccc}
		\tilde {{\cal D}}_{X/S}^{(m)}\otimes_{\tilde{\mathfrak Z}}{\cal C}_{X}& \longrightarrow &  {\cal E}nd_{{\cal C}_{X}}\left(\tilde{{\cal D}}_{X/S}^{(m)}\right)                    \\[-4pt]
 		\\[-4pt]
		\varphi \otimes c                     & \longmapsto     & \{x\mapsto \varphi \cdot x\cdot c\}.
		\end{array}\label{equation4}
	\end{equation}
\end{theo}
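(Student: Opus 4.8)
The plan is to exhibit the map $\Psi\colon \tilde{{\cal D}}_{X/S}^{(m)}\otimes_{\tilde{\mathfrak Z}}{\cal C}_{X}\to {\cal E}nd_{{\cal C}_{X}}(\tilde{{\cal D}}_{X/S}^{(m)})$, $\varphi\otimes c\mapsto\{x\mapsto \varphi\cdot x\cdot c\}$, as a morphism of locally free ${\cal I}_{X}^{gp}$-indexed ${\cal C}_{X}$-modules of the same finite rank, and then to reduce its bijectivity to a statement over geometric fibres. First I would check that $\Psi$ is a well-defined homomorphism of ${\cal I}_{X}^{gp}$-indexed ${\cal C}_{X}$-algebras. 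By Theorem \ref{theorem13} the subalgebra $\tilde{\mathfrak Z}={\cal B}_{X/S}^{(m+1)}\otimes_{{\cal O}_{X'}}\mathfrak Z$ is the center of $\tilde{{\cal D}}_{X/S}^{(m)}$; in particular $\mathfrak Z\subset\tilde{\mathfrak Z}$ is central, so ${\cal C}_{X}={\cal A}_{X}^{gp}\otimes_{{\cal O}_{X}}\mathfrak Z$ is commutative and, being the centralizer of ${\cal A}_{X}^{gp}$, equals its own centralizer in $\tilde{{\cal D}}_{X/S}^{(m)}$. Centrality of $\tilde{\mathfrak Z}$ makes $x\mapsto \varphi\cdot x\cdot c$ balanced over $\tilde{\mathfrak Z}$, commutativity of ${\cal C}_{X}$ makes it right ${\cal C}_{X}$-linear (so that it indeed lands in ${\cal E}nd_{{\cal C}_{X}}$) and makes $\Psi$ multiplicative, and the ${\cal I}_{X}^{gp}$-grading is respected since an element of index $i$ is sent to an endomorphism of degree $i$. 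These verifications are routine.

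Next I would compute ranks \'etale locally, fixing a logarithmic system of coordinates $m_{1},\dots,m_{r}$ and writing $N:=p^{(m+1)r}$. By Proposition \ref{proposition2}, ${\cal A}_{X}^{gp}$ is free of rank $N$ over ${\cal B}_{X/S}^{(m+1)}$ with basis $\{\underline{\theta}^{\underline{j}}\mid \underline{j}\in[0,p^{m+1}-1]^{r}\}$, while Lemma \ref{lem1} together with Proposition \ref{proposition4} shows that each $\underline{\partial}_{<\underline{k}>}$ factors as $\underline{\partial}_{<p^{m+1}\underline{q}>}\cdot\underline{\partial}_{<\underline{r}>}$ for $\underline{k}=p^{m+1}\underline{q}+\underline{r}$, so that ${\cal D}_{X/S}^{(m)}$ is free of rank $N$ over $\mathfrak Z$ with basis $\{\underline{\partial}_{<\underline{r}>}\mid \underline{r}\in[0,p^{m+1}-1]^{r}\}$. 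Combining these, $\tilde{{\cal D}}_{X/S}^{(m)}$ is free of rank $N^{2}$ over $\tilde{\mathfrak Z}$ and, pushing the central factors $\underline{\partial}_{<p^{m+1}\underline{q}>}$ to the coefficient algebra, locally free of rank $N$ as a right ${\cal C}_{X}$-module. Hence the source of $\Psi$ is locally free of rank $N^{2}$ over ${\cal C}_{X}$, and its target is $\,{\cal E}nd_{{\cal C}_{X}}(\tilde{{\cal D}}_{X/S}^{(m)})\cong M_{N}({\cal C}_{X})$, again locally free of rank $N^{2}$. Since both sides are locally free ${\cal C}_{X}$-modules of the same finite rank, $\Psi$ is an isomorphism as soon as it becomes so after passing to each geometric fibre, where both sides have equal finite dimension and bijectivity is equivalent to injectivity.

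The heart of the argument, and the step I expect to be the main obstacle, is this fibrewise injectivity. Reducing the ${\cal C}_{X}$-structure modulo a maximal ideal turns $\Psi$ into the left regular representation of the finite-dimensional $k$-algebra $\tilde{{\cal D}}_{X/S,\bar x}^{(m)}$ (of dimension $N^{2}$) on $V:=\tilde{{\cal D}}_{X/S}^{(m)}\otimes_{{\cal C}_{X}}k$, and I must prove this representation faithful. The cleanest route is to show that $\tilde{{\cal D}}_{X/S,\bar x}^{(m)}$ is a matrix algebra, hence simple, so that its action on the nonzero module $V$ is automatically faithful. To see this I would let it act on the reduction $U$ of ${\cal A}_{X}^{gp}$, spanned by the $N$ monomials $\underline{\theta}^{\underline{j}}$ with $\underline{j}\in[0,p^{m+1}-1]^{r}$ (recall $\theta_{i}$ is invertible and $\theta_{i}^{p^{m+1}}$ is central, lying in ${\cal B}_{X/S}^{(m+1)}$). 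By Proposition \ref{proposition1} one has $\underline{\partial}_{<p^{s}\underline{\varepsilon}_{i}>}.\underline{\theta}^{\underline{j}}=q_{p^{s}}!\binom{j_{i}}{p^{s}}\underline{\theta}^{\underline{j}}$, and since $q_{p^{s}}!$ is a unit and $\binom{j_{i}}{p^{s}}\equiv d_{s}(j_{i})\bmod p$ recovers the $s$-th base-$p$ digit of $j_{i}$ (Lucas), the operators $\underline{\partial}_{<p^{s}\underline{\varepsilon}_{i}>}$ for $0\le s\le m$, $1\le i\le r$, are simultaneously diagonal with joint eigenvalues separating all $N$ monomials. As $\theta_{i}$ acts by a cyclic shift of period $p^{m+1}$ on this eigenbasis, the shifts and the diagonal operators together generate $\,{\cal E}nd_{k}(U)\cong M_{N}(k)$, whence $\tilde{{\cal D}}_{X/S,\bar x}^{(m)}$ acts faithfully on $U$ and, by the dimension count, $\tilde{{\cal D}}_{X/S,\bar x}^{(m)}\cong M_{N}(k)$.

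Granting that fibre computation, $\Psi_{\bar x}$ is an injective (hence bijective) map of $N^{2}$-dimensional spaces, and therefore $\Psi$ is an isomorphism. An equivalent and more conceptual packaging of the same fact is the double-centralizer picture: within $\,{\cal E}nd_{{\cal C}_{X}}(\tilde{{\cal D}}_{X/S}^{(m)})$ the commutant of the left multiplications $L(\tilde{{\cal D}}_{X/S}^{(m)})$ is exactly the right multiplications $R({\cal C}_{X})=Z({\cal E}nd_{{\cal C}_{X}})$, precisely because ${\cal C}_{X}$ is its own centralizer; a fibrewise centralizer-dimension count then forces $L$ to be surjective and $\Psi$ to be an isomorphism. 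In either presentation the genuinely delicate point is the same nondegeneracy, namely that $\tilde{{\cal D}}_{X/S,\bar x}^{(m)}$ is central simple over $k$, which is what the digit-separation built into the $p^{m+1}$-periodic commutation relations of Proposition-Definition \ref{proposition10} and Proposition \ref{proposition1} provides.
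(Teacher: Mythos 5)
Your setup (well-definedness of $\Psi$, and the computation that both sides are locally free of rank $N^{2}$ over ${\cal C}_{X}$ with the bases you describe) is sound and consistent with what the paper uses. The gap is in the fibrewise step, which is where you yourself locate the heart of the argument. You propose to prove that the fibre algebra $\tilde{{\cal D}}_{X/S}^{(m)}\otimes_{\tilde{\mathfrak Z}}k$ is a matrix algebra by letting it act faithfully on ``the reduction $U$ of ${\cal A}_{X}^{gp}$''. But ${\cal A}_{X}^{gp}$ has zero $p^{m+1}$-curvature: by Proposition \ref{proposition1}, $\underline{\partial}_{<p^{m+1}\underline{\varepsilon}_{i}>}$ acts on $\underline{\theta}^{\underline{j}}$ by $q_{p^{m+1}}!\binom{j_{i}}{p^{m+1}}=p!\binom{j_{i}}{p^{m+1}}=0$, and indeed the paper (proof of Theorem \ref{theorem40}) records that the $\tilde{{\cal D}}_{X/S}^{(m)}$-action on ${\cal A}_{X}^{gp}$ factors through $\tilde{{\cal D}}_{X/S}^{(m)}\otimes_{\tilde{\mathfrak Z}}{\cal B}_{X/S}^{(m+1)}$, i.e.\ through the quotient killing $S^{+}{\cal T}_{X'/S}$. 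Hence ${\cal A}_{X}^{gp}\otimes_{{\cal C}_{X}}k=0$ for every maximal ideal of ${\cal C}_{X}$ not containing $S^{+}{\cal T}_{X'/S}$, that is, at every geometric point lying off the zero section of the cotangent bundle. Your digit-separation computation is correct where $U\neq 0$, but it only establishes simplicity of the fibre algebra, hence injectivity of $\Psi_{\bar x}$, along the zero section; an isomorphism of locally free ${\cal C}_{X}$-modules cannot be certified there alone, since the determinant of $\Psi$ is a section of the degree-zero part of ${\cal C}_{X}$ that could a priori vanish away from the zero section. To repair the argument along your lines you would need, for each point $\xi$ of $\operatorname{Spec}S^{\cdot}{\cal T}_{X'/S}$, a nonzero module of dimension $N$ with $p^{m+1}$-curvature $\xi$; producing such modules is essentially the splitting-module problem that the paper only solves in Section 5, after a divided-power completion and under a mod $p^{2}$ lifting hypothesis. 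The same missing nondegeneracy undermines the double-centralizer packaging you offer as an alternative.

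The paper's proof avoids localisation entirely and is purely a basis computation: it introduces $\beta_{i}=\theta_{i}^{-1}\otimes\theta_{i}-1\otimes 1$, shows that the monomials $\underline{\beta}^{\underline{j}}$ form a basis of $\tilde{{\cal D}}_{X/S}^{(m)}\otimes_{\tilde{\mathfrak Z}}{\cal C}_{X}$ as a left $\tilde{{\cal D}}_{X/S}^{(m)}$-module (upper-triangular transition from $1\otimes\underline{\theta}^{\underline{j}}$), computes via Proposition \ref{proposition1} that each $\underline{\beta}^{\underline{j}}$ acts on the right-${\cal C}_{X}$-basis $\underline{\partial}_{<\underline{k}>}$ of $\tilde{{\cal D}}_{X/S}^{(m)}$ as a lowering operator, and concludes that the images form a basis of ${\cal E}nd_{{\cal C}_{X}}\bigl(\tilde{{\cal D}}_{X/S}^{(m)}\bigr)$ by a second triangular transition matrix with unit diagonal. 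You should either adopt that direct computation or supply the missing twisting argument off the zero section.
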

\begin{rem}
Theorem \ref{Theorem5} is due to Schepler in the case $m=0$ {\rm(see Theorem 2.15 of \cite{S})} and is due to Gros, Le Stum and Quir\'os when $X\to S$ is a smooth morphism of schemes \rm{(}see Theorem 3.7 of \cite{GLQ}$)$.
\end{rem}
\begin{proof}
Since the assertion is \'etale local on $X$, we may assume that there is a logarithmic system of coordinates $m_{1},\ldots m_{r}\in {\cal M}_{X}$. Then $\tilde {{\cal D}}_{X/S}^{(m)}\otimes_{\tilde{\mathfrak Z}}{\cal C}_{X}$ has a basis $\left\{1\otimes \underline{\theta}^{\underline{i}}|\underline{i}\in \left[0, p^{m+1}-1\right]^{r}\right\}$ as a left $\tilde {{\cal D}}_{X/S}^{(m)}$-module. We set
	\begin{equation*}
	\beta_{i}=\theta_{i}^{-1}\otimes \theta_{i}-1\otimes 1.
	\end{equation*}
Note that this sum is well-defined since $\theta_{i}^{-1}\otimes \theta_{i}$ and $1\otimes 1$ are both in the fiber of $\tilde {{\cal D}}_{X/S}^{(m)}\otimes_{\tilde{\mathfrak Z}}{\cal C}_{X}$ at $0$. We put $\underline{\beta}^{\underline{j}}:=\prod_{i=1}^{r}\beta_{i}^{j_{i}}$ for $\underline{j}\in [0, p^{m+1}-1]^{r}$. By the binomial theorem, we have
\begin{equation*}
\underline{\beta}^{\underline{j}}=\sum_{\underline{i}\leq \underline{j}}\left((-1)^{|\underline{j}-\underline{i}|}\binom{\underline{j}}{\underline{i}}\theta^{-\underline{i}}\right)(1\otimes \theta^{\underline{i}}).
\end{equation*}
Thus, if we endow $M$ with some suitable order compatible with the product partial order then the transition matrix from the basis $\left\{1\otimes \underline{\theta}^{\underline{i}}|\underline{i}\in \left[0, p^{m+1}-1\right]^{r}\right\}$ to the set
$\left\{\underline{\beta}^{\underline{j}}|\underline{j}\in \left[0, p^{m+1}-1\right]^{r}\right\}$ of $\tilde {{\cal D}}_{X/S}^{(m)}\otimes_{\tilde{\mathfrak Z}}{\cal C}_{X}$ is upper triangular, with units on the diagonal.
Therefore $\left\{\underline{\beta}^{\underline{j}}|\underline{j}\in M\right\}$ is also a basis for $\tilde {{\cal D}}_{X/S}^{(m)}\otimes_{\tilde{\mathfrak Z}}{\cal C}_{X}$.
On the other hand, let us consider the image of $\left\{\underline{\beta}^{\underline{j}}|\underline{j}\in \left[0, p^{m+1}-1\right]^{r}\right\}$ by the map (\ref{equation4}).
For $\underline{i}\in M$, let $\alpha_{\underline{i}}$ be the unique homomorphism which sends $\underline{\partial}_{<\underline{j}>}$ to $\delta_{\underline{i}\underline{j}}$.
Then, since $\tilde{\cal D}_{X/S}^{(m)}$ has a basis $\left\{\underline{\partial}_{<\underline{i}>}|\underline{i}\in \left[0, p^{m+1}-1\right]^{r}\right\}$ as a right ${\cal C}_{X}$-module, $\left\{\alpha_{\underline{i}}|\underline{i}\in \left[0, p^{m+1}-1\right]^{r}\right\}$ forms a basis for ${\cal E}nd_{{\cal C}_{X}}(\tilde {{\cal D}}_{X/S}^{(m)})$ as a left $\tilde {{\cal D}}_{X/S}^{(m)}$-module.
By Proposition \ref{proposition1}, we now calculate that  $\beta_{i}$ acts on $\underline{\partial}_{<\underline{k}>}$ by 
\begin{eqnarray*}
\beta_{i}\cdot  \underline{\partial}_{<\underline{k}>}&=&\theta_{i}^{-1}\cdot \underline{\partial}_{<\underline{k}>}\cdot \theta_{i}-\underline{\partial}_{<\underline{k}>}\\
&=&\theta_{i}^{-1}\left(\theta_{i}\otimes \underline{\partial}_{<\underline{k}>}+\left\{
\begin{array}{c}
k_{i}\\
k_{i}-1
\end{array}
\right\} \theta_{i}\otimes \underline{\partial}_{<\underline{k}-\underline{\varepsilon}_{i}>}\right)-\underline{\partial}_{<\underline{k}>}\\
&=&\left\{
\begin{array}{c}
k_{i}\\
k_{i}-1
\end{array}
\right\} \underline{\partial}_{<\underline{k}-\underline{\varepsilon}_{i}>}
\end{eqnarray*}
Hence $\underline{\beta}^{\underline{j}}$ acts on  $\tilde {{\cal D}}_{X/S}^{(m)}$ by sending  $\underline{\partial}_{<\underline{k}>}$ to $\frac{\underline{q}_{\underline{k}}!}{\underline{q}_{\underline{k}-\underline{j}}!} \underline{\partial}_{<\underline{k}- \underline{j}>}$ if $\underline{k}\leq \underline{j}$ and $0$ otherwise. We thus obtain
\begin{equation*}
\underline{\beta}^{\underline{j}}=\sum_{\underline{i}\leq \underline{j}, \underline{i}\in M}\left(\frac{\underline{q}_{\underline{i}}!}{\underline{q}_{\underline{i}-\underline{j}}!} \underline{\partial}_{<\underline{i}- \underline{j}>}\right)\alpha_{\underline{i}}
\end{equation*}
in ${\cal E}nd_{{\cal C}_{X}}(\tilde{{\cal D}}_{X}^{(m)})$ and  see the the transition matrix from the basis $\left\{\alpha_{\underline{i}}|\underline{i}\in \left[0, p^{m+1}-1\right]^{r}\right\}$ to the set $\left\{\underline{\beta}^{\underline{j}}|\underline{j}\in \left[0, p^{m+1}-1\right]^{r}\right\}$ of ${\cal E}nd_{{\cal C}_{X}}(\tilde{{\cal D}}_{X}^{(m)})$ is lower triangular with units on the diagonal.
We see that $\left\{\underline{\beta}^{\underline{j}}|\underline{j}\in \left[0, p^{m+1}-1\right]^{r}\right\}$ is also a basis for ${\cal E}nd_{{\cal C}_{X}}(\tilde{{\cal D}}_{X}^{(m)})$.
\end{proof}

\begin{cor}\label{cor1}
The ${\cal I}_{X}^{gp}$-indexed ${\cal O}_{X}$-algebra $\tilde{{\cal D}}_{X/S}^{(m)}$ is an Azumaya algebra over its center $\tilde{\mathfrak Z}$ of rank $p^{2(m+1)r}$ where $r$ is the rank of $\Omega^{1}_{X/S}$.
\end{cor}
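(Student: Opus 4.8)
The plan is to read the statement off directly from Theorem \ref{Theorem5}, matching it against the definition of an indexed Azumaya algebra. By Theorem \ref{theorem13} the center of $\tilde{{\cal D}}_{X/S}^{(m)}$ is $\tilde{\mathfrak Z}\cong {\cal B}_{X/S}^{(m+1)}\otimes_{{\cal O}_{X'}}\mathfrak Z$, so what I must produce is a commutative, faithfully flat ${\cal I}_{X}^{gp}$-indexed $\tilde{\mathfrak Z}$-algebra over which $\tilde{{\cal D}}_{X/S}^{(m)}$ splits, together with a splitting module of rank $p^{(m+1)r}$. The natural candidate is the centralizer ${\cal C}_{X}={\cal A}_{X}^{gp}\otimes_{{\cal O}_{X}}\mathfrak Z$ of ${\cal A}_{X}^{gp}$: it is commutative because both ${\cal A}_{X}^{gp}$ and $\mathfrak Z=S^{\cdot}{\cal T}_{X'/S}$ are commutative, and it is a $\tilde{\mathfrak Z}$-algebra since $\tilde{\mathfrak Z}\subset {\cal C}_{X}$ by the previous results.

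For the splitting itself there is nothing new to establish: Theorem \ref{Theorem5} already furnishes an isomorphism of ${\cal I}_{X}^{gp}$-indexed ${\cal C}_{X}$-algebras
\[
\tilde{{\cal D}}_{X/S}^{(m)}\otimes_{\tilde{\mathfrak Z}}{\cal C}_{X}\xrightarrow{\cong}{\cal E}nd_{{\cal C}_{X}}\bigl(\tilde{{\cal D}}_{X/S}^{(m)}\bigr),
\]
so $\tilde{{\cal D}}_{X/S}^{(m)}$ splits over ${\cal C}_{X}$ with splitting module $M=\tilde{{\cal D}}_{X/S}^{(m)}$ viewed as a right ${\cal C}_{X}$-module. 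I then only have to record the rank of $M$: in the proof of Theorem \ref{Theorem5} one sees that $\tilde{{\cal D}}_{X/S}^{(m)}$ is free over ${\cal C}_{X}$ on the basis $\{\underline{\partial}_{<\underline{i}>}\mid \underline{i}\in[0,p^{m+1}-1]^{r}\}$, whose cardinality is $p^{(m+1)r}$; in particular $M$ is locally free of rank $p^{(m+1)r}$.

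The one point requiring genuine verification is that ${\cal C}_{X}$ is faithfully flat over $\tilde{\mathfrak Z}$ in the indexed sense. Here I would invoke Proposition \ref{proposition2}, which says ${\cal A}_{X}^{gp}$ is locally free over ${\cal B}_{X/S}^{(m+1)}$ with basis $\{\underline{\theta}^{\underline{j}}\mid \underline{j}\in[0,p^{m+1}-1]^{r}\}$; since ${\cal C}_{X}$ and $\tilde{\mathfrak Z}\cong{\cal B}_{X/S}^{(m+1)}\otimes_{{\cal O}_{X'}}\mathfrak Z$ are obtained from ${\cal A}_{X}^{gp}$ and ${\cal B}_{X/S}^{(m+1)}$ by one and the same extension of scalars by $\mathfrak Z$, the same family $\{\underline{\theta}^{\underline{j}}\}$ is a local basis of ${\cal C}_{X}$ over $\tilde{\mathfrak Z}$. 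Thus ${\cal C}_{X}$ is locally free of positive finite rank over $\tilde{\mathfrak Z}$, hence faithfully flat (the tensor functor is exact and, the rank being nonzero, faithful). Feeding ${\cal C}_{X}$, the splitting isomorphism, and the module $M$ of rank $p^{(m+1)r}$ into the definition of an indexed Azumaya algebra yields that $\tilde{{\cal D}}_{X/S}^{(m)}$ is Azumaya over $\tilde{\mathfrak Z}$ of rank $\bigl(p^{(m+1)r}\bigr)^{2}=p^{2(m+1)r}$, as claimed. The main (and essentially only) obstacle is confirming that the indexed tensor product $-\otimes_{{\cal O}_{X'}}\mathfrak Z$ really behaves as a flat base change preserving the local freeness of Proposition \ref{proposition2}, so that the bookkeeping of base rings (${\cal O}_{X}$ for ${\cal C}_{X}$ versus ${\cal O}_{X'}$ for $\tilde{\mathfrak Z}$) is consistent; everything else is a direct citation of the results already proved.
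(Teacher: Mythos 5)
Your proposal is correct and follows essentially the same route as the paper: identify the centralizer ${\cal C}_{X}\cong{\cal A}_{X}^{gp}\otimes_{{\cal B}_{X/S}^{(m+1)}}\tilde{\mathfrak Z}$ as a faithfully flat extension of $\tilde{\mathfrak Z}$ via Proposition \ref{proposition2}, and invoke Theorem \ref{Theorem5} to split $\tilde{{\cal D}}_{X/S}^{(m)}$ over ${\cal C}_{X}$ with splitting module $\tilde{{\cal D}}_{X/S}^{(m)}$ of rank $p^{(m+1)r}$. Your extra remarks on the rank bookkeeping and the base-change compatibility are fine but add nothing beyond what the paper's (very terse) proof already relies on.
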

\begin{proof}
From Proposition \ref{proposition2}, ${\cal A}_{X}^{gp}$ is locally free as an ${\cal I}_{X}^{gp}$-indexed ${\cal B}_{X/S}^{(m+1)}$-module. Thus ${\cal C}_{X}\cong {\cal A}_{X}^{gp}\otimes_{{\cal B}_{X/S}^{(m+1)}} \tilde{\mathfrak Z}$ is a faithfully flat extension of $\tilde{\mathfrak Z}$. By Theorem \ref{Theorem5}, $\tilde{{\cal D}}_{X/S}^{(m)}$ splits over ${\cal C}_{X}$ with splitting module $\tilde{{\cal D}}_{X/S}^{(m)}$.
\end{proof}
The proof of the Azumaya nature of $\tilde{{\cal D}}_{X^{(m)}/S}^{(0)}$ is the same as that of $\tilde{{\cal D}}_{X/S}^{(m)}$ except for obvious modifications.
\begin{theo}\label{theorem30}
Let $X\to S$ be a log smooth morphism of fine log schemes.

{\rm (1)}
Let $\tilde{\mathfrak Z'}$ denote the center of $\tilde {{\cal D}}_{X^{(m)}/S}^{(0)}$.
Then the $p$-curvature map ${\mathfrak Z}\to {\cal D}_{X^{(m)}/S}^{(0)}$ induces an isomorphism between ${\cal B}_{X/S}^{(m+1)}\otimes_{{\cal O}_{X'}}\mathfrak Z$ and $\tilde{\mathfrak Z'}$ as an indexed subalgebra of $\tilde {{\cal D}}_{X^{(m)}/S}^{(0)}$.

{\rm (2)}
The ${\cal I}_{X}^{gp}$-indexed ${\cal O}_{X^{(m)}}$-algebra $\tilde{{\cal D}}_{X^{(m)}/S}^{(0)}$ is an Azumaya algebra over its center $\tilde{\mathfrak Z'}$ of rank $p^{2r}$ where $r$ is the rank of $\Omega^{1}_{X/S}$.
\end{theo}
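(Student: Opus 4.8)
The plan is to transpose the proofs of Theorem \ref{theorem13}, Theorem \ref{Theorem5} and Corollary \ref{cor1} to the present setting by means of the dictionary that replaces ${\cal A}_{X}^{gp}$ by ${\cal B}_{X/S}^{(m)}$, the sheaf ${\cal D}_{X/S}^{(m)}$ by ${\cal D}_{X^{(m)}/S}^{(0)}$, and the family of generators $\{\underline{\partial}_{<p^{s}\underline{\varepsilon}_{i}>(m)}\mid 0\leq s\leq m\}$ by the single family $\{\underline{\partial'}_{<\underline{\varepsilon}_{i}>(0)}\mid 1\leq i\leq r\}$, which generates ${\cal D}_{X^{(m)}/S}^{(0)}$ as an ${\cal O}_{X^{(m)}}$-algebra by Proposition \ref{proposition4} (1) applied with level $0$. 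Throughout I work \'etale locally with a logarithmic system of coordinates $m_{1},\ldots,m_{r}$, and I use that the $p$-curvature map is the instance of Definition \ref{defi1} for $X^{(m)}\to S$ at level $0$: since $(X^{(m)})'=X'=X^{(m+1)}$, the target $\mathfrak Z=S^{\cdot}{\cal T}_{X'/S}$ is unchanged, the map $\beta_{0}\colon{\cal T}_{X'/S}\to{\cal D}_{X^{(m)}/S}^{(0)}$ sends $\xi_{i}'$ to $\underline{\partial'}_{<p\underline{\varepsilon}_{i}>(0)}$ by Proposition \ref{theorem1}, and its image lies in the center by Lemma \ref{Lem4}. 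The essential computational input is that, under ${\cal B}_{X/S}^{(m)}\hookrightarrow{\cal A}_{X}^{gp}$, the ${\cal D}_{X^{(m)}/S}^{(0)}$-module structure on ${\cal B}_{X/S}^{(m)}$ identifies $\underline{\partial'}_{<\underline{k}>(0)}$ with $\underline{\partial}_{<p^{m}\underline{k}>(m)}$ acting inside ${\cal A}_{X}^{gp}$ (formula (\ref{formula3})), so that Proposition \ref{proposition1} may be applied to the action.

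For (1), I first show ${\cal B}_{X/S}^{(m+1)}\otimes_{{\cal O}_{X'}}\mathfrak Z\subset\tilde{\mathfrak Z'}$ as in Theorem \ref{theorem13}: a section $f$ of ${\cal B}_{X/S}^{(m+1)}$ satisfies $\underline{\partial'}_{<\underline{\varepsilon}_{i}>(0)}.f=\underline{\partial}_{<p^{m}\underline{\varepsilon}_{i}>(m)}.f=0$ by the local description of ${\cal B}_{X/S}^{(m+1)}$ (the remark following Proposition \ref{proposition3}, case $s=m$), hence $f\otimes 1$ commutes with the generators $\underline{\partial'}_{<\underline{\varepsilon}_{i}>}$; and $\mathfrak Z$ is central by Lemma \ref{Lem4}. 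For the reverse inclusion I use Proposition \ref{proposition14}, which makes ${\cal B}_{X/S}^{(m)}$ free over ${\cal B}_{X/S}^{(m+1)}$ with basis $\{\underline{\theta}^{p^{m}\underline{j}}\mid\underline{j}\in[0,p-1]^{r}\}$, so that $\tilde{{\cal D}}_{X^{(m)}/S}^{(0)}$ is generated as a ${\cal B}_{X/S}^{(m+1)}$-algebra by $\{\theta_{i}^{p^{m}},\underline{\partial'}_{<\underline{\varepsilon}_{i}>}\}$. Writing $\varphi=\sum_{\underline{k}}f_{\underline{k}}\otimes\underline{\partial'}_{<\underline{k}>}$ and computing the two commutators as in (\ref{equation1}) and (\ref{equation2}): the commutator with $\theta_{i}^{p^{m}}$ produces (by Proposition \ref{proposition1}) the coefficient $\binom{k_{i}}{k_{i}-1}=k_{i}$, which vanishes in characteristic $p$ exactly when $p\mid k_{i}$, forcing $f_{\underline{k}}=0$ unless $p\mid\underline{k}$; and the commutator with $\underline{\partial'}_{<\underline{\varepsilon}_{i}>}$, together with the level-$0$ case of Lemma \ref{lem1} (which gives $\underline{\partial'}_{<p\underline{k}>}\cdot\underline{\partial'}_{<l\underline{\varepsilon}_{i}>}=\underline{\partial'}_{<p\underline{k}+l\underline{\varepsilon}_{i}>}$, hence the linear independence of the relevant products), forces $\underline{\partial'}_{<\underline{\varepsilon}_{i}>}.f_{\underline{k}}=0$, i.e. $f_{\underline{k}}\in{\cal B}_{X/S}^{(m+1)}$. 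Since the same level-$0$ case of Lemma \ref{lem1} shows the image of $\beta_{0}$ is generated over ${\cal O}_{X'}$ by the $\underline{\partial'}_{<p\underline{k}>}$, this yields $\varphi\in{\cal B}_{X/S}^{(m+1)}\otimes_{{\cal O}_{X'}}\mathfrak Z$.

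For (2), let ${\cal C}'_{X}$ be the centralizer of ${\cal B}_{X/S}^{(m)}$ in $\tilde{{\cal D}}_{X^{(m)}/S}^{(0)}$; the commutator computation above (the analogue of (\ref{equation1})) gives ${\cal C}'_{X}={\cal B}_{X/S}^{(m)}\otimes_{{\cal O}_{X^{(m)}}}\mathfrak Z$. I then reproduce Theorem \ref{Theorem5}, setting $\beta_{i}=(\theta_{i}^{p^{m}})^{-1}\otimes\theta_{i}^{p^{m}}-1\otimes 1$ and $\underline{\beta}^{\underline{j}}=\prod_{i}\beta_{i}^{j_{i}}$ for $\underline{j}\in[0,p-1]^{r}$, and verifying that the transition matrices between $\{\underline{\beta}^{\underline{j}}\}$ and the standard bases $\{1\otimes\underline{\theta}^{p^{m}\underline{j}}\}$ of $\tilde{{\cal D}}_{X^{(m)}/S}^{(0)}\otimes_{\tilde{\mathfrak Z'}}{\cal C}'_{X}$ and $\{\alpha_{\underline{i}}\}$ of ${\cal E}nd_{{\cal C}'_{X}}(\tilde{{\cal D}}_{X^{(m)}/S}^{(0)})$ are triangular with units on the diagonal; this produces the splitting isomorphism $\tilde{{\cal D}}_{X^{(m)}/S}^{(0)}\otimes_{\tilde{\mathfrak Z'}}{\cal C}'_{X}\xrightarrow{\cong}{\cal E}nd_{{\cal C}'_{X}}(\tilde{{\cal D}}_{X^{(m)}/S}^{(0)})$. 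Finally, as in Corollary \ref{cor1}, Proposition \ref{proposition14} identifies ${\cal C}'_{X}\cong{\cal B}_{X/S}^{(m)}\otimes_{{\cal B}_{X/S}^{(m+1)}}\tilde{\mathfrak Z'}$ and shows it is faithfully flat over $\tilde{\mathfrak Z'}$, so $\tilde{{\cal D}}_{X^{(m)}/S}^{(0)}$ is Azumaya over $\tilde{\mathfrak Z'}$; the splitting module has rank $p^{r}$ (the number of indices $\underline{j}\in[0,p-1]^{r}$), whence the rank is $p^{2r}$.

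The main obstacle I anticipate is not the formal bookkeeping but the one genuinely new input relative to the level-$m$ arguments, namely the verification that the ${\cal D}_{X^{(m)}/S}^{(0)}$-action on ${\cal B}_{X/S}^{(m)}$ agrees, through ${\cal B}_{X/S}^{(m)}\hookrightarrow{\cal A}_{X}^{gp}$, with the action of $\underline{\partial}_{<p^{m}\underline{k}>(m)}$ on ${\cal A}_{X}^{gp}$, so that Proposition \ref{proposition1} and the local description of ${\cal B}_{X/S}^{(m+1)}$ become applicable to the commutators. This is exactly what makes Proposition \ref{proposition14} (rather than Proposition \ref{proposition2}) the relevant freeness statement, changing the rank from $p^{2(m+1)r}$ to $p^{2r}$, and what prevents the result from being a mere instance of the level-$0$ theory applied to $X^{(m)}$, which would involve ${\cal A}_{X^{(m)}}^{gp}$ in place of ${\cal B}_{X/S}^{(m)}$ (cf. Remark \ref{Remark01}).
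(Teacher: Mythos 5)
Your proposal is correct and follows essentially the same route as the paper: the paper's own (sketched) proof likewise transposes the arguments of Theorem \ref{theorem13}, Theorem \ref{Theorem5} and Corollary \ref{cor1}, replacing ${\cal A}_{X}^{gp}$ by ${\cal B}_{X/S}^{(m)}$, invoking Proposition \ref{proposition14} for the freeness over ${\cal B}_{X/S}^{(m+1)}$ with basis $\{\underline{\theta}^{p^{m}\underline{j}}\}$, computing the commutators with $\theta_{i}^{p^{m}}$ and $\underline{\partial}_{<\underline{\varepsilon}_{i}>(0)}$, and using formula (\ref{formula3}) to identify $\underline{\partial}'_{<\underline{\varepsilon}_{i}>(0)}$ with $\underline{\partial}_{<p^{m}\underline{\varepsilon}_{i}>(m)}$ so as to land in ${\cal B}_{X/S}^{(m+1)}$. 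The only quibble is that the centralizer should be written ${\cal B}_{X/S}^{(m)}\otimes_{{\cal O}_{X'}}\mathfrak Z$ (tensor over ${\cal O}_{X'}$, not ${\cal O}_{X^{(m)}}$), a purely notational slip.
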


\begin{proof}
We shall only sketch a proof.
Take $\varphi \in \tilde {{\cal D}}_{X^{(m)}/S}^{(0)}$.
As in the proof of Theorem \ref{theorem13}, we consider the necessary and sufficient condition for $\varphi \in \tilde{\mathfrak Z'}$.
Recall that ${\cal B}_{X/S}^{(m)}$ is locally free as an ${\cal I}_{X}^{gp}$-indexed ${\cal B}_{X/S}^{(m+1)}$-module with a local basis $\left\{\,\underline{\theta}^{p^{m}\underline{j}}\,|\,\underline{j}\in \left[0, p-1\right]^{r}\,\right\}$
(see Proposition \ref{proposition14}).
Thus $\tilde {{\cal D}}_{X^{(m)}/S}^{(0)}$ is generated by 
$\left\{\theta_{i}^{p^{m}}, \underline{\partial}_{<\underline{\varepsilon}_{i}>(0)} | 1\leq i\leq r\right\}$
as an ${\cal I}_{X}^{gp}$-indexed ${\cal B}_{X/S}^{(m+1)}$-algebra.
Let us describe $\varphi=\sum_{\underline{k}}f_{\underline{k}}\otimes \underline{\partial}_{<\underline{k}>(0)}$ with $f_{\underline{k}}\in {\cal B}_{X/S}^{(m+1)}$.
Then commutators $[\varphi, \theta^{p^{m}}_{i}]$ and $[\varphi, \underline{\partial}_{<\underline{\varepsilon}_{i}>}]$ can be calculated as follows.
\begin{equation}\label{formula1}
[\varphi, \theta^{p^{m}}_{i}]=\sum_{\underline{k}} k_{i}f_{\underline{k}} \theta^{p^{m}}_{i} \otimes \underline{\partial}_{<\underline{k}-\underline{\varepsilon}_{i}>(0)}.
\end{equation}
\begin{equation}\label{formula2}
[\varphi, \underline{\partial}_{<\underline{\varepsilon}_{i}>}]=-\sum_{\underline{k}} \underline{\partial}_{<\underline{\varepsilon}_{i}>(0)} f_{\underline{k}}\otimes \underline{\partial}_{<\underline{k}>}.
\end{equation}
By (\ref{formula1}), if $\phi \in \tilde{\mathfrak Z'}$ then $f_{\underline{k}}\neq 0$ only for $p | \underline{k}$.
By (\ref{formula2}) and ({\ref{formula3}}), we have $\underline{\partial}_{<p^{m}\underline{\varepsilon}_{i}>(m)} f_{\underline{k}}=\underline{\partial}_{<\underline{\varepsilon}_{i}>(0)} f_{\underline{k}}=0$ for any $1\leq i\leq r$, that is, $f_{\underline{k}}\in {\cal B}_{X/S}^{(m+1)}$.
This finishes the proof of (1).
For proving (2), let ${\cal C}_{X^{(m)}}$ be the centralizer of ${\cal B}_{X/S}^{(m)}$ in $\tilde {{\cal D}}_{X^{(m)}/S}^{(0)}$ which is isomorphic to ${\cal B}_{X/S}^{(m)}\otimes_{{\cal O}_{X'}}{\mathfrak Z'}$.
Then, by an analogous argument to the proof of Theorem \ref{Theorem5}, one can obtain the isomorphism
	\begin{equation}
		\begin{array}{ccc}
		\tilde {{\cal D}}_{X^{(m)}/S}^{(0)}\otimes_{\tilde{\mathfrak Z'}}{\cal C}_{X^{(m)}}& \xrightarrow{\cong} &  {\cal E}nd_{{\cal C}_{X^{(m)}}}\left(\tilde{{\cal D}}_{X^{(m)}/S}^{(0)}\right)                    \\[-4pt]
 		\\[-4pt]
		\varphi \otimes c                     & \longmapsto     & \left\{x\mapsto \varphi \cdot x\cdot c\right\}.
		\end{array}
	\end{equation}
Therefore (2) follows from Proposition \ref{proposition14}.
\end{proof}
\begin{rem}\label{remark31}
Theorem \ref{theorem30} is a variant of Theorem 2.14 and Theorem 2.15 in \cite{S}.
\end{rem}

\subsection{The log Cartier descent theorem}

As a first application of Corollary \ref{cor1}, we prove the log Cartier descent theorem of higher level. (See Corollary 3.2.4 of \cite{LQ} for the case without log structure.)\\

First we generalize the notion of admissible connection to the case of higher level.  
\begin{defi}
Let $\cal J$ be a sheaf of ${\cal I}_{X}^{gp}$-sets on $X$ and let $\cal E$ be a $\cal J$-indexed ${\cal A}_{X}^{gp}$-module with an $\cal J$-indexed left ${\cal D}_{X/S}^{(m)}$-module structure.
Then the action of ${\cal D}_{X/S}^{(m)}$ on $\cal E$ is admissible if the $\cal J$-indexed ${\cal A}_{X}^{gp}$-module structure and the $\cal J$-indexed left ${\cal D}_{X/S}^{(m)}$-module structure on $\cal E$ extend to the $\cal J$-indexed  $\tilde {{\cal D}}_{X/S}^{(m)}$-module structure on $\cal E$.
We also define the notion of admissibleness for $\cal J$-indexed ${\cal B}_{X/S}^{(m)}$-module with an $\cal J$-indexed left ${\cal D}_{X^{(m)}/S}^{(0)}$-module structure in a similar manner.

\end{defi}
We give an equivalent condition for the action of ${\cal D}_{X/S}^{(m)}$ on a $\cal J$-indexed ${\cal A}_{X}^{gp}$-module to be admissible.
Let $\cal E$ be $\cal J$-indexed ${\cal A}_{X}^{gp}$-module.
Recall from Remark \ref{Remark2} that the action of ${\cal A}_{X}^{gp}$ on $\cal E$ is equivalent to the family of morphisms $\rho_{ij}:{\cal A}_{X, i}^{gp}\otimes_{{\cal O}_{U}}{\cal E}_{j}\to{\cal E}_{i+j}$ satisfying the suitable conditions.
\begin{pro}
Let $\cal J$ be a sheaf of ${\cal I}_{X}^{gp}$-sets on $X$ and let $\cal E$ be a $\cal J$-indexed ${\cal A}_{X}^{gp}$-module with an $\cal J$-indexed left ${\cal D}_{X/S}^{(m)}$-module structure.
Then the action of ${\cal D}_{X/S}^{(m)}$ on $\cal E$ is admissible if and only if the corresponding structural morphism $\rho_{ij} :{\cal A}_{X, i}^{gp}\otimes_{{\cal O}_{U}} {\cal E}_{j}\to {\cal E}_{i+j}$;  $a\otimes e\mapsto ae$ is a ${\cal D}_{X/S}^{(m)}$-homomorphism for any \'etale open $U$ of $X$ and  any section $(i, j)\in {\cal I}_{X}^{gp}\times {\cal J}$. Here ${\cal A}_{X, i}^{gp}\otimes_{{\cal O}_{U}} {\cal E}_{j}$ is a tensor product as a left ${\cal D}_{X/S}^{(m)}$-module.
\end{pro}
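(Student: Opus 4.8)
The plan is to show that both the admissibility of the ${\cal D}_{X/S}^{(m)}$-action and the ${\cal D}_{X/S}^{(m)}$-linearity of every $\rho_{ij}$ are equivalent to one and the same Leibniz-type identity, and then to read off the stated equivalence. First I would unwind admissibility. Since every local section of $\tilde{{\cal D}}_{X/S}^{(m)}$ is a sum of terms $a\otimes P=(a\otimes 1)(1\otimes P)$, any $\cal J$-indexed $\tilde{{\cal D}}_{X/S}^{(m)}$-module structure extending the two given structures must act by $(a\otimes P)e=a(Pe)$, so such a structure is unique if it exists. By the uniqueness in Proposition-Definition \ref{proposition10}, its existence is equivalent to the compatibility of this formula with the one nontrivial defining relation of $\tilde{{\cal D}}_{X/S}^{(m)}$, namely
\[
(1\otimes\underline{\partial}_{<\underline{k}>})(a\otimes 1)=\sum_{\underline{i}\leq\underline{k}}\left\{\begin{array}{c}\underline{k}\\\underline{i}\end{array}\right\}(\underline{\partial}_{<\underline{k}-\underline{i}>}.a)\otimes\underline{\partial}_{<\underline{i}>}.
\]
Evaluating both sides on a section $e$ of $\cal E$, admissibility becomes equivalent to the identity
\[
\underline{\partial}_{<\underline{k}>}.(ae)=\sum_{\underline{i}\leq\underline{k}}\left\{\begin{array}{c}\underline{k}\\\underline{i}\end{array}\right\}(\underline{\partial}_{<\underline{k}-\underline{i}>}.a)(\underline{\partial}_{<\underline{i}>}.e)
\]
for all local sections $a$ of ${\cal A}_{X}^{gp}$ and $e$ of $\cal E$ and all $\underline{k}$; by Proposition \ref{proposition4}~(1) it suffices to check it for the algebra generators $\underline{\partial}_{<p^{s}\underline{\varepsilon}_{i}>}$.

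Next I would make the tensor-product side explicit. Because the ${\cal D}_{X/S}^{(m)}$-action on ${\cal A}_{X}^{gp}$ and on $\cal E$ is over the trivial index $\ast$, it preserves fibers, so each ${\cal A}_{X,i}^{gp}$ and each ${\cal E}_{j}$ is an ordinary ${\cal D}_{X/S}^{(m)}$-module and the tensor product carries the ${\cal D}_{X/S}^{(m)}$-structure attached to the tensor product of their log $m$-PD stratifications (Remark \ref{Remark8}), i.e. the one governed by the comultiplication $\delta_{m}$. Its explicit form on operators is
\[
\underline{\partial}_{<\underline{k}>}.(a\otimes e)=\sum_{\underline{i}\leq\underline{k}}\left\{\begin{array}{c}\underline{k}\\\underline{i}\end{array}\right\}(\underline{\partial}_{<\underline{k}-\underline{i}>}.a)\otimes(\underline{\partial}_{<\underline{i}>}.e);
\]
I would pin down the coefficient by testing against the special case ${\cal E}_{j}={\cal O}_{X}$, where $a\otimes e\mapsto ae$ identifies the tensor product with ${\cal A}_{X,i}^{gp}$ and reduces the formula to the commutation relation of Proposition \ref{proposition4}~(3) (here the symmetry of $\left\{\begin{array}{c}\underline{k}\\\underline{i}\end{array}\right\}$ in its two lower arguments is what allows the operator to be distributed onto either factor). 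With this structure in hand, the condition that $\rho_{ij}$ be a ${\cal D}_{X/S}^{(m)}$-homomorphism reads $\rho_{ij}(\underline{\partial}_{<\underline{k}>}.(a\otimes e))=\underline{\partial}_{<\underline{k}>}.(ae)$, which is exactly the displayed Leibniz identity restricted to $a\in{\cal A}_{X,i}^{gp}$, $e\in{\cal E}_{j}$.

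Finally I would conclude by matching the gradings. By Remark \ref{Remark2} the ${\cal A}_{X}^{gp}$-action is the collection of the maps $\rho_{ij}$, so the global Leibniz identity of the first paragraph holds for all $a,e$ if and only if its fiberwise form holds for every pair of sections $(i,j)\in{\cal I}_{X}^{gp}\times{\cal J}$, and the latter is precisely the ${\cal D}_{X/S}^{(m)}$-linearity of $\rho_{ij}$ established in the second paragraph. Hence the ${\cal D}_{X/S}^{(m)}$-action on $\cal E$ is admissible if and only if every $\rho_{ij}$ is a ${\cal D}_{X/S}^{(m)}$-homomorphism. The only genuinely delicate step is the verification that the two occurrences of the coefficient $\left\{\begin{array}{c}\underline{k}\\\underline{i}\end{array}\right\}$ agree: the one coming from the defining commutation relation of $\tilde{{\cal D}}_{X/S}^{(m)}$ and the one coming from the comultiplication defining the tensor-product ${\cal D}_{X/S}^{(m)}$-structure. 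Both are the level-$m$ symbol $\left\{\begin{array}{c}\underline{k}\\\underline{i}\end{array}\right\}$, reflecting that the ${\cal A}_{X}^{gp}$-action plays the role of multiplication; everything else is a formal unwinding of the indexed definitions.
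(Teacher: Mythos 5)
Your proposal is correct and follows essentially the same route as the paper: both reduce admissibility and the ${\cal D}_{X/S}^{(m)}$-linearity of the maps $\rho_{ij}$ to the single $m$-PD Leibniz identity $\underline{\partial}_{<\underline{k}>}.(ae)=\sum_{\underline{i}\leq\underline{k}}\left\{\begin{array}{c}\underline{k}\\ \underline{i}\end{array}\right\}(\underline{\partial}_{<\underline{k}-\underline{i}>}.a)(\underline{\partial}_{<\underline{i}>}.e)$, checked against the defining commutation relation of $\tilde{{\cal D}}_{X/S}^{(m)}$ from Proposition-Definition \ref{proposition10}. The paper only writes out the ``if'' direction, so your explicit treatment of the uniqueness of the extension and of the converse is a welcome (and correct) completion rather than a departure.
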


\begin{proof}
We show the if part. Let us show that the action of $\tilde {{\cal D}}_{X/S}^{(m)}$ on $\cal E$ defined by $\tilde {{\cal D}}_{X, i}^{(m)}\otimes _{{\cal O}_{U}}{\cal E}_{j}\to {\cal E}_{i+j}$ $(a\otimes P)\otimes e\mapsto a. P. e$ is well-defined. We may work locally and by assumption, we have 
\begin{equation*}
\underline{\partial}_{<\underline{k}>}.(a.e)=\sum_{\underline{i}\leq \underline{k}}\left\{
\begin{array}{c}
\underline{k}\\
\underline{i}
\end{array}
\right\}\underline{\partial}_{<\underline{k}-\underline{i}>}.a\underline{\partial}_{<\underline{i}>}.e.
\end{equation*} 
On the other hand, we have
\begin{eqnarray*}
(\underline{\partial}_{<\underline{k}>}.a).e&=&\left(\sum_{\underline{i}\leq \underline{k}}\left\{
\begin{array}{c}
\underline{k}\\
\underline{i}
\end{array}
\right\} \underline{\partial}_{<\underline{k}-\underline{i}>}.a\otimes \underline{\partial}_{<\underline{i}>}\right).e\\
&=&\sum_{\underline{i}\leq \underline{k}} \left\{
\begin{array}{c}
\underline{k}\\
\underline{i}
\end{array}
\right\}\underline{\partial}_{<\underline{k}-\underline{i}>}.a\underline{\partial}_{<\underline{i}>}.e.
\end{eqnarray*}
This completes the proof.
\end{proof}
\begin{rem}\label{lemma25}
We can also give a condition for the action of ${\cal D}_{X/S}^{(m)}$ on a $\cal J$-indexed ${\cal A}_{X}^{gp}$-module to be admissible by using the notion of log $m$-stratification.
Let $p_{i}^{n}$ (with $i=0, 1$) be the natural projection $P_{X/S, (m)}^{n}\to X$ and
\{$\varepsilon_{{\cal A}, n}$\} (resp. \{$\varepsilon_{{\cal E}, n}$\}) the log $m$-stratification on ${\cal A}_{X}^{gp}$ (resp. on ${\cal E}$) associated to the ${\cal D}_{X/S}^{(m)}$-action.
Then a $\cal J$-indexed left ${\cal D}_{X/S}^{(m)}$-module structure on $\cal J$-indexed ${\cal A}_{X}^{gp}$-module $\cal E$ is admissible if and only if the following diagram is commutative for any positive integer $n$: 
	\[\xymatrix{
	{p_{1}^{n*}\bigl({\cal A}_{X}^{gp}\otimes_{{\cal O}_{X}}{\cal E}}\bigr) \ar[r] \ar[d] & {p_{0}^{n*}\bigl({\cal A}_{X}^{gp}\otimes_{{\cal O}_{X}}{\cal E}}\bigr) \ar[d]\\
	{p_{1}^{n*}{\cal E}} \ar[r]	
	 & {p_{0}^{n*}{\cal E}}
	,}\]
	where the upper horizontal arrows is the tensor product $\varepsilon_{{\cal A}, n}\otimes \varepsilon_{{\cal E}, n}$,
	the under horizontal arrow is $\varepsilon_{{\cal E}, n}$
	and the vertical arrows are induced from the ${\cal A}_{X}^{gp}$-action on $\cal E$.

\end{rem}
\begin{rem}\label{lemma26}
Let $\cal E$ be a ${\cal D}_{X/S}^{(m)}$-module.
Then the ${\cal D}_{X/S}^{(m)}$-action on the tensor product ${\cal A}_{X}^{gp}\otimes_{{\cal O}_{X}}{\cal E}$ as a ${\cal D}_{X/S}^{(m)}$-module is admissible.
\end{rem}

For a $\cal J$-indexed $\tilde {{\cal D}}_{X/S}^{(m)}$-module $\cal E$, we put ${\cal E}^{\nabla}:={\cal H}om_{{\cal D}_{X/S}^{(m)}}({\cal O}_{X}, {\cal E})$.

\begin{theo}\label{theorem40}
Let $X\to S$ be a log smooth morphism of fine log schemes. Let $\cal J$ be a sheaf of ${\cal I}_{X}^{gp}$-sets on $X$. Then the functor $\cal E\mapsto {\cal E}^{\nabla}$ give an equivalence between the category of $\cal J$-indexed ${\cal A}_{X}^{gp}$-modules with an  admissible $\cal J$-indexed left ${\cal D}_{X/S}^{(m)}$-module structure and zero $p^{m+1}$-curvature and the category of $\cal J$-indexed ${\cal B}_{X/S}^{(m+1)}$-modules.
\end{theo}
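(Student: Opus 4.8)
The plan is to reduce the statement to the indexed Morita equivalence of Proposition~\ref{Proposition5}, applied to the Azumaya algebra $\tilde{\cal D}_{X/S}^{(m)}$ restricted along the zero section. First I would rewrite the hypotheses cutting out the source category in purely algebraic terms. By the definition of admissibility, a $\cal J$-indexed ${\cal A}_{X}^{gp}$-module with admissible $\cal J$-indexed left ${\cal D}_{X/S}^{(m)}$-module structure is exactly a $\cal J$-indexed $\tilde{\cal D}_{X/S}^{(m)}$-module. By Theorem~\ref{theorem13} the center is $\tilde{\mathfrak Z}\cong {\cal B}_{X/S}^{(m+1)}\otimes_{{\cal O}_{X'}}\mathfrak Z$ with $\mathfrak Z=S^{\cdot}{\cal T}_{X'/S}$, the copy of $\mathfrak Z$ being the image of the $p^{m+1}$-curvature map $\beta$. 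Hence the $p^{m+1}$-curvature of a $\tilde{\cal D}_{X/S}^{(m)}$-module vanishes if and only if the augmentation ideal $\mathfrak Z_{+}$ (the positive-degree part of $S^{\cdot}{\cal T}_{X'/S}$) acts as zero, that is, if and only if the central action factors through ${\cal B}_{X/S}^{(m+1)}=\tilde{\mathfrak Z}\otimes_{\mathfrak Z}{\cal O}_{X'}$, where ${\cal O}_{X'}$ is a $\mathfrak Z$-algebra via the zero section $\mathfrak Z\to{\cal O}_{X'}$. Thus the source category is canonically the category of $\cal J$-indexed left $\bigl(\tilde{\cal D}_{X/S}^{(m)}\otimes_{\tilde{\mathfrak Z}}{\cal B}_{X/S}^{(m+1)}\bigr)$-modules.

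Next I would split this restricted algebra. The indexed algebra ${\cal A}_{X}^{gp}$, with its tautological $\tilde{\cal D}_{X/S}^{(m)}$-action, has vanishing $p^{m+1}$-curvature: by Proposition~\ref{proposition1} the operator $\underline{\partial}_{<p^{m+1}\underline{\varepsilon}_{i}>}=\beta(\xi_{i}')$ acts on $\underline{\theta}^{\underline{j}}$ through the scalar $\underline{q}_{p^{m+1}\underline{\varepsilon}_{i}}!=p!$, which is zero in characteristic $p$. Hence ${\cal A}_{X}^{gp}$ is a module over $\tilde{\cal D}_{X/S}^{(m)}\otimes_{\tilde{\mathfrak Z}}{\cal B}_{X/S}^{(m+1)}$, compatibly with its ${\cal B}_{X/S}^{(m+1)}$-module structure. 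Since the base change of an indexed Azumaya algebra along the commutative $\tilde{\mathfrak Z}$-algebra ${\cal B}_{X/S}^{(m+1)}$ is again Azumaya, Corollary~\ref{cor1} shows that $\tilde{\cal D}_{X/S}^{(m)}\otimes_{\tilde{\mathfrak Z}}{\cal B}_{X/S}^{(m+1)}$ is an Azumaya algebra over ${\cal B}_{X/S}^{(m+1)}$ of rank $p^{2(m+1)r}$. By Proposition~\ref{proposition2}, ${\cal A}_{X}^{gp}$ is locally free over ${\cal B}_{X/S}^{(m+1)}$ of rank $p^{(m+1)r}$, so Proposition~\ref{Proposition7} applies and yields an isomorphism of ${\cal I}_{X}^{gp}$-indexed ${\cal B}_{X/S}^{(m+1)}$-algebras
\begin{equation*}
\tilde{\cal D}_{X/S}^{(m)}\otimes_{\tilde{\mathfrak Z}}{\cal B}_{X/S}^{(m+1)}\xrightarrow{\cong}{\cal E}nd_{{\cal B}_{X/S}^{(m+1)}}\bigl({\cal A}_{X}^{gp}\bigr).
\end{equation*}

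With the splitting in hand, Proposition~\ref{Proposition5} gives an equivalence $E\mapsto {\cal A}_{X}^{gp}\otimes_{{\cal B}_{X/S}^{(m+1)}}E$ between the category of $\cal J$-indexed ${\cal B}_{X/S}^{(m+1)}$-modules and the category of $\cal J$-indexed left ${\cal E}nd_{{\cal B}_{X/S}^{(m+1)}}({\cal A}_{X}^{gp})$-modules, the latter being exactly the source category identified in the first paragraph. It remains to recognize the quasi-inverse as $\cal E\mapsto{\cal E}^{\nabla}$. The quasi-inverse supplied by Proposition~\ref{Proposition5} is $\cal E\mapsto {\cal H}om_{{\cal E}nd_{{\cal B}_{X/S}^{(m+1)}}({\cal A}_{X}^{gp})}({\cal A}_{X}^{gp},\cal E)$, and I would compare it with ${\cal E}^{\nabla}={\cal H}om_{{\cal D}_{X/S}^{(m)}}({\cal O}_{X},\cal E)$ by restricting along the ${\cal D}_{X/S}^{(m)}$-linear inclusion ${\cal O}_{X}={\cal A}_{X,0}^{gp}\hookrightarrow{\cal A}_{X}^{gp}$, i.e. by evaluating a homomorphism at $1=\underline{\theta}^{\underline{0}}$. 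Since $1$ is a horizontal section of ${\cal A}_{X}^{gp}$, evaluation lands in ${\cal E}^{\nabla}$. Conversely ${\cal B}_{X/S}^{(m+1)}$ preserves ${\cal E}^{\nabla}$: one checks, via Proposition~\ref{proposition1} and Lucas' theorem (the scalar $\underline{q}_{\underline{k}}!\binom{\underline{j}}{\underline{k}}$ is divisible by $p$ on the $p^{m+1}$-th power generators), that the finite list of conditions in Proposition~\ref{proposition3} already forces $\underline{\partial}_{<\underline{k}>}.b=0$ for every $b\in{\cal B}_{X/S}^{(m+1)}$ and every $\underline{k}\neq 0$; combined with the Leibniz rule of Proposition-Definition~\ref{proposition10} this makes ${\cal E}^{\nabla}$ a $\cal J$-indexed ${\cal B}_{X/S}^{(m+1)}$-module.

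The main obstacle is this last identification. Working étale-locally, ${\cal A}_{X}^{gp}$ is the standard faithful module over the matrix algebra ${\cal E}nd_{{\cal B}_{X/S}^{(m+1)}}({\cal A}_{X}^{gp})\cong M_{p^{(m+1)r}}({\cal B}_{X/S}^{(m+1)})$, and $1=\underline{\theta}^{\underline{0}}$ is a cyclic generator corresponding to the idempotent projecting onto the $\underline{\theta}^{\underline{0}}$-component; the crux is to verify that, under the splitting isomorphism above, this idempotent cuts out precisely the horizontal part, so that evaluation at $1$ is a natural bijection from ${\cal E}nd_{{\cal B}_{X/S}^{(m+1)}}({\cal A}_{X}^{gp})$-linear maps ${\cal A}_{X}^{gp}\to\cal E$ onto ${\cal E}^{\nabla}$, functorial in $\cal E$ and compatible with the ${\cal B}_{X/S}^{(m+1)}$-actions. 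This exhibits $\cal E\mapsto{\cal E}^{\nabla}$ as the quasi-inverse of $E\mapsto{\cal A}_{X}^{gp}\otimes_{{\cal B}_{X/S}^{(m+1)}}E$, completing the proof.
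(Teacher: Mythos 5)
Your proposal is correct and follows essentially the same route as the paper: identify the source category with $\cal J$-indexed left $\tilde{\cal D}_{0}$-modules for $\tilde{\cal D}_{0}=\tilde{\cal D}_{X/S}^{(m)}\otimes_{\tilde{\mathfrak Z}}{\cal B}_{X/S}^{(m+1)}$ via the zero section, split $\tilde{\cal D}_{0}$ over ${\cal B}_{X/S}^{(m+1)}$ with splitting module ${\cal A}_{X}^{gp}$ using Corollary \ref{cor1} and Proposition \ref{Proposition7}, apply the indexed Morita equivalence of Proposition \ref{Proposition5}, and identify the quasi-inverse with ${\cal E}\mapsto{\cal E}^{\nabla}$. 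The only difference is one of emphasis: you spell out the vanishing of the $p^{m+1}$-curvature of ${\cal A}_{X}^{gp}$ and the identification ${\cal H}om_{\tilde{\cal D}_{0}}({\cal A}_{X}^{gp},{\cal E})\cong{\cal E}^{\nabla}$ in more detail than the paper, which simply asserts these points.
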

\begin{proof}
We use Proposition \ref{Proposition5} to construct the equivalence.
We consider ${\cal B}_{X/S}^{(m+1)}$ as a $\tilde{\mathfrak Z}$-algebra via the composite $S^{\cdot}({\cal T}_{X'/S})\to S^{\cdot}({\cal T}_{X'/S})/S^{+}({\cal T}_{X'/S})\xrightarrow{\cong} {\cal O}_{X'}\to {\cal B}_{X/S}^{(m+1)}$.
By Corollary \ref{cor1}, $\tilde{\cal D}_{0}:=\tilde{{\cal D}}_{X}^{(m)}\otimes_{\tilde{\mathfrak Z}}{\cal B}_{X/S}^{(m+1)}$ is an Azumaya algebra of rank $p^{2(m+1)r}$. Let us find the splitting module of $\tilde{\cal D}_{0}
$. Since ${\cal A}_{X}^{gp}$ is a locally free ${\cal B}_{X/S}^{(m+1)}$-module of rank $p^{(m+1)r}$ which has a structure of left $\tilde{\cal D}_{0}$-module, $\tilde{\cal D}_{0}$ splits over ${\cal B}_{X/S}^{(m+1)}$ with splitting module ${\cal A}_{X}^{gp}$ by Proposition \ref{Proposition7}.
Hence, we can apply Proposition \ref{Proposition5} and get the equivalence of categories ${\cal E}\mapsto {\cal H}om_{\tilde{\cal D}_{0}}({\cal A}_{X}^{gp}, {\cal E})$ between the category of $\cal J$-indexed left $\tilde{\cal D}_{0}$-modules and the category of $\cal J$-indexed ${\cal B}_{X/S}^{(m+1)}$-modules. Now the notion of $\cal J$-indexed left $\tilde{\cal D}_{0}$-module is equivalent to that of $\cal J$-indexed ${\cal A}_{X}^{gp}$-module with an  admissible $\cal J$-indexed left ${\cal D}_{X/S}^{(m)}$-module structure and zero $p^{m+1}$-curvature, and there is a natural isomorphism ${\cal H}om_{\tilde{\cal D}_{0}}({\cal A}_{X}^{gp}, {\cal E})\cong {\cal E}^{\nabla}$. This completes the proof.
\end{proof}

\section{The global Cartier transform}
The goal of this section is to construct the log global Cartier transform of higher level.
First we recall a few notions on the log crystalline theory of higher level needed later.
It should be remarked here that Miyatani studied the foundations of log crystalline theory of higher level in his unpublished master thesis in the University of Tokyo \cite{Mi}.
\subsection{Logarithmic crystalline site of level $m$}
We fix throughout this subsection an $m$-PD fine log scheme $(S, \mathfrak a,\mathfrak b, \gamma)$ and a fine log scheme $X$ over $S$. We assume that the $m$-PD structure $\gamma$ extends to $X$.
\begin{defi}
Let $U$ be a fine log scheme over $X$. A log $m$-PD thickening $(U, T, J, \delta)$ of $U$ over $(S, \mathfrak a,\mathfrak b, \gamma)$ is a data which consists of a fine log scheme $T$ over $S$, an exact closed immersion $U\hookrightarrow T$ over $S$ and an $m$-PD structure $(J, \delta)$ on the defining ideal of $U\hookrightarrow T$ compatible with $({\mathfrak b}, \gamma)$.
A morphism of log $m$-PD thickenings over $(S, \mathfrak a,\mathfrak b, \gamma)$ can be defined in an obvious way.
\end{defi}
\begin{defi}
{\rm(1)} The log $m$-crystalline site ${\rm{Cris}}^{(m)}(X/S)$ is the category of log $m$-PD thickenings $(U, T, J, \delta)$ of an \'etale open $U$ of $X$ over $(S, \mathfrak a,\mathfrak b, \gamma)$ endowed with the topology induced by the \'etale topology on $T$. Its associated topos $(X/S)^{(m)}_{\rm cris}$ is called the log $m$-crystalline topos.

{\rm(2)} The sheaf of rings defined by
\begin{equation*}
(U, T, J, \delta)\longmapsto \Gamma(T, {\cal O}_{T})
\end{equation*}
in the topos $(X/S)^{(m)}_{\rm cris}$ is called the structure sheaf of the site ${\rm{Cris}}^{(m)}(X/S)$ and we denote it by ${\cal O}_{X/S}^{(m)}$.
\end{defi}
\begin{rem}
A sheaf $E$ on ${\rm{Cris}}^{(m)}(X/S)$ is equivalent to the following data: For every log $m$-PD thickening $(U, T, J, \delta)$, an \'etale sheaf $E_{T}$ on $T$, and for every morphism $u:T_{1}\to T_{2}$ in ${\rm{Cris}}^{(m)}(X/S)$, a map $\rho_{u}:u^{-1}(E_{T_{2}})\to E_{T_{1}}$, satisfying the cocycle condition such that $\rho_{u}$ is an isomorphism if $u$ is \'etale.
\end{rem}
Next we define the notion of log $m$-crystal.
\begin{defi}
Let $E$ be an ${\cal O}_{X/S}^{(m)}$-module in $(X/S)^{(m)}_{\rm cris}$. Then, $E$ is called a log $m$-crystal in ${\cal O}_{X/S}^{(m)}$-modules if, for all morphism $f: (U, T, J, \delta)\to (U', T', J', \delta')$ of $\rm{Cris}^{(m)}(X/S)$, the canonical morphism
\begin{equation*}
f^{*}(E_{(U', T', J', \delta')})\longrightarrow E_{(U, T, J, \delta)}
\end{equation*}
is an isomorphism.
\end{defi}
\begin{defi}
Let $M$ be an ${\cal O}_{X}$-module. Then a log hyper $m$-PD stratification on $M$ is a ${\cal P}_{X/S, (m)}$-linear isomorphism
\begin{equation*}
\varepsilon: {\cal P}_{X/S, (m)}\otimes M \to M\otimes {\cal P}_{X/S, (m)}
\end{equation*}
which is reduced to the identity map on $M$ modulo the kernel of ${\cal P}_{X/S, (m)}\to {\cal O}_{X}$ and satisfies the usual cocycle condition.
\end{defi}

\begin{pro}\label{Proposition8}
Let $({\mathfrak a_{0}, \mathfrak b_{0}}, \gamma_{0})$ be a quasi-coherent $m$-PD subideal of $\mathfrak a$, let $S_{0}\hookrightarrow S$ denote the exact closed immersion defined by $\mathfrak a_{0}$, and $X_{0}\hookrightarrow X$ its base change by $X\to S$. We assume that $X$ is log smooth and flat over $S$. Then the following categories are equivalent
\begin{enumerate}
\item[\rm (1)] The category of  log $m$-crystals in ${\cal O}_{X_{0}/S}^{(m)}$-modules
\item[\rm (2)] The category of ${\cal O}_{X}$-modules equipped with a log hyper $m$-PD stratification. 
\end{enumerate}
\end{pro}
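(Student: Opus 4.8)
The plan is to establish, in the logarithmic level-$m$ situation, the standard equivalence between crystals and modules equipped with a stratification; I freely use that $X$ is log smooth and flat over $S$ and that $\gamma$ extends to $X$.

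First I would construct the functor from (1) to (2). Since $X$ is flat over $S$ and $(\mathfrak a_{0},\mathfrak b_{0},\gamma_{0})$ is an $m$-PD subideal of $\mathfrak a$, the exact closed immersion $X_{0}\hookrightarrow X$ carries a canonical structure of log $m$-PD thickening of $X_{0}$ over $(S,\mathfrak a,\mathfrak b,\gamma)$, so $(X_{0},X)$ is an object of ${\rm Cris}^{(m)}(X_{0}/S)$. Given a log $m$-crystal $E$, put $M:=E_{X}$. The composite $X_{0}\hookrightarrow X\hookrightarrow P_{X/S,(m)}$ (the second map being the diagonal section into the log $m$-PD envelope) is again an exact closed immersion, so $(X_{0},P_{X/S,(m)})$ lies in the site, and the two projections $p_{0},p_{1}\colon P_{X/S,(m)}\to X$ are morphisms of thickenings over the identity of $X_{0}$. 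The crystal condition then furnishes isomorphisms $p_{i}^{*}M\xrightarrow{\cong}E_{P_{X/S,(m)}}$ for $i=0,1$, whose composite is a ${\cal P}_{X/S,(m)}$-linear isomorphism $\varepsilon\colon {\cal P}_{X/S,(m)}\otimes M\to M\otimes {\cal P}_{X/S,(m)}$. That $\varepsilon$ reduces to the identity modulo the kernel of ${\cal P}_{X/S,(m)}\to{\cal O}_{X}$ follows by applying $E$ to the diagonal $X\to P_{X/S,(m)}$, and the cocycle condition follows from the functoriality of $E$ along the three projections out of the log $m$-PD envelope of $X\times_{S}X\times_{S}X$. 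This is exactly a log hyper $m$-PD stratification on $M$.

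Next I would build the quasi-inverse (2)$\to$(1) by the familiar local-lifting procedure. For a log $m$-PD thickening $(U,T,J,\delta)$, the structural morphism $U\to X_{0}\hookrightarrow X$ lifts, \'etale locally on $T$, to a morphism $g\colon T\to X$ over $S$; this is precisely the infinitesimal lifting property of the log smooth morphism $X/S$ against the exact $m$-PD thickening $U\hookrightarrow T$. I set $E_{T}:=g^{*}M$ locally. Any two lifts $g,g'$ on an overlap jointly factor through a map $T\to P_{X/S,(m)}$, and pulling back $\varepsilon$ along this map yields a canonical isomorphism $g^{*}M\cong g'^{*}M$; the cocycle condition on $\varepsilon$ makes these comparisons compatible on triple overlaps, so the local modules glue to an ${\cal O}_{T}$-module $E_{T}$ independent of all choices. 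Pulling back lifts along a morphism $u\colon T_{1}\to T_{2}$ produces the transition maps $\rho_{u}$, which are isomorphisms because $E_{T}$ is defined by pullback; hence $E$ is a log $m$-crystal in ${\cal O}_{X_{0}/S}^{(m)}$-modules.

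Finally I would verify that the two functors are mutually quasi-inverse, which is essentially formal: evaluating the crystal attached to $(M,\varepsilon)$ on $(X_{0},X)$ uses the identity lift and returns $M$, and reading off its stratification through $p_{0},p_{1}$ returns $\varepsilon$; conversely, for a crystal $E$ the locally lifted module $g^{*}(E_{X})$ is identified with $E_{T}$ by the crystal condition, so the round trip is canonically the identity, compatibly with morphisms. I expect the genuine difficulty to lie in the well-definedness of the functor (2)$\to$(1): one must both secure the \'etale-local existence of the lifts $g$ (the log smooth lifting criterion in the $m$-PD setting) and, more delicately, check that the isomorphisms supplied by $\varepsilon$ for different lifts obey the cocycle compatibility needed to glue. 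This gluing step is exactly where the universal property of $P_{X/S,(m)}$, log smoothness, and flatness over $S$ are all used.
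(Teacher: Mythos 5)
Your overall strategy is the standard crystal--stratification equivalence, which is indeed the template the paper invokes (``the proof is the same as the classical case''). However, you skip the one point that the paper isolates as the actual content of the proof: the identification of the log $m$-PD envelope of $X_{0}$ in $X\times_{S}X$ (and in $X\times_{S}X\times_{S}X$) with the log $m$-PD envelope $P_{X/S,(m)}$ of $X$ itself. Two of your steps silently rely on this identification and are not justified as written. First, you assert that $(X_{0},P_{X/S,(m)})$ lies in ${\rm Cris}^{(m)}(X_{0}/S)$ merely because the composite $X_{0}\hookrightarrow X\hookrightarrow P_{X/S,(m)}$ is an exact closed immersion; but an object of the site also requires an $m$-PD structure on the defining ideal of $X_{0}$ in $P_{X/S,(m)}$ (namely on $\bar{I}+\mathfrak{a}_{0}{\cal P}_{X/S,(m)}$) compatible with $(\mathfrak{b},\gamma)$, and producing it is exactly the nontrivial point. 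Second, in the construction of the quasi-inverse you claim that two local lifts $g,g'\colon T\to X$ ``jointly factor through a map $T\to P_{X/S,(m)}$''; the universal property only gives a factorization through the log $m$-PD envelope of $X_{0}$ in $X\times_{S}X$ (since $(g,g')$ restricted to $U$ lands in the diagonal of $X_{0}$, not of $X$), so to land in $P_{X/S,(m)}$ and pull back $\varepsilon$ you again need the two envelopes to coincide.

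The paper's proof supplies precisely this missing ingredient: the $m$-PD structure on $\bar{I}$ is compatible with $(\mathfrak{b}_{0},\gamma_{0})$, whence the envelope of $X_{0}$ in $X\times_{S}X$ (resp.\ the triple product) agrees with that of $X$. Once that is established, the rest of your argument (the existence of local lifts by log smoothness, the gluing via the cocycle condition, and the formal verification that the two functors are quasi-inverse) goes through as you describe. So the gap is not in the architecture of your proof but in the omission of the single lemma that makes the architecture applicable in this setting.
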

\begin{proof}
The proof is the same as the classical case. 
It suffices to see that the log $m$-PD envelope of $X_0$ in 
$X \times_S X, X \times_S X \times_S X$ is equal to 
the log $m$-PD envelope of $X$ in 
$X \times_S X, X \times_S X \times_S X$, respectively. 
This follows from the fact that the $m$-PD structure of the latter 
is compatible with the $m$-PD structure 
$({\mathfrak b_{0}}, \gamma_{0})$. 
\end{proof}
Finally, for technical reason, we introduce a variant of a big crystalline site.
\begin{defi}
We define a site ${\rm CRIS}_{\rm Int}^{(m)}(X/S)$ as the category of log $m$-PD thickenings $(U, T, J, \delta)$ of $U$ over $S$ such that $U$ is any fine log scheme over $X$ and that $T\to S$ is integral endowed with the topology induced from the \'etale topology on $T$.
\end{defi}
\begin{rem} \label {Remark7}
As in the classical case, we obtain an equivalence
\begin{equation*}
 \left(
\begin{array}{c}
\text{The category of} \\
\text{log $m$-crystals of }{\cal O}^{(m)}_{X/S} \text{-modules}\\
\text{on }{\rm Cris}^{(m)}(X/S)
\end{array}
\right)
\to 
\left(
\begin{array}{c}
\text{The category of} \\
\text{${\cal O}_{X}$-modules with a log $m$-PD stratification}
\end{array}
\right)
\end{equation*}
by the following way.
Let $\cal E$ be a log $m$-crystal of ${\cal O}^{(m)}_{X/S}$-module.
Then, for each natural number $n$, the natural morphism  $(X\hookrightarrow P^{n}_{X/S, (m)}) \to (X \xrightarrow{\rm id} X)$ in ${\rm Cris}^{(m)}(X/S)$ defines an isomorphism of
${\cal P}^{n}_{X/S, (m)}$-modules ${\varepsilon}_{n}: p_{0}^{n*}{\cal E}_{X}\xrightarrow{\cong} {\cal E}_{P^{n}_{X/S, (m)}}\xleftarrow{\cong}p_{1}^{n*}{\cal E}_{X}$.
These isomorphisms define a log $m$-stratification on an ${\cal O}_{X}$-module ${\cal E}:={\cal E}_{X}$.
If $X\to S$ is log smooth and integral, which is of our interest in the sequel, then log $m$-PD envelopes of $X$ in $X \times_S X$ and $X \times_S X \times_S X$ are integral over $S$.
So we also obtain an equivalence
\begin{equation*}
 \left(
\begin{array}{c}
\text{The category of} \\
\text{log $m$-crystals of }{\cal O}^{(m)}_{X/S} \text{-modules}\\
\text{on }{\rm CRIS}_{\rm Int}^{(m)}(X/S)
\end{array}
\right)
\to 
\left(
\begin{array}{c}
\text{The category of} \\
\text{${\cal O}_{X}$-modules with a log $m$-PD stratification}
\end{array}
\right)
\end{equation*}
in a similar manner.
Hence the category of log $m$-crystals on ${\rm CRIS}_{\rm Int}^{(m)}(X/S)$ is equivalent to that of ${\rm{Cris}}^{(m)}(X/S)$ in this case.
\end{rem}
\begin{rem}
Let $(U, T, J, \delta)$ be an object in ${\rm{CRIS}}_{\rm Int}^{(m)}(X/S)$. 
Because $U$ and $T$ are integral over $S$ by definition, the underlying scheme of $U''=U\times_{S}S$ (resp. $T''=T\times_{S}S$) coincide with the fiber product of the diagram $U\to S\leftarrow S$ (resp. $T\to S\leftarrow S$) in the category of schemes, where $S\to S$ is the ($m+1$)-st iterate of its absolute Frobenius endomorphism of $S$. See also the Subsection 3.2.
\end{rem}
\subsection{The global Cartier transform}

Let us set some notation.
Let $X\to S$ be a log smooth morphism of fine log schemes defined over ${\mathbb Z}/p{\mathbb Z}$.
Fix a sheaf of ${\cal I}_{X}^{gp}$-sets $\cal J$ on $X$.
$\cal G$ denotes the nilpotent divided power envelope of the zero section of the cotangent bundle of $X'/S$, so that ${\cal O_{G}}=\hat{\Gamma}.{\cal T}_{X'/S}$. We put ${\cal O_{G}^{B}}:={\cal B}_{X/S}^{(m+1)}\otimes_{{\cal O}_{X'}}\cal O_{G}$, ${\cal O_{G}^{A}}:={\cal A}_{X}^{gp}\otimes_{{\cal O}_{X'}}\cal O_{G}$, and $\tilde {{\cal D}}_{X/S}^{(m), \gamma}:=\tilde {{\cal D}}_{X/S}^{(m)}\otimes_{\tilde{\mathfrak Z}}{\cal O_{G}^{B}}$.
We denote by ${\rm HIG}^{\cal B, J}_{\rm PD}(X'/S)$ the category of $\cal J$-indexed ${\cal O_{G}^{B}}$-modules, and by ${\rm MIC}^{\cal A, J}_{\rm PD}(X/S)$ the category of $\cal J$-indexed $\tilde {{\cal D}}_{X/S}^{(m), \gamma}$-modules.
Note that an object of ${\rm HIG}^{\cal B, J}_{\rm PD}(X'/S)$ is equivalent to a $\cal J$-indexed ${\cal B}_{X/S}^{(m+1)}$-module $E'$ equipped with a homomorphism of ${\cal O}_{X'}$-algebras 
\begin{equation*}
\theta:{\cal O_{G}}\to {\cal E}nd_{{\cal B}_{X/S}^{(m+1)}}(E') 
\end{equation*}
(called a ${\cal B}_{X/S}^{(m+1)}$-linear $\cal G$-Higgs field). 
Similarly, an object of ${\rm MIC}^{\cal A, J}_{\rm PD}(X/S)$ is equivalent to a $\cal J$-indexed ${\cal A}_{X}^{gp}$-module $E$ with an admissible ${\cal D}_{X/S}^{(m)}$-action endowed with 
a homomorphism of algebras 
\begin{equation*}
\theta:{\cal O_{G}}\to F_{X/S*}{\cal E}nd_{\tilde {{\cal D}}_{X/S}^{(m)}}(E)
\end{equation*}
extending the map 
\begin{equation*}
\psi:S^{\cdot}{\cal T}_{X'/S}\to F_{X/S*}{\cal E}nd_{\tilde {{\cal D}}_{X/S}^{(m)}}(E)
\end{equation*}
given by the $p^{m+1}$-curvature 
(called a horizontal ${\cal A}_{X}^{gp}$-linear $\cal G$-Higgs field).
The global Cartier transform is formulated as an equivalence of categories between ${\rm MIC}^{\cal A, J}_{\rm PD}(X/S)$ and ${\rm HIG}^{\cal B, J}_{\rm PD}(X'/S)$.
As in \cite{OV} and \cite{S}, first we study the lifting torsor of the $(m+1)$-st relative Frobenius morphism in the context of crystals.
\begin{defi}\label{Definition1}
Let $f:Y\to Z$ be a morphism of fine log schemes defined over ${\mathbb Z}/p{\mathbb Z}$. Then a lifting of $f$ modulo $p^{n}$ is a morphism $\tilde{f}:\tilde{Y}\to \tilde{Z}$ of fine log schemes flat over ${\mathbb Z}/p^{n}{\mathbb Z}$ which fits into a cartesian square in the category of fine log schemes
	\begin{equation*}
		\begin{CD}
			Y @>>>\tilde{Y}\\
			@VVV @VVV\\
			Z @>>> \tilde{Z},\\
		\end{CD}
	\end{equation*}
where $Z\to \tilde{Z'}$ is the exact closed immersion defined by $p$.
\end{defi}
If $f$ is log smooth, respectively log \'etale, resp. integral, resp. exact, so is $\tilde{f}$. From now on, we are mainly concerned with liftings modulo $p^{2}$.\\

For the rest of this paper, we consider an integral log smooth morphism $f:X\to S$ of fine log schemes defined over ${\mathbb Z}/p{\mathbb Z}$ equipped with a lifting $\tilde{X'}\to \tilde{S}$ of $X'\to S$ modulo $p^{2}$,
and we regard $S$, $\tilde{S}$ as $m$-PD fine log schemes with the canonical log structure on $(p)$.
We denote the data $(X\to S, \tilde{X'}\to \tilde{S})$ by $\cal X/S$. 
\begin{lemm}\label{Lemma2}
Let $(U, T, J, \delta)$ be an object of ${\rm{CRIS}}_{\rm Int}^{(m)}(X/S)$. Then there exists a canonical morphism $T\to U'$ such that the following diagram commutes
\[\xymatrix{
 {U} \ar[rd] \ar[rr] & & U'\\
 & T\ar[ru] &}
 ,
\]
where the symbol ${}'$ is as in the beginning of Subsection 3.2.
\end{lemm}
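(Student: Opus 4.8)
The plan is to interpose the log scheme $U'':=U\times_{S,F_{S}}S$ of Subsection 3.2, where $F_{S}$ is the $(m+1)$-st iterate of the absolute Frobenius of $S$; recall that the relative Frobenius $F_{U/S}\colon U\to U'$ is purely inseparable, that $U'\to U''$ is log \'etale, and that their composite is the canonical map $w\colon U\to U''$ induced by the $(m+1)$-st absolute Frobenius $F_{U}$ of $U$. First I would construct a canonical morphism $g\colon T\to U''$ with $g\circ\iota=w$, where $\iota\colon U\hookrightarrow T$ is the given exact closed immersion, and then lift $g$ along the log \'etale morphism $U'\to U''$ to obtain the desired $T\to U'$.

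For the first step I would show that the $(m+1)$-st iterate $F_{T}\colon T\to T$ of the absolute Frobenius factors through $\iota$. Let $\bar{J}$ denote the defining ideal of $U$ in $T$ and $(J,\delta)$ its $m$-PD structure, so that $\bar{J}^{(p^{m})}\subseteq J\subseteq\bar{J}$. For a local section $x$ of $\bar{J}$ one has $x^{p^{m}}\in\bar{J}^{(p^{m})}\subseteq J$, whence $x^{p^{m+1}}=(x^{p^{m}})^{p}=p!\,\delta_{p}(x^{p^{m}})=0$ in characteristic $p$; thus the comorphism $s\mapsto s^{p^{m+1}}$ of $F_{T}$ annihilates $\bar{J}$, and $F_{T}$ factors uniquely as $\iota\circ a$ for some $a\colon T\to U$. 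It is precisely here that the $(m+1)$-st, rather than the first, Frobenius is forced by the level-$m$ hypothesis. Combining $\mathrm{str}_{T}\circ F_{T}=F_{S}\circ\mathrm{str}_{T}$ with $\mathrm{str}_{T}\circ\iota=\mathrm{str}_{U}$ gives $\mathrm{str}_{U}\circ a=F_{S}\circ\mathrm{str}_{T}$, so the pair $(a,\mathrm{str}_{T})$ defines $g\colon T\to U''$; the integrality assumption built into ${\rm CRIS}_{\rm Int}^{(m)}(X/S)$ is what lets me invoke the universal property here, since by the remark following Remark \ref{Remark7} it guarantees that the underlying scheme of $U''$ is the scheme-theoretic fibre product. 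Because the absolute Frobenius is natural and $\iota$ is a monomorphism, $a\circ\iota=F_{U}$, and hence $g\circ\iota=w$.

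Finally I would lift $g$ across $U'\to U''$. The outer square with sides $\iota\colon U\hookrightarrow T$, $g\colon T\to U''$, $F_{U/S}\colon U\to U'$ and $U'\to U''$ commutes, since $g\circ\iota=w=(U'\to U'')\circ F_{U/S}$; as $\iota$ is an exact closed immersion whose defining ideal is a nil-ideal (again $x^{p^{m+1}}=0$), the formal log \'etaleness of $U'\to U''$ yields a unique morphism $T\to U'$ lifting $F_{U/S}$ and lying over $g$, which is the asserted canonical morphism, the required triangle commuting by construction. I expect the principal obstacle to lie not in the scheme-level computation but in the log-structure bookkeeping: one must verify that the factorization $F_{T}=\iota\circ a$, the map $g$, and the lifting all respect the monoid structures, using the exactness of $\iota$ together with the $p^{m+1}$-power map on ${\cal M}_{T}$, and that the log \'etale lifting criterion genuinely applies to the exact thickening $U\hookrightarrow T$. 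These points are handled as in the level-zero arguments of Schepler and Ogus--Vologodsky, the integrality hypothesis again ensuring that all fibre products in sight are computed on underlying schemes.
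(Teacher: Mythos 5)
Your proposal is correct and follows essentially the same route as the paper: both use the $m$-PD structure to get $x^{p^{m+1}}=p!\,x^{\{p^{m+1}\}}=0$ on the defining ideal, both use this (together with the integrality hypothesis identifying $U''$ with the scheme-theoretic fibre product) to factor the canonical map $T\to T''$ through $U''$, and both then lift across the log \'etale morphism $U'\to U''$ using that $U\hookrightarrow T$ is an exact nilimmersion. The only cosmetic difference is that you obtain $T\to U''$ by first factoring $F_{T}$ through $U$ and invoking the universal property of the fibre product, whereas the paper writes the comorphism ${\cal O}_{U''}\to{\cal O}_{T}$ directly and handles the log structures via the strictness of $U''\to T''$.
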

\begin{proof}
We use the symbol ${}''$ as in the beginning of Subsection 3.2.
Since $J$ is an $m$-PD ideal, we have $a^{p^{m+1}}=p!a^{\{p^{m+1}\}}=0$ in characteristic $p>0$ for any $a\in J$. 
Therefore $U\to T$ is a homeomorphism.
Since $a^{p^{m+1}}=0$ for any $a\in J$, the morphism ${\cal O}_{S}\otimes_{{\cal O}_{S}}{\cal O}_{T}\to {\cal O}_{T};\,  a\otimes b\mapsto a\cdot b^{p^{m+1}}$ induces a natural morphism ${\cal O}_{U''}\to {\cal O}_{T}$.
Since $U''\to T''$ is strict, we also have a natural morphism ${\cal M}_{U''}\simeq {\cal M}_{T''}\to {\cal M}_{T}$.
Therefore the morphism $T\to T''$ factors through $U''$.
We thus obtain solid arrows in the diagram
\[\xymatrix{
{U} \ar[d] \ar[r] & U' \ar[d] \\
T \ar[r]\ar@{-->}[ru] & U'' .}
\] 
Then, since the morphism $U'\to U''$ is log \'etale and $U\to T$ is an exact nilimmersion, there exists a unique morphism $T\to U'$ (the dotted arrow) making the above diagram commute.
\end{proof}
\begin{defi}
Let $(U, T, J, \delta)$ be an object in ${\rm{CRIS}}_{\rm Int}^{(m)}(X/S)$.
We define the morphism $T\to X'$ by the composition of the morphism in Lemma \ref{Lemma2} and $U'\to X'$, and denote it by $f_{T/S}$.
\end{defi}
If $g:T_{1}\to T_{2}$ be a morphism of ${\rm{CRIS}}_{\rm Int}^{(m)}(X/S)$, then $f_{T_{2}/S}\circ g=f_{T_{1}/S}$. Hence if $E'$ is an ${\cal O}_{X'}$-module, there exists a natural isomorphism
\begin{equation*}
\theta_{g}:g^{*}f^{*}_{T_{2}/S}E'\xrightarrow{\cong} f^{*}_{T_{1}/S}E'.
\end{equation*}
Thus we have the following lemma.
\begin{lemm}\label{Lem3}
Let $E'$ be an ${\cal O}_{X'}$-module.
The collection $\bigl\{ f^{*}_{T/S}E', \theta_{g}\bigr\}$ defines a log $m$-crystal of ${\cal O}^{(m)}_{X/S}$-module on ${\rm CRIS}_{\rm Int}^{(m)}(X/S)$. 
We denote it by $F_{X/S}^{*}E'$, by abuse of notation.
\end{lemm}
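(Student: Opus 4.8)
The plan is to check that the assignment $(U,T,J,\delta)\mapsto f^{*}_{T/S}E'$, together with the isomorphisms $\theta_{g}$, satisfies the axioms in the description of sheaves on the crystalline site recalled above, upgraded to the requirement that all transition maps be isomorphisms (which is exactly the definition of a log $m$-crystal). For each object $(U,T,J,\delta)$ of ${\rm CRIS}_{\rm Int}^{(m)}(X/S)$ the pullback $E_{T}:=f^{*}_{T/S}E'$ is by construction an ${\cal O}_{T}$-module, and for a morphism $g:T_{1}\to T_{2}$ the comparison map $\theta_{g}$ is ${\cal O}_{T_{1}}$-linear since it is a pullback of the identity of $E'$. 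Hence the only substantive points to verify are the cocycle condition for the family $\{\theta_{g}\}$, the normalization $\theta_{{\rm id}_{T}}={\rm id}$, and the fact that each $\theta_{g}$ is an isomorphism.

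For the cocycle condition I would take composable morphisms $T_{1}\xrightarrow{g}T_{2}\xrightarrow{h}T_{3}$ and use the relations $f_{T_{2}/S}\circ g=f_{T_{1}/S}$, $f_{T_{3}/S}\circ h=f_{T_{2}/S}$ and $f_{T_{3}/S}\circ(h\circ g)=f_{T_{1}/S}$ established just before the statement. Under these identifications $\theta_{g}$, $\theta_{h}$ and $\theta_{h\circ g}$ are all instances of the canonical comparison between the pullback of $E'$ along a composite and the corresponding iterated pullback, so the equality $\theta_{h\circ g}=\theta_{g}\circ g^{*}(\theta_{h})$ (after inserting the standard coherence isomorphism $(h\circ g)^{*}\cong g^{*}\circ h^{*}$) follows from the functoriality of pullback with respect to composition. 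The normalization $\theta_{{\rm id}_{T}}={\rm id}$ is immediate from the construction.

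Finally, each $\theta_{g}$ is an isomorphism because it is precisely the canonical identification $g^{*}f^{*}_{T_{2}/S}E'\cong(f_{T_{2}/S}\circ g)^{*}E'=f^{*}_{T_{1}/S}E'$ coming from $f_{T_{2}/S}\circ g=f_{T_{1}/S}$. Since the transition maps are isomorphisms for \emph{every} morphism of the site, not merely the log \'etale ones, the collection $\{f^{*}_{T/S}E',\theta_{g}\}$ defines an ${\cal O}^{(m)}_{X/S}$-module which is moreover a log $m$-crystal. I do not expect a genuine obstacle here: all the geometric input has already been absorbed into the construction of the morphisms $f_{T/S}$ in Lemma \ref{Lemma2} and into the compatibility $f_{T_{2}/S}\circ g=f_{T_{1}/S}$, so what remains is the formal verification of cocycle compatibility, and the only point requiring care is the bookkeeping of the coherence isomorphisms for pullbacks along the triangle of maps to $X'$.
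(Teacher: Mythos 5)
Your proposal is correct and follows exactly the route the paper intends: the paper gives no separate proof of this lemma, treating it as an immediate consequence of the identity $f_{T_{2}/S}\circ g=f_{T_{1}/S}$ established just beforehand, and your verification of the cocycle condition, normalization, and invertibility of each $\theta_{g}$ is precisely the formal bookkeeping that the paper leaves implicit.
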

\begin{defi}\label{definition20}
Let $\tilde{T}$ be an object of ${\rm{CRIS}}_{\rm Int}^{(m)}(X/\tilde{S})$ which is flat over $\tilde{S}$, and $T$ the closed subscheme defined by $p$.
A lifting of $f_{T/S}$ to $\tilde{T}$ is a lifting $\tilde{F}:\tilde{T}\to \tilde{X'}$ over $\tilde{S}$ modulo $p^{2}$. ${\cal L}^{(m)}_{\cal X/S}(\tilde{T})$ denotes the set of all such liftings, and ${\cal L}^{(m)}_{{\cal X/S}, \tilde{T}}$ denotes the \'etale sheaf of sets on $\tilde{T}$ of local liftings of $f_{T/S}$.
For a morphism $\tilde{g}:\tilde{T_{1}}\to \tilde{T_{2}}$ of ${\rm{CRIS}}_{\rm Int}^{(m)}(X/S)$, we define the map ${\cal L}^{(m)}_{\cal X/S}(\tilde{g}):{\cal L_{X/S}}(\tilde{T_{2}})\to {\cal L_{X/S}}(\tilde{T_{1}})$ by $\tilde{F}\mapsto \tilde{F}\circ \tilde{g}$.
\end{defi}
Let ${\rm{CRIS}}_{{\rm Int},f}^{(m)}(X/\tilde{S})$ denote the full subsite of ${\rm{CRIS}}_{\rm Int}^{(m)}(X/\tilde{S})$
consisting of those objects which are flat over $\tilde{S}$.
The family $\Bigl\{\, {\cal L}^{(m)}_{{\cal X/S}, \tilde{T}}\, \Bigl|\, \tilde{T}\in {\rm{CRIS}}_{{\rm Int},f}^{(m)}(X/\tilde{S})\,\Bigr\}$ together with the family of transition maps ${\cal L}^{(m)}_{\cal X/S}(\tilde{g})$ defines a sheaf of sets on ${\rm{CRIS}}_{{\rm Int},f}^{(m)}(\tilde{X}/S)$.
\begin{lemm}\label{Lem8}
Let $(\tilde{U}, \tilde{T}, \tilde{J}, \tilde{\delta})$ be an object of ${\rm{CRIS}}_{{\rm Int},f}^{(m)}(X/\tilde{S})$.
Let $T$ denote the closed subscheme of $\tilde{T}$ defined by $p$. Then the sheaf ${\cal L}^{(m)}_{{\cal X/S}, \tilde{T}}$ forms a torsor over ${\cal H}om_{{\cal O}_{T}}(f^{*}	_{T/S}\Omega^{1}_{X'/S}, {\cal O}_{T})\cong f^{*}_{T/S}{\cal T}_{X'/S}$.
\end{lemm}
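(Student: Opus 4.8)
The plan is to identify this statement with the standard infinitesimal lifting problem for the log smooth morphism $\tilde{X'}\to \tilde{S}$, and to verify the two defining properties of a torsor separately: local nonemptiness of ${\cal L}^{(m)}_{{\cal X/S}, \tilde{T}}$, and a simply transitive action of the sheaf $f^{*}_{T/S}{\cal T}_{X'/S}$.

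First I would analyze the thickening $T\hookrightarrow \tilde{T}$. Since $\tilde{T}$ is flat over $\tilde{S}$ and $\tilde{S}$ is flat over ${\mathbb Z}/p^{2}{\mathbb Z}$, we have $p^{2}=0$ in ${\cal O}_{\tilde{T}}$, so the defining ideal $p{\cal O}_{\tilde{T}}$ of the exact closed immersion $T\hookrightarrow \tilde{T}$ is square-zero; moreover, flatness makes multiplication by $p$ into an isomorphism of ${\cal O}_{T}$-modules ${\cal O}_{T}\xrightarrow{\cong} p{\cal O}_{\tilde{T}}$. Composing $f_{T/S}:T\to X'$ of Lemma \ref{Lemma2} with the closed immersion $X'\hookrightarrow \tilde{X'}$, producing an element of ${\cal L}^{(m)}_{{\cal X/S}, \tilde{T}}$ becomes precisely the problem of lifting the morphism $T\to \tilde{X'}$ over $\tilde{S}$ along the square-zero exact closed immersion $T\hookrightarrow \tilde{T}$.

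For local nonemptiness I would invoke Kato's infinitesimal lifting criterion for log smoothness: because $\tilde{X'}\to \tilde{S}$ is log smooth (being a lifting of the log smooth morphism $X'\to S$, cf. Definition \ref{Definition1}) and $T\hookrightarrow \tilde{T}$ is an exact closed immersion with nilpotent defining ideal, a lifting $\tilde{F}:\tilde{T}\to \tilde{X'}$ exists \'etale locally on $\tilde{T}$, equivalently on $T$ since $T\hookrightarrow \tilde{T}$ is a homeomorphism. Hence ${\cal L}^{(m)}_{{\cal X/S}, \tilde{T}}$ is locally nonempty. For the action, I would use the standard fact that the difference of two liftings reducing to the same $f_{T/S}$ modulo $p$ is an ${\cal O}_{T}$-linear log derivation of ${\cal O}_{X'}$ into $p{\cal O}_{\tilde{T}}$, hence factors uniquely through an ${\cal O}_{T}$-linear map $f^{*}_{T/S}\Omega^{1}_{X'/S}\to p{\cal O}_{\tilde{T}}$, and that conversely every such map modifies a given lifting to a new one. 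This exhibits ${\cal L}^{(m)}_{{\cal X/S}, \tilde{T}}$ as a torsor under ${\cal H}om_{{\cal O}_{T}}(f^{*}_{T/S}\Omega^{1}_{X'/S}, p{\cal O}_{\tilde{T}})$, which via the isomorphism $p{\cal O}_{\tilde{T}}\cong {\cal O}_{T}$ is identified with ${\cal H}om_{{\cal O}_{T}}(f^{*}_{T/S}\Omega^{1}_{X'/S}, {\cal O}_{T})\cong f^{*}_{T/S}{\cal T}_{X'/S}$, using that $\Omega^{1}_{X'/S}$ is locally free.

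The main point requiring care is the log-structure bookkeeping: the liftings are morphisms of log schemes over $\tilde{S}$, so the relevant derivations must be taken with respect to the log structures and the controlling module is the sheaf of \emph{log} differentials $\Omega^{1}_{X'/S}$ rather than the ordinary K\"ahler differentials. This is exactly the content of Kato's log smooth deformation theory, so once the square-zero ideal $p{\cal O}_{\tilde{T}}$ has been identified with ${\cal O}_{T}$ the remaining verifications are the routine ones for infinitesimal deformations and present no essential obstacle.
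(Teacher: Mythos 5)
Your proposal is correct and follows essentially the same route as the paper: local existence of liftings from the log smoothness of $\tilde{X'}\to \tilde{S}$ applied to the exact square-zero thickening $T\hookrightarrow\tilde{T}$, and the torsor structure from identifying the difference of two liftings with an ${\cal O}_{T}$-linear map $f^{*}_{T/S}\Omega^{1}_{X'/S}\to p{\cal O}_{\tilde{T}}\cong{\cal O}_{T}$. The only cosmetic difference is that the paper writes the subtraction out by explicit formulas on $dx$ and $d\log m$, where you invoke Kato's log smooth deformation theory; your explicit justification of $p{\cal O}_{\tilde{T}}\cong{\cal O}_{T}$ via flatness is a point the paper uses implicitly.
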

\begin{proof}
Let us consider the following diagram
	\[\xymatrix{
	T \ar[d] \ar[r]^{f_{T/S}} & X' \ar[r] & \tilde{X'}\ar[d]\\
 	\tilde{T} \ar[rr] & & \tilde{S}. }
	\] 
Since $\tilde{X'}\to \tilde{S}$ is log smooth, a lifting of $f_{T/S}$ to $\tilde{T}$ exist locally on $T$, so ${\cal L}_{{\cal X/S}, \tilde{T}}$ has nonempty stalks.
If we define, for $g_{1}, g_{2}\in {\cal L}_{{\cal X/S}, \tilde{T}}$, 
the subtraction $g_{1}-g_{2}\in {\cal H}om_{{\cal O}_{T}}(f^{*}_{T/S}\Omega^{1}_{X'/S}, {\cal O}_{T})$ by
\begin{eqnarray}
f^{*}_{T/S}(dx)\mapsto a \text{ where }p\tilde{a}=g_{2}^{*}(\tilde{x})-g_{1}^{*}(\tilde{x})\\
f^{*}_{T/S}(d\log m)\mapsto b \text{ where }g_{1}^{*}(\tilde{m})(1+p\tilde{b})=g_{2}^{*}(\tilde{m}),
\end{eqnarray}
where $\tilde{a}\in {\cal O}_{\tilde{T}}$ (resp. $\tilde{a}\in {\cal O}_{\tilde{T}}$) is a lift of $a\in {\cal O}_{T}$ (resp. $b\in {\cal O}_{T}$) and $g^{*}_{i}$ is the underlying morphism of the structure sheaf or the log structure,
then we can check that ${\cal L}_{{\cal X/S}, \tilde{T}}$ is a torsor over $f^{*}_{T/S}{\cal T}_{X'/S}$ by this subtraction.
\end{proof}
Therefore the family $\{{\cal L}^{(m)}_{{\cal X/S}, \tilde{T}}\}$ defines a log $m$-crystal of torsor over $F_{X/S}^{*}{\cal T}_{X'/S}$.
The following lemma shows that this crystal of torsor on ${\rm{CRIS}}_{{\rm Int},f}^{(m)}(X/\tilde{S})$ defines a log $m$ crystal of torsor on ${\rm{CRIS}}_{{\rm Int}}^{(m)}(X/S)$.
\begin{lemm}\label{Lem7}
The natural inclusion of sites ${\rm{CRIS}}_{{\rm Int},f}^{(m)}(X/\tilde{S})\to {\rm{CRIS}}_{\rm Int}^{(m)}(X/\tilde{S})$ induces an equivalence of categories between the respective categories of log $m$-crystals of ${\cal O}^{(m)}_{X/\tilde{S}}$-modules.
The natural functor from the category of $p$-torsion log $m$-crystals ${\cal O}^{(m)}_{X/\tilde{S}}$-modules on ${\rm{CRIS}}_{\rm Int}^{(m)}(X/\tilde{S})$ to the category of log $m$-crystals ${\cal O}^{(m)}_{X/S}$-modules on ${\rm{CRIS}}_{\rm Int}^{(m)}(X/S)$ is also an equivalence of categories.
Furthermore the category of $p$-torsion log $m$-crystals of torsor over $F_{X/S}^{*}{\cal T}_{X'/S}$ on ${\rm{CRIS}}_{\rm Int}^{(m)}(X/\tilde{S})$ is also equivalent to the category of log $m$-crystals of torsor over $F_{X/S}^{*}{\cal T}_{X'/S}$
on ${\rm{CRIS}}_{\rm Int}^{(m)}(X/S)$.
\end{lemm}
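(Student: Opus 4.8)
The plan is to treat the three assertions in turn, reducing each to the description of crystals by log $m$-PD stratifications recorded in Remark~\ref{Remark7}, together with two elementary observations about the sites involved: that the thickenings flat over $\tilde{S}$ are large enough to compute every crystal, and that the $p$-torsion thickenings over $\tilde{S}$ are exactly the thickenings over $S$. The first of these underlies the first assertion, the second underlies the second, and the third assertion will then follow formally by transporting the torsor structure through the resulting equivalences.

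For the first assertion, I would begin by observing that every object $(U, T, J, \delta)$ of ${\rm CRIS}_{\rm Int}^{(m)}(X/\tilde{S})$ admits, \'etale locally on $T$, an exact closed immersion $T\hookrightarrow \tilde{T}$ into an object $\tilde{T}$ flat and integral over $\tilde{S}$: one embeds $U$ locally into a log smooth integral $\tilde{S}$-scheme (for instance a suitable log affine space over $\tilde{S}$, which is flat over $\tilde{S}$) and forms its log $m$-PD envelope, which is again flat over $\tilde{S}$ since $\tilde{S}$ is flat over ${\mathbb Z}/p^{2}{\mathbb Z}$. The crystal condition then forces the value of any log $m$-crystal on $T$ to be the pullback of its value on $\tilde{T}$, so that two crystals agreeing on the flat subsite agree on all objects (full faithfulness), while a crystal on the flat subsite extends to all objects by pullback along such local embeddings, the extension being well defined and independent of the choice of embedding by the cocycle condition (essential surjectivity). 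This is the step I expect to be the main obstacle, since it requires producing the local flat thickenings within the integral site and verifying that the glued extension is independent of all choices; once it is in place, the remaining assertions are comparatively formal.

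For the second assertion, the key point is that $S\hookrightarrow \tilde{S}$ is the exact closed immersion defined by $p$, so that a log $m$-PD thickening over $\tilde{S}$ on which $p$ acts as zero is precisely a log $m$-PD thickening over $S$; the full subcategory of $p$-torsion objects of ${\rm CRIS}_{\rm Int}^{(m)}(X/\tilde{S})$ is thereby identified with ${\rm CRIS}_{\rm Int}^{(m)}(X/S)$. Restriction along this identification sends a $p$-torsion crystal on $X/\tilde{S}$ to a crystal on $X/S$, and I would construct a quasi-inverse as follows: given a crystal $E_{0}$ on $X/S$, by the first assertion it suffices to prescribe values on objects $\tilde{T}$ flat over $\tilde{S}$, and one sets $E_{\tilde{T}}:=(E_{0})_{T}$ with $T:=\tilde{T}\times_{\tilde{S}}S$ the reduction modulo $p$, viewed as an ${\cal O}_{\tilde{T}}$-module through the surjection ${\cal O}_{\tilde{T}}\to {\cal O}_{T}$. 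This is $p$-torsion by construction, the transition isomorphisms of $E_{0}$ supply those of $E$, and the two constructions are mutually inverse. Equivalently, since $X$ factors through the monomorphism $S\hookrightarrow \tilde{S}$ one has $X\times_{\tilde{S}}X=X\times_{S}X$, so the relevant log $m$-PD envelopes, and hence the notion of log $m$-PD stratification, coincide on the two sides, and the claim follows from Remark~\ref{Remark7}.

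For the third assertion I would transport the torsor structure through the equivalences just established. The sheaf $F_{X/S}^{*}{\cal T}_{X'/S}$ is the log $m$-crystal attached by Lemma~\ref{Lem3} to the ${\cal O}_{X'}$-module ${\cal T}_{X'/S}$, it is $p$-torsion because $X$ is defined over ${\mathbb Z}/p{\mathbb Z}$, and it is carried to the corresponding group crystal on ${\rm CRIS}_{\rm Int}^{(m)}(X/S)$ under the equivalences above. Since those equivalences are compatible with fiber products, they preserve the notion of a simply transitive action of a crystal of abelian groups, as in Lemma~\ref{Lem8}; hence they send $p$-torsion log $m$-crystals of torsors under $F_{X/S}^{*}{\cal T}_{X'/S}$ to log $m$-crystals of torsors under the same group crystal, which yields the asserted equivalence and completes the plan.
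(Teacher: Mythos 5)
Your argument is correct in substance but follows a genuinely different route from the paper. The paper's proof is two sentences: since the question is \'etale local on $X$, one may assume that a lifting $\tilde{X}/\tilde{S}$ of $X/S$ exists, and then Proposition \ref{Proposition8} identifies each of the categories in question with the category of ${\cal O}_{\tilde{X}}$-modules equipped with a log hyper $m$-PD stratification (the $p$-torsion, respectively mod $p$, conditions matching up on the two sides); all three assertions follow at once from this single dictionary, at the price of leaving the \'etale descent step implicit. You instead argue directly on the sites: a cofinality argument showing that the flat thickenings compute every crystal, an identification of $p$-torsion thickenings over $\tilde{S}$ with thickenings over $S$ together with an explicit quasi-inverse given by reduction modulo $p$, and formal transport of the torsor structure. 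Your route avoids Proposition \ref{Proposition8} and makes the reduction steps explicit, hence is more self-contained; the paper's is shorter but leans on the stratification dictionary.

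Two points should be tightened. First, in the cofinality step what you actually produce is not an exact closed immersion $T\hookrightarrow\tilde{T}$ but a morphism $T\to\tilde{T}$ in the site: \'etale locally one lifts $U\to Z$ to $T\to Z$ by log smoothness of $Z/\tilde{S}$ and then factors through the log $m$-PD envelope by its universal property. This is all that is needed, since the crystal condition applies to arbitrary morphisms of thickenings; also, flatness of the envelope over $\tilde{S}$ comes from its local freeness over $Z$ combined with flatness of $Z$ over $\tilde{S}$, not merely from flatness of $\tilde{S}$ over ${\mathbb Z}/p^{2}{\mathbb Z}$. Second, the closing ``equivalently'' remark in your treatment of the second assertion is misleading: Remark \ref{Remark7} does not apply to crystals on ${\rm CRIS}^{(m)}_{\rm Int}(X/\tilde{S})$, because $X$ is not flat over $\tilde{S}$; the correct stratification dictionary on that side involves the envelope of $X$ in $\tilde{X}\times_{\tilde{S}}\tilde{X}$ for a local lift $\tilde{X}$ (this is precisely Proposition \ref{Proposition8}), not the envelope of $X$ in $X\times_{S}X$. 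Your primary argument for the second assertion does not use this remark, so nothing is lost by deleting it.
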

\begin{proof}
Since the question is \'etale local on $X$, we may assume that there exists a lifting $\tilde{X}/\tilde{S}$. Then, by Proposition \ref{Proposition8}, both categories can be identified with the category of ${\cal O}_{\tilde{X}}$-modules equipped with a log hyper $m$-PD stratification.
\end{proof}

Second, we construct the ${\cal O}_{X}$-module ${\cal K}^{(m)}_{\cal X/S}$ with natural actions of $\hat{\Gamma}.(F_{X/S}^{*}{\cal T}_{X'/S})$ and ${\cal D}^{(m)}_{X/S}$.
Let $(\tilde{U}, \tilde{T}, \tilde{J}, \tilde{\delta})$ be an object of ${\rm{CRIS}}_{{\rm Int},f}^{(m)}(X/\tilde{S})$. Let $T$ be a closed subscheme of $\tilde{T}$ defined by $p$. 
In Lemma \ref{Lem8}, we saw that ${\cal L}^{(m)}_{{\cal X/S}, \tilde{T}}$ forms a torsor over $f^{*}_{T/S}{\cal T}_{X'/S}$.
For a local section $a\in {\cal L}^{(m)}_{{\cal X/S},\tilde{T}}$ and $\varphi \in {\cal H}om({\cal L}^{(m)}_{{\cal X/S}, \tilde{T}}, {\cal O}_{T})$, we define the map $\varphi_{a}:f^{*}_{T/S}{\cal T}_{X'/S}\to {\cal O}_{T}$ by $D\mapsto \varphi(a+D)-\varphi(a)$.
Let ${\cal E}_{{\cal X/S}, \tilde{T}}^{(m)}$ denote the subsheaf of ${\cal H}om({\cal L}^{(m)}_{{\cal X/S}, \tilde{T}}, {\cal O}_{T})$ consisting of morphisms $\varphi:{\cal L}^{(m)}_{{\cal X/S}, \tilde{T}}\to {\cal O}_{T}$ such that, for any local section $a$ of ${\cal L}^{(m)}_{\cal X/S}$, the map $\varphi_{a}$ is ${\cal O}_{T}$-linear.
Note that the map $\varphi_{a}$ is independent of the choice of $a$.
We put $\omega_{\varphi}:=\varphi_{a}$.
Then, we have a diagram
	\begin{equation*}
	(\sharp)_{\tilde{T}}\,\,\,\,\,0\longrightarrow {\cal O}_{T}\longrightarrow {\cal E}_{{\cal X/S}, \tilde{T}}^{(m)}\xrightarrow[\varphi\mapsto \omega_{\varphi}] \ f^{*}_{T/S}{\Omega}^{1}_{X'/S}\longrightarrow 0,
	\end{equation*}
	where the map ${\cal O}_{T} \to {\cal E}_{{\cal X/S}, \tilde{T}}^{(m)}$ sends $b\in {\cal O}_{T}$ to the constant function.
This is a locally split exact sequence.
In fact, given a local section $a\in {\cal L}^{(m)}_{{\cal X/S}, \tilde{T}}$, we define the map $\sigma_{a}: f^{*}_{T/S}{\Omega}^{1}_{X'/S}\to {\cal E}_{{\cal X/S}, \tilde{T}}^{(m)}$ by $\omega\mapsto [b\mapsto \langle \omega,b-a\rangle]$.
Then this map gives the section of ${\cal E}_{{\cal X/S}, \tilde{T}}^{(m)}\to f^{*}_{T/S}{\Omega}^{1}_{X'/S}$.
The injection ${\cal O}_{T}\hookrightarrow {\cal E}_{{\cal X/S}, \tilde{T}}^{(m)}$ induces an injection $S^{n}({\cal E}_{{\cal X/S}, \tilde{T}}^{(m)})\hookrightarrow S^{n+1}({\cal E}_{{\cal X/S}, \tilde{T}}^{(m)})$ for each natural number $n$.
We define the ${\cal O}_{T}$-algebra ${\cal K}^{(m)}_{{\cal X/S}, \tilde{T}}$ by the inductive limit $\displaystyle \lim_{\to} S^{n}({\cal E}_{{\cal X/S}, \tilde{T}}^{(m)})$.
Next, let us define an action of  $f^{*}_{T/S}{\cal T}_{X'/S}$ on ${\cal K}^{(m)}_{{\cal X/S}, \tilde{T}}$.
When the exact sequence $(\sharp)_{\tilde{T}}$ splits, we have ${\cal E}_{{\cal X/S}, \tilde{T}}^{(m)}\simeq {\cal O}_{T}\times f^{*}_{T/S}{\Omega}^{1}_{X'/S}$.
This isomorphism induces an isomorphism of ${\cal O}_{T}$-algebras ${\cal K}^{(m)}_{{\cal X/S}, \tilde{T}}\simeq S^{\cdot}(f^{*}_{T/S}{\Omega}^{1}_{X'/S})$.
Now we define an action of $D\in f^{*}_{T/S}{\cal T}_{X'/S}$ on ${\cal K}^{(m)}_{{\cal X/S}, \tilde{T}}$ by the composition
\begin{equation*}
{\cal K}^{(m)}_{{\cal X/S}, \tilde{T}}\simeq S^{\cdot}(f^{*}_{T/S}{\Omega}^{1}_{X'/S})\xrightarrow{D} S^{\cdot}(f^{*}_{T/S}{\Omega}^{1}_{X'/S})\simeq{\cal K}^{(m)}_{{\cal X/S}, \tilde{T}},
\end{equation*}
where the map $D$ is defined as derivation.
Furthermore this action induces the action of $\hat{\Gamma}.(f^{*}_{T/S}{\cal T}_{X'/S})$ on ${\cal K}^{(m)}_{{\cal X/S}, \tilde{T}}$.
Since $D\in f^{*}_{T/S}{\cal T}_{X'/S}$ acts on ${\cal K}^{(m)}_{{\cal X/S}, \tilde{T}}$ as a derivation, the action of $D\in f^{*}_{T/S}{\cal T}_{X'/S}$ on ${\cal O}_{T}$ is zero.
Thus the action is independent of the choice of a splitting ${\cal E}_{{\cal X/S}, \tilde{T}}^{(m)}\simeq {\cal O}_{T}\times f^{*}_{T/S}{\Omega}^{1}_{X'/S}$ of the exact sequence $(\sharp)_{\tilde{T}}$.
Now the family $\left\{f^{*}_{T/S}{\cal T}_{X'/S}\right\}$ (resp. $\left\{f^{*}_{T/S}{\Omega}^{1}_{X'/S}\right\}$, $\left\{{\cal E}_{{\cal X/S}, \tilde{T}}^{(m)} \right\}$ and $\left\{{\cal K}^{(m)}_{{\cal X/S}, \tilde{T}}\right\}$) defines a $p$-torsion log $m$-crystal of ${\cal O}_{X/\tilde{S}}^{(m)}$-modules on ${\rm{CRIS}}^{(m)}_{{\rm Int}, f}(X/\tilde{S})$.
Hence, by Lemma \ref{Lem8}, we obtain corresponding log $m$-crystals of ${\cal O}_{X/S}^{(m)}$-modules $F_{X/S}^{*}{\cal T}_{X'/S}$, $F_{X/S}^{*}\Omega^{1}_{X'/S}$, ${\cal E}_{\cal X/S}^{(m)}$ and ${\cal K}^{(m)}_{\cal X/S}$ respectively.
Each of the log $m$-crystals defines the sheaf on $X$. We denote it by the same symbol.
The family of exact sequences $\left\{ (\sharp)_{\tilde{T}}\right\}$ induces an exact sequence of ${\cal O}_{X}$-modules
\begin{equation*}
(\sharp)\,\,\,\,\,\,0\longrightarrow {\cal O}_{X}\longrightarrow {\cal E}_{\cal X/S}^{(m)}\longrightarrow F^{*}_{X/S}\Omega^{1}_{X'/S}\longrightarrow0.
\end{equation*}
Let us assume that  there exists a lifting $\tilde{X}\to \tilde{X'}$ of $X\to X'$ modulo $p^{2}$.
Then the exact sequence $(\sharp)$ coincides with $(\sharp)_{\tilde{X}}$ and splits.
The splitting defines an action of $\hat{\Gamma}_{.}(F_{X/S}^{*}{\cal T}_{X'/S})$ on ${\cal K}^{(m)}_{\cal X/S}$.
Since this action is independent of the choice of a splitting, we have an action $({\rm A})$ of $\hat{\Gamma}_{.}(F_{X/S}^{*}{\cal T}_{X'/S})$ on ${\cal K}^{(m)}_{\cal X/S}$ globally on $X$.
On the other hand, the structure of log $m$-crystal of ${\cal O}_{X/S}^{(m)}$-modules on ${\cal K}^{(m)}_{\cal X/S}$ gives the action of ${\cal D}_{X/S}^{(m)}$ on ${\cal K}^{(m)}_{\cal X/S}$.
Thus we also have an action $({\rm B})$ of ${\cal T}_{X'/S}$ on ${\cal K}^{(m)}_{\cal X/S}$ via the $p^{m+1}$-curvature map $\beta:{\cal T}_{X'/S}\to {\cal D}_{X/S}^{(m)}$.
Let us show the following lemma.
\begin{lemm}\label{lem5}
The two ${\cal T}_{X'/S}$-actions ${\rm(A)}$ and ${\rm(B)}$ on ${\cal K}^{(m)}_{\cal X/S}$ are equal.
\end{lemm}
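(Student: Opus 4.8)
The plan is to reduce the identity of the two actions to a computation on the degree-one part of ${\cal K}^{(m)}_{\cal X/S}$, after exhibiting both actions as derivations of the symmetric-algebra structure. Since the assertion is \'etale local on $X$ and both ${\rm (A)}$ and ${\rm (B)}$ are ${\cal O}_{X'}$-linear in $D\in{\cal T}_{X'/S}$, I would first pass to an \'etale neighborhood on which a logarithmic system of coordinates $m_{1},\ldots,m_{r}$ exists together with a lifting $\tilde X\to \tilde{X'}$ of $X\to X'$ modulo $p^{2}$. Such a lift realizes the exact sequence $(\sharp)$ as $(\sharp)_{\tilde X}$ and splits it globally, so the chosen splitting $\sigma\colon F^{*}_{X/S}\Omega^{1}_{X'/S}\to {\cal E}^{(m)}_{\cal X/S}$ identifies ${\cal K}^{(m)}_{\cal X/S}$ with $S^{\cdot}(F^{*}_{X/S}\Omega^{1}_{X'/S})$. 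Under this identification action ${\rm (A)}$ of $\xi'_{i}$ is, by construction, the derivation extending the contraction $1\otimes\pi^{*}d\log m_{j}\mapsto \langle F^{*}_{X/S}\xi'_{i},\,1\otimes\pi^{*}d\log m_{j}\rangle=\delta_{ij}$, and it annihilates the degree-zero part ${\cal O}_{X}$.

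Next I would argue that action ${\rm (B)}$ is likewise a derivation of $S^{\cdot}(F^{*}_{X/S}\Omega^{1}_{X'/S})$ vanishing on ${\cal O}_{X}$. Because ${\cal K}^{(m)}_{\cal X/S}=S^{\cdot}{\cal E}^{(m)}_{\cal X/S}$ is a crystal of ${\cal O}_{X}$-algebras, its log $m$-PD stratification is an algebra isomorphism, so by the comultiplication $\delta_{m}^{n,n'}$ on ${\cal D}^{(m)}_{X/S}$ the induced action satisfies the level-$m$ Leibniz rule on products. Applying this to $\underline{\partial}_{<p^{m+1}\underline{\varepsilon}_{i}>}=\beta(\xi'_{i})$ (Proposition \ref{theorem1}), and using that $\beta$ lands in the centre (Lemma \ref{Lem4}) together with the multiplicativity $\underline{\partial}_{<p^{m+1}\underline{k}>}\cdot\underline{\partial}_{<l\underline{\varepsilon}_{i}>}=\underline{\partial}_{<p^{m+1}\underline{k}+l\underline{\varepsilon}_{i}>}$ of Lemma \ref{lem1}, all cross terms in the Leibniz expansion of $\underline{\partial}_{<p^{m+1}\underline{\varepsilon}_{i}>}(xy)$ vanish except the two primitive ones; this is the higher-level incarnation of the fact that in characteristic $p$ a $p$-power of a derivation is again a derivation. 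The vanishing on ${\cal O}_{X}$ is the computation $\underline{\partial}_{<p^{m+1}\underline{\varepsilon}_{i}>(m)}(a)=p!\,\underline{\partial}_{<p^{m+1}\underline{\varepsilon}_{i}>(m+1)}(a)=0$ already recorded in the proof of Lemma \ref{Lem4}. Hence both ${\rm (A)}$ and ${\rm (B)}$ are derivations killing ${\cal O}_{X}$, and since $S^{\cdot}(F^{*}_{X/S}\Omega^{1}_{X'/S})$ is generated in degree one it suffices to compare the two on $F^{*}_{X/S}\Omega^{1}_{X'/S}$.

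The remaining central step, which I expect to be the main obstacle, is to show that ${\rm (B)}$ also sends $1\otimes\pi^{*}d\log m_{j}$ to $\delta_{ij}$. For this I would evaluate the crystal ${\cal E}^{(m)}_{\cal X/S}$ on the log $m$-PD neighborhood $P^{p^{m+1}}_{X/S,(m)}$ and read off, via duality with the basis $\{\underline{\eta}^{\{\underline{k}\}}\}$ of Proposition-Definition \ref{Prod1}, the coefficient of $\underline{\eta}^{\{p^{m+1}\underline{\varepsilon}_{i}\}}$ in the stratification-expansion of $\sigma(1\otimes\pi^{*}d\log m_{j})$; by the dictionary of Remark \ref{Remark8} this coefficient is exactly $\underline{\partial}_{<p^{m+1}\underline{\varepsilon}_{i}>}\cdot(1\otimes\pi^{*}d\log m_{j})$. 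The stratification on ${\cal E}^{(m)}_{\cal X/S}$ is transported from the crystal of Frobenius-lifting torsors ${\cal L}^{(m)}_{\cal X/S}$ (Lemma \ref{Lem8}), whose two pullbacks along the diagonal differ by a section of $F^{*}_{X/S}{\cal T}_{X'/S}$; unwinding $\sigma_{a}(\omega)=[b\mapsto\langle\omega,b-a\rangle]$ shows this difference pairs against $\pi^{*}d\log m_{j}$ to produce precisely the class $\alpha(1\otimes\pi^{*}d\log m_{j})=\xi_{j}^{\{p^{m+1}\}}$ of the log level-$m$ Mochizuki theorem (Theorem \ref{Theorem4}), which is the same class entering the map $y\mapsto y-a(y)$ and the projection to $\bar{I}/(\bar{I}^{\{p^{m+1}+1\}}+I{\cal P}_{X/S,(m)})$ in Definition \ref{defi1}. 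The labour lies entirely in this bookkeeping --- evaluating ${\cal K}^{(m)}_{\cal X/S}$ on $P^{p^{m+1}}_{X/S,(m)}$ and matching the level-$m$ divided-power normalizations --- but once both expansions are written in the same basis the coefficient is forced to be $\delta_{ij}$, so ${\rm (B)}$ agrees with ${\rm (A)}$ on degree one and hence everywhere.
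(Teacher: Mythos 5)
Your proposal follows essentially the same route as the paper's proof: localize, use the mod $p^{2}$ lifting to split $(\sharp)$ and identify ${\cal K}^{(m)}_{\cal X/S}$ with $S^{\cdot}(F^{*}_{X/S}\Omega^{1}_{X'/S})$, reduce the comparison to ${\cal E}^{(m)}_{\cal X/S}$ (the paper leaves the derivation argument implicit where you spell it out), and then extract the $\eta_{i}^{\{p^{m+1}\}}$-coefficient of the stratification applied to $\sigma_{\tilde{F}}(d\log \pi^{*}(m_{j}))$ via the torsor of Frobenius liftings. The one point your ``bookkeeping'' elides, and which the paper must treat explicitly, is the error term $\tilde{e}$ defined by $\tilde{F}^{*}(\pi^{*}(m_{j})^{\sim})=\tilde{m}_{j}^{p^{m+1}}(1+p\tilde{e})$: its contribution $p_{1}^{*}(e)-p_{0}^{*}(e)$ to the $\eta_{j}^{\{p^{m+1}\}}$-component is killed precisely because $\underline{\partial}_{<p^{m+1}\underline{\varepsilon}_{j}>}$ annihilates functions --- the same fact you already invoke for the vanishing on ${\cal O}_{X}$ --- after which the $\delta_{ij}$ comes from the $p!\,\tilde{\eta}_{j}^{\{p^{m+1}\}}$ term of $(1+\tilde{\eta}_{j})^{p^{m+1}}$.
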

\begin{proof}
Since the assertion is \'etale local, we may assume that there exists a lifting $\tilde{F}:\tilde{X}\to \tilde{X'}$ of $X\to X'$ modulo $p^{2}$ and a logarithmic system of coordinates $\{\tilde{m}_{i}\}_{i}$ of $\tilde{X}\to \tilde{S}$.
Denote by $\{m_{i}\}_{i}$ the image of $\{\tilde{m}_{i}\}_{i}$ in ${\cal M}_{X}^{gp}$.
Then $\left\{d\log \pi^{*}(m_{i})\right\}_{i}$ forms a basis for $\Omega^{1}_{X'/S}$.
We also denote by $\tilde{P}$ the log $m$-PD envelope of the diagonal $\tilde{X}\rightarrow \tilde{X}\times_{\tilde{S}}\tilde{X}$ and by $P$ the log $m$-PD envelope of $X\rightarrow X\times_{S}X$.
Let $\left\{\underline{\tilde{\eta}}^{\{\underline{k}\}}\right\}$ denote a basis of ${\cal O}_{\tilde{P}}$ induced by $\{\tilde{m}_{i}\}_{i}$.
We denote the image of $\left\{\underline{\tilde{\eta}}^{\{\underline{k}\}}\right\}$ in ${\cal O}_{P}$ by $\left\{\underline{\eta}^{\{\underline{k}\}}\right\}$.
Let $\{\xi^{'}_{i}\}_{i}$ denote the dual basis of $\{d\log \pi^{*}(m_{i})\}_{i}$.
By the construction of ${\cal K}^{(m)}_{\cal X/S}$  (and the actions of ${\cal T}_{X'/S}$ on it), it suffices to show that the two actions ${\rm(A)}$ and ${\rm(B)}$ agree on ${\cal E}_{\cal X/S}^{(m)}$.
By the existence of a lifting $\tilde{F}:\tilde{X}\to \tilde{X'}$, the exact sequence $(\sharp)$ splits via $\sigma_{\tilde{F}}:F^{*}_{X/S}\Omega^{1}_{X'/S}\hookrightarrow {\cal E}_{\cal X/S}^{(m)}$.
Hence we have ${\cal E}_{\cal X/S}^{(m)}\simeq {\cal O}_{X}\times F^{*}_{X/S}\Omega^{1}_{X'/S}$.
The two actions ${\rm(A)}$ and ${\rm(B)}$ are zero on ${\cal O}_{X}$.
Thus, we need to calculate the action on $F^{*}_{X/S}\Omega^{1}_{X'/S}$.
First, the action ${\rm(A)}$ of $\xi^{'}_{i} \in {\cal T}_{X'/S}$ is given by $F^{*}_{X/S}\Omega^{1}_{X'/S}\to {\cal O}_{X};\,d\log \pi^{*}(m_{j})\mapsto \delta_{ij}$.
On the other hand, the action ${\rm (B)}$ of $\xi^{'}_{i} \in {\cal T}_{X'/S}$ is given by
\begin{eqnarray*}
F^{*}_{X/S}\Omega^{1}_{X'/S}\xrightarrow{\sigma_{\tilde{F}}} {\cal E}_{\cal X/S}^{(m)} \xrightarrow{p^{*}_{1}} {\cal O}_{\tilde{P}}\otimes {\cal E}_{\cal X/S}^{(m)}\xrightarrow{\cong} {\cal E}_{\cal X/S}^{(m)}\otimes {\cal O}_{\tilde{P}}\,\,\,\,\,\,\,\,\,\,\,\,\,\,\,\,\\ 
\,\,\,\,\,\,=\bigoplus_{\underline{k}} {\cal E}_{\cal X/S}^{(m)}\underline{\eta}^{\{\underline{k}\}}\to {\cal E}_{\cal X/S}^{(m)}\eta_{i}^{\{p^{m+1}\}}={\cal E}_{\cal X/S}^{(m)},
\end{eqnarray*}
where the isomorphism ${\cal O}_{\tilde{P}}\otimes {\cal E}_{\cal X/S}^{(m)}\xrightarrow{\cong} {\cal E}_{\cal X/S}^{(m)}\otimes {\cal O}_{\tilde{P}}$ is the HPD-stratification associated to the log $m$-crystal structure on ${\cal E}_{\cal X/S}^{(m)}$, and the last map is the natural projection.
Let us calculate this action explicitly.
First, $\sigma_{\tilde{F}}\left(d\log \pi^{*}\left(m_{i}\right)\right)\in {\cal E}_{\cal X/S}^{(m)}$ sends $\tilde{F}'\in {\cal L}^{(m)}_{{\cal X/S}, \tilde{X}}$ to $a$,
where $a$ is a the section of ${\cal O}_{X}$ satisfying $\tilde{F}^{*}(\pi^{*}(m_{j})^{\sim})(1+p\tilde{a})=\tilde{F'}(\pi^{*}(m_{j})^{\sim})$.
(We denote by $\sim$ a lifting of section.)
Next, if we take the pullback of $\sigma_{\tilde{F}}(d\log \pi^{*}(m_{i}))$ by $p_{1}^{*}$, we
obtain the map $\sigma_{p_{1}\circ \tilde{F}}(d\log \pi^{*}(m_{i})): {\cal L}^{(m)}_{{\cal X/S}, \tilde{P}}\to {\cal O}_{P}$.
The image of $\tilde{F}\in {\cal L}^{(m)}_{{\cal X/S}, \tilde{X}}$ by
\begin{equation*}
{\cal L}^{(m)}_{{\cal X/S}, \tilde{X}}\xrightarrow{-\circ p_{0}} {\cal L}^{(m)}_{{\cal X/S}, \tilde{P}}\xrightarrow{\sigma_{p_{1}\circ \tilde{F}}(d\log \pi^{*}(m_{i}))}{\cal O}_{P} 
\end{equation*}
is the element $b\in {\cal O}_{P}=\bigoplus {\cal O}_{X}\underline{\eta}^{\{\underline{k}\}}$ satisfying
$(\tilde{F}\circ p_{1})^{*}(\pi^{*}(m_{j})^{\sim})(1+p\tilde{b})=(\tilde{F'}\circ p_{0})^{*}(\pi^{*}(m_{j})^{\sim})$.
We have to show that the $\eta_{i}^{\{p^{m+1}\}}$-component of $b$ is $\delta_{ij}$.
We take $c, d\in {\cal O}_{P}$ satisfying
\begin{eqnarray}
(\tilde{F}\circ p_{1})^{*}(\pi^{*}(m_{j})^{\sim})(1+p\tilde{c})=(\tilde{F}\circ p_{0})^{*}(\pi^{*}(m_{j})^{\sim}),\label{equ5}\\
(\tilde{F}\circ p_{0})^{*}(\pi^{*}(m_{j})^{\sim})(1+p\tilde{d})=(\tilde{F'}\circ p_{0})^{*}(\pi^{*}(m_{j})^{\sim}).\nonumber
\end{eqnarray}
Then, we have $b=c+d$.
By the definition of $d$, we may assume that $\tilde{d}$ is of the form $\tilde{d}=p_{0}^{*}(\tilde{d}')$.
Then $d$ is in ${\cal O}_{X}\cdot 1\subset{\cal O}_{P}$, and we see that the $\eta_{i}^{\{p^{m+1}\}}$-component of $d$ is zero.
On the other hand, From (\ref{equ5}), we have 
\begin{equation}\label{equ6}
p_{1}^{*}\left(\tilde{F}^{*}(\pi^{*}(m_{j})^{\sim})\right)(1+p\tilde{c})=p_{0}^{*}(\tilde{F}^{*}(\pi^{*}(m_{j})^{\sim})
\end{equation}
Since $\tilde{F}^{*}(\pi^{*}(m_{j})^{\sim})$ is a lift of $F^{*}\pi^{*}(m_{j})=m_{j}^{p^{m+1}}$, there exists $\tilde{e}\in {\cal O}_{\tilde{X}}$ satisfying 
\begin{equation}\label{equ7}
\tilde{F}^{*}\left(\pi^{*}(m_{j})^{\sim}\right)=\tilde{m_{j}}^{p^{m+1}}\cdot (1+p\tilde{e}).
\end{equation}
From (\ref{equ6}) and (\ref{equ7}), we have
\begin{equation*}
\left(1+p\left(p_{1}^{*}(\tilde{e})-p_{0}^{*}(\tilde{e})+\tilde{c}\right)\right)(1+\tilde{\eta}_{j})^{p^{m+1}}=1
\end{equation*}
Now, since $(1+\tilde{\eta}_{j})^{p^{m+1}}=1+p\left(\text{the terms of }\tilde{\eta}_{j} \text{ of degree}<p^{m+1}\right)+p!\tilde{\eta}_{j}^{\{p^{m+1}\}}$,
we see that
\begin{equation*} 
\text{the }{\eta}_{i}^{\{p^{m+1}\}}\text{-component of }(p_{1}^{*}(e)-p_{0}^{*}(e)+c)=\delta_{ij}.
\end{equation*}
Finally, since $p_{1}^{*}(e)-p_{0}^{*}(e)=\sum_{\underline{k}>0}\underline{\partial}_{<\underline{k}>}(e)\underline{\eta}^{\{\underline{k}\}}$,
\begin{equation*}
\text{the }{\eta}_{j}^{\{p^{m+1}\}}\text{-component of }(p_{1}^{*}(e)-p_{0}^{*}(e))=\underline{\partial}_{<p^{m+1}\underline{\varepsilon_{j}}>}(e)=0.
\end{equation*}
Hence the ${\eta}_{j}^{\{p^{m+1}\}}$-component of $c$ is $\delta_{ij}$. We finish the proof.
\end{proof}
Now, we are ready to construct the log global Cartier transform of higher level.
Let $\check{{\cal K}}_{\cal X/S}^{(m)}$ be the dual of ${\cal K}_{\cal X/S}^{(m)}$ as a left ${\cal D}_{X/S}^{(m)}$-module.
We consider the tensor product $\check{{\cal K}}_{\cal X/S}^{(m), {\cal A}}:={\cal A}_{X}^{gp}\otimes_{{\cal O}_{X}}\check{{\cal K}}_{\cal X/S}^{(m)}$ as a left ${\cal D}_{X/S}^{(m)}$-module.
Let us show that $\check{{\cal K}}_{\cal X/S}^{(m), {\cal A}}$ is a splitting module of $\tilde {{\cal D}}_{X/S}^{(m)}$ over ${\cal O_{G}^{B}}$.
Since the action of ${\cal D}_{X/S}^{(m)}$ on $\check{{\cal K}}_{\cal X/S}^{(m), {\cal A}}$ is admissible by Remark \ref{lemma26},
the action of ${\cal D}_{X/S}^{(m)}$ extends to an $\cal J$-indexed $\tilde {{\cal D}}_{X/S}^{(m)}$-module structure on $\check{{\cal K}}_{\cal X/S}^{(m), {\cal A}}$.
On the other hand, we also have the ${\cal O}_{\cal G}$-action on $\check{{\cal K}}_{\cal X/S}^{(m)}$ induced from the action $({\rm A})$.
By Lemma \ref{lem5}, these two actions extend to the structure of left ${\cal A}_{X}^{gp}\otimes_{{\cal O}_{X}}{\cal D}_{X/S}^{(m)}\otimes_{S^{\cdot}{\cal T}_{X'/S}}{\cal O}_{\cal G}\simeq \tilde {{\cal D}}_{X}^{(m), \gamma}$-module on $\check{{\cal K}}_{\cal X/S}^{(m), {\cal A}}$.
Since we locally have an isomorphism
\begin{equation*}
\check{{\cal K}}_{\cal X/S}^{(m)}:={\cal H}om_{{\cal O}_{X}}({\cal K}_{\cal X/S}^{(m)}, {\cal O}_{X})\simeq \hat{\Gamma.}(F^{*}_{X/S}{\cal T}_{X'/S})\simeq F^{*}_{X/S}{\cal O_{G}},
\end{equation*}
$\check{{\cal K}}_{\cal X/S}^{(m), {\cal A}}$ is a locally free ${\cal O_{G}^{A}}$-module of rank $1$.
Moreover, by Proposition \ref{proposition2}, ${\cal O_{G}^{A}}\simeq {\cal O_{G}^{B}}\otimes_{{\cal B}_{X/S}^{(m+1)}}{\cal A}_{X}^{gp}$ is a locally free module of rank $p^{(m+1)r}$ over ${\cal O_{G}^{B}}$.
Hence $\check{{\cal K}}_{\cal X/S}^{(m), {\cal A}}$ is a splitting module for $\tilde {{\cal D}}_{X/S}^{(m)}$ over ${\cal O_{G}^{B}}$ by Proposition \ref{Proposition7}.
We thus obtain the following isomorphism of ${\cal O_{G}^{B}}$-algebras:
\begin{equation}\label{AA}
\tilde{\cal D}_{X/S}^{(m)}\otimes_{\tilde{\mathfrak Z}} {\cal O_{G}^{B}}\xrightarrow{\cong} {\cal E}nd_{\cal O_{G}^{B}}(\check{{\cal K}}_{\cal X/S}^{(m), {\cal A}}).
\end{equation}
\begin{rem}
In the case without log structure, Gros, Le Stum and Quiros obtained a similar isomorphism (\ref{AA}) in a different way.
See Subsection 6.4 of \cite{GLQ}.
\end{rem}
Using Proposition \ref{Proposition5}, we obtain the following theorem, which is the central result of this paper.
\begin{theo}\label{Theorem21}
 Let ${\cal X/S}=(X\to S, \tilde{X'}\to \tilde{S})$ be a log smooth integral morphism with a lifting $\tilde{X}\to \tilde{S}$ modulo $p^{2}$. Then the functor
 	\begin{equation*}
	C_{\cal X/S}: {{\rm MIC}^{\cal A, J}_{\rm PD}(X/S)}\to {{\rm HIG}^{\cal B, J}_{\rm PD}(X'/S)}, \,\,\,E\mapsto {\cal H}om_{\tilde {{\cal D}}_{X}^{(m), \gamma}}(\check{{\cal K}}_{\cal X/S}^{(m), {\cal A}}, E)
	\end{equation*}
is an equivalence of categories. The quasi-inverse of $C_{\cal X/S}$ is given by
	\begin{equation*}
	C_{\cal X/S}^{-1}:{{\rm HIG}^{\cal B, J}_{\rm PD}(X'/S)}\to {{\rm MIC}^{\cal A, J}_{\rm PD}(X/S)}, \,\,\,E'\mapsto \check{{\cal K}}_{\cal X/S}^{(m), {\cal A}}\otimes_{\cal O_{G}^{B}}E'.
	\end{equation*}
\end{theo}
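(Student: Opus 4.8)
The plan is to obtain the equivalence as a formal consequence of the indexed Morita equivalence (Proposition \ref{Proposition5}), taking $\check{{\cal K}}_{\cal X/S}^{(m), {\cal A}}$ as the locally free module over the commutative indexed algebra ${\cal O_{G}^{B}}$. The one essential input is the isomorphism (\ref{AA}) established just above, which identifies $\tilde{\cal D}_{X/S}^{(m), \gamma}=\tilde{\cal D}_{X/S}^{(m)}\otimes_{\tilde{\mathfrak Z}}{\cal O_{G}^{B}}$ with ${\cal E}nd_{\cal O_{G}^{B}}(\check{{\cal K}}_{\cal X/S}^{(m), {\cal A}})$; once this is in hand, no genuinely new computation remains.

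First I would check the hypotheses of Proposition \ref{Proposition5} with ${\cal I}={\cal I}_{X}^{gp}$, ${\cal A}={\cal O_{G}^{B}}$ and $M=\check{{\cal K}}_{\cal X/S}^{(m), {\cal A}}$. The algebra ${\cal O_{G}^{B}}={\cal B}_{X/S}^{(m+1)}\otimes_{{\cal O}_{X'}}{\cal O_{G}}$ is a commutative ${\cal I}_{X}^{gp}$-indexed ${\cal O}_{X}$-algebra, and by the discussion preceding the theorem $\check{{\cal K}}_{\cal X/S}^{(m), {\cal A}}$ is a locally free ${\cal I}_{X}^{gp}$-indexed ${\cal O_{G}^{B}}$-module of finite rank: it is locally free of rank one over ${\cal O_{G}^{A}}$, which is in turn locally free of rank $p^{(m+1)r}$ over ${\cal O_{G}^{B}}$ by Proposition \ref{proposition2}. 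Hence the indexed endomorphism algebra ${\cal E}:={\cal E}nd_{\cal O_{G}^{B}}(\check{{\cal K}}_{\cal X/S}^{(m), {\cal A}})$ is defined, and (\ref{AA}) identifies it with $\tilde{\cal D}_{X/S}^{(m), \gamma}$ as an ${\cal I}_{X}^{gp}$-indexed ${\cal O}_{X}$-algebra.

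With these verifications, Proposition \ref{Proposition5} provides an equivalence between the category of $\cal J$-indexed ${\cal O_{G}^{B}}$-modules and the category of $\cal J$-indexed left ${\cal E}\cong\tilde{\cal D}_{X/S}^{(m), \gamma}$-modules, realized by $E'\mapsto \check{{\cal K}}_{\cal X/S}^{(m), {\cal A}}\otimes_{\cal O_{G}^{B}}E'$ with quasi-inverse $E\mapsto {\cal H}om_{\tilde{\cal D}_{X/S}^{(m), \gamma}}(\check{{\cal K}}_{\cal X/S}^{(m), {\cal A}}, E)$. Unwinding definitions, the source category is exactly ${\rm HIG}^{\cal B, J}_{\rm PD}(X'/S)$ and the target is exactly ${\rm MIC}^{\cal A, J}_{\rm PD}(X/S)$, so these two functors are precisely $C_{\cal X/S}^{-1}$ and $C_{\cal X/S}$, which yields the theorem.

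The step that in principle deserves the most care, although its real difficulty was already absorbed into the construction, is the identification of the Morita target with ${\rm MIC}^{\cal A, J}_{\rm PD}(X/S)$: one must see that a $\cal J$-indexed left $\tilde{\cal D}_{X/S}^{(m), \gamma}$-module is the same datum as a $\cal J$-indexed ${\cal A}_{X}^{gp}$-module carrying an admissible ${\cal D}_{X/S}^{(m)}$-action together with a horizontal ${\cal A}_{X}^{gp}$-linear ${\cal G}$-Higgs field extending the $p^{m+1}$-curvature. This rests on Lemma \ref{lem5}: the ${\cal T}_{X'/S}$-action on ${\cal K}_{\cal X/S}^{(m)}$ coming from the lifting torsor agrees with the one coming from the $p^{m+1}$-curvature map, which is exactly what lets the ${\cal D}_{X/S}^{(m)}$-action and the ${\cal O_{G}}$-action glue into a single $\tilde{\cal D}_{X/S}^{(m), \gamma}$-module structure. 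Granting Lemma \ref{lem5} and (\ref{AA}), the proof is complete.
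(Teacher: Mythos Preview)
Your proposal is correct and follows exactly the paper's approach: the paper states the theorem immediately after establishing (\ref{AA}) with the single sentence ``Using Proposition \ref{Proposition5}, we obtain the following theorem,'' and your argument simply makes this Morita-equivalence application explicit, invoking the same ingredients (local freeness via Proposition \ref{proposition2}, the splitting isomorphism (\ref{AA}), and the role of Lemma \ref{lem5} in gluing the two actions).
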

Let us give two corollaries.
In the case ${\cal J}={\cal I}_{X}^{gp}$ with the standard action, the category of $\cal J$-indexed ${\cal A}_{X}^{gp}$-modules is equivalent to the category of ${\cal O}_{X}$-modules 
with quasi-inverse $E\mapsto {\cal A}_{X}^{gp}\otimes_{{\cal O}_{X}}E$ (see p.22 of \cite{S}).

We put ${\cal D}_{X/S}^{(m), \gamma}:={\cal D}_{X/S}^{(m)}\otimes_{\mathfrak Z} {\cal O_{G}}$, and denote by ${\rm MIC}_{\rm PD}(X/S)$ the category of ${\cal D}_{X/S}^{(m), \gamma}$-modules.
We thus obtain the following corollary.
\begin{cor}
The functor
 	\begin{equation*}
	C_{\cal X/S}: {\rm MIC}_{\rm PD} (X/S)\to {\rm HIG}^{{\cal B, I}^{gp}_{X}}_{\rm PD}(X'/S), \,\,\,E\mapsto {\cal H}om_{\tilde {{\cal D}}_{X}^{(m), \gamma}}(\check{{\cal K}}_{\cal X/S}^{(m), {\cal A}}, {\cal A}_{X}^{gp}\otimes_{{\cal O}_{X}}E)
	\end{equation*}
is an equivalence of categories.
\end{cor}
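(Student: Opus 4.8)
The plan is to deduce this corollary from Theorem \ref{Theorem21} by specializing to the case ${\cal J}={\cal I}_{X}^{gp}$ equipped with its standard action, and then transporting along the equivalence between ${\cal I}_{X}^{gp}$-indexed ${\cal A}_{X}^{gp}$-modules and ordinary ${\cal O}_{X}$-modules recalled just above (with quasi-inverse $E\mapsto {\cal A}_{X}^{gp}\otimes_{{\cal O}_{X}}E$, see p.22 of \cite{S}).

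First I would identify the source categories. By the discussion preceding Theorem \ref{Theorem21}, an object of ${\rm MIC}^{\cal A, {\cal I}_{X}^{gp}}_{\rm PD}(X/S)$ is an ${\cal I}_{X}^{gp}$-indexed ${\cal A}_{X}^{gp}$-module $E$ carrying an admissible ${\cal D}_{X/S}^{(m)}$-action together with a horizontal ${\cal A}_{X}^{gp}$-linear $\cal G$-Higgs field, i.e. an ${\cal O_{G}}$-action extending the $p^{m+1}$-curvature map. I claim this category is equivalent to ${\rm MIC}_{\rm PD}(X/S)$, the category of ${\cal D}_{X/S}^{(m), \gamma}={\cal D}_{X/S}^{(m)}\otimes_{\mathfrak Z}{\cal O_{G}}$-modules, via $E_{0}\mapsto {\cal A}_{X}^{gp}\otimes_{{\cal O}_{X}}E_{0}$. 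Indeed, under the base equivalence a ${\cal D}_{X/S}^{(m)}$-module structure on the ${\cal O}_{X}$-module $E_{0}$ induces the tensor-product ${\cal D}_{X/S}^{(m)}$-action on ${\cal A}_{X}^{gp}\otimes_{{\cal O}_{X}}E_{0}$, which is admissible by Remark \ref{lemma26}; conversely, an admissible action on the indexed side descends to $E_{0}$. The remaining datum, the ${\cal O_{G}}$-action extending the $p^{m+1}$-curvature, matches on both sides since it is exactly the extra structure packaged into the tensor factor over $\mathfrak Z$. The step requiring care here is to check that these two pieces of data transport simultaneously and naturally, so that one obtains an equivalence of categories rather than merely a bijection on objects; this is where Remark \ref{lemma26} carries the essential weight, and I would verify that the horizontality of the Higgs field is preserved under the identification.

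Having established ${\rm MIC}^{\cal A, {\cal I}_{X}^{gp}}_{\rm PD}(X/S)\simeq {\rm MIC}_{\rm PD}(X/S)$, I would then apply Theorem \ref{Theorem21} with ${\cal J}={\cal I}_{X}^{gp}$: the functor $E\mapsto {\cal H}om_{\tilde {{\cal D}}_{X}^{(m), \gamma}}(\check{{\cal K}}_{\cal X/S}^{(m), {\cal A}}, E)$ is an equivalence ${\rm MIC}^{\cal A, {\cal I}_{X}^{gp}}_{\rm PD}(X/S)\to {\rm HIG}^{{\cal B, I}^{gp}_{X}}_{\rm PD}(X'/S)$. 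Precomposing with the equivalence $E\mapsto {\cal A}_{X}^{gp}\otimes_{{\cal O}_{X}}E$ of the previous paragraph yields precisely the functor of the corollary, $E\mapsto {\cal H}om_{\tilde {{\cal D}}_{X}^{(m), \gamma}}(\check{{\cal K}}_{\cal X/S}^{(m), {\cal A}}, {\cal A}_{X}^{gp}\otimes_{{\cal O}_{X}}E)$, and a composite of equivalences is again an equivalence.

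The formal composition in the last paragraph is routine; the main obstacle I anticipate is entirely contained in the first step, namely confirming the compatibility of the base equivalence with both the admissible ${\cal D}_{X/S}^{(m)}$-action and the $\cal G$-Higgs field at once. I would therefore devote the proof mainly to making those identifications explicit and natural, and then conclude by invoking Theorem \ref{Theorem21}.
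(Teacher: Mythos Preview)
Your proposal is correct and follows essentially the same approach as the paper: the paper presents this corollary without a separate proof, deriving it immediately from Theorem \ref{Theorem21} specialized to ${\cal J}={\cal I}_{X}^{gp}$ together with the equivalence (recalled just before the corollary) between ${\cal I}_{X}^{gp}$-indexed ${\cal A}_{X}^{gp}$-modules and ${\cal O}_{X}$-modules via $E\mapsto {\cal A}_{X}^{gp}\otimes_{{\cal O}_{X}}E$. Your additional care in checking that the admissible ${\cal D}_{X/S}^{(m)}$-action and the ${\cal G}$-Higgs field transport compatibly is exactly what the paper leaves implicit in its one-line reduction.
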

Next, we consider the case without log structure. Let $X\to S$ be a smooth morphism of schemes.
We denote by ${\rm HIG}_{\rm PD}(X'/S)$ the category of ${\cal O_{G}}$-modules.
\begin{cor}
Let ${\cal X/S}=(X\to S, \tilde{X'}\to \tilde{S})$ be a smooth morphism of schemes with a lifting $\tilde{X}\to \tilde{S}$ modulo $p^{2}$. Then the functor
 	\begin{equation*}
	C_{\cal X/S}: {{\rm MIC}_{\rm PD}(X/S)}\to {{\rm HIG}_{\rm PD}(X'/S)}, \,\,\,E\mapsto {\cal H}om_{{\cal D}_{X/S}^{(m), \gamma}}(\check{{\cal K}}_{\cal X/S}^{(m)}, E)
	\end{equation*}
is an equivalence of categories. 
\end{cor}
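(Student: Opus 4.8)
The plan is to deduce this statement from the preceding corollary (hence ultimately from Theorem \ref{Theorem21}) by equipping $X$ and $S$ with their trivial log structures and checking that every indexed object collapses to its non-indexed analogue.

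First I would note that a smooth morphism of schemes, endowed with trivial log structures, is log smooth and integral, so the hypotheses of Theorem \ref{Theorem21} are met. Since the log structure is trivial we have ${\cal M}_{X}^{gp}={\cal O}_{X}^{*}$, whence ${\cal I}_{X}^{gp}={\cal M}_{X}^{gp}/{\cal O}_{X}^{*}$ is the trivial sheaf of abelian groups. Thus an ${\cal I}_{X}^{gp}$-indexed sheaf has a single fiber, sitting over the zero section, and the whole formalism of ${\cal I}_{X}^{gp}$-indexed ${\cal O}_{X}$-modules reduces to that of ordinary ${\cal O}_{X}$-modules; in particular the contracted product ${\cal A}_{X}^{gp}={\cal M}_{X}^{gp}\wedge_{{\cal O}_{X}^{*}}{\cal O}_{X}$ is canonically identified with ${\cal O}_{X}$.

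Next I would identify ${\cal B}_{X/S}^{(m+1)}$ with ${\cal O}_{X'}$. Under ${\cal A}_{X}^{gp}={\cal O}_{X}$, the algebra ${\cal B}_{X/S}^{(m+1)}$ is the subsheaf of ${\cal O}_{X}$ of sections horizontal for the canonical ${\cal D}_{X/S}^{(m)}$-module structure on ${\cal O}_{X}$, and by higher-level Cartier descent (the scheme-theoretic counterpart of Theorem \ref{theorem40}, cf. \cite{GLQ}) this is precisely the image of the $(m+1)$-st relative Frobenius, namely ${\cal O}_{X'}$, over which ${\cal O}_{X}$ is locally free of rank $p^{(m+1)r}$. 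Granting this, the remaining objects degenerate mechanically: $\tilde{{\cal D}}_{X/S}^{(m)}={\cal A}_{X}^{gp}\otimes_{{\cal O}_{X}}{\cal D}_{X/S}^{(m)}={\cal D}_{X/S}^{(m)}$, the scalar extensions ${\cal O_{G}^{B}}$ and ${\cal O_{G}^{A}}$ both become ${\cal O_{G}}$, the ring $\tilde{{\cal D}}_{X/S}^{(m), \gamma}$ becomes ${\cal D}_{X/S}^{(m), \gamma}$, and the splitting module $\check{{\cal K}}_{\cal X/S}^{(m), {\cal A}}={\cal A}_{X}^{gp}\otimes_{{\cal O}_{X}}\check{{\cal K}}_{\cal X/S}^{(m)}$ becomes $\check{{\cal K}}_{\cal X/S}^{(m)}$.

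Finally I would feed these identifications into the preceding corollary, whose functor $E\mapsto {\cal H}om_{\tilde{{\cal D}}_{X/S}^{(m), \gamma}}(\check{{\cal K}}_{\cal X/S}^{(m), {\cal A}}, {\cal A}_{X}^{gp}\otimes_{{\cal O}_{X}}E)$ then reads $E\mapsto {\cal H}om_{{\cal D}_{X/S}^{(m), \gamma}}(\check{{\cal K}}_{\cal X/S}^{(m)}, E)$, since ${\cal A}_{X}^{gp}\otimes_{{\cal O}_{X}}E=E$; simultaneously its target ${\rm HIG}^{{\cal B}, {\cal I}_{X}^{gp}}_{\rm PD}(X'/S)$ becomes the category ${\rm HIG}_{\rm PD}(X'/S)$ of ${\cal O_{G}}$-modules, and the equivalence is then immediate. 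The one step requiring genuine care is the identification ${\cal B}_{X/S}^{(m+1)}={\cal O}_{X'}$: the local descriptions of the indexed algebras in the text are written in logarithmic coordinates, which need not exist for the trivial log structure, so the cleanest route is to characterize ${\cal B}_{X/S}^{(m+1)}$ intrinsically as the Frobenius-descended subalgebra of ${\cal O}_{X}$ and to work with ordinary \'etale coordinates $x_{1},\dots,x_{r}$ in place of logarithmic ones. Everything else is routine, as the relevant algebra and module structures were defined over ${\cal O}_{X}$ with the indexing carried solely by the ${\cal A}_{X}^{gp}$-factor.
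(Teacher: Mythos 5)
Your proposal is correct and follows the route the paper intends: the corollary is stated as an immediate specialization of Theorem \ref{Theorem21} (via the preceding corollary) to the trivial log structure, under which ${\cal I}_{X}^{gp}$ is trivial, ${\cal A}_{X}^{gp}={\cal O}_{X}$, ${\cal B}_{X/S}^{(m+1)}={\cal O}_{X'}$ by level-$m$ Cartier descent, and all indexed objects collapse to their ordinary counterparts. Your added care about the identification of ${\cal B}_{X/S}^{(m+1)}$ and about the existence of logarithmic coordinates for the trivial log structure is sensible but does not change the argument.
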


\section{Compatibility}
In this section, we discuss the compatibility between the log global Cartier transform and the log Frobenius descent.
Throughout this section, we fix a sheaf of ${\cal I}^{gp}_{X/S}$-sets $\cal J$ and denote by $F$ (resp. $F_{X/S}$) the $m$-th relative Frobenius morphism (resp. the $(m+1)$-st relative Frobenius morphism) by abuse of notation.

\subsection{Frobenius descent}
Let us briefly recall Montagnon's log Frobenius descent.
Let $\cal E$ be a $\cal J$-indexed left $\tilde {{\cal D}}_{X/S}^{(m)}$-module.
We consider ${\cal J}\xrightarrow{\cong} {\cal H}om_{*}(*, {\cal J})$-indexed sheaf defined by
\begin{equation}
{\mathbb F}_{\cal J}({\cal E}):={\cal H}om_{{\cal D}^{(m)}_{X/S}}(F^{*}{\cal D}_{X^{(m)}/S}^{(0)}, {\cal E}).
\end{equation}
We give a ${\cal B}_{X/S}^{(m)}$-action on ${\mathbb F}_{\cal J}({\cal E})$ by the scalar restriction of the ${\cal A}_{X}^{gp}$-action on $\cal E$ and 
give a left ${\cal D}_{X^{(m)}/S}^{(0)}$-action on ${\mathbb F}_{\cal J}(\cal E)$ by the right multiplication of ${\cal D}_{X^{(m)}/S}^{(0)}$ on $F^{*}{\cal D}_{X^{(m)}/S}^{(0)}$.
Then one can see that these two actions naturally extend to a $\cal J$-indexed left $\tilde{{\cal D}}_{X^{(m)}/S}^{(0)}$-module structure on ${\mathbb F}_{\cal J}(\cal E)$ (for the proof, see Subsection 4.2.1 of \cite{M}).
We thus obtain a functor 
\begin{equation*}
{\mathbb F}_{\cal J}:
 \left(
\begin{array}{c}
\text{The category of} \\
{\cal J}\text{-indexed left } \tilde {{\cal D}}_{X/S}^{(m)} \text{-modules}
\end{array}
\right)
\to 
\left(
\begin{array}{c}
\text{The category of} \\
{\cal J}\text{-indexed left } \tilde {{\cal D}}_{X^{(m)}/S}^{(0)} \text{-modules}
\end{array}
\right).
\end{equation*}
Conversely, let $\cal F$ be a $\cal J$-indexed left $\tilde {{\cal D}}_{X^{(m)}/S}^{(0)}$-module.
We consider ${\cal J}\xrightarrow{\cong} {\cal I}_{X}^{gp}\otimes_{{\cal I}_{X}^{gp}} {\cal J}$-indexed sheaf defined by
	\begin{equation*}
	{\mathbb G}_{\cal J}({\cal F}):={\cal A}_{X}^{gp}\otimes_{F^{*}{\cal B}_{X/S}^{(m)}} F^{*}{\cal F} .
	\end{equation*}
Here $F^{*}{\cal B}_{X/S}^{(m)}$ ia the ${\cal I}_{X}^{gp}\xrightarrow{\cong} {*}\otimes_{{*}}{\cal I}_{X}^{gp}$-indexed ${\cal O}_{X}$-algebra ${\cal O}_{X}\otimes_{F^{-1}{\cal O}_{X^{(m)}}}F^{-1}{\cal B}_{X/S}^{(m)}$ and
$F^{*}{\cal F}$ is defined in a similar manner.
${\mathbb G}_{\cal J}({\cal F})$ has an ${\cal A}_{X}^{gp}$-action induced from the first component.
Let us endow ${\mathbb G}_{\cal J}({\cal F})$ with a left ${\cal D}_{X/S}^{(m)}$-action in the following way.
Let $\{\varepsilon_{{\cal A}, n}\}$ be the log $m$-stratification of ${\cal A}_{X}^{gp}$ associated to its ${\cal D}_{X/S}^{(m)}$-action.
Let $\{\varepsilon_{{\cal F}, n}\}$ (resp. $\{\varepsilon_{{\cal B}, n}\}$) be the log $0$-stratification associated to the ${\cal D}^{(0)}_{X^{(m)}/S}$-action on $\cal F$ (resp. ${\cal B}_{X/S}^{(m)}$).
Let $\Psi$ be the composition
\begin{equation*}
p_{1}^{n*}{\cal A}_{X}^{gp}\times p_{1}^{n*}F^{*}{\cal F}\xrightarrow{\varepsilon_{{\cal A}, n}\times {\Phi}^{*}\varepsilon_{{\cal F}, n}} p_{0}^{n*}{\cal A}_{X}^{gp}\times p_{0}^{n*}F^{*}{\cal F}\to
p_{0}^{n*}{\cal A}_{X}^{gp}\otimes_{p_{0}^{n*}F^{*}{\cal B}_{X/S}^{(m)}} p_{0}^{n*}F^{*}{\cal F},
\end{equation*}
where $p_{0}^{n}$ and $p_{1}^{n}$ denote the first and second projection $P_{X/S}^{n}\to X$, $\Phi$ is defined in the beginning of Subsection 4.1.2 and the second map is the natural projection.

\begin{lemm}\label{lemma11}
$\Psi$ is a biadditive $p_{1}^{n*}F^{*}{\cal B}_{X/S}^{(m)}$-balanced map.
\end{lemm}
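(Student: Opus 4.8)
The plan is to verify biadditivity directly and then reduce the balancedness assertion to a single identity comparing the two stratifications restricted to ${\cal B}_{X/S}^{(m)}$. Biadditivity is immediate: both $\varepsilon_{{\cal A}, n}$ and $\Phi^{*}\varepsilon_{{\cal F}, n}$ are additive isomorphisms of sheaves, and the natural projection onto $p_{0}^{n*}{\cal A}_{X}^{gp}\otimes_{p_{0}^{n*}F^{*}{\cal B}_{X/S}^{(m)}} p_{0}^{n*}F^{*}{\cal F}$ is biadditive in each argument; hence so is the composite $\Psi$. The content of the lemma is therefore the $p_{1}^{n*}F^{*}{\cal B}_{X/S}^{(m)}$-balancedness.

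To treat this I would work \'etale-locally with a logarithmic system of coordinates $m_{1},\ldots, m_{r}$ and take local sections $a$ of $p_{1}^{n*}{\cal A}_{X}^{gp}$, $f$ of $p_{1}^{n*}F^{*}{\cal F}$, and $b$ of $p_{1}^{n*}F^{*}{\cal B}_{X/S}^{(m)}$. Since ${\cal A}_{X}^{gp}$ is an indexed ${\cal O}_{X}$-algebra whose ${\cal D}_{X/S}^{(m)}$-action yields an \emph{algebra} stratification, $\varepsilon_{{\cal A}, n}$ is multiplicative; and since $\cal F$ is a $\tilde{{\cal D}}_{X^{(m)}/S}^{(0)}$-module, $\varepsilon_{{\cal F}, n}$ is semilinear over $\varepsilon_{{\cal B}, n}$. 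Thus
\begin{equation*}
\Psi(ab, f)=\varepsilon_{{\cal A}, n}(a)\,\varepsilon_{{\cal A}, n}(b)\otimes \Phi^{*}\varepsilon_{{\cal F}, n}(f), \qquad \Psi(a, bf)=\varepsilon_{{\cal A}, n}(a)\otimes \Phi^{*}\varepsilon_{{\cal B}, n}(b)\,\Phi^{*}\varepsilon_{{\cal F}, n}(f).
\end{equation*}
In the tensor product over $p_{0}^{n*}F^{*}{\cal B}_{X/S}^{(m)}$ the scalar $\Phi^{*}\varepsilon_{{\cal B}, n}(b)$ may be moved across $\otimes$, so the two sides agree as soon as one establishes the key identity $\varepsilon_{{\cal A}, n}(b)=\Phi^{*}\varepsilon_{{\cal B}, n}(b)$ as sections of $p_{0}^{n*}F^{*}{\cal B}_{X/S}^{(m)}$; that is, the level $m$ stratification $\varepsilon_{{\cal A}, n}$ restricts on ${\cal B}_{X/S}^{(m)}$ to the $\Phi$-pullback of the level $0$ stratification $\varepsilon_{{\cal B}, n}$.

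For the key identity I would write $\varepsilon_{{\cal A}, n}(p_{1}^{*}b)=\sum_{\underline{k}}(\underline{\partial}_{<\underline{k}>(m)}b)\,\underline{\eta}^{\{\underline{k}\}(m)}$ and argue that only the multi-indices divisible by $p^{m}$ contribute. Indeed, by the local description of ${\cal B}_{X/S}^{(m)}$ (Proposition \ref{proposition3}) together with the generation and structure results of Proposition \ref{proposition4} (used exactly as in the proof of Proposition \ref{proposition14}, where $\underline{\partial}_{<p^{s}\underline{\varepsilon}_{i}-\underline{k}>}$ is shown to lie in the subalgebra generated by the lower-level operators), one gets $\underline{\partial}_{<\underline{k}>(m)}b=0$ unless $p^{m}\mid \underline{k}$. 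For the surviving terms $\underline{k}=p^{m}\underline{k}'$, Proposition \ref{proposition11} (2) gives $\underline{\eta}^{\{p^{m}\underline{k}'\}(m)}=\Phi^{*}(\underline{\eta'}^{\{\underline{k}'\}(0)})$, while formula (\ref{formula3}) identifies $\underline{\partial}_{<p^{m}\underline{k}'>(m)}b$ with the level $0$ operator $\underline{\partial}'_{<\underline{k}'>(0)}b$ coming from the ${\cal B}_{X/S}^{(m)}$-structure. Hence $\varepsilon_{{\cal A}, n}(p_{1}^{*}b)=\sum_{\underline{k}'}(\underline{\partial}'_{<\underline{k}'>(0)}b)\,\Phi^{*}(\underline{\eta'}^{\{\underline{k}'\}(0)})=\Phi^{*}\varepsilon_{{\cal B}, n}(p_{1}^{*}b)$, which is precisely the required identity. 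I expect this last step --- the simultaneous vanishing $\underline{\partial}_{<\underline{k}>(m)}b=0$ for $p^{m}\nmid\underline{k}$ and the matching of the surviving operators and dual bases under $\Phi$ --- to be the main obstacle, since it is where Montagnon's generation result, the characterization of ${\cal B}_{X/S}^{(m)}$, and the compatibility of $\Phi$ with the coordinate bases must be combined.
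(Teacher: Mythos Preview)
Your proposal is correct and follows essentially the same route as the paper: both reduce balancedness to the single identity $\varepsilon_{{\cal A}, n}(b)=\Phi^{*}\varepsilon_{{\cal B}, n}(b)$, which the paper isolates as Lemma~\ref{lemma22} (the statement that $F^{*}{\cal B}_{X/S}^{(m)}\to {\cal A}_{X}^{gp}$ is ${\cal D}_{X/S}^{(m)}$-linear), while you prove it inline by expanding the stratification in the local basis $\{\underline{\eta}^{\{\underline{k}\}}\}$.

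One minor comment on the justification of the vanishing $\underline{\partial}_{<\underline{k}>(m)}b=0$ for $p^{m}\nmid\underline{k}$: the appeal to the argument of Proposition~\ref{proposition14} only literally covers operators of small order, so it is cleaner to argue (as the paper does in proving Lemma~\ref{lemma22}) directly from the definition ${\cal B}_{X/S}^{(m)}={\cal H}om_{{\cal D}^{(m)}_{X/S}}(F^{*}{\cal D}_{X^{(m)}/S}^{(0)},{\cal A}_{X}^{gp})$: writing $b=g(1\otimes 1)$ with $g$ ${\cal D}_{X/S}^{(m)}$-linear gives $\underline{\partial}_{<\underline{k}>(m)}b=g(\underline{\partial}_{<\underline{k}>(m)}.(1\otimes 1))$, and then formula~(\ref{formula3}) yields both the vanishing and the identification with $\underline{\partial}'_{<\underline{k}'>(0)}b$ in one stroke.
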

\begin{proof}
We prove $\Psi(ab, f)=\Psi(a, bf)$ for any local section $a\in p_{1}^{n*}{\cal A}_{X}^{gp}$, $b\in p_{1}^{n*}F^{*}{\cal B}_{X/S}^{(m)}$ and $f\in p_{1}^{n*}F^{*}{\cal F}$.
Since $\varepsilon_{{\cal A}, n}$ is a morphism of ${\cal I}_{X}^{gp}$-indexed ${\cal P}_{X/S, (m)}^{n}$-algebras, $\Psi(ab, f)$ is equal to
$\varepsilon_{{\cal A}, n}(a) \varepsilon_{{\cal A}, n}(b)\otimes {\Phi}^{*} \varepsilon_{{\cal F}, n}(f)$.
On the other hand, since ${\cal D}_{X^{(m)}/S}^{(0)}$-module structure on $\cal F$ is admissible (see Remark \ref{lemma26}), $\Psi(a, bf)$ is equal to
$\varepsilon_{{\cal A}, n}(a)\otimes {\Phi}^{*} \varepsilon_{{\cal B}, n}(b){\Phi}^{*} \varepsilon_{{\cal F}, n}(f)$.
So it suffices to show that $\varepsilon_{{\cal A}, n}(b)={\Phi}^{*} \varepsilon_{{\cal B}, n}(b)$.
This claim follows from Lemma \ref{lemma22} below.
\end{proof}
\begin{lemm}\label{lemma22}
The natural homomorphism $F^{*}{\cal B}_{X/S}^{(m)}\to {\cal A}_{X}^{gp}$ of ${\cal I}_{X}^{gp}$-indexed ${\cal O}_{X}$-algebras  is a morphism of ${\cal I}_{X}^{gp}$-indexed ${\cal D}_{X/S}^{(m)}$-modules.
\end{lemm}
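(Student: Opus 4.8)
The plan is to check $\mathcal{D}^{(m)}_{X/S}$-linearity \'etale locally, after fixing a logarithmic system of coordinates $m_{1},\dots,m_{r}\in{\cal M}^{gp}_{X}$, and to use the tautological description of ${\cal B}^{(m)}_{X/S}$ as a sheaf of homomorphisms. Denote by $\phi\colon F^{*}{\cal B}^{(m)}_{X/S}\to{\cal A}^{gp}_{X}$ the map in question; it is ${\cal O}_{X}$-linear and sends $a\otimes b\mapsto a\,\psi(b)$, where $\psi(g)=g(1\otimes 1)$ is the injection identifying a local section $g$ of ${\cal B}^{(m)}_{X/S}={\cal H}om_{{\cal D}^{(m)}_{X/S}}(F^{*}{\cal D}^{(0)}_{X^{(m)}/S},{\cal A}^{gp}_{X})$ with its value in ${\cal A}^{gp}_{X}$. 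The ${\cal D}^{(m)}_{X/S}$-structure on the source is the one produced by the functor $F^{*}$ of Remark \ref{Remark20} out of the ${\cal D}^{(0)}_{X^{(m)}/S}$-module ${\cal B}^{(m)}_{X/S}$. Since $\{\underline{\partial}_{<\underline{k}>}\}$ is a local ${\cal O}_{X}$-basis of ${\cal D}^{(m)}_{X/S}$ and $\phi$ is ${\cal O}_{X}$-linear, it suffices to prove $\phi(\underline{\partial}_{<\underline{k}>(m)}.x)=\underline{\partial}_{<\underline{k}>(m)}.\phi(x)$ for all $\underline{k}$ and all $x$; and because the Leibniz relation of Proposition-Definition \ref{proposition10},
\[
\underline{\partial}_{<\underline{k}>(m)}.(a\,y)=\sum_{\underline{i}\le\underline{k}}\left\{\begin{array}{c}\underline{k}\\\underline{i}\end{array}\right\}\underline{\partial}_{<\underline{k}-\underline{i}>}(a)\,\underline{\partial}_{<\underline{i}>(m)}.y,
\]
holds in any ${\cal D}^{(m)}_{X/S}$-module and $\phi$ is ${\cal O}_{X}$-linear, a term-by-term comparison reduces the whole statement to the elements $x=1\otimes b$.

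The heart of the argument is then the case $x=1\otimes b$, which I would settle by exploiting that $b=\psi(g)$ is literally a ${\cal D}^{(m)}_{X/S}$-linear homomorphism. On the target side,
\[
\underline{\partial}_{<\underline{k}>(m)}.\psi(g)=\underline{\partial}_{<\underline{k}>(m)}.g(1\otimes 1)=g\big(\underline{\partial}_{<\underline{k}>(m)}.(1\otimes 1)\big),
\]
using ${\cal D}^{(m)}_{X/S}$-linearity of $g$. Formula (\ref{formula3}) of Remark \ref{Remark20}, applied to the standard generator $1\otimes 1$ of $F^{*}{\cal D}^{(0)}_{X^{(m)}/S}$, gives
\[
\underline{\partial}_{<\underline{k}>(m)}.(1\otimes 1)=\begin{cases}1\otimes\underline{\partial}'_{<p^{-m}\underline{k}>(0)} & \text{if }p^{m}\mid\underline{k},\\ 0 & \text{otherwise},\end{cases}
\]
so $\underline{\partial}_{<\underline{k}>(m)}.\psi(g)$ equals $g(1\otimes\underline{\partial}'_{<p^{-m}\underline{k}>(0)})$ when $p^{m}\mid\underline{k}$ and vanishes otherwise. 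On the source side the same formula (\ref{formula3}) computes $\underline{\partial}_{<\underline{k}>(m)}.(1\otimes b)$, and $\phi$ turns it into $\psi(\underline{\partial}'_{<p^{-m}\underline{k}>(0)}.b)$ in the divisible case and $0$ otherwise.

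To conclude one must match $\psi(\underline{\partial}'_{<\underline{l}>(0)}.b)$ with $g(1\otimes\underline{\partial}'_{<\underline{l}>(0)})$ for $\underline{l}=p^{-m}\underline{k}$. Since the ${\cal D}^{(0)}_{X^{(m)}/S}$-action on ${\cal B}^{(m)}_{X/S}$ is by right multiplication on $F^{*}{\cal D}^{(0)}_{X^{(m)}/S}$, one has $(\underline{\partial}'_{<\underline{l}>(0)}.g)(1\otimes 1)=g\big((1\otimes 1)\cdot\underline{\partial}'_{<\underline{l}>(0)}\big)=g(1\otimes\underline{\partial}'_{<\underline{l}>(0)})$, which is exactly the required identity. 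I expect the only delicate point to be this last bookkeeping: the symbol $b$ carries two a priori different module structures---the level-$0$ action coming from the ${\cal H}om$-description of ${\cal B}^{(m)}_{X/S}$ and the level-$m$ action on ${\cal A}^{gp}_{X}$---and the content of the lemma is precisely that the divisibility dichotomy of (\ref{formula3}) transports correctly across $\psi$. Once the compatibility $\psi(\underline{\partial}'_{<\underline{l}>(0)}.g)=g(1\otimes\underline{\partial}'_{<\underline{l}>(0)})$ is recorded, both the vanishing for $p^{m}\nmid\underline{k}$ and the equality for $p^{m}\mid\underline{k}$ fall out of the ${\cal D}^{(m)}_{X/S}$-linearity of $g$ together with Remark \ref{Remark20}, with no further computation.
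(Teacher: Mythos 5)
Your argument is correct and follows essentially the same route as the paper's proof: both reduce to a local computation with the dual basis $\{\underline{\partial}_{<\underline{k}>}\}$, apply formula (\ref{formula3}) of Remark \ref{Remark20} to produce the divisibility dichotomy, use the ${\cal D}^{(m)}_{X/S}$-linearity of a section $g$ of ${\cal H}om_{{\cal D}^{(m)}_{X/S}}(F^{*}{\cal D}^{(0)}_{X^{(m)}/S},{\cal A}^{gp}_{X})$ to move the operator inside, and identify $\psi(\underline{\partial}'_{<\underline{l}>(0)}.g)$ with $g(1\otimes\underline{\partial}'_{<\underline{l}>(0)})$ via the right-multiplication definition of the ${\cal D}^{(0)}_{X^{(m)}/S}$-action. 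The only difference is organizational: you factor out the Leibniz expansion to reduce to $x=1\otimes b$ first, whereas the paper carries the coefficient $a$ through both sides of one computation.
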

\begin{proof}
We may work \'etale locally on $X$.
Let \{$\underline{\partial}_{<\underline{k}>}$\} and \{$\underline{\partial}'_{<\underline{k}>}$\} be as in Remark \ref{Remark20}.
Let $\psi$ denote the natural homomorphism of ${\cal I}_{X}^{gp}$-indexed ${\cal O}_{X}$-algebras defined by
\begin{equation*}
F^{*}{\cal B}_{X/S}^{(m)}={\cal O}_{X}\otimes_{{\cal O}_{X^{(m)}}}{\cal H}om_{{\cal D}_{X/S}^{(m)}}(F^{*}{\cal D}_{X^{(m)}/S}^{(0)}, {\cal A}_{X}^{gp}) \to {\cal A}_{X}^{gp},\,\,\,\, a\otimes f \mapsto af(1\otimes 1).
\end{equation*}
We shall show the equality $\psi\bigl(\underline{\partial}_{<\underline{k}>}.(a\otimes f)\bigr)=\underline{\partial}_{<\underline{k}>}.\psi\bigl(a\otimes f\bigr)$.
By the formulas in Remark \ref{Remark20}, the left hand side is calculated by  
\begin{eqnarray*}
\psi\left(\underline{\partial}_{<\underline{k}>}.\left(a\otimes f\right)\right)&=&
\psi\left(\sum_{\underline{i}\leq \underline{k}}\left\{
\begin{array}{c}
\underline{k}\\
\underline{i}
\end{array}
\right\} \underline{\partial}_{<\underline{k}-\underline{i}>}\left(a\right)\underline{\partial}_{<\underline{i}>}.\left(1\otimes f\right)\right)\\
&=&
\psi\Bigl(\sum_{p^{m}\underline{j}\leq \underline{k}}\left\{
\begin{array}{c}
\underline{k}\\
p^{m}\underline{j}
\end{array}
\right\} \underline{\partial}_{<\underline{k}-p^{m}\underline{j}>}(a)\otimes \underline{\partial}'_{<\underline{j}>}.f\Bigr)\\
&=&
\sum_{p^{m}\underline{j}\leq \underline{k}}\left\{
\begin{array}{c}
\underline{k}\\
p^{m}\underline{j}
\end{array}
\right\} \underline{\partial}_{<\underline{k}-p^{m}\underline{j}>}(a)f(1\otimes \underline{\partial}'_{<\underline{j}>}).
\end{eqnarray*}
Similarly, by the formulas in Remark \ref{Remark20}, the right hand side is calculated by
\begin{eqnarray*}
\underline{\partial}_{<\underline{k}>}. af(1\otimes 1)&=&
\sum_{\underline{i}\leq \underline{k}}\left\{
\begin{array}{c}
\underline{k}\\
\underline{i}
\end{array}
\right\} \underline{\partial}_{<\underline{k}-\underline{i}>}(a)\underline{\partial}_{<\underline{i}>}.f(1\otimes 1)\\
&=&
\sum_{\underline{i}\leq \underline{k}}\left\{
\begin{array}{c}
\underline{k}\\
\underline{i}
\end{array}
\right\} \underline{\partial}_{<\underline{k}-\underline{i}>}(a)f\left(\underline{\partial}_{<\underline{i}>}.\left(1\otimes 1\right)\right)\\
&=&
\sum_{p^{m}\underline{j}\leq \underline{k}}\left\{
\begin{array}{c}
\underline{k}\\
p^{m}\underline{j}
\end{array}
\right\} \underline{\partial}_{<\underline{k}-p^{m}\underline{j}>}(a)f(1\otimes \underline{\partial}'_{<\underline{j}>}).
\end{eqnarray*}
This finishes the proof.

\end{proof}
Thanks to Lemma \ref{lemma11} and the universal mapping property of a tensor product, we have 
\begin{equation*}
\varepsilon_{n}: p_{1}^{n*}{\cal A}_{X}^{gp}\otimes_{p_{1}^{n*}F^{*}{\cal B}_{X/S}^{(m)}} p_{1}^{n*}F^{*}{\cal F} \to p_{0}^{n*}{\cal A}_{X}^{gp}\otimes_{p_{0}^{n*}F^{*}{\cal B}_{X/S}^{(m)}} p_{0}^{n*}F^{*}{\cal F}.
\end{equation*}
One can obtain the inverse of $\varepsilon_{n}$ in a similar way.
So these maps \{$\varepsilon_{n}$\} are isomorphisms of $\cal J$-indexed ${\cal P}_{X/S, (m)}^{n}$-algebras. 
Furthermore \{$\varepsilon_{n}$\} satisfy the obvious cocycle conditions.
Therefore \{$\varepsilon_{n}$\} forms a log $m$-stratification on ${\cal A}_{X}^{gp}\otimes_{F^{*}{\cal B}_{X/S}^{(m)}} F^{*}{\cal F}$ and defines a $\cal J$-indexed ${\cal D}_{X/S}^{(m)}$-module structure.
\begin{lemm}\label{lemma23}
The $\cal J$-indexed ${\cal D}_{X/S}^{(m)}$-module structure on ${\cal A}_{X}^{gp}\otimes_{F^{*}{\cal B}_{X/S}^{(m)}} F^{*}{\cal F}$ is admissible.
\end{lemm}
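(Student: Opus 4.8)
The plan is to verify admissibility directly through the stratification criterion of Remark \ref{lemma25}, rather than through the $\rho_{ij}$-criterion. Write ${\cal E}:={\cal A}_{X}^{gp}\otimes_{F^{*}{\cal B}_{X/S}^{(m)}} F^{*}{\cal F}$ and recall that its ${\cal D}_{X/S}^{(m)}$-module structure is precisely the one attached to the log $m$-stratification $\{\varepsilon_{n}\}$ constructed just above, which is induced from $\Psi=\varepsilon_{{\cal A}, n}\otimes {\Phi}^{*}\varepsilon_{{\cal F}, n}$ by the universal property of the tensor product. Thus, writing $\varepsilon_{{\cal E}, n}:=\varepsilon_{n}$, a local section $a'\otimes f$ is sent to $\varepsilon_{{\cal A}, n}(a')\otimes {\Phi}^{*}\varepsilon_{{\cal F}, n}(f)$. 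Moreover the ${\cal A}_{X}^{gp}$-module structure on ${\cal E}$ is by construction multiplication on the first tensor factor, so the stratification of ${\cal A}_{X}^{gp}$ that appears in Remark \ref{lemma25} is exactly the first component $\varepsilon_{{\cal A}, n}$ of $\varepsilon_{{\cal E}, n}$.

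By Remark \ref{lemma25} it then suffices to check, for each positive integer $n$, that the square whose horizontal arrows are $\varepsilon_{{\cal A}, n}\otimes \varepsilon_{{\cal E}, n}$ and $\varepsilon_{{\cal E}, n}$ and whose vertical arrows are the ${\cal A}_{X}^{gp}$-action on ${\cal E}$ commutes. I would chase a local section $b\otimes(a'\otimes f)$ of $p_{1}^{n*}({\cal A}_{X}^{gp}\otimes_{{\cal O}_{X}}{\cal E})$ around both routes. The top-then-right route yields $\bigl(\varepsilon_{{\cal A}, n}(b)\,\varepsilon_{{\cal A}, n}(a')\bigr)\otimes {\Phi}^{*}\varepsilon_{{\cal F}, n}(f)$, whereas the left-then-bottom route, using $b\cdot(a'\otimes f)=(ba')\otimes f$, yields $\varepsilon_{{\cal A}, n}(ba')\otimes {\Phi}^{*}\varepsilon_{{\cal F}, n}(f)$. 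Hence commutativity of the square is equivalent to the single identity $\varepsilon_{{\cal A}, n}(ba')=\varepsilon_{{\cal A}, n}(b)\,\varepsilon_{{\cal A}, n}(a')$.

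The main (indeed the only) substantive point is therefore this multiplicativity, and it is already at hand: $\varepsilon_{{\cal A}, n}$ is a morphism of ${\cal I}_{X}^{gp}$-indexed ${\cal P}_{X/S, (m)}^{n}$-algebras, which is precisely the property invoked in the proof of Lemma \ref{lemma11}. Feeding this in closes the diagram for every $n$, whence Remark \ref{lemma25} gives admissibility; this is the natural analogue over $F^{*}{\cal B}_{X/S}^{(m)}$ of the ${\cal O}_{X}$-linear statement of Remark \ref{lemma26}. I do not expect a genuine obstacle beyond careful bookkeeping of indices, namely keeping track of the fact that $b$ and $a'$ lie in ${\cal A}_{X}^{gp}$-fibers over varying sections of ${\cal I}_{X}^{gp}$ while $f$ lies over a section of ${\cal J}$, so that the tensor products and the action are formed fiberwise over the correct index sheaves; since $\varepsilon_{{\cal A}, n}$ respects multiplication on each fiber, the computation above goes through verbatim.
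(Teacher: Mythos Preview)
Your proposal is correct and follows essentially the same approach as the paper: both use the stratification criterion of Remark \ref{lemma25}, reduce the commutativity of the relevant square to the multiplicativity $\varepsilon_{{\cal A}, n}(ba')=\varepsilon_{{\cal A}, n}(b)\,\varepsilon_{{\cal A}, n}(a')$, and conclude from the fact that $\varepsilon_{{\cal A}, n}$ is a morphism of ${\cal I}_{X}^{gp}$-indexed ${\cal P}_{X/S,(m)}^{n}$-algebras. Your version simply spells out the diagram chase that the paper leaves implicit.
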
 
	\begin{proof}
	It suffices to show that the following diagram is commutative:
	\[\xymatrix{
	{p_{1}^{n*}({\cal A}_{X}^{gp}\otimes_{{\cal O}_{X}}{\cal A}_{X}^{gp}\otimes_{F^{*}{\cal B}_{X/S}^{(m)}} F^{*}{\cal F})} \ar[r] \ar[d] & {p_{0}^{n*}({\cal A}_{X}^{gp}\otimes_{{\cal O}_{X}}{\cal A}_{X}^{gp}\otimes_{F^{*}{\cal B}_{X/S}^{(m)}} F^{*}{\cal F})} \ar[d]\\
	{p_{1}^{n*}({\cal A}_{X}^{gp}\otimes_{F^{*}{\cal B}_{X/S}^{(m)}} F^{*}{\cal F})} \ar[r]^{\varepsilon_{n}} & {p_{0}^{n*}({\cal A}_{X}^{gp}\otimes_{F^{*}{\cal B}_{X/S}^{(m)}} F^{*}{\cal F})},}
	\]
	where the upper horizontal arrow is the  the log $m$-stratification $\varepsilon_{{\cal A}, n}\otimes \varepsilon_{n}$ and the vertical arrows are induced from the multiplication on ${\cal A}_{X}^{gp}$.
	This follows from the fact that the log $m$-stratification $\varepsilon_{{\cal A}, n}$ is a morphism of ${\cal I}_{X}^{gp}$-indexed ${\cal P}_{X/S, (m)}^{n}$-algebras.
	\end{proof}
By Lemma \ref{lemma23} and Remark \ref{lemma25} , we have a ${\cal J}$-indexed $\tilde{\cal D}_{X/S}^{(m)}$-module structure on ${\cal A}_{X}^{gp}\otimes_{F^{*}{\cal B}_{X/S}^{(m)}} F^{*}{\cal F}$ and obtain a functor
\begin{equation*}
{\mathbb G}_{\cal J}:
 \left(
\begin{array}{c}
\text{The category of} \\
{\cal J}\text{-indexed left } \tilde {{\cal D}}_{X^{(m)}/S}^{(0)} \text{-modules}

\end{array}
\right)
\to 
\left(
\begin{array}{c}
\text{The category of} \\
{\cal J}\text{-indexed left } \tilde {{\cal D}}_{X/S}^{(m)} \text{-modules}
\end{array}
\right).
\end{equation*}

	\begin{theo}
	${\mathbb F}_{\cal J}$ is an equivalence of categories with a quasi-inverse ${\mathbb G}_{\cal J}$.
	\end{theo}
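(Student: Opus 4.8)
The plan is to produce explicit unit and counit natural transformations relating the two composites ${\mathbb G}_{\cal J}{\mathbb F}_{\cal J}$ and ${\mathbb F}_{\cal J}{\mathbb G}_{\cal J}$ to the identity functors, and then to check that they are isomorphisms, the verification being reduced to Montagnon's non-indexed log Frobenius descent together with the local freeness results already proved. Since the functors ${\mathbb F}_{\cal J}$, ${\mathbb G}_{\cal J}$ and the transformations below are all compatible with \'etale localization on $X$, and since a morphism of ${\cal J}$-indexed modules is an isomorphism exactly when it is so on each fiber, it suffices to argue \'etale-locally. I would therefore assume a logarithmic system of coordinates $m_{1},\ldots,m_{r}$, so that the ${\cal B}_{X/S}^{(m+1)}$-bases $\{\underline{\theta}^{\underline{j}}\mid \underline{j}\in[0,p^{m+1}-1]^{r}\}$ of ${\cal A}_{X}^{gp}$ (Proposition \ref{proposition2}) and $\{\underline{\theta}^{p^{m}\underline{j}}\mid \underline{j}\in[0,p-1]^{r}\}$ of ${\cal B}_{X/S}^{(m)}$ (Proposition \ref{proposition14}) are available.

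Next I would write down the two transformations. For a ${\cal J}$-indexed left $\tilde{\cal D}_{X^{(m)}/S}^{(0)}$-module ${\cal F}$, the unit $\eta_{\cal F}:{\cal F}\to {\mathbb F}_{\cal J}{\mathbb G}_{\cal J}({\cal F})$ sends a local section $x$ to the ${\cal D}_{X/S}^{(m)}$-linear homomorphism $F^{*}{\cal D}_{X^{(m)}/S}^{(0)}\to {\mathbb G}_{\cal J}({\cal F})$ determined by $1\otimes 1\mapsto 1\otimes(1\otimes x)$. For a ${\cal J}$-indexed left $\tilde{\cal D}_{X/S}^{(m)}$-module ${\cal E}$, the counit $\epsilon_{\cal E}:{\mathbb G}_{\cal J}{\mathbb F}_{\cal J}({\cal E})\to {\cal E}$ is induced by the evaluation $a\otimes(c\otimes g)\mapsto ac\,g(1\otimes 1)$, using the ${\cal A}_{X}^{gp}$-action on ${\cal E}$. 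I would first check that these are well defined, i.e. that they descend across $\otimes_{F^{*}{\cal B}_{X/S}^{(m)}}$ and that $\eta_{\cal F}(x)$ is genuinely ${\cal D}_{X/S}^{(m)}$-linear; the essential inputs are Lemma \ref{lemma22}, which says that the structural map $F^{*}{\cal B}_{X/S}^{(m)}\to {\cal A}_{X}^{gp}$ is ${\cal D}_{X/S}^{(m)}$-linear, and Lemma \ref{lemma23}, which provides the admissibility realizing the targets as honest $\tilde{\cal D}$-modules so that $\eta$ and $\epsilon$ are maps of indexed $\tilde{\cal D}$-modules.

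To see that $\eta$ and $\epsilon$ are isomorphisms I would peel off the indexed structure. On underlying ${\cal D}$-modules, ${\mathbb G}_{\cal J}$ is $F^{*}$ followed by extension of scalars along $F^{*}{\cal B}_{X/S}^{(m)}\to {\cal A}_{X}^{gp}$, and ${\mathbb F}_{\cal J}$ is the matching ${\cal H}om$; by Montagnon's log Frobenius descent (\cite{M}) the functor ${\cal E}'\mapsto F^{*}{\cal E}'$ is an equivalence between left ${\cal D}_{X^{(m)}/S}^{(0)}$-modules and left ${\cal D}_{X/S}^{(m)}$-modules, with quasi-inverse ${\cal H}om_{{\cal D}_{X/S}^{(m)}}(F^{*}{\cal D}_{X^{(m)}/S}^{(0)},-)$. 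Because Propositions \ref{proposition2} and \ref{proposition14} exhibit ${\cal A}_{X}^{gp}$ and ${\cal B}_{X/S}^{(m)}$ as finite free ${\cal B}_{X/S}^{(m+1)}$-modules, the scalar extension ${\cal A}_{X}^{gp}\otimes_{F^{*}{\cal B}_{X/S}^{(m)}}(-)$ is exact and commutes with the relevant ${\cal H}om$, so computing $\eta_{\cal F}$ and $\epsilon_{\cal E}$ on fibers in terms of the above bases identifies them, after this base change, with the unit and counit of Montagnon's descent; they are therefore isomorphisms.

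The step I expect to be the main obstacle is the bookkeeping of the three superimposed structures — the ${\cal I}_{X}^{gp}$-indexed algebra action of ${\cal A}_{X}^{gp}$ (resp. ${\cal B}_{X/S}^{(m)}$), the ${\cal D}$-module action, and the way the two combine into a $\tilde{\cal D}$-module — and in particular confirming that $\epsilon_{\cal E}$ respects the ${\cal D}_{X/S}^{(m)}$-action. The delicate point will be matching the $p^{m}$-divisibility mechanism governing the pulled-back stratification in formula (\ref{formula3}) with the passage from the ${\cal B}_{X/S}^{(m)}$-basis $\{\underline{\theta}^{p^{m}\underline{j}}\}$ to the ${\cal A}_{X}^{gp}$-basis $\{\underline{\theta}^{\underline{j}}\}$, so that extension of scalars from ${\cal B}_{X/S}^{(m)}$ to ${\cal A}_{X}^{gp}$ is truly intertwined with Frobenius descent and not merely compatible at the level of underlying sheaves.
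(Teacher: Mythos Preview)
The paper gives no proof of its own here: it simply cites Montagnon's Th\'eor\`eme~4.2.1 and remarks afterwards that her argument for ${\cal J}={\cal I}_{X}^{gp}$ carries over verbatim to arbitrary ${\cal J}$. So your proposal is already more detailed than what the paper offers, and the relevant comparison is with Montagnon's actual argument.

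Your overall architecture --- write down explicit unit and counit, check well-definedness from Lemmas~\ref{lemma22} and~\ref{lemma23}, then verify bijectivity \'etale-locally using the bases of Propositions~\ref{proposition2} and~\ref{proposition14} --- is exactly the shape of Montagnon's proof and is sound. The step that does not stand is your third paragraph, where you ``peel off the indexed structure'' and invoke ``Montagnon's log Frobenius descent'' as a \emph{non-indexed} equivalence $F^{*}$ between left ${\cal D}_{X^{(m)}/S}^{(0)}$-modules and left ${\cal D}_{X/S}^{(m)}$-modules. That is circular: Montagnon's log Frobenius descent \emph{is} the indexed statement you are proving (Th\'eor\`eme~4.2.1 of \cite{M}), and in the log setting there is no prior non-indexed equivalence to fall back on. Indeed, the reason the machinery of ${\cal A}_{X}^{gp}$ and ${\cal B}_{X/S}^{(m)}$ was introduced is precisely that the relative Frobenius $F$ need not be finite flat, so the naive $F^{*}$ on ordinary ${\cal D}$-modules is not known to be an equivalence.

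What Montagnon actually does, and what you should do in place of that reduction, is carry out the local computation directly: expand a section of ${\mathbb F}_{\cal J}{\mathbb G}_{\cal J}({\cal F})$ or of ${\mathbb G}_{\cal J}{\mathbb F}_{\cal J}({\cal E})$ in the bases $\{\underline{\theta}^{\underline{i}}\}$, use formula~(\ref{formula3}) together with Proposition~\ref{proposition1} to unwind the ${\cal D}_{X/S}^{(m)}$-linearity constraint, and check that $\eta_{\cal F}$, $\epsilon_{\cal E}$ are bijections by hand. The local freeness statements you cite are exactly what make this computation terminate; they are inputs to the direct verification, not to a base-change-and-appeal argument. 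Your final paragraph already identifies this bookkeeping as the crux --- the correction is that it is the whole proof, not a preamble to an external descent theorem.
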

\begin{proof}
See Th\'eor\`eme 4.2.1 of \cite{M}.
\end{proof}
\begin{rem}
In \cite{M}, Montagnon only treats the case of ${\cal J}={\cal I}^{gp}_{X}$, but the same argument as that in \cite{M} works for any sheaf of ${\cal I}^{gp}_{X}$-sets ${\cal J}$.
As is the same with the case about the definition of ${\cal B}^{(m)}_{X/S}$, she defines the Frobenius descent functor by ${\mathbb F}({\cal E}):={\cal H}om_{{\cal D}^{(m)}_{X/S, {\cal I}}}(F^{*}{\cal D}_{X^{(m)}/S, {\cal I}}^{(0)}, {\cal E})$ but this is a mistake.
\end{rem}

\subsection{Main Theorem}
Let us start with stating the main theorem.
Let ${\cal X/S}=(X\to S, \tilde{X'}\to \tilde{S})$ be a log smooth integral morphism of fine log schemes with a lifting $\tilde{X}\to \tilde{S}$ of $X'\to S$ modulo $p^{2}$.
We also denote the data $(X^{(m)}\to S, \tilde{X'}\to \tilde{S})$ by $\cal X/S$ by abuse of notation.
As in the Subsection 5.1, $\cal G$ denotes the nilpotent divided power envelope of the zero section of the cotangent bundle of $X'/S$, ${\cal O_{G}^{B}}$ denotes ${\cal B}_{X/S}^{(m+1)}\otimes_{{\cal O}_{X'}}\cal O_{G}$ and ${\rm HIG}^{\cal B, J}_{\rm PD}(X'/S)$ denotes the category of $\cal J$-indexed ${\cal O_{G}^{B}}$-modules.
We put $\tilde {{\cal D}}_{X^{(m)}/S}^{(0), \gamma} =\tilde{{\cal D}}_{X^{(m)}/S}^{(0)} \otimes_{\mathfrak Z'}{\cal O_{G}^{B}}$,
and denote by ${\rm MIC}^{\cal B, J}_{\rm PD}(X^{(m)}/S)$ the category of $\cal J$-indexed $\tilde {{\cal D}}_{X^{(m)}/S}^{(0), \gamma}$-modules.
By Theorem \ref{theorem30}, Proposition \ref{proposition14} and Lemma \ref{lem5}, we see that $\check{{\cal K}}_{\cal X/S}^{(0), {\cal B}}:={\cal B}_{X/S}^{(m)}\otimes_{{\cal O}_{X^{(m)}/S}}\check{{\cal K}}_{\cal X/S}^{(0)}$ is a splitting module of $\tilde{{\cal D}}_{X^{(m)}/S}^{(0)}$ over ${\cal O_{G}^{B}}$ and obtain an equivalence of categories
\begin{equation*}
C': {\rm MIC}^{{\cal B, J}}_{\rm PD} (X^{(m)}/S)\to {\rm HIG}^{\cal B, J}_{\rm PD}(X'/S), E\mapsto {\cal H}om_{\tilde {{\cal D}}_{X^{(m)}/S}^{(0), \gamma}}(\check{{\cal K}}_{\cal X/S}^{(0), {\cal B}}, E),
\end{equation*}
which is a variant of Theorem \ref{Theorem21} and Theorem 4.2 in \cite{S}.
Note that the equivalence $C'$ does not coincide with Theorem 4.2 in \cite{S} (see Remark \ref{Remark01}).

On the other hand, the Frobenius descent induces the following equivalence of categories.

	\begin{lemm}
	The Frobenius descent functor ${\mathbb F}_{\cal J}$ induces an equivalence of categories between ${\rm MIC}^{\cal A, J}_{\rm PD}(X/S)$ and ${\rm MIC}^{\cal B, J}_{\rm PD}(X^{(m)}/S)$.
	\end{lemm}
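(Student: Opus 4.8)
The plan is to realize both ${\rm MIC}^{\cal A, J}_{\rm PD}(X/S)$ and ${\rm MIC}^{\cal B, J}_{\rm PD}(X^{(m)}/S)$ as full subcategories of the categories of $\cal J$-indexed left $\tilde{\cal D}^{(m)}_{X/S}$-modules, resp. $\cal J$-indexed left $\tilde{\cal D}^{(0)}_{X^{(m)}/S}$-modules, and then show that the Frobenius descent equivalence $\mathbb F_{\cal J}$ respects them. Indeed, since $\tilde{\cal D}^{(m),\gamma}_{X/S} = \tilde{\cal D}^{(m)}_{X/S}\otimes_{\tilde{\mathfrak Z}}{\cal O_{G}^{B}}$, an object of ${\rm MIC}^{\cal A, J}_{\rm PD}(X/S)$ is exactly a $\cal J$-indexed $\tilde{\cal D}^{(m)}_{X/S}$-module together with an extension of the action of its center $\tilde{\mathfrak Z}$ along $\tilde{\mathfrak Z}\to {\cal O_{G}^{B}}$, and likewise for ${\rm MIC}^{\cal B, J}_{\rm PD}(X^{(m)}/S)$ with $\tilde{\cal D}^{(0),\gamma}_{X^{(m)}/S} = \tilde{\cal D}^{(0)}_{X^{(m)}/S}\otimes_{\tilde{\mathfrak Z'}}{\cal O_{G}^{B}}$. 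By Theorem \ref{theorem13} and Theorem \ref{theorem30} (1) both centers are canonically identified with ${\cal B}^{(m+1)}_{X/S}\otimes_{{\cal O}_{X'}}\mathfrak Z$. As $\mathbb F_{\cal J}$ is already an equivalence with quasi-inverse $\mathbb G_{\cal J}$, it therefore suffices to prove that $\mathbb F_{\cal J}$ is compatible with the central actions under this identification.

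The heart of the argument is the verification that $\mathbb F_{\cal J}$ intertwines the $p^{m+1}$-curvature of level $m$ on the source with the $p$-curvature of level $0$ on the target. The ${\cal B}^{(m+1)}_{X/S}$-factor of the center causes no difficulty: on $\cal E$ it acts through the scalar ${\cal A}^{gp}_{X}$-action, and on $\mathbb F_{\cal J}({\cal E})$ through the scalar restriction of that action along ${\cal B}^{(m+1)}_{X/S}\subset {\cal B}^{(m)}_{X/S}\subset {\cal A}^{gp}_{X}$, so the two coincide. For the $\mathfrak Z = S^{\cdot}{\cal T}_{X'/S}$-factor I would work \'etale locally with a logarithmic system of coordinates. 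Let $\varphi\in \mathbb F_{\cal J}({\cal E}) = {\cal H}om_{{\cal D}^{(m)}_{X/S}}(F^{*}{\cal D}^{(0)}_{X^{(m)}/S}, {\cal E})$ and let $\xi'_{i}$ be the local basis of ${\cal T}_{X'/S}$. The $p$-curvature of $X^{(m)}/S$ sends $\xi'_{i}$ to $\underline{\partial}'_{<p\underline{\varepsilon}_{i}>(0)}$, which is central in ${\cal D}^{(0)}_{X^{(m)}/S}$ by the level-$0$ case of Theorem \ref{theorem30} (1); hence on a local generator $1\otimes f$ of $F^{*}{\cal D}^{(0)}_{X^{(m)}/S}$ the associated action by right multiplication gives
\begin{equation*}
(\xi'_{i}\cdot \varphi)(1\otimes f) = \varphi\bigl(1\otimes (f\,\underline{\partial}'_{<p\underline{\varepsilon}_{i}>(0)})\bigr) = \varphi\bigl(1\otimes (\underline{\partial}'_{<p\underline{\varepsilon}_{i}>(0)}\,f)\bigr).
\end{equation*}
By the descent formula (\ref{formula3}), applied with $\underline{k} = p^{m+1}\underline{\varepsilon}_{i} = p^{m}\cdot p\underline{\varepsilon}_{i}$, the right-hand side equals $\varphi\bigl(\underline{\partial}_{<p^{m+1}\underline{\varepsilon}_{i}>(m)}.(1\otimes f)\bigr)$, and since $\varphi$ is ${\cal D}^{(m)}_{X/S}$-linear this is $\underline{\partial}_{<p^{m+1}\underline{\varepsilon}_{i}>(m)}.\varphi(1\otimes f)$. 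The operator $\underline{\partial}_{<p^{m+1}\underline{\varepsilon}_{i}>(m)}$ is precisely $\beta(\xi'_{i})$, so the $\mathfrak Z$-action of $\xi'_{i}$ on $\mathbb F_{\cal J}({\cal E})$ is induced by the $\mathfrak Z$-action of $\xi'_{i}$ on $\cal E$; here Lemma \ref{lem1} guarantees that the image of $\beta$ is generated by the $\underline{\partial}_{<p^{m+1}\underline{k}>}$, so that checking on the $\xi'_{i}$ is enough.

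Combining the two factors shows that $\mathbb F_{\cal J}$ carries the $\tilde{\mathfrak Z}$-action on any $\cal E$ to the $\tilde{\mathfrak Z'}$-action on $\mathbb F_{\cal J}({\cal E})$ under the canonical isomorphism $\tilde{\mathfrak Z}\cong {\cal B}^{(m+1)}_{X/S}\otimes_{{\cal O}_{X'}}\mathfrak Z\cong \tilde{\mathfrak Z'}$. Since an ${\cal O_{G}^{B}}$-extension of the central action is an ${\cal O_{G}^{B}}$-algebra map ${\cal O_{G}^{B}}\to {\cal E}nd_{\tilde{\cal D}^{(m)}_{X/S}}({\cal E})$ extending $\tilde{\mathfrak Z}$, and the equivalence $\mathbb F_{\cal J}$ induces an isomorphism ${\cal E}nd_{\tilde{\cal D}^{(m)}_{X/S}}({\cal E})\cong {\cal E}nd_{\tilde{\cal D}^{(0)}_{X^{(m)}/S}}(\mathbb F_{\cal J}({\cal E}))$ carrying the $\tilde{\mathfrak Z}$-subextension to the $\tilde{\mathfrak Z'}$-subextension, such extensions correspond bijectively under $\mathbb F_{\cal J}$. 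Hence $\mathbb F_{\cal J}$ and $\mathbb G_{\cal J}$ preserve the full subcategories cut out by the extension condition and restrict to mutually quasi-inverse equivalences ${\rm MIC}^{\cal A, J}_{\rm PD}(X/S)\simeq {\rm MIC}^{\cal B, J}_{\rm PD}(X^{(m)}/S)$. The main obstacle is the central computation of the second paragraph, and in particular correctly matching the left action on $F^{*}{\cal D}^{(0)}_{X^{(m)}/S}$ coming from (\ref{formula3}) with the right-multiplication action defining the $p$-curvature, which is exactly where the centrality of $\underline{\partial}'_{<p\underline{\varepsilon}_{i}>(0)}$ is used.
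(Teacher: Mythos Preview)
Your proof is correct and follows essentially the same approach as the paper: both reduce to the local verification that the $p$-curvature action of $\xi'_i$ on $\mathbb F_{\cal J}(\cal E)$ via $\underline{\partial}'_{<p\underline{\varepsilon}_i>(0)}$ agrees with the action induced from the $p^{m+1}$-curvature $\underline{\partial}_{<p^{m+1}\underline{\varepsilon}_i>(m)}$ on $\cal E$, using the descent formula (\ref{formula3}) and ${\cal D}^{(m)}_{X/S}$-linearity. Your framing via the identification of centers $\tilde{\mathfrak Z}\cong\tilde{\mathfrak Z'}$ and the transport of ${\cal O}_{\cal G}^{\cal B}$-extensions through the endomorphism-ring isomorphism coming from full faithfulness of $\mathbb F_{\cal J}$ is a bit more explicit than the paper's, but the content is the same.
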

	\begin{proof}
	Let $\cal E$ be an object in ${\rm MIC}^{\cal A, J}_{\rm PD}(X/S)$.
	Then the $\hat{\Gamma}. {\cal T}_{X'/S}$-action on ${\mathbb F}_{\cal J}(\cal E)$ is naturally induced from $\hat{\Gamma}. {\cal T}_{X'/S}$-action on ${\cal E}$.
	We shall show that the following diagram is commutative:
	\[\xymatrix{
	{S^{.}{\cal T}_{X'/S}} \ar[r] \ar[d] & {\hat{\Gamma}.{\cal T}_{X'/S}} \ar[d]\\
	{{\cal D}_{X^{(m)}/S}^{(0)}} \ar[r] & {{\cal E}nd({\mathbb F}_{\cal J}({\cal E}))}.}
	\]
	Here the left vertical arrow is the $p$-curvature map and the right vertical arrow (resp. the lower horizontal arrow) is induced from the $\hat{\Gamma}.{\cal T}_{X'/S}$-action (resp. the ${\cal D}_{X^{(m)}/S}^{(0)}$-action) on
	${\mathbb F}_{\cal J}({\cal E})$.
	Note that, by assumption, the composite $S^{.}{\cal T}_{X'/S} \to \hat{\Gamma}.{\cal T}_{X'/S} \to {\cal E}nd({\mathbb F}_{\cal J}({\cal E}))$ is equals to the composite $S^{.}{\cal T}_{X'/S} \to 
	{\cal D}_{X/S}^{(m)} \to {\cal E}nd({\mathbb F}_{\cal J}({\cal E}))$, where the first arrow $S^{.}{\cal T}_{X'/S} \to {\cal D}_{X/S}^{(m)}$ is the $p^{m+1}$-curvature map.
	We may work in a local situation.
	Take $g\in {\mathbb F}_{\cal J}({\cal E})$.
	Then, by the formulas defined in Remark \ref{Remark20}, ${S^{.}{\cal T}_{X'/S}}$-action on ${\mathbb F}_{\cal J}({\cal E})	$ defined by the composite $S^{.}{\cal T}_{X'/S} \to {\cal D}_{X^{(m)}/S}^{(0)} \to {\cal E}nd({\mathbb F}_{\cal J}({\cal E}))$	is calculated by 
	\begin{eqnarray*}
	\underline{\partial}'_{<p\underline{\epsilon}_{i}>(0)}.g\left(1\otimes 1\right)=g(1\otimes \underline{\partial}'_{<p\underline{\epsilon}_{i}>(0)})=g\left(\underline{\partial}_{<p^{m+1}\underline{\epsilon}_{i}>(m)}.\left(1\otimes 1\right)\right)=\underline{\partial}_{<p^{m+1}\underline{\epsilon}_{i}>(m)}.g(1\otimes 1).
	\end{eqnarray*}
	So this is same as the action defined by the composite $S^{.}{\cal T}_{X'/S} \to {\cal D}_{X/S}^{(m)} \to {\cal E}nd({\mathbb F}_{\cal J}({\cal E}))$.
	Therefore these two ${S^{.}{\cal T}_{X'/S}}$-action are equal and
	${\mathbb F}_{\cal J}({\cal E})$ defines an object in ${\rm MIC}^{\cal B, J}_{\rm PD}(X^{(m)}/S)$ if ${\cal E}$ is an object in ${\rm MIC}^{\cal A, J}_{\rm PD}(X/S)$.
	One can check that a quasi-inverse ${\mathbb G}_{\cal J}$ of ${\mathbb F}_{\cal J}$ induces the functor ${\rm MIC}^{\cal B, J}_{\rm PD}(X^{(m)}/S)\to {\rm MIC}^{\cal A, J}_{\rm PD}(X/S)$ in a similar manner.
	 \end{proof}
Now, we are ready to state the main theorem in this section.
	\begin{theo}\label{theorem10}
	The following diagram of categories commutes:
	\[\xymatrix{
	{{\rm MIC}^{\cal A, J}_{\rm PD}(X/S)} \ar[rd]_{{\mathbb F}_{\cal J}} \ar[rr]^{C_{\cal X/S}} & & {\rm HIG}^{\cal B, J}_{\rm PD}(X'/S)\\
	& {\rm MIC}^{{\cal B, J}}_{\rm PD} (X^{(m)}/S)\ar[ru]_{C'} &}.
	\]
	\end{theo}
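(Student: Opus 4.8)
The plan is to deduce the commutativity of the diagram from a single compatibility between the two splitting modules, and to treat that compatibility (the new ingredient advertised in the introduction as Theorem \ref{theorem11}) as the real content. Recall that $C_{\cal X/S}(E)={\cal H}om_{\tilde{{\cal D}}_{X}^{(m), \gamma}}(\check{{\cal K}}_{\cal X/S}^{(m), {\cal A}}, E)$ and $C'(E')={\cal H}om_{\tilde{{\cal D}}_{X^{(m)}/S}^{(0), \gamma}}(\check{{\cal K}}_{\cal X/S}^{(0), {\cal B}}, E')$, and that the lemma immediately preceding the statement exhibits ${\mathbb F}_{\cal J}$ as an equivalence between ${\rm MIC}^{\cal A, J}_{\rm PD}(X/S)$ and ${\rm MIC}^{\cal B, J}_{\rm PD}(X^{(m)}/S)$ that is compatible with the ${\cal O_{G}^{B}}$-structures induced by the $p^{m+1}$-curvature and the $p$-curvature. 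Granting the identity ${\mathbb F}_{{\cal I}_{X}^{gp}}(\check{{\cal K}}_{\cal X/S}^{(m), {\cal A}})\cong \check{{\cal K}}_{\cal X/S}^{(0), {\cal B}}$ of $\tilde{{\cal D}}_{X^{(m)}/S}^{(0), \gamma}$-modules, the commutativity becomes formal: for $E$ in ${\rm MIC}^{\cal A, J}_{\rm PD}(X/S)$ one has
\begin{align*}
C'\bigl({\mathbb F}_{\cal J}(E)\bigr)
&={\cal H}om_{\tilde{{\cal D}}_{X^{(m)}/S}^{(0), \gamma}}\bigl(\check{{\cal K}}_{\cal X/S}^{(0), {\cal B}}, {\mathbb F}_{\cal J}(E)\bigr)\\
&\cong {\cal H}om_{\tilde{{\cal D}}_{X^{(m)}/S}^{(0), \gamma}}\bigl({\mathbb F}_{{\cal I}_{X}^{gp}}(\check{{\cal K}}_{\cal X/S}^{(m), {\cal A}}), {\mathbb F}_{\cal J}(E)\bigr)\\
&\cong {\cal H}om_{\tilde{{\cal D}}_{X}^{(m), \gamma}}\bigl(\check{{\cal K}}_{\cal X/S}^{(m), {\cal A}}, E\bigr)=C_{\cal X/S}(E),
\end{align*}
where the last isomorphism uses that Montagnon's descent is an equivalence functorially in the index sheaf (it is defined for every sheaf of ${\cal I}_{X}^{gp}$-sets) and that the internal indexed ${\cal H}om$-object of Definition \ref{defi3} $(3)$ is assembled from ordinary Hom-sets on which the equivalence induces bijections; here $\check{{\cal K}}_{\cal X/S}^{(m), {\cal A}}$ and $\check{{\cal K}}_{\cal X/S}^{(0), {\cal B}}$ are taken with their natural ${\cal I}_{X}^{gp}$-indexing, so that both members of the identity are ${\cal J}$-indexed.

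The step I expect to be the main obstacle is therefore Theorem \ref{theorem11}, the identification ${\mathbb F}_{{\cal I}_{X}^{gp}}(\check{{\cal K}}_{\cal X/S}^{(m), {\cal A}})\cong \check{{\cal K}}_{\cal X/S}^{(0), {\cal B}}$. I would prove it \'etale-locally, after choosing a logarithmic system of coordinates $m_{1},\ldots,m_{r}$ and a lifting $\tilde{F}:\tilde{X}\to\tilde{X'}$. Using the local splitting $\check{{\cal K}}_{\cal X/S}^{(m)}\cong F^{*}_{X/S}{\cal O_{G}}$ and the definition ${\mathbb F}({\cal E})={\cal H}om_{{\cal D}^{(m)}_{X/S}}(F^{*}{\cal D}_{X^{(m)}/S}^{(0)}, {\cal E})$, I would compute ${\mathbb F}({\cal A}_{X}^{gp}\otimes_{{\cal O}_{X}}\check{{\cal K}}_{\cal X/S}^{(m)})$ by means of the descent formula (\ref{formula3}), which retains precisely the components indexed by multi-indices divisible by $p^{m}$. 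The underlying ${\cal B}_{X/S}^{(m)}$-module produced in this way should then be matched with ${\cal B}_{X/S}^{(m)}\otimes_{{\cal O}_{X^{(m)}}}\check{{\cal K}}_{\cal X/S}^{(0)}$ via the local freeness of ${\cal B}_{X/S}^{(m)}$ over ${\cal B}_{X/S}^{(m+1)}$ with basis $\{\underline{\theta}^{p^{m}\underline{j}}\}$ from Proposition \ref{proposition14}.

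The delicate verifications inside Theorem \ref{theorem11} are twofold. First, the identification must respect the entire $\tilde{{\cal D}}_{X^{(m)}/S}^{(0), \gamma}$-action, that is, simultaneously the ${\cal D}_{X^{(m)}/S}^{(0)}$-action and the ${\cal O_{G}}$-action: the former is governed by (\ref{formula3}), while for the latter the essential input is Lemma \ref{lem5}, which asserts that the action $({\rm A})$ of $\hat{\Gamma}_{.}(F^{*}_{X/S}{\cal T}_{X'/S})$ coming from the lifting and the action $({\rm B})$ via the $p^{m+1}$-curvature agree on ${\cal K}_{\cal X/S}^{(m)}$; this is exactly what guarantees that the ${\cal O_{G}}$-structure transported through ${\mathbb F}$ coincides with the one defining $\check{{\cal K}}_{\cal X/S}^{(0), {\cal B}}$. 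Second, one must check that the local isomorphism is independent of the chosen coordinates and lifting, so that it glues to a global isomorphism of indexed modules. Once these points are settled, the formal chain above completes the proof, the only residual check being the ${\cal O_{G}^{B}}$-linearity and ${\cal J}$-indexedness of the displayed natural transformation, which follow from the corresponding properties of ${\mathbb F}_{\cal J}$ recorded in the preceding lemma.
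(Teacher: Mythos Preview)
Your reduction of Theorem~\ref{theorem10} to Theorem~\ref{theorem11} is essentially the paper's: compute $C'({\mathbb F}_{\cal J}(E))$, replace $\check{{\cal K}}_{\cal X/S}^{(0), {\cal B}}$ by ${\mathbb F}_{{\cal I}_{X}^{gp}}(\check{{\cal K}}_{\cal X/S}^{(m), {\cal A}})$, and invoke that ${\mathbb F}$ is an equivalence. The paper isolates your remark that ``the internal indexed ${\cal H}om$ is assembled from ordinary Hom-sets'' as a separate Lemma (${\cal H}om_{\cal A}({\cal E},{\cal F})(j)\cong{\cal H}om_{\cal A}({\cal E},{\cal F}(j))$), but the content is identical.

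Where you genuinely diverge is in the proof of Theorem~\ref{theorem11}. You propose an \'etale-local computation: choose coordinates and a lifting $\tilde{F}$, use the splitting $\check{{\cal K}}_{\cal X/S}^{(m)}\cong F^{*}_{X/S}{\cal O_{G}}$ and formula~(\ref{formula3}) to compute ${\mathbb F}$ explicitly, then check independence of choices and glue. The paper instead gives a coordinate-free crystalline argument. It introduces a functor ${\mathbb G}'$ from log $0$-crystals on ${\rm CRIS}^{(0)}_{\rm Int}(X^{(m)}/\tilde S)$ to log $m$-crystals on ${\rm CRIS}^{(m)}_{\rm Int}(X/\tilde S)$, defined by ${\mathbb G}'({\cal E})_{(U,T,J,\delta)}:={\cal E}_{(T_0,T,J+p{\cal O}_T,\delta)}$, and shows (Lemma~\ref{lemma20}) that ${\mathbb G}'$ corresponds to $F^{*}$ under the crystal/${\cal D}$-module dictionary. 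The identification ${\mathbb G}'({\cal K}^{(0)}_{\cal X/S})={\cal K}^{(m)}_{\cal X/S}$ then reduces to checking, for each flat thickening $\tilde T$, that the two torsors of Frobenius liftings ${\cal L}^{(0)}_{{\cal X/S},\tilde T}$ and ${\cal L}^{(m)}_{{\cal X/S},\tilde T}$ literally coincide as torsors over $f^{*}_{T/S}{\cal T}_{X'/S}$, because the two maps $T\to X'$ they are lifting are equal. This gives a global isomorphism with no gluing to verify.

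Your approach could in principle be made to work, but the step you flag as delicate---independence of the local splitting from the choice of $\tilde F$---is exactly where the labor lies: the splitting $\sigma_{\tilde F}$ of the sequence $(\sharp)$ does depend on $\tilde F$, so you would have to track how the local isomorphisms transform under change of lifting and show these transformations are compatible on overlaps. The paper's crystalline route buys you this for free, since ${\cal K}^{(m)}_{\cal X/S}$ and ${\cal K}^{(0)}_{\cal X/S}$ are defined as crystals from the outset and the comparison is made at the level of the defining torsors ${\cal L}_{\cal X/S}$ rather than after choosing trivializations.
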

	\begin{rem}
	Theorem \ref{theorem10} can be regarded as the log global version of the result stated in Subsection 6.6 of \cite{GLQ}.  
	\end{rem}	
	
The key ingredient is the following. 
	\begin{theo}\label{theorem11}
	The image of $\check{{\cal K}}_{\cal X/S}^{(m), {\cal A}}$ under the Frobenius descent functor is naturally isomorphic to $\check{{\cal K}}_{\cal X/S}^{(0), {\cal B}}$,
	that is, 
	\begin{equation*}
	{\mathbb F}_{{\cal I}_{X}^{gp}}(\check{{\cal K}}_{\cal X/S}^{(m), {\cal A}})=\check{{\cal K}}_{\cal X/S}^{(0), {\cal B}}	.
	\end{equation*}
	\end{theo}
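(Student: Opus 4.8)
The plan is to verify the statement after passing to the quasi-inverse. Recall that ${\mathbb F}_{{\cal I}_X^{gp}}$ is an equivalence of categories with quasi-inverse ${\mathbb G}_{{\cal I}_X^{gp}}$, so it suffices to produce an isomorphism
\begin{equation*}
{\mathbb G}_{{\cal I}_X^{gp}}\!\left(\check{{\cal K}}_{{\cal X}/S}^{(0),{\cal B}}\right)\cong \check{{\cal K}}_{{\cal X}/S}^{(m),{\cal A}}
\end{equation*}
of ${\cal I}_X^{gp}$-indexed left $\tilde{{\cal D}}_{X/S}^{(m)}$-modules and then apply ${\mathbb F}_{{\cal I}_X^{gp}}$ to both sides. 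First I would record the formal part. Writing $F\colon X\to X^{(m)}$ for the $m$-th relative Frobenius and unwinding the definition of ${\mathbb G}_{{\cal I}_X^{gp}}$ together with $\check{{\cal K}}_{{\cal X}/S}^{(0),{\cal B}}={\cal B}_{X/S}^{(m)}\otimes_{{\cal O}_{X^{(m)}}}\check{{\cal K}}_{{\cal X}/S}^{(0)}$, one has
\begin{equation*}
{\mathbb G}_{{\cal I}_X^{gp}}\!\left(\check{{\cal K}}_{{\cal X}/S}^{(0),{\cal B}}\right)={\cal A}_X^{gp}\otimes_{F^{*}{\cal B}_{X/S}^{(m)}}F^{*}\!\left({\cal B}_{X/S}^{(m)}\otimes_{{\cal O}_{X^{(m)}}}\check{{\cal K}}_{{\cal X}/S}^{(0)}\right).
\end{equation*}
Since $F^{*}$ is monoidal and $F^{*}{\cal O}_{X^{(m)}}={\cal O}_X$, the standard cancellation ${\cal A}\otimes_{B}(B\otimes_{C}N)\cong {\cal A}\otimes_{C}N$ with $B=F^{*}{\cal B}_{X/S}^{(m)}$, $C={\cal O}_X$ and $N=F^{*}\check{{\cal K}}_{{\cal X}/S}^{(0)}$ reduces the right-hand side to ${\cal A}_X^{gp}\otimes_{{\cal O}_X}F^{*}\check{{\cal K}}_{{\cal X}/S}^{(0)}$.

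\textbf{The geometric input.} The crux is therefore the identification $F^{*}\check{{\cal K}}_{{\cal X}/S}^{(0)}\cong \check{{\cal K}}_{{\cal X}/S}^{(m)}$ as left ${\cal D}_{X/S}^{(m)}$-modules, after which the previous paragraph gives ${\cal A}_X^{gp}\otimes_{{\cal O}_X}\check{{\cal K}}_{{\cal X}/S}^{(m)}=\check{{\cal K}}_{{\cal X}/S}^{(m),{\cal A}}$, as desired. Conceptually this is the assertion that the whole Frobenius-lifting construction of Section 5 descends one level: writing $F_{X/S}=F^{(0)}\circ F$ with $F^{(0)}\colon X^{(m)}\to X'$ the relative Frobenius of $X^{(m)}\to S$, and noting that the map $f_{T/S}$ defining the lifting torsor depends only on the Frobenius to $\tilde{X'}$, the log $m$-crystal of torsors $\{{\cal L}^{(m)}_{{\cal X}/S,\tilde T}\}$ — hence the extension $(\sharp)$ and the algebra ${\cal K}_{{\cal X}/S}^{(m)}$ — should be the pullback along $\Phi$ (Proposition \ref{proposition11}) of the corresponding level $0$ objects attached to $X^{(m)}\to S$; passing to symmetric algebras and dualizing yields the claim. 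I expect the cleanest verification to be \'etale local: the underlying ${\cal O}_X$-modules of $F^{*}\check{{\cal K}}_{{\cal X}/S}^{(0)}$ and $\check{{\cal K}}_{{\cal X}/S}^{(m)}$ both equal $\hat{\Gamma}_{\cdot}(F_{X/S}^{*}{\cal T}_{X'/S})$ (because $F_{X/S}=F^{(0)}\circ F$), so the content is the comparison of the two ${\cal D}_{X/S}^{(m)}$-actions. By formula (\ref{formula3}) the operators $\underline{\partial}_{<p^{m}\underline{l}>(m)}$ act on $F^{*}\check{{\cal K}}_{{\cal X}/S}^{(0)}$ through $\underline{\partial}'_{<\underline{l}>(0)}$, while $\underline{\partial}_{<\underline{k}>(m)}$ with $p^{m}\nmid\underline{k}$ act by $0$; one matches this with the action of ${\cal D}_{X/S}^{(m)}$ coming from the crystal structure on $\check{{\cal K}}_{{\cal X}/S}^{(m)}$. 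The top piece of this comparison is exactly the equality of actions $\mathrm{(A)}=\mathrm{(B)}$ of Lemma \ref{lem5}, sending $\xi_i'\mapsto \underline{\partial}_{<p^{m+1}\underline{\varepsilon}_i>(m)}$; the remaining pieces express that the lower-order part of the ${\cal D}^{(m)}$-action is the $F^{*}$ of the level $0$ ${\cal D}^{(0)}$-action. This crystal-level Frobenius descent of $\check{{\cal K}}$ is the main obstacle, since the $p^{m+1}$-curvature (action $\mathrm{(B)}$) controls only the $\underline{\partial}_{<p^{m+1}\underline{\varepsilon}_i>}$-part directly, and the descent of the intermediate operators must be extracted from the crystal.

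\textbf{Assembling and checking structures.} Finally I would check that the chain of isomorphisms is compatible with the structures present, so that it is an isomorphism in the source of ${\mathbb G}_{{\cal I}_X^{gp}}$. The ${\cal A}_X^{gp}$-action is carried by the left tensor factor throughout, and the algebra map $F^{*}{\cal B}_{X/S}^{(m)}\to {\cal A}_X^{gp}$ used in the cancellation is ${\cal D}_{X/S}^{(m)}$-linear by Lemma \ref{lemma22}, so the computation respects the admissible ${\cal D}_{X/S}^{(m)}$-action: on the left this action is the log $m$-stratification $\varepsilon_n$ built in the Frobenius-descent construction (Lemmas \ref{lemma11} and \ref{lemma23}), and under the geometric input it becomes the diagonal action on ${\cal A}_X^{gp}\otimes_{{\cal O}_X}\check{{\cal K}}_{{\cal X}/S}^{(m)}$, which is admissible by Remark \ref{lemma26}. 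Moreover the $\hat{\Gamma}_{\cdot}{\cal T}_{X'/S}$-action (action $\mathrm{(A)}$), hence the ${\cal O}_{\cal G}^{\cal B}$-structure needed for Theorem \ref{theorem10}, is preserved, since it matches the $\hat{\Gamma}$-action on $F^{*}\check{{\cal K}}_{{\cal X}/S}^{(0)}$ under the geometric input by the same Lemma \ref{lem5}. Granting these compatibilities, applying ${\mathbb F}_{{\cal I}_X^{gp}}$ to ${\mathbb G}_{{\cal I}_X^{gp}}(\check{{\cal K}}_{{\cal X}/S}^{(0),{\cal B}})\cong\check{{\cal K}}_{{\cal X}/S}^{(m),{\cal A}}$ and using ${\mathbb F}_{{\cal I}_X^{gp}}\circ{\mathbb G}_{{\cal I}_X^{gp}}\cong\mathrm{id}$ gives the asserted isomorphism ${\mathbb F}_{{\cal I}_{X}^{gp}}(\check{{\cal K}}_{{\cal X}/S}^{(m),{\cal A}})\cong\check{{\cal K}}_{{\cal X}/S}^{(0),{\cal B}}$.
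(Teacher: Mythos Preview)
Your formal reduction is exactly the paper's: pass to the quasi-inverse ${\mathbb G}_{{\cal I}_X^{gp}}$, cancel the ${\cal B}_{X/S}^{(m)}$ factor, and reduce to $F^*\check{\cal K}^{(0)}_{{\cal X}/S}\cong\check{\cal K}^{(m)}_{{\cal X}/S}$ (equivalently $F^*{\cal K}^{(0)}_{{\cal X}/S}\cong{\cal K}^{(m)}_{{\cal X}/S}$) as left ${\cal D}^{(m)}_{X/S}$-modules. You correctly name this as the crux and even articulate the crystal-theoretic statement that settles it --- that the level-$m$ lifting torsor is the $\Phi$-pullback of the level-$0$ one --- but then you retreat to a local comparison of operator actions and flag that as an unresolved ``main obstacle.'' That local route is indeed awkward: the identification $\check{\cal K}^{(m)}\simeq\hat\Gamma_{\cdot}(F_{X/S}^*{\cal T}_{X'/S})$ depends on a choice of Frobenius lifting, and Lemma~\ref{lem5} only pins down the $p^{m+1}$-curvature part, so matching the intermediate $\underline\partial_{<p^s\underline\varepsilon_i>}$ for $0\le s\le m$ by hand and then checking the local isomorphisms glue would amount to re-proving the torsor statement anyway.

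The paper executes precisely the crystal approach you sketched but did not carry out. It introduces (Lemmas~\ref{lemma10} and \ref{lemma20}) a functor ${\mathbb G}'$ from log $0$-crystals on ${\rm CRIS}^{(0)}_{\rm Int}(X^{(m)}/\tilde S)$ to log $m$-crystals on ${\rm CRIS}^{(m)}_{\rm Int}(X/\tilde S)$, defined by viewing an $m$-PD thickening $(U,T,J,\delta)$ as the $0$-PD thickening $(T_0,T,J+p{\cal O}_T,\delta)$ over $X^{(m)}$, and checks that ${\mathbb G}'$ realizes $F^*$ on the ${\cal D}$-module side. The whole problem then becomes ${\mathbb G}'({\cal L}^{(0)}_{{\cal X}/S})={\cal L}^{(m)}_{{\cal X}/S}$ as crystals of torsors, which unwinds to the equality of two composites $T\to X'$ (namely $d\circ c=e\circ b$ in the paper's diagram) --- a one-line diagram chase. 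From there ${\cal E}^{(m)}$ and ${\cal K}^{(m)}$ follow formally. So your plan is sound and matches the paper's outline; the missing piece is to replace the proposed local matching of ${\cal D}^{(m)}$-actions by the global torsor identification on the crystalline site.
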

We shall give a proof of Theorem \ref{theorem11} in the next subsection.
Here, let us prove Theorem \ref{theorem10} from Theorem \ref{theorem11}.
We need the following easy lemma.
\begin{lemm}\label{lemma12}
Let ${\cal A}$ be an ${\cal I}$-indexed ${\cal O}_{X}$-algebra.
Let $\cal E$ be a ${\cal I}$-indexed left $\cal A$-module and $\cal F$ a $\cal J$-indexed left $\cal A$-module.
Then, for each $j\in {\cal J}(X)=Hom_{\cal I}(\cal I, J)$, ${\cal H}om_{\cal A}({\cal E, F})(j)$ is isomorphic to ${\cal H}om_{\cal A}({\cal E, F}(j))$.
\end{lemm}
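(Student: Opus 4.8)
The plan is to deduce the statement directly from the definitions of the pullback operation $(\varphi)$ and of the internal hom (Definition \ref{defi3} (2), (3)), after isolating the one structural fact that is really being used, namely the canonical identification
\[
{\cal H}om_{\cal I}({\cal I}, {\cal K})\xrightarrow{\ \cong\ }{\cal K},\qquad \varphi\longmapsto \varphi(0),
\]
valid for every sheaf of ${\cal I}$-sets ${\cal K}$. This holds because ${\cal I}$, acting on itself by translation, is the free sheaf of ${\cal I}$-sets on one generator: an ${\cal I}$-equivariant map $\varphi\colon {\cal I}\to {\cal K}$ satisfies $\varphi(i)=i+\varphi(0)$ and is thus determined by $\varphi(0)$, while any section $k$ of ${\cal K}$ yields the translation $\varphi_{k}\colon i\mapsto i+k$. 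Under this identification the global section $j\in {\cal J}(X)={\rm Hom}_{\cal I}({\cal I},{\cal J})$ appearing on both sides corresponds to $\varphi_{j}\colon {\cal I}\to {\cal J}$, $i\mapsto i+j$, and this is the sense in which the symbol $(j)$ is read in the statement.

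First I would record the compatibility of $(\varphi)$ with composition. For a ${\cal K}$-indexed ${\cal A}$-module ${\cal G}$ and morphisms of ${\cal I}$-sets $\psi\colon{\cal L}\to{\cal L}'$ and $\varphi\colon{\cal L}'\to{\cal K}$, associativity of fibre products gives a canonical identification ${\cal G}(\varphi)(\psi)={\cal G}(\varphi\circ\psi)$ of ${\cal L}$-indexed ${\cal A}$-modules, since both are ${\cal G}\times_{{\cal K}}{\cal L}$ with the ${\cal A}$-action inherited from ${\cal G}$. This step is purely formal and requires no computation.

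With these two observations in place I would compare fibres over an arbitrary local section $i$ of ${\cal I}$ (equivalently, over an \'etale open $U$). On the left, ${\cal H}om_{\cal A}({\cal E},{\cal F})$ is ${\cal H}om_{\cal I}({\cal I},{\cal J})\cong{\cal J}$-indexed, so $(\varphi_{j})$ produces an ${\cal I}$-indexed module whose fibre at $i$ is the component of ${\cal H}om_{\cal A}({\cal E},{\cal F})$ at $i+j\in{\cal J}$, namely ${\rm Hom}_{\cal A}({\cal E}|_{U},{\cal F}|_{U}(\varphi_{i+j}))$ by Definition \ref{defi3} (3). On the right, ${\cal F}(j)={\cal F}(\varphi_{j})$ is ${\cal I}$-indexed, so ${\cal H}om_{\cal A}({\cal E},{\cal F}(j))$ is ${\cal H}om_{\cal I}({\cal I},{\cal I})\cong{\cal I}$-indexed, and its fibre at $i$ is ${\rm Hom}_{\cal A}({\cal E}|_{U},{\cal F}(j)|_{U}(\varphi_{i}))$; by the composition rule of the previous step ${\cal F}(\varphi_{j})(\varphi_{i})={\cal F}(\varphi_{j}\circ\varphi_{i})={\cal F}(\varphi_{i+j})$, so this fibre is again ${\rm Hom}_{\cal A}({\cal E}|_{U},{\cal F}|_{U}(\varphi_{i+j}))$. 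The two families of fibres coincide, naturally in $U$ and in $i$, and the identification respects the ${\cal A}$-action and the projection to the (identified) indexing sheaves; this yields the asserted isomorphism.

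I expect the only real work to be bookkeeping rather than mathematics. The point demanding care is to keep the two a priori different indexing sheaves ${\cal H}om_{\cal I}({\cal I},{\cal J})$ and ${\cal H}om_{\cal I}({\cal I},{\cal I})$ apart, and to check that the canonical isomorphisms ${\cal H}om_{\cal I}({\cal I},-)\cong(-)$ intertwine the transition (gluing) data of the two sheaves, so that the fibrewise identification above globalizes to an isomorphism of sheaves on $X$. No commutativity of ${\cal A}$ is needed for the underlying isomorphism of indexed sheaves of abelian groups; when ${\cal A}$ is commutative the same map is visibly ${\cal A}$-linear, giving the isomorphism of indexed ${\cal A}$-modules.
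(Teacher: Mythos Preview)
Your proposal is correct and follows essentially the same approach as the paper: both arguments identify the fibre at a local section $i\in{\cal I}(U)$ of each side with ${\rm Hom}_{\cal A}({\cal E}|_U,{\cal F}|_U(i+j))$ via the rule ${\cal F}(j)(i)={\cal F}(i+j)$. Your write-up is more careful than the paper's in making explicit the identification ${\cal H}om_{\cal I}({\cal I},{\cal K})\cong{\cal K}$ and the composition law ${\cal G}(\varphi)(\psi)={\cal G}(\varphi\circ\psi)$, which the paper uses without comment.
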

\begin{proof}
There exists a natural morphism ${\cal H}om_{\cal A}({\cal E, F})(j) \to {\cal H}om_{\cal A}({\cal E, F}(j))$.
It is suffices to show that this morphism is an isomorphism at every fiber.
Let $i$ be a section of $\cal I$ defined on $U$. 
The left hand side is a sheaf defined by 
\begin{eqnarray*}
V\longmapsto \bigsqcup_{i'\in {\cal I}(V)}
{\rm Hom}_{{\cal A}|_{V}}({\cal E}|_{V}, {\cal F}(j)(i'))=\bigsqcup_{i'\in {\cal I}(V)}{\rm Hom}_{{\cal A}|_{V}}({\cal E}|_{V}, {\cal F}(i'+j)).
\end{eqnarray*}
Therefore the fiber at $i$ is isomorphic to ${\cal H}om_{\cal A}({\cal E, F}(i+j))$.
On the other hand, the fiber of  left hand side at $i$ is isomorphic to
${\cal H}om_{\cal A}({\cal E, F})_{i+j} \cong {\cal H}om_{\cal A}({\cal E, F}(i+j))$.
\end{proof}
Now, we prove Theorem \ref{theorem10}.
Let ${\cal E}$ be an object in ${\rm MIC}^{\cal A, J}_{\rm PD}(X/S)$ and consider the image
	\begin{equation*}
	C'({\mathbb F}_{\cal J}({\cal E}))={\cal H}om_{\tilde {{\cal D}}_{X^{(m)}/S}^{(0), \gamma}}(\check{{\cal K}}_{\cal X/S}^{(0), {\cal B}}, {\mathbb F}_{\cal J}({\cal E})).
	\end{equation*}
It is a correspondence
\begin{eqnarray*}
U\longmapsto \bigsqcup_{j\in {\cal J}(U)}
{\rm Hom}_{\tilde {{\cal D}}_{X^{(m)}/S}^{(0), \gamma}|_{U}}(\check{{\cal K}}_{\cal X/S}^{(0), {\cal B}}|_{U}, {\mathbb F}_{\cal J}({\cal E})(j))
\end{eqnarray*}
as a presheaf on $X$.
It is suffices to show that ${\rm Hom}_{\tilde {{\cal D}}_{X^{(m)}/S}^{(0), \gamma}|_{U}}(\check{{\cal K}}_{\cal X/S}^{(0), {\cal B}}|_{U}, {\mathbb F}_{\cal J}({\cal E})(j))$ is canonically identified with 
${\rm Hom}_{\tilde {{\cal D}}_{X/S}^{(m), \gamma}|_{U}}(\check{{\cal K}}_{\cal X/S}^{(m), {\cal A}}|_{U}, {\cal E}(j))$ for every $U$ and $j\in {\cal J}(U)$.
We may assume $U=X$.
Then, by Theorem \ref{theorem11} and Lemma \ref{lemma12}, we have 
\begin{eqnarray*}
{\rm Hom}_{\tilde {{\cal D}}_{X^{(m)}/S}^{(0), \gamma}}(\check{{\cal K}}_{\cal X/S}^{(0), {\cal B}}, {\mathbb F}_{\cal J}({\cal E})(j))
={\rm Hom}_{\tilde {{\cal D}}_{X^{(m)}/S}^{(0), \gamma}}( {\mathbb F}_{{\cal I}_{X}^{gp}}(\check{{\cal K}}_{\cal X/S}^{(m), {\cal A}}), {\mathbb F}_{{\cal I}_{X}^{gp}}({\cal E}(j))).
\end{eqnarray*}
The assertion follows from the fact that the Frobenius descent is an equivalence of categories.

\subsection{Proof}
In this subsection, we give a proof of Theorem \ref{theorem11}.
First, we introduce a description of ${\mathbb G}_{\cal J}$ in terms of crystals which is essentially due to Berthelot \cite{B1}.

Let $\cal E$ be a log $0$-crystal on ${\rm{CRIS}}_{\rm Int}^{(0)}(X^{(m)}/\tilde{S})$ and $(U, T, J, \delta)$ log $m$-PD thickening in ${\rm{CRIS}}_{\rm Int}^{(m)}(X/\tilde{S})$.
We consider a closed subscheme $T_{0}$ of $T$ defined by $J+p{\cal O}_{T}$ 
and endow $T_{0}$ with the inverse image log structure from $T$.
We have a log $0$-PD thickenings $T_{0}\hookrightarrow T$ and the following commutative diagram:
\[\xymatrix{
{X} \ar[d]  & U \ar[d]\ar[l]\ar[r] & T_{0}\ar[d] \\
X^{(m)}  & U^{(m)} \ar[l]\ar[r] & T_{0}^{(m)}.}
\]

Here all vertical arrows are the $m$-th relative Frobenius morphisms.
\begin{lemm}\label{lemma10}
There exists a unique morphism $T_{0}\to U^{(m)}$ such that the following diagram commutes
\[\xymatrix{
{U} \ar[d] \ar[r] & T_{0} \ar[d]\ar[ld] \\
U^{(m)} \ar[r] & T_{0}^{(m)} .}
\] 
\end{lemm}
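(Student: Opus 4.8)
The plan is to run the argument of Lemma \ref{Lemma2} essentially verbatim, replacing the $(m+1)$-st relative Frobenius used there by the $m$-th relative Frobenius $F$. Recall from the beginning of Subsection 3.2 (applied to the $m$-th Frobenius) that $F$ factors as $U\to U^{(m)}\to U^{(m)\prime\prime}\to U$, where $U^{(m)\prime\prime}:=U\times_{S}S$ is formed via the $m$-th iterate of the absolute Frobenius of $S$, the morphism $U\to U^{(m)}$ is purely inseparable, and $U^{(m)}\to U^{(m)\prime\prime}$ is log \'etale; the same factorization holds for $T_{0}$. The strategy is: first produce a canonical map $T_{0}\to U^{(m)\prime\prime}$ factoring $T_{0}\to T_{0}^{(m)\prime\prime}$, then lift it uniquely through the log \'etale morphism $U^{(m)}\to U^{(m)\prime\prime}$ along an exact nilimmersion.

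First I would record that $U\hookrightarrow T_{0}$ is an exact closed nilimmersion. Since $U$ lies over $\mathbb{Z}/p\mathbb{Z}$, the defining ideal $I$ of $U$ in $T$ contains $p\mathcal{O}_{T}$, and as $J\subset I$ we obtain $J+p\mathcal{O}_{T}\subset I$, giving the factorization $U\hookrightarrow T_{0}\hookrightarrow T$. For any local section $a\in I$ one has $a^{p^{m}}\in I^{(p^{m})}\subset J$, hence $a^{p^{m}}=0$ in $\mathcal{O}_{T_{0}}$; thus the defining ideal of $U$ in $T_{0}$ is nil and $U\to T_{0}$ is a homeomorphism. Exactness is automatic because both $U$ and $T_{0}$ carry the inverse image log structure of $T$. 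Next I would construct the factorization of the canonical morphism $T_{0}\to T_{0}^{(m)\prime\prime}$ through $U^{(m)\prime\prime}$: on structure sheaves the map $\mathcal{O}_{T_{0}^{(m)\prime\prime}}=\mathcal{O}_{T_{0}}\otimes_{\mathcal{O}_{S}}\mathcal{O}_{S}\to\mathcal{O}_{T_{0}}$, $b\otimes s\mapsto b^{p^{m}}s$, annihilates the ideal generated by the defining ideal of $U$ by the vanishing $a^{p^{m}}=0$ just established, so it factors through $\mathcal{O}_{U^{(m)\prime\prime}}$; on log structures one uses that $U^{(m)\prime\prime}\to T_{0}^{(m)\prime\prime}$ is strict, being the $m$-th Frobenius twist of the strict immersion $U\hookrightarrow T_{0}$, to transport $\mathcal{M}_{T_{0}^{(m)\prime\prime}}$ exactly as in Lemma \ref{Lemma2}. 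This yields a morphism $T_{0}\to U^{(m)\prime\prime}$ whose restriction along $U\hookrightarrow T_{0}$ is the canonical $U\to U^{(m)\prime\prime}$.

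Finally I would obtain the diagonal from the infinitesimal lifting property of the log \'etale morphism $U^{(m)}\to U^{(m)\prime\prime}$ along the exact nilimmersion $U\hookrightarrow T_{0}$: this produces a unique $T_{0}\to U^{(m)}$ whose composite with $U\hookrightarrow T_{0}$ is $U\to U^{(m)}$ and whose composite with $U^{(m)}\to U^{(m)\prime\prime}$ is the map of the previous step. The upper-right triangle of the asserted square is then immediate, while the lower-left identity $(U^{(m)}\to T_{0}^{(m)})\circ(T_{0}\to U^{(m)})=F_{T_{0}}$ follows from a second application of the uniqueness of the log \'etale lifting: both sides restrict to the same map on $U$ and agree after composing with the log \'etale morphism $T_{0}^{(m)}\to T_{0}^{(m)\prime\prime}$, hence coincide. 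I expect the only real obstacle to be the log-structure bookkeeping in the middle step, namely the strictness of $U^{(m)\prime\prime}\to T_{0}^{(m)\prime\prime}$ and the compatibility of the $p^{m}$-power factorization with the log structures; but this is formally identical to the verification already carried out in the proof of Lemma \ref{Lemma2}, so it should require no new input.
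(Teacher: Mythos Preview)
Your proposal is correct and follows essentially the same approach as the paper, namely rerunning the argument of Lemma \ref{Lemma2} with the $(m+1)$-st Frobenius replaced by the $m$-th and the $m$-PD inclusion $I^{(p^{m})}+pI\subset J$ supplying the key vanishing $a^{p^{m}}=0$ in $\mathcal{O}_{T_{0}}$. If anything, your write-up is more explicit than the paper's: the paper's proof records only the structure-sheaf factorization and leaves the log-\'etale lifting step and the verification of the lower triangle implicit, whereas you spell these out.
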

\begin{proof}
Note that all fine log schemes in the diagram are homeomorphic.
Let $I$ denote the defining ideal of $U\hookrightarrow T$.
Since $(U, T, J, \delta)\in {\rm{CRIS}}_{\rm Int}^{(m)}(X/\tilde{S})$ (So $U$ and $T$ are integral over $\tilde{S}$), 
${\cal O}_{U^{(m)}}={\cal O}_{\tilde{S}}\otimes_{{\cal O}_{\tilde{S}}}{\cal O}_{T}/I{\cal O}_{\tilde{S}}\otimes_{{\cal O}_{\tilde{S}}}{\cal O}_{T}$, ${\cal O}_{T_{0}}={\cal O}_{T}/J+p{\cal O}_{T}$.
Consider the composition ${\cal O}_{T^{(m)}}\to {\cal O}_{T}\to {\cal O}_{T_{0}}$, where the first map is the $m$-th relative Frobenius and the second one is the natural projection.
Then, since the image of this map is contained in $I^{(p^{m})}+pI$, this one is zero on $I{\cal O}_{\tilde{S}}\otimes_{{\cal O}_{\tilde{S}}}{\cal O}_{T}$.
Thus ${\cal O}_{T^{(m)}}\to {\cal O}_{T_{0}}$ uniquely factors as ${\cal O}_{T^{(m)}}\to {\cal O}_{U^{(m)}}\to {\cal O}_{T_{0}}$.
\end{proof}
By Lemma \ref{lemma10}, $T_{0}\hookrightarrow T$ can be considered as an object in ${\rm{CRIS}}_{\rm Int}^{(0)}(X^{(m)}/\tilde{S})$.
We define a functor 
	\begin{equation*}
	{\mathbb G'}:
	\left(
		\begin{array}{c}
		\text{The category of} \\
		\text{log $0$-crystals on ${\rm{CRIS}}_{\rm Int}^{(0)}(X^{(m)}/\tilde{S})$}
		\end{array}
	\right)
	\to 
	\left(
		\begin{array}{c}
		\text{The category of} \\
		\text{log $m$-crystals on ${\rm{CRIS}}_{\rm Int}^{(m)}(X/\tilde{S})$}
		\end{array}
	\right)
	\end{equation*}
	by ${\mathbb G'}({\cal E})_{(U, T, J. \delta)}:={\cal E}_{(T_{0}, T, J+p{\cal O}_{T}, \delta)}$ for each $(U, T, J, \delta)\in {\rm{CRIS}}_{\rm Int}^{(m)}(X/\tilde{S})$.
	We also obtain a functor 
	\begin{equation*}
	\left(
		\begin{array}{c}
		\text{The category of} \\
		\text{log $0$-crystals on ${\rm{CRIS}}_{\rm Int}^{(0)}(X^{(m)}/S)$}
		\end{array}
	\right)
	\to 
	\left(
		\begin{array}{c}
		\text{The category of} \\
		\text{log $m$-crystals on ${\rm{CRIS}}_{\rm Int}^{(m)}(X/S)$}
		\end{array}
	\right)
	\end{equation*}	
	in a similar manner and denote it by $\mathbb G'$ by abuse of notations.

	\begin{lemm}\label{lemma20}
	The following diagram of categories commutes
	\[\xymatrix{
	{ \left(
		\begin{array}{c}
		\text{The category of} \\
		\text{log $0$-crystals on ${\rm{CRIS}}_{\rm Int}^{(0)}(X^{(m)}/S)$}
		\end{array}
	\right)
	} \ar[d] \ar[r]^{\mathbb G'} & 
	{\left(
		\begin{array}{c}
		\text{The category of} \\
		\text{log $m$-crystals on ${\rm{CRIS}}_{\rm Int}^{(m)}(X/S)$}
		\end{array}
	\right)} \ar[d] \\
	{ \left(
		\begin{array}{c}
		\text{The category of} \\
		\text{left ${\cal D}_{X^{(m)}/S}^{(0)}$-modules on $X^{(m)}$}
		\end{array}
	\right)
	} \ar[r]^{F^{*}} & 
	{ \left(
		\begin{array}{c}
		\text{The category of} \\
		\text{left ${\cal D}_{X/S}^{(m)}$-modules on $X$}
		\end{array}
	\right)
	}  .}
	\] 
	
	Here each of the vertical functors is defined by the composite of functors defined in Remark \ref{Remark8} and Remark \ref{Remark7}.	
	\end{lemm}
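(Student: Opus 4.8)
The plan is to test the asserted commutativity on an object $\cal E$ of the category of log $0$-crystals on ${\rm CRIS}_{\rm Int}^{(0)}(X^{(m)}/S)$, and to use the two vertical equivalences to reduce the problem to a comparison of underlying modules and stratifications. Both composites $F^{*}\circ(\text{left vertical})$ and $(\text{right vertical})\circ{\mathbb G'}$ produce a left ${\cal D}_{X/S}^{(m)}$-module, so by the equivalences of Remark \ref{Remark8} and Remark \ref{Remark7} it suffices to exhibit, functorially in $\cal E$, an isomorphism of the underlying ${\cal O}_{X}$-modules that is compatible with the two associated log $m$-stratifications.

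First I would compare the underlying ${\cal O}_{X}$-modules by evaluating ${\mathbb G'}(\cal E)$ at the trivial thickening $(X\xrightarrow{\rm id}X)$. Since all schemes are defined over ${\mathbb Z}/p{\mathbb Z}$, the ideal $J+p{\cal O}_{T}$ attached to this thickening is zero, so the associated closed subscheme $T_{0}$ is $X$ itself; Lemma \ref{lemma10} then identifies its structure morphism $T_{0}\to U^{(m)}$ with the $m$-th relative Frobenius $F\colon X\to X^{(m)}$. Because $\cal E$ is a crystal, evaluating it at this $0$-PD thickening over $X^{(m)}$ yields a canonical isomorphism ${\mathbb G'}({\cal E})_{X}\cong F^{*}{\cal E}_{X^{(m)}}$, which is exactly the ${\cal O}_{X}$-module underlying the lower route, since the functor $F^{*}$ on ${\cal D}$-modules is ${\cal O}_{X}$-linearly the pullback.

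Next I would compare the log $m$-stratifications. The one attached to the crystal ${\mathbb G'}(\cal E)$ is obtained by evaluating ${\mathbb G'}(\cal E)$ at the thickenings $X\hookrightarrow P_{X/S,(m)}^{n}$ and using the two projections, while the one defining the ${\cal D}_{X/S}^{(m)}$-structure on $F^{*}\cal E$ is, by its very construction (Remark \ref{Remark20}), the pullback along $\Phi$ of the log $0$-stratification of $\cal E$, where $\Phi\colon P_{X/S,(m)}\to P_{X^{(m)}/S,(0)}$ is the morphism of Proposition \ref{proposition11}. To match these, the key point is that $\Phi$, together with the maps of Lemma \ref{lemma10}, upgrades to a morphism of log $0$-PD thickenings from the $T_{0}$-thickening of $P_{X/S,(m)}^{n}$ to $X^{(m)}\hookrightarrow P_{X^{(m)}/S,(0)}^{n}$, and that this morphism intertwines the first and second projections of $P_{X/S,(m)}^{n}$ with those of $P_{X^{(m)}/S,(0)}^{n}$ via $F$ (a property built into the construction of $F_{\triangle}$ and $\Phi$). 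Granting this, the crystal property of $\cal E$ gives a canonical identification ${\mathbb G'}({\cal E})_{P_{X/S,(m)}^{n}}\cong \Phi^{*}{\cal E}_{P_{X^{(m)}/S,(0)}^{n}}$ compatible with both projection pullbacks, whence the stratification of ${\mathbb G'}(\cal E)$ is precisely $\Phi^{*}$ of that of $\cal E$; the cocycle conditions are matched in the same way using the triple self-products.

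The main obstacle I expect is exactly this last compatibility, namely that $\Phi$ is compatible with the $T_{0}$-construction underlying ${\mathbb G'}$ and that it intertwines the projections via the relative Frobenius. Establishing the requisite morphism of $0$-PD thickenings and the intertwining relation requires unwinding Montagnon's explicit construction of $\Phi$ (Proposition 3.3.1 and 3.4.1 of \cite{M}) and Berthelot's Frobenius-descent comparison \cite{B1}; once this is in place, the functoriality of all the identifications in $\cal E$ is routine, and the commutativity of the square follows.
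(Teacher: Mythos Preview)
Your proposal is correct and follows essentially the same approach as the paper. The paper carries out exactly the step you flag as the main obstacle by drawing the explicit diagram relating $(P^{n}_{X/S,(m)})_{0}\hookrightarrow P^{n}_{X/S,(m)}$ to $X^{(m)}\hookrightarrow P^{n}_{X^{(m)}/S,(0)}$ via $\Phi$ and $F$, and verifies the one nontrivial commutativity (the ``middle trapezoid'') locally using the explicit description of $\Phi^{*}$ in Proposition~\ref{proposition11}(2) together with Proposition~\ref{Proposition9}(1).
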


	\begin{proof}
	Let $\cal E$ be a log $0$-crystal on ${\rm{CRIS}}_{\rm Int}^{(0)}(X^{(m)}/S)$.
	Let us consider the following diagram:
	\[\xymatrix{
	X\ar[d] & X\ar[l]\ar[r] \ar[d] & (P^{n}_{X/S, (m)})_{0}\ar[ld]\ar@{^{(}->}[r]& P^{n}_{X/S, (m)} \ar@<1ex>[r]^{p_{0}}\ar[r]_{p_{1}}\ar[d]^{\Phi}& X\ar[d]\\
	X^{(m)}  & X^{(m)}\ar[l]\ar@{^{(}->}[rr] && P^{n}_{X^{(m)}/S, (0)} \ar@<1ex>[r]^{q_{0}}\ar[r]_{q_{1}}& X^{(m)},
	}
	\]
	where the first, second and the last vertical arrows are the $m$-th relative Frobenius morphisms, the slanting arrow is defined in Lemma \ref{lemma10}, $\Phi$ is explained in Subsection 4.1.2 and $p_{0}, p_{1}$ (resp. $q_{0}, q_{1}$) are the first and the second projection.
	Then the two squares in the diagram are clearly commutative.
	The triangle in the diagram is commutative by construction of the map $(P^{n}_{X/S, (m)})_{0}\to X^{(m)}$ (see Lemma \ref{lemma10}).
	For proving the commutativity of the middle trapezoid, we may work locally.
	Then the assertion follows from the local description of $\Phi^{*}$ (see (2) of Proposition \ref{proposition11}) and (1) of Proposition \ref{Proposition9}.
	Now, we see that $\Phi: \bigl((P^{n}_{X/S, (m)})_{0}\hookrightarrow P^{n}_{X/S, (m)}\bigr)\to \bigl(X^{(m)}\hookrightarrow P^{n}_{X^{(m)}/S, (0)}\bigr)$ is a morphism of log $0$-PD thickenings and, since $\cal E$ is a log $0$-crystal, we obtain 
	the canonical isomorphisms
		\begin{equation*}
		\Phi^{*}{\cal E}_{(X^{(m)}\hookrightarrow P^{n}_{X^{(m)}/S, (0)})}\xrightarrow{\cong} {\cal E}_{((P^{n}_{X/S, (m)})_{0}\hookrightarrow P^{n}_{X/S, (m)})} =:{\mathbb G'}({\cal E})_{(X\hookrightarrow P^{n}_{X/S, (m)})}.
		\end{equation*}
	Therefore the log $m$-PD stratification of ${\mathbb G'}({\cal E})$ is equal to that of $F^{*}({\cal E}_{X})$.
	\end{proof}
Now, we are ready to prove Theorem \ref{theorem11}.
By construction of the functor ${\mathbb G}_{{\cal I}^{gp}_{X}}$, Remark \ref{lemma25} and Remark \ref{lemma26} the following diagram of categories commutes:
	\[\xymatrix{
	{ \left(
		\begin{array}{c}
		\text{The category of} \\
		\text{left ${\cal D}_{X^{(m)}/S}^{(0)}$-modules on $X^{(m)}$}
		\end{array}
	\right)
	} \ar[d]_{\otimes{{\cal B}_{X/S}^{(m)}}} \ar[r]^{F^{*}} & 
	{\left(
		\begin{array}{c}
		\text{The category of} \\
		\text{left ${\cal D}_{X/S}^{(m)}$-modules on $X$}
		\end{array}
	\right)} \ar[d]^{\otimes{\cal A}_{X}^{gp}} \\
	{ \left(
		\begin{array}{c}
		\text{The category of} \\
		\text{left $\tilde{{\cal D}}_{X^{(m)}/S}^{(0)}$-modules on $X^{(m)}$}
		\end{array}
	\right)
	} \ar[r]^{{\mathbb G}_{{\cal I}_{X}^{gp}}} & 
	{ \left(
		\begin{array}{c}
		\text{The category of} \\
		\text{left $\tilde{{\cal D}}_{X/S}^{(m)}$-modules on $X$}
		\end{array}
	\right).
	}  }
	\] 
So it suffices to show that $F^{*}{\cal K}^{(0)}_{\cal X/S}$ is isomorphic to ${\cal K}^{(m)}_{\cal X/S}$.
Since the following diagram of categories commutes:
	\[\xymatrix{
	{ \left(
		\begin{array}{c}
		\text{The category of} \\
		\text{$p$-torsion log $0$-crystals}\\
		\text{ on ${\rm{CRIS}}_{{\rm Int},f}^{(0)}(X^{(m)}/\tilde{S})$}
		\end{array}
	\right)
	} \ar[d]^{\mathbb G'} \ar[r]^{A}& 
	{\left(
		\begin{array}{c}
		\text{The category of} \\
		\text{log $0$-crystals}\\
		\text{ on ${\rm{CRIS}}_{\rm Int}^{(0)}(X^{(m)}/S)$}		
		\end{array}
	\right)
	} \ar[d]^{\mathbb G'} \ar[r]&
	{ \left(
		\begin{array}{c}
		\text{The category of} \\
		\text{left ${\cal D}_{X^{(m)}/S}^{(0)}$-modules}\\
		\text{ on $X^{(m)}$}
		\end{array}
	\right)
	}\ar[d]^{F^{*}} \\	
	{ \left(
		\begin{array}{c}
		\text{The category of} \\
		\text{$p$-torsion log $m$-crystals}\\
		\text{ on ${\rm{CRIS}}_{{\rm Int},f}^{(m)}(X/\tilde{S})$}
		\end{array}
	\right)
	} \ar[r]^{A'} & 
	{ \left(
		\begin{array}{c}
		\text{The category of} \\
		\text{log $m$-crystals}\\
		\text{ on ${\rm{CRIS}}_{\rm Int}^{(m)}(X/S)$}		
		\end{array}
	\right)
	}  \ar[r] &
	{ \left(
		\begin{array}{c}
		\text{The category of} \\
		\text{left ${\cal D}_{X/S}^{(m)}$-modules}\\
		\text{ on $X$}
		\end{array}
	\right),
	}	
	}
	\] 
where $A$ and $A'$ are equivalences of categories in Lemma \ref{Lem7} and the right square is same as Lemma \ref{lemma20},
we shall show that the functor $\mathbb G'$ sends the $p$-torsion log $0$-crystal ${\cal K}^{(0)}_{{\cal X/S}}$ to the $p$-torsion log $m$-crystal ${\cal K}^{(m)}_{{\cal X/S}}$.
Let ($\tilde{U}, \tilde{T}, \tilde{J}, \tilde{\delta}$) be a $m$-PD thickening in ${\rm{CRIS}}_{{\rm Int},f}^{(m)}(X/\tilde{S})$.
Let $T$ denote the reduction of $\tilde{T}$ modulo $p$.
Let us consider the following commutative diagram:
	\[\xymatrix{
		{X} \ar[d]& {\tilde{U}} \ar[l]\ar@{^{(}->}[r]\ar[d] & {\tilde{T}_{0}} \ar@{^{(}->}[r]\ar[dd]\ar[ld]_{a} &\ar@{}[rd]|{\square}{T}\ar[ldd]_{c}\ar[r]^{\text{mod }p}\ar[lldd]_{b}\ar[d] & {\tilde{T}}\ar[d]\\
		{X^{(m)}}\ar[d] & {\tilde{U^{(m)}}}\ar[l]\ar[d] & & {S}\ar[r]^{\text{mod }p} & {\tilde{S}} & \\
		{X'} & {\tilde{U'}} \ar[l]_{e} & {\tilde{T}^{(1)}_{0}.}\ar@/^/[ll]^{d}
	}\]
Here the slanting arrow $a$ is defined in Lemma \ref{lemma10}, the slanting arrows $b$ and $c$ are defined in Lemma \ref{Lemma2}
and the arrow $d: \tilde{T}^{(1)}_{0}\to X'=\left(X^{(m)}\right)^{(1)}$ is induced from the composition $\tilde{T}_{0} \xrightarrow{a} \tilde{U^{(m)}}\to X^{(m)}$ via the functoriality of the first relative Frobenius morphism.
Then, by definition,
$\left({\mathbb G}'\left({\cal L}_{{\cal X/S}}^{(0)}\right)\right)_{(\tilde{U}\hookrightarrow\tilde{T})}=\left({\cal L}_{{\cal X/S}}^{(0)}\right)_{(\tilde{T}_{0}\hookrightarrow\tilde{T})}$
is the \'etale sheaf of sets on $\tilde{T}$ of local liftings of the composition $d \circ c$.
On the other hand, $\left({\cal L}_{{\cal X/S}}^{(m)}\right)_{(\tilde{U}\hookrightarrow\tilde{T})}$ is the \'etale sheaf of sets on $\tilde{T}$ of local liftings of the composition $f_{T/S}:=e\circ b$.
Therefore the equality $\left({\mathbb G}'\left({\cal L}_{{\cal X/S}}^{(0)}\right)\right)_{(\tilde{U}\hookrightarrow\tilde{T})}=\left({\cal L}_{{\cal X/S}}^{(m)}\right)_{(\tilde{U}\hookrightarrow\tilde{T})}$ follows from
the fact that the morphism $d \circ c$ is equals to the morphism $e\circ b$. 
Actually this is the identification as a torsor over $f^{*}_{T/S}{\cal T}_{X'/S}$.
So we have $\left({\mathbb G}'\left({\cal E}^{(0)}_{{\cal X/S}}\right)\right)_{(\tilde{U}\hookrightarrow\tilde{T})}=\left({\cal E}^{(m)}_{{\cal X/S}}\right)_{(\tilde{U}\hookrightarrow\tilde{T})}$ and
$\left({\mathbb G}'\left({\cal K}^{(0)}_{{\cal X/S}}\right)\right)_{(\tilde{U}\hookrightarrow\tilde{T})}=\left({\cal K}^{(m)}_{{\cal X/S}}\right)_{(\tilde{U}\hookrightarrow\tilde{T})}$.
Consequently we obtain the equality ${\mathbb G}'\left({\cal K}^{(0)}_{{\cal X/S}}\right)={\cal K}^{(m)}_{{\cal X/S}}$ as a $p$-torsion log $m$-crystal.
This finishes the proof.




\end{document}